\DeclareRobustCommand{\SkipTocEntry}[5]{}
\definecolor{blue}{rgb}{.255,.41,.884} 
\definecolor{red}{rgb}{1, 0, 0} 
\definecolor{green}{rgb}{.196,.804,.196} 
\definecolor{yellow}{rgb}{1,.648,0} 
\definecolor{pink}{rgb}{1,0.5,0.5}
\newtheorem{theorem}{Theorem}[section]
\newtheorem{lemma}[theorem]{Lemma}
\newtheorem{hyp}[theorem]{Hypothesis}
\newtheorem{proposition}[theorem]{Proposition}
\newtheorem{corollary}[theorem]{Corollary}
\theoremstyle{definition}
\newtheorem{definition}[theorem]{Definition}
\newtheorem{example}[theorem]{Example}
\theoremstyle{remark}
\newtheorem{remark}[theorem]{Remark}
\newtheorem{problem}[theorem]{Problem}
\newcommand{\be}{\begin{equation}}
\newcommand{\ee}{\end{equation}}
\newcommand{\II}{{\rm  I\hspace{-.2mm}I}}
\newcommand{\IIo}{\hspace{0.4mm}\mathring{\rm{ I\hspace{-.2mm} I}}{\hspace{.0mm}}}
\newcommand{\Fo}{ {\hspace{.6mm}} \mathring{\!{ F}}{\hspace{.2mm}}}
\newcommand{\IIIo}{{\mathring{{\bf\rm I\hspace{-.2mm} I \hspace{-.2mm} I}}{\hspace{.2mm}}}{}}
\newcommand{\IVo}{{\mathring{{\bf\rm I\hspace{-.2mm} V}}{\hspace{.2mm}}}{}}
\newcommand{\Vo}{{\mathring{{\bf\rm V}}}{}}
\newcommand{\otop}{\mathring{\top}}
\newcommand{\ID}{{{{\bf\rm I\hspace{-.3mm} D}}{\hspace{0.1mm}}}}
\newcommand{\IDh}{{{{\bf\rm I\hspace{-.3mm} \hat{D}}}{\hspace{0.1mm}}}}
\newcommand{\III}{{{{\bf\rm I\hspace{-.2mm} I \hspace{-.2mm} I}}{\hspace{.2mm}}}{}}
\newcommand{\ba}{\begin{array}}
\newcommand{\ea}{\end{array}}
\newcommand{\beq}{\begin{eqnarray}}
\newcommand{\eeq}{\end{eqnarray}}
\newtheorem{lm}{lemma}
\newtheorem{thee}{theorem}
\newtheorem{proo}{proposition}
\newtheorem{co}{corollary}
\newtheorem{rem}{remark}
\newtheorem{deff}{definition}
\newcommand{\bd}{\begin{deff}}
\newcommand{\ed}{\end{deff}}
\newcommand{\bl}{\begin{lm}}
\newcommand{\el}{\end{lm}}
\newcommand{\bp}{\begin{proo}}
\newcommand{\ep}{\end{proo}}
\newcommand{\bt}{\begin{thee}}
\newcommand{\et}{\end{thee}}
\newcommand{\bc}{\begin{co}}
\newcommand{\ec}{\end{co}}
\newcommand{\brm}{\begin{rem}}
\newcommand{\erm}{\end{rem}}
\def\frak{\mathfrak}
\def\Cal{\mathcal}
\newcommand{\bS}{\mathbb{S}}
\newcommand{\newc}{\newcommand}
\let\ccdot.
\newc{\aR}{\mbox{\boldmath{$ R$}}}
\newc{\aS}{\mbox{\boldmath{$ S$}}}
\newc{\aT}{\mbox{\boldmath{$ T$}}}
\newc{\aW}{\mbox{\boldmath{$ W$}}}
\newc{\aD}{\mbox{\boldmath{$ D$}}\hspace{-.2mm}}
\newc{\aK}{\mbox{\boldmath{$ K$}}}
\newc{\aL}{\mbox{\boldmath{$ L$}}}
\newcommand{\ce}{{\Cal E}}
\newcommand{\ct}{{\Cal T}}
\newcommand{\bT}{{\Bbb T}}
\newcommand{\Rho}{{\it P}}
\newcommand{\Sc}{{\it Sc}}
\let\hash=\sharp  
\newcommand{\nn}[1]{(\ref{#1})}
\newcommand{\bg}{\mbox{\boldmath{$ g$}}}
\newc{\obstrn}[2]{B^{#1}_{#2}}
\newcommand{\rpl}                         
{\mbox{$
\begin{picture}(12.7,8)(-.5,-1)
\put(0,0.2){$+$}
\put(4.2,2.8){\oval(8,8)[r]}
\end{picture}$}}
\newcommand{\lpl}                         
{\mbox{$
\begin{picture}(12.7,8)(-.5,-1)
\put(2,0.2){$+$}
\put(6.2,2.8){\oval(8,8)[l]}
\end{picture}$}}
\newc{\tensor}[1]{#1}
\newc{\Mvariable}[1]{\mbox{#1}}
\newc{\down}[1]{{}_{#1}}
\newc{\up}[1]{{}^{#1}}
\newc{\JulyStrut}{\rule{0mm}{6mm}}
\newc{\midtenPan}{\mbox{\sf S}}
\newc{\midten}{\mbox{\sf T}}
\newc{\midtenEi}{\mbox{\sf U}}
\newc{\ATen}{\mbox{\sf E}}
\newc{\BTen}{\mbox{\sf F}}
\newc{\CTen}{\mbox{\sf G}}
\def\sideremark#1{\ifvmode\leavevmode\fi\vadjust{\vbox to0pt{\vss
 \hbox to 0pt{\hskip\hsize\hskip1em
 \vbox{\hsize2cm\tiny\raggedright\pretolerance10000
  \noindent #1\hfill}\hss}\vbox to8pt{\vfil}\vss}}}
\numberwithin{equation}{section}
\newcommand{\hh}{{\hspace{.3mm}}}
\newcommand{\cc}{\boldsymbol{c}}
\newcommand{\sss}{\scriptscriptstyle}
\renewcommand\geq{\geqslant}
\renewcommand\leq{\leqslant}
\DeclareMathOperator{\EXT}{d}
\newcommand{\ext}{{\EXT\hspace{.01mm}}}
\newcommand\reallywidehat[1]{%
\savestack{\tmpbox}{\stretchto{%
  \scaleto{%
    \scalerel*[\widthof{\ensuremath{#1}}]{\kern-.6pt\bigwedge\kern-.6pt}%
    {\rule[-\textheight/2]{1ex}{\textheight}}
  }{\textheight}%
}{0.5ex}}%
\stackon[1pt]{#1}{\tmpbox}%
}
\definecolor{ao}{rgb}{0.0,0.0,1.0}
\definecolor{forest}{rgb}{0.0,0.3,0.0}
\definecolor{red}{rgb}{0.8, 0.0, 0.0}
\begin{document}
\subjclass[2010]{53C18, 53A55, 53C21, 58J32}

%
%
%


\renewcommand{\today}{}
\title{
{
 Conformal Fundamental Forms\\
 and the Asymptotically Poincar\'e--Einstein Condition 
}}
%
%
%

\author{ Samuel Blitz${}^\flat$, A. Rod Gover${}^\sharp$ \&  Andrew Waldron${}^\natural$}

\address{${}^\flat$
  Center for Quantum Mathematics and Physics (QMAP)\\
  Department  of Physics\\ 
  University of California\\
  Davis, CA95616, USA} 
   \email{shblitz@ucdavis.edu}
 
\address{${}^\sharp$
  Department of Mathematics\\
  The University of Auckland\\
  Private Bag 92019\\
  Auckland 1142\\
  New Zealand,  and\\
  Mathematical Sciences Institute, Australian National University, ACT 
  0200, Australia} \email{gover@math.auckland.ac.nz}
  
  \address{${}^{\natural}$
  Center for Quantum Mathematics and Physics (QMAP)\\
  Department of Mathematics\\ 
  University of California\\
  Davis, CA95616, USA} \email{wally@math.ucdavis.edu}

\vspace{10pt}

\renewcommand{\arraystretch}{1}

\begin{abstract}
  An important problem is to determine under which circumstances a metric on a conformally compact manifold  is conformal to a Poincar\'e--Einstein metric. 
  Such conformal rescalings are in general obstructed by conformal invariants of
  the boundary hypersurface embedding, the first of which is the
  trace-free second fundamental form and then, at the next order, the trace-free
   Fialkow tensor.  We show that these tensors are
  the lowest order examples in a sequence of conformally invariant
  higher fundamental forms determined by the data of a conformal
  hypersurface embedding. We give a construction of these canonical extrinsic curvatures. Our main result is that the vanishing of these fundamental forms is a
  necessary and sufficient condition for a conformally compact metric
  to be conformally related to an asymptotically Poincar\'e--Einstein
  metric.
More generally,  these higher fundamental forms are basic to the study of conformal hypersurface invariants.    
  Because Einstein metrics necessarily have constant scalar curvature, our method employs asymptotic solutions of the
singular Yamabe problem to select an asymptotically distinguished conformally compact metric.
Our approach relies on conformal tractor calculus as this 
  is key for an extension of the general theory of conformal
  hypersurface embeddings that we further develop  here.
In particular,
we give in full detail tractor analogs of the 
classical Gau\ss\ Formula and  Gau\ss\ Theorem for Riemannian hypersurface embeddings.

\medskip

\noindent
\begin{center}
{\sf \tiny Keywords: 
Extrinsic conformal geometry, hypersurface embeddings, Poincar\'e--Einstein metrics, 
  Yamabe problem}
\end{center}

\end{abstract}

\maketitle

\pagestyle{myheadings} \markboth{Blitz, Gover \& Waldron}{Conformal Hypersurfaces}

\thispagestyle{empty}

\newpage

\tableofcontents

\newcommand{\balpha}{{\bm \alpha}}
\newcommand{\balphas}{{\scalebox{.76}{${\bm \alpha}$}}}
\newcommand{\bnu}{{\bm \nu}}
\newcommand{\blambda}{{\bm \lambda}}
\newcommand{\bnus}{{\scalebox{.76}{${\bm \nu}$}}}
\newcommand{\bnuss}{\hh\hh\!{\scalebox{.56}{${\bm \nu}$}}}

\newcommand{\bmu}{{\bm \mu}}
\newcommand{\bmus}{{\scalebox{.76}{${\bm \mu}$}}}
\newcommand{\bmuss}{\hh\hh\!{\scalebox{.56}{${\bm \mu}$}}}

\newcommand{\btau}{{\bm \tau}}
\newcommand{\btaus}{{\scalebox{.76}{${\bm \tau}$}}}
\newcommand{\btauss}{\hh\hh\!{\scalebox{.56}{${\bm \tau}$}}}

\newcommand{\bsigma}{{\bm \sigma}}
\newcommand{\bsigmas}{{{\scalebox{.8}{${\bm \sigma}$}}}}
\newcommand{\bbeta}{{\bm \beta}}
\newcommand{\bbetas}{{\scalebox{.65}{${\bm \beta}$}}}

\renewcommand{\bS}{{\bm {\mathcal S}}}
\newcommand{\bB}{{\bm {\mathcal B}}}
\renewcommand{\bT}{{\bm {\mathcal T}}}
\newcommand{\bM}{{\bm {\mathcal M}}}

\newcommand{\go}{{\mathring{g}}}
\newcommand{\nuo}{{\mathring{\nu}}}
\newcommand{\alphao}{{\mathring{\alpha}}}

\newcommand{\Ell}{\mathscr{L}}
\newcommand{\density}[1]{[g\, ;\, #1]}

\renewcommand{\Dot}{{\scalebox{2}{$\cdot$}}}

\newcommand{\PanE}{P_{4}^{\sss\Sigma\hookrightarrow M}}
\newcommand\eqSig{ \mathrel{\overset{\makebox[0pt]{\mbox{\normalfont\tiny\sffamily $\Sigma$}}}{=}} }
\newcommand\eqtau{\mathrel{\overset{\makebox[0pt]{\mbox{\normalfont\tiny\sffamily $\tau$}}}{=}}}
\newcommand{\hd }{\hat{D}}
\newcommand{\hdb}{\hat{\bar{D}}}
\newcommand{\Two}{{{{\bf\rm I\hspace{-.2mm} I}}{\hspace{.2mm}}}{}}
\newcommand{\TwoN}{{\mathring{{\bf\rm I\hspace{-.2mm} I}}{\hspace{.2mm}}}{}}
\newcommand{\Fn}{\mathring{\mathcal{F}}}
\newcommand{\csdot}{\hspace{-0.55mm} \cdot \hspace{-0.55mm}}
\newcommand{\IdD}{(I \csdot \hd)}
\newcommand{\Kd}{\dot{K}}
\newcommand{\Kdd}{\ddot{K}}
\newcommand{\Kddd}{\dddot{K}}

 \newcommand{\bdot }{\mathop{\lower0.33ex\hbox{\LARGE$\cdot$}}}

\section{Introduction}
Codimension one embedded submanifolds, or {\em hypersurfaces}, in
Riemannian $d$-manifolds $(M^d\!,g)$ have long been understood to be important because (for example)
they arise naturally in foliations, and as the boundary of both domains and
manifolds with boundary. In this setting the construction of local
(and some global) invariants is well understood through the equations
of Gauss, Codazzi and Ricci
combined with Weyl's classical invariant theory.  The goal of this
article is the study of invariants of hypersurfaces embedded in a
conformal manifold $(M^d,\cc)$, 
where~$\cc$ denotes an equivalence class of smoothly conformally related metrics. The main results include a sequence of conformal fundamental
forms that should be viewed as higher order analogs of the
trace-free second fundamental form. They are natural symmetric
trace-free two tensors, on a hypersurface, that are extrinsic
conformal invariants. The higher forms capture higher jet data of the
interaction between the hypersurface and the ambient conformal
structure. Therefore they are well-suited to the  study of Poincar\'e--Einstein
metrics.

A manifold $(M\backslash \Sigma,g^o)$ with boundary~$\Sigma$ is said to be conformally compact when $g=s^2 g^o$ extends as smoothly as a metric to the boundary for any choice of {\it defining function} $s$ \linebreak for $\Sigma$---meaning that $\Sigma$ is the zero locus of $s\in C^\infty M$ and~$\ext s$ is nowhere zero along $\Sigma$.
Moreover, when the interior metric $g^o$ is Einstein, $(M\backslash \Sigma,g^o)$ is said to be {\it Poincar\'e--Einstein}.  These structures have attracted intense scrutiny, partly because of their relation to the AdS/CFT correspondence of~\cite{Maldacena} and anomalies~\cite{HS} in physics, as well as to basic geometric considerations~\cite{FG,GrahamLee,Biquard,Anderson,GZscatt,Albin,LeeFred,MazzPac,Chang,
FGbook}.
Given a conformally embedded hypersurface~$\Sigma\hookrightarrow (M,\cc)$, we say a
metric~$g^o$ on $M\backslash \Sigma$ is {\it asymptotically Poincar\'e--Einstein}
if  its  trace-free Schouten $\mathring P^{g^o}$ (or equivalently trace-free Ricci) tensor satisfies
\begin{equation}\label{aPE}
\mathring P^{g^o} = {\mathcal O}(s^{d-3})\, ,
\end{equation}
where 
$$
g^o=s^{-2} g\, ,
$$
for some $g\in \cc$ and $s$ a defining function for $\Sigma$. Here ${\mathcal O}(s^k)$ denotes any smooth rank two tensor times the function $s^k$.
We therefore ask the question: 
 when is
   the metric on a conformally compact manifold  conformal to an asymptotically Poincar\'e--Einstein metric?
More generally we treat the following problem:
\begin{problem}\label{P}
Given a conformally embedded hypersurface $\Sigma\hookrightarrow (M,\cc)$,
under what conditions does $\cc|_{M\backslash \Sigma}$ contain an  asymptotically Poincar\'e--Einstein metric?
\end{problem}
\noindent
It is well known that the embedding $\Sigma\hookrightarrow (M,\cc)$ must be umbilic meaning that the trace-free second fundamental form $\IIo$ 
must vanish ($\forall p\in \Sigma$)~\cite{Goal,LeBrun}. 
Thus, given a general conformal  embedding  $ \Sigma\hookrightarrow (M,\cc)$, its trace-free second fundamental form oughxewed as  an obstruction to the existence of a Poincar\'e--Einstein metric in the conformal class along $M\backslash\Sigma$; 
at the next order, an obstruction is the trace-free Fialkow tensor (of~\cite{Fialkow}; see also~\cite{Vyatkin}).
In Theorem~\ref{uber-PE} below we establish 
a necessary and sufficient condition for  solving Problem~\ref{P} in terms of vanishing conditions for higher fundamental forms, which we now discuss.

\medskip

For a hypersurface $\Sigma$, embedded in a dimension $d\geq 3$
Riemannian manifold $(M^d,g)$, the induced metric $\bar{g}$ on the hypersurface
is sometimes referred to as the first fundamental form. The second fundamental
form~$\II$ is a rank two, natural, symmetric tensor along $\Sigma$ that
captures the failure of the conormal to be parallel.
Such
diffeomorphism invariant tensors built from the metric $g$, its
derivatives, and the conormal are termed {\it natural}, see {\it
  e.g.}~\cite{Atiyah,Stredder,GPt,Will2}. 
 For simplicity we do not consider parity odd tensors constructed from the Levi--Civita symbol/Hodge dual.

The  part of $\II$  that is trace-free with respect to the induced metric~$\bar g$ is denoted $\IIo$. Then
$$
\IIo = \II - \bar g H\, ,
$$
defines the {\em mean curvature} $H$ of $\Sigma\hookrightarrow
(M,g)$. The {\em trace-free second fundamental form} $\IIo$ has the important property that when computed with respect
to a conformally related metric $\Omega^2 g$, with $0<\Omega\in
C^\infty M$, it obeys
$$\IIo^{\Omega^2 g} = \Omega \IIo^g\, .$$ The {\em Fialkow tensor} of expression~(\ref{Aaron}) below is another rank two tensor
with a similar conformal transformation.
 Such {\em
  conformal hypersurface invariants} (see \cite[Definition 4.1]{Will1} for a formal definition) are important because they
play a key {\it r\^ole} in  geometric PDE boundary problems (see for example~\cite{Case1, Case2,GPt,Bent}), in generalizing the scattering
program of Melrose, Graham-Zworksi and others~\cite{MELscatt,GZscatt}, and in the
mathematics of the AdS/CFT program~\cite{Maldacena,HS}.
Indeed, while the trace-free Schouten tensor of a metric $g^o=s^{-2}g$ on the interior of a conformally compact structure becomes singular as one approaches $\Sigma$, the quantity $s \mathring P^{g^o}$ is finite in this limit. Moreover this equals the trace-free part of the curvature-adjusted Hessian on $M\backslash \Sigma$ 
\begin{equation}\label{Rug}
(\nabla^2 s + s \Rho)_\circ\, .
\end{equation}
Here $\nabla$ is the Levi-Civita connection of $g$ and $\Rho$ is the corresponding Schouten tensor. Importantly the above display extends smoothly to the boundary and there equals the trace-free second fundamental form $\IIo$. This suggests building higher fundamental forms from   jets of the Hessian~\nn{Rug} transverse to~$\Sigma$.

Later, we define a notion of the {\em transverse order} of a natural
tensor along $\Sigma$ by suitably counting transverse derivatives of
the ambient metric $g$ that are used in the formula for the given
invariant along $\Sigma\hh$---see Section~\nn{herecomethI} for details.  The
 trace-free second fundamental form~$\IIo$  has
transverse order one and is the first example of what we
define to be a conformal fundamental form. The trace-free part of the Fialkow tensor 
is a transverse order two example.

\begin{definition} \label{ff-def}
 Let $n\in {\mathbb N}$.
A {\it conformal fundamental form} is {\it any} natural  trace-free section $L$ of~$\odot^2 T^*\Sigma$ with transverse order $n$ that obeys
$$
L^{\Omega^2 g} = \Omega^{2-n}L^g\, .
$$
\end{definition}

\noindent
We call a conformal fundamental form of transverse order $n-1>0$ an {\it $n$th conformal fundamental form}, and for brevity, often drop the adjective conformal when $n>2$.
Because, by their very definition, conformal fundamental forms measure how the ambient metric extends off of the hypersurface,  their leading transverse order terms are expressed in terms of ambient curvature quantities. Indeed, the first three non-trivial examples are made from the Weyl, Cotton, and Bach tensors, respectively.

In dimensions~$d\neq 3$, a third fundamental form is
$$
W_{\hat n ab \hat n}\, ,
$$
where $W$ is the Weyl tensor of $(M,g)$ and $\hat n$ is the unit conormal to $\Sigma$.
In dimensions $d\neq5$, a fourth fundamental form is 
\begin{equation}\label{mynumberisugly}
\tfrac1{d-5}\,  \bar \nabla^c W_{c(ab) \hat n}^\top+
C_{\hat  n(ab)}^\top  +H W_{\hat n ab \hat n} \, ,
\end{equation}
where $C$ is the Cotton tensor of $(M,g)$, $\top$ denotes the projection of $TM|_\Sigma$ to $T\Sigma$, and $\bar \nabla$ is the Levi-Civita connection of $(\Sigma,\bar g)$.
In dimensions $d\neq 3,5,7$, a fifth fundamental form is 
\begin{multline*}
\hspace{-.4cm}
-\tfrac {2(d-4)}{d-7}\, \big(\bar \Delta \Fo_{ab}-
\bar \nabla_{(a} \bar \nabla\csdot \Fo_{b)\circ}
-(d-3) \bar P_{(a} \csdot\, 
\Fo_{b)\circ}-2\bar J \Fo_{ab}\big)
-\tfrac{d-4}{d-5}\,  \bar{B}_{ab}
\\
+  B^\top_{(ab)\circ}  
- 2(d-4) H C^\top_{\hat n(ab)} 
-  (d-4)H^2 W_{\hat nab\hat n}  
- \tfrac{d-4}{6(d-1)(d-2)}\big( \bar{\nabla}_{(a} \bar{\nabla}_{b)\circ}\!-2\bar{\Rho}_{(ab)\circ} \big) \IIo^2\!
\\
+ \tfrac{d-4}{(d-2)(d-3)}  \TwoN_{ab} \big(\bar{\nabla}_c\bar{\nabla}_d
-(d-2)\bar \Rho_{cd}\big)
\TwoN^{cd} 
- \tfrac{d-4}{(d-2)^2} \bar{\nabla}\csdot\,  \TwoN_{(a} \bar{\nabla}\csdot\,  \TwoN_{b)\circ} 
 \, ,
\end{multline*}
where
 $B$ and $\bar B$ are the Bach tensors of $M$ and $\Sigma$, respectively (bars are universally used to denote hypersurface quantities and objects), see~\nn{notjohann}.
Our conventions for tensors and for the Schouten tensor $P$ and its trace $J$ are given in Section~\ref{RG}.
The above third and fourth fundamental forms vanish for generic embeddings in conformally flat spaces. It turns out that no fifth fundamental form exists with the same property.

\smallskip

The above tensor structures might at first seem arcane, but in fact they follow from a general
theory of conformally invariant normal operators along hypersurfaces.
The foundations of such were developed
in~\cite{GPt}, although important lacun\ae\ in precisely this setting
remain. We shall show that conformal fundamental forms are
 central to the theory of conformal
hypersurface embeddings. 
They are atoms for the construction of  {\it hypersurface invariants}, {\it i.e.}, natural tensors distinguished by
the property that they are unchanged, or transform in a simple
(``covariant'') way when the metric $g$ 
 is replaced by a conformally related metric
  $\widehat{g}=e^{2\omega} g$, with $\omega\in C^\infty(M)$.

\smallskip

The identification and construction of  conformal
hypersurface invariants is complicated on all fronts. On one hand,
the equations of Gauss, Codazzi, and Ricci are all badly
behaved under conformal transformation, and on the other, even
the construction of intrinsic conformal invariants is difficult and
incomplete~\cite{BEGr,GOadv}. 
There are two main tools which provide an effective route to resolving
these difficulties. The first is the conformally invariant local
calculus based around the conformal Cartan or tractor connection
\cite{BEG}. This enables a replacement of the Riemannian
Gauss-Codazzi-Ricci hypersurface theory with an analogous conformally
invariant set of basic equations and identities. While the basic
elements of this are established in~\cite{BEG,Grant, Stafford, Vyatkin, WillB, Will1, Will2}, gaps
have remained and we treat these  in Section~\ref{CEH}; see also Theorem~\ref{GTF} and its Corollary~\ref{GTE} below.  These results are crucial for the effective use of the second
main tool which is {\em holography}.

\medskip
The tractor approach to embedded hypersurfaces begins by extending the 
tangent bundle to~$M$ to a rank $d+2$ vector bundle $\ct M$ equipped with a Cartan connection $\nabla^\ct$ canonically determined  by the data $(M,\cc)$ which preserves a tractor metric $h:\Gamma(\odot^2\ct M)\to C^\infty M$. These are respectively termed the {\it (standard) tractor bundle} and {\it tractor connection}; the precise definitions are given in Section~\ref{t-sec}. Because the conformal class of metrics $\cc$ induces a conformal class of metrics~$\cc_\Sigma$ along~$\Sigma$, the hypersurface also comes equipped with a tractor bundle $\ct \Sigma$ and connection~$\bar \nabla^\ct$.  
It turns out that there is a simple relationship between $\ct M$ and $\ct \Sigma$~\cite{Goal}, and so it is natural to search for the conformal analogs of the Gau\ss\ formula and the Gau\ss\ equation for Riemannian hypersurface embeddings; early results for such were presented in~\cite{Will1,Will2,hypersurface_old}. Recall that the
Gau\ss\ formula and Gau\ss\ equation relate ambient and hypersurface connections and curvatures, respectively; see Equations~\nn{GF} and~\nn{GE}. To obtain optimal conformal analogs of these,
we use the Thomas-$D$ operator
which extends the tractor connection
to a  map from  tractors to tractors.
To understand this operator, one needs to enlarge the space of tractor objects to include sections of tensor products of tractor bundles as well as
weighted tractor bundles (obtained by tensoring tractor fields with conformal densities), see Section~\ref{c-sec}. 
Denoting a weight~$w$ tractor bundle of tensor type $\Phi$ by $\ct^\Phi M[w]$, the Thomas-$D$
operator is a map
$$D:\Gamma(\ct^\Phi M[w])\to \Gamma(\ct M\otimes \ct^\Phi M[w-1])\, .$$
A quick way to understand the Thomas-$D$ operator is that it is a conformally invariant
triplet of differential operators in correspondence with the weight, gradient and Laplace operators. Detailed formul\ae\
are given in Section~\ref{D-sec}. 
Because the Thomas-$D$ 
operator is a powerful tool for proliferating 
conformal invariants~\cite{BEG}, it is advantageous to have an analog of the Gau\ss\ formula relating the bulk and hypersurface Thomas-$D$ operators:
\begin{theorem}[Gau\ss--Thomas formula]\label{GTF}
Let $w+\frac d2 \neq 1,\frac 32,2$. 
Acting  on weight $w$ tractors, the bulk tangential and hypersurface Thomas-$D$ operators obey
$$
\hd^T_A\stackrel\Sigma =  \hdb_A +\Gamma_A {}^\sharp - \frac{X_A}{\bar{d} + 2w-2} \bigg{\{} 
2\Gamma_B {}^\sharp \circ \hdb^B + \Gamma^B{}^\sharp \circ \Gamma_B{}^\sharp 
+ \frac{1}{\bar{d}(\bar{d}-1)} \left[\left(\hdb K \right)  \wedge X \right]^\sharp - \frac{(3\bar{d}+2)wK}{2\bar{d}(\bar d-1)} \bigg{\}}\, .
$$
\end{theorem}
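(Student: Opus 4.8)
The plan is to fix a representative metric $g\in\cc$, inducing $\bar g\in\cc_\Sigma$, and to verify the asserted identity slot by slot in the resulting tractor trivializations. Since every object appearing on both sides is a genuine conformally invariant tractorial expression---the Thomas-$D$ operators $\hd$ and $\hdb$ are conformally invariant, $\Gamma_A{}^\sharp$ is the conformally invariant tractor second fundamental form (an $\mathfrak{so}(\ct M)$-valued object along $\Sigma$, the conformal analog of $\IIo$), and $K$ is the canonical extrinsic density---it suffices to establish the formula in any one convenient scale; I would take $g$ so that the mean curvature satisfies $H\stackrel{\Sigma}{=}0$, which annihilates several Gau\ss-equation terms.

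First I would record, in the scale $g$, the explicit three-slot expression for $\hd_A$ acting on a weight-$w$ tractor $V$ of tensor type $\Phi$---built from $wV$, $\nabla_a V$, and $-(\Delta+wJ)V$ with the normalizations of Section~\ref{D-sec}---and the analogous expression for $\hdb_A$ relative to $\bar g$. Next I would invoke the decomposition $\ct M|_\Sigma\cong\ct\Sigma\oplus\underline{\mathbb{R}}[-1]$ along the normal tractor $I$ of \cite{Goal}, together with the tractor Gau\ss\ formula of Section~\ref{CEH} (extending \cite{Will1,Will2}), which expresses the pullback of $\nabla^\ct$ to $\Sigma$ as $\bar\nabla^\ct_a+\Gamma_a{}^\sharp$. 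The weight slot of $\hd^T_A V$ is then immediate, and the gradient slot reduces to $\bar\nabla_a V+\Gamma_a{}^\sharp V$ together with a normal component which is absorbed, after tangential projection, into the $X_A\{\cdots\}$ correction.

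The crux is the bottom slot, where the ambient tractor Laplacian $\Delta^\ct$ must be reorganized. Along $\Sigma$ one has $\Delta^\ct\stackrel{\Sigma}{=}\bar\Delta^\ct+(\nabla^\ct_n)^2+\bar d\, H\,\nabla^\ct_n+(\text{curvature and }\Gamma\text{-terms})$, and the transverse derivatives $\nabla^\ct_n V$ and $(\nabla^\ct_n)^2 V$ must be eliminated. This is the step that forces the right-hand side into its stated shape: on the \emph{tangential} part of $\hd_A$ the $X_A$-direction is pinned, so $\nabla^\ct_n V$ can be solved from the normal contraction $I\csdot\hd\, V$ and $(\nabla^\ct_n)^2 V$ re-expressed by a second use of the tractor Gau\ss\ formula together with the tractor Gau\ss--Ricci identities of Section~\ref{CEH}. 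Substituting, trading the ambient Schouten trace $J$ for its hypersurface analog $\bar J$ via the Gau\ss\ equation, and collecting the resulting extrinsic scalars into the density $K$, one obtains the $\Gamma_B{}^\sharp\circ\hdb^B$ and $\Gamma^B{}^\sharp\circ\Gamma_B{}^\sharp$ contributions (from expanding $\nabla^\ct_n$ and $(\nabla^\ct_n)^2$), the $[(\hdb K)\wedge X]^\sharp$ contribution (from the derivative of the trace part of $\Gamma$, equivalently the transverse jet of the scale), and the $wK$ contribution (from the purely algebraic pieces), all with the stated coefficients; the overall factor $\tfrac1{\bar d+2w-2}$ emerges on inverting the weight operator $\bar d+2w-2$ that normalizes $\hdb$. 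Finally I would check that the excluded weights $w+\tfrac d2\in\{1,\tfrac32,2\}$ are precisely the loci where $\bar d+2w-2$ vanishes or where $\hd$ or $\hdb$ degenerates.

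I expect the Laplacian slot to be the main obstacle: one must show that the ambient second transverse derivative $(\nabla^\ct_n)^2 V$, a priori transverse-order-two data invisible to the hypersurface, collapses---after tangential projection and repeated use of the tractor identities for $X\csdot\hd$ and $I\csdot\hd$---into hypersurface operators applied to $V$ plus purely algebraic extrinsic-curvature terms, and that the $g$-curvature quantities ($J$, $P$, $W$) convert cleanly into $\bar g$-curvature quantities. This is where the full tractor Gau\ss\ and Gau\ss--Ricci machinery of Section~\ref{CEH} is essential and where the bookkeeping of the $\sharp$-action is most delicate; working in the scale with $H\stackrel{\Sigma}{=}0$ removes the $H$-dependent terms and renders the remaining coefficient-matching manageable.
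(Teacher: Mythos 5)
Your overall strategy---verify the identity in a fixed splitting, with the weight and gradient slots following immediately from the Fialkow--Gau\ss\ formula and all the real work concentrated in the Laplacian slot---is essentially the route the paper takes: its proof contracts both sides with the hypersurface injectors $(X^A,\bar Z^A_a,\bar Y^A)$, disposes of the first two contractions exactly as you describe, and then computes the difference $\Delta^\top-\bar\Delta$ by iterating the Fialkow--Gau\ss\ formula after splitting $\Gamma=\Gamma_\perp+2NL$ and using $\bar Z^a_A\bar\nabla_a L^{AC}=2L^{-C}$, $\bar Z^a_A\bar\nabla_a F^{AC}=3F^{-C}$, $\nabla^\top_a N^B=L^B_a$.

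The one place your sketch would derail as written is the claim that $(\nabla^\ct_n)^2V$ can be ``re-expressed by a second use of the tractor Gau\ss\ formula'': the Gau\ss\ formula governs only tangential derivatives and cannot touch a second transverse derivative, which is genuinely undetermined by $V|_\Sigma$. What actually happens is that this term cancels against the counterterm $\tfrac{X_A}{d+2w-3}\hh N^B N^C\hd_B\hd_C$ built into the very definition of $\hd^T$ in Proposition~\ref{Dt-complex}; the paper short-circuits the entire elimination by importing the explicit tangential formula of \cite{Will2} (Equation~\nn{Dt-matrix}), which writes $(\hd^T_A)_\Sigma$ purely in terms of $w$, $\nabla^\top$, $\Delta^\top$ and $K$, so that only the comparison of $\Delta^\top$ with $\bar\Delta$ remains. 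If you re-derive that formula yourself (via Lemma~\ref{Dt-simple} and the $X\csdot\hd$, $I\csdot\hd$ identities, as your final paragraph suggests), your plan goes through. Note also that the paper never needs the gauge $H\stackrel\Sigma=0$, since every intermediate quantity is kept tractorial; your gauge choice is legitimate but not load-bearing.
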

\noindent
The above explicit tractor result refines and simplifies the implicit one of~\cite{Will2}.
The notation $\stackrel\Sigma=$ denotes equalities that hold along the hypersurface.
The tangential Thomas-$D$ operator $\hd^T$ is introduced in Proposition~\ref{Dt-complex}.
Like the projection of the bulk Levi-Civita connection~$\nabla^\top$ along~$\Sigma$ to $T\Sigma$, the operator $\hat D^T$ obeys the tangentiality property 
$$\hat D^T U \stackrel\Sigma=\hat D^T U^\prime\, ,$$
for any pair of tractors $U$ and $U^\prime$ satisfying $U \eqSig U'$. The tractor $\Gamma_{ABC}$ is built from  the second and third conformal fundamental forms; see Equation~\nn{Gamma}. Also,~$K$ denotes $\IIo^2$
and is termed the {\it rigidity density}. Finally,  
the canonical tractor $X$  is defined 
in Section~\ref{t-sec}. Theorem~\ref{GTF} is proved  in Section~\ref{GTform-proof}.

The commutator of two Thomas-$D$ operators includes a curvature tractor called the $W$-tractor---see Proposition~\ref{DD-comm}---which unifies in a conformally covariant way, the Weyl, Cotton and Bach tensors~\cite{GOadv,GOpet}. It obeys an  analog of the Gau\ss\ theorem, which essentially follows as a corollary of the Gau\ss--Thomas formula:
\begin{corollary}[Gau\ss-Thomas Equation]\label{GTE}
Let $d>5$. Then the bulk and hypersurface $W$-tractors are related by
\begin{align}
\begin{split} \label{Wt-Wb}
W^\top_{ABCD}
\big|_\Sigma =\, \overline{W}_{ABCD}& - 2 L_{A[C} L_{D]B} - 2 \overline{h}_{A[C} F_{D]B} + 2 \overline{h}_{B[C} F_{D]A} - \tfrac{2}{(d-1)(d-2)} \overline{h}_{A[C} \overline{h}_{D]B} K \\[1mm]
&+ 2 X_{[A} T_{B]CD} +2 X_{[C} T_{D]AB}  
- 2 X_A X_{[C} V_{D]B} + 2 X_B X_{[C} V_{D]A} \\
&+ \tfrac{1}{3(d-1)(d-2)} X_A X_{[C} \hdb_{D]} \hdb_B K - \tfrac{1}{3(d-1)(d-2)} X_B X_{[C} \hdb_{D]} \hdb_A K,
\end{split}
\end{align}
where
\begin{align*}
T_{ABC} &:= 2 \hdb_{[C} F_{B]A} + \tfrac{1}{(d-1)(d-2)} \overline{h}_{A[B} \hdb_{C]} K\:\in\: \Gamma(\ct \Sigma \otimes \wedge^2 \ct \Sigma[-3] )\, , 
\end{align*}
and $V_{AB}\in \Gamma(\odot^2 \ct \Sigma[-4])$ is a symmetric tractor built from curvatures such that $X^A V_{AB} = X_B V$ for some $V\in \Gamma(\ce M[-4])$.
\end{corollary}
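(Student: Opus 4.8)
Corollary~\ref{GTE} should follow from Theorem~\ref{GTF} in essentially the way the classical Gau\ss\ equation follows from the Gau\ss\ formula. By Proposition~\ref{DD-comm} the $W$-tractor is the curvature appearing in the commutator of two Thomas-$D$ operators: for a tractor $U$ along $\Sigma$ one has an operator identity expressing $[\hd_A,\hd_B]U$ in terms of $W_{ABCD}$, $X$, $\overline h$, $\hd$ and the weight, and likewise $[\hdb_A,\hdb_B]U$ in terms of $\overline W_{ABCD}$. Because the $W$-tractor is built from two applications of a Thomas-$D$ operator, its hypersurface restriction is computed using the \emph{tangential} operator $\hd^T$ of Proposition~\ref{Dt-complex}, which by its tangentiality property descends to tractors along $\Sigma$; thus $[\hd^T_A,\hd^T_B]$ is well-defined on $\Sigma$ and produces $W^\top_{ABCD}\big|_\Sigma$. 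The restriction $d>5$ is exactly what is needed so that the weight hypothesis $w+\tfrac d2\neq 1,\tfrac 32,2$ of Theorem~\ref{GTF} holds at each of the weights at which the Gau\ss--Thomas formula must be invoked in the course of this computation.

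The plan is then to substitute $\hd^T_A\eqSig\hdb_A+\Gamma_A{}^\sharp-\tfrac{X_A}{\bar d+2w-2}\{\cdots\}$ from Theorem~\ref{GTF} into $[\hd^T_A,\hd^T_B]$ and expand. The double commutator breaks into: (i) the purely hypersurface piece $[\hdb_A,\hdb_B]$, which by Proposition~\ref{DD-comm} applied on $(\Sigma,\cc_\Sigma)$ contributes $\overline W_{ABCD}$ together with its intrinsic trace parts; (ii) mixed terms, schematically $[\hdb_A,\Gamma_B{}^\sharp]$, $\Gamma_A{}^\sharp\circ\Gamma_B{}^\sharp$, and cross terms pairing $\hdb$ or $\Gamma{}^\sharp$ with the $X_A\{\cdots\}$ correction; and (iii) terms in which both factors carry the $X_A\{\cdots\}$ correction, which are manifestly of the form $X_AX_{[C}(\cdots)_{D]B}$. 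Since $\Gamma_{ABC}$ is built from the second and third conformal fundamental forms (Equation~\nn{Gamma}), evaluating the $\sharp$-actions in (ii) produces the quadratic term $-2L_{A[C}L_{D]B}$ — the conformal analog of the Gau\ss-equation term $\II_{ac}\II_{bd}-\II_{ad}\II_{bc}$ — together with the trace corrections $-2\overline h_{A[C}F_{D]B}+2\overline h_{B[C}F_{D]A}$ and $-\tfrac{2}{(d-1)(d-2)}\overline h_{A[C}\overline h_{D]B}K$, the latter two arising from the $\Gamma^B{}^\sharp\circ\Gamma_B{}^\sharp$, $\tfrac1{\bar d(\bar d-1)}[(\hdb K)\wedge X]^\sharp$ and $\tfrac{(3\bar d+2)wK}{2\bar d(\bar d-1)}$ pieces of the bracket in Theorem~\ref{GTF}.

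All remaining contributions carry at least one factor of $X$. Collecting the coefficient of $X_{[A}(\cdots)_{B]CD}$ — which by the algebraic Bianchi symmetry of $W$ must be accompanied by the matching $X_{[C}(\cdots)_{D]AB}$ term — isolates the Cotton-type tractor, and the $\hdb$- and $K$-dependent terms of Theorem~\ref{GTF} deliver $T_{ABC}=2\hdb_{[C}F_{B]A}+\tfrac1{(d-1)(d-2)}\overline h_{A[B}\hdb_{C]}K$; one then verifies by counting weights and inspecting slots that $T_{ABC}\in\Gamma(\ct\Sigma\otimes\wedge^2\ct\Sigma[-3])$. The terms quadratic in $X$ assemble into the two explicit $X_AX_{[C}\hdb_{D]}\hdb_B K$ terms plus $-2X_AX_{[C}V_{D]B}+2X_BX_{[C}V_{D]A}$, where $V_{AB}$ is \emph{defined} as the symmetric remainder in the double-$X$ slot once those $\hdb\hdb K$ terms are removed. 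Its asserted property $X^AV_{AB}=X_BV$ is then not an extra input but a consequence: contracting the full identity with $X^A$ and using the standard $X$-contraction identities for $\overline W$, $L$, $F$, $T$ and for $X^AW^\top_{ABCD}$ forces the double-$X$ remainder to depend on $B$ only through a further explicit factor $X_B$, which exhibits the scalar density $V\in\Gamma(\ce M[-4])$.

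The main obstacle is bookkeeping. The expansion of $[\hd^T_A,\hd^T_B]$ produces a large number of terms, and the delicate points are (a) computing the tractor $\sharp$-action of the composite $\Gamma_{ABC}$ on $\hdb$ and on hypersurface tractors, where the second and third fundamental forms become entangled and the slot structure must be tracked carefully, and (b) repeatedly invoking the algebraic identities obeyed by the $W$-tractor (trace-freeness, the algebraic Bianchi identity, and its $X$-contractions) to reduce the result to the canonical decomposition~\nn{Wt-Wb} and to certify that the residual $V_{AB}$ is genuinely symmetric with $X^AV_{AB}=X_BV$. Keeping track of which weight $w$ is active at each use of Theorem~\ref{GTF}, and hence of numerical coefficients such as $\tfrac1{3(d-1)(d-2)}$, is the other point requiring care.
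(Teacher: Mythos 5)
Your overall strategy---compute $[\hd^T_A,\hd^T_B]$ two ways and equate---is viable in principle, and indeed the paper explicitly acknowledges it: the proof of Corollary~\ref{GTE} opens by remarking that one \emph{could} obtain the result by applying the Gau\ss--Thomas formula to $[\hd^T_A,\hd^T_B]V_C$, but that because $\hd$ is not a derivation this computation is ``rather involved,'' and then deliberately takes a different route. The paper instead verifies Equation~\eqref{Wt-Wb} componentwise, by contracting both sides with every relevant product of hypersurface injectors $(X^A,\bar Z^A_a,\bar Y^A)$ in a scale chosen so that $H^g=0$: the single-$X$ contractions vanish on both sides by the $X$-contraction identities for $L$, $F$, $T$ and the ansatz on $V$; the all-$\bar Z$ contraction reduces to the classical trace-free Gau\ss\ equation; the $\bar Z\bar Z\bar Z\bar Y$ contraction reduces to a Cotton-tensor identity $C^\top_{abc}|_\Sigma=\overline C_{abc}+2\bar\nabla_{[a}\Fo_{b]c}+\cdots$ proved from the Codazzi--Mainardi and Fialkow--Gau\ss\ equations; and the double-$\bar Y$ components merely \emph{define} $V_{AB}$, which is why no formula for it is asserted. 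What the paper's method buys is that each check is a finite, classical tensor identity; what your method would buy, if completed, is a derivation rather than a verification, and in particular an explicit formula for $V_{AB}$.

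The genuine soft spot in your sketch is the very first step: you assert that $[\hd^T_A,\hd^T_B]$ ``produces $W^\top_{ABCD}|_\Sigma$.'' That is not automatic. Proposition~\ref{DD-comm} identifies the bulk $W$-tractor inside $[D_A,D_B]$, but $\hd^T$ differs from $\hd$ by the $N^{\rm e}$- and $X$-dependent corrections of Proposition~\ref{Dt-complex}, so $[\hd^T_A,\hd^T_B]$ is \emph{not} the projection of $[\hd_A,\hd_B]$; computing it from the bulk side generates derivatives of $N^{\rm e}$ (hence $L$-terms, by Lemma~\ref{thisismylabel}) and further corrections from the repeated failure of the Leibniz rule (Proposition~\ref{leib-failure}). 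In other words, extrinsic curvature terms appear on \emph{both} sides of your two expansions, not only on the $\overline W$ side, and until that bulk-side computation is carried out you have not actually isolated $W^\top_{ABCD}|_\Sigma$ nor fixed any of the coefficients such as $\tfrac{1}{3(d-1)(d-2)}$. As written, the proposal is a plausible plan whose hard content---precisely the part the paper declined to do---remains to be executed.
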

\noindent
The tractors $L_{AB}$ and $F_{AB}$ are defined in Section~\ref{HTC}. This corollary is proved in Section~\ref{GTE-proof}.

Our tractor hypersurface technology lays the groundwork for a forthcoming study of higher extrinsic Laplacian powers and generalized  Willmore energies~\cite{US}. One such energy that emerges directly from our fundamental form study is given below.

\begin{theorem}\label{goodwill}
Let $d=5$ and $\Sigma\hookrightarrow (M,\cc)$ be a closed conformally embedded hypersurface. Then the integral defined for the metric $\bar g$ induced by $g\in\cc$ and given by
\begin{equation}\label{Samaritan}
\int_\Sigma {\ext}{\rm Vol}(\bar g) \big(
-\tfrac12 \IIo \csdot \bar{\Delta} \IIo 
  +6 \IIo \csdot C_n^{\top}
  - 4 \IIo \csdot \bar{P} \csdot \IIo
  + \tfrac72 \bar{J} K 
 +6 H \IIo^3
  +6 H \IIo \csdot \IIIo\big)\, , 
  \end{equation}
 is independent of the choice of $g\in \cc$.
\end{theorem}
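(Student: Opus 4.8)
The plan is to recognize the integrand in \eqref{Samaritan} as (a multiple of) the trace-free part, or rather a scalar built from, the fifth conformal fundamental form specialized to $d=5$. Recall that the displayed fifth fundamental form above carries an explicit overall factor $\tfrac{d-4}{d-5}$ (and $\tfrac{2(d-4)}{d-7}$) and is a conformal density-valued tensor of the appropriate weight; in $d=5$ these prefactors are singular, which is precisely the statement that "no fifth fundamental form exists" that vanishes for generic conformally flat embeddings. The standard mechanism is that when a conformal fundamental form of weight $2-n$ fails to exist because of a pole at some critical dimension, the residue at that pole is, up to divergences and lower fundamental forms (which vanish or integrate to zero appropriately), a conformally invariant \emph{integral}: the would-be anomaly becomes an honest conformal invariant upon integration over a closed $\Sigma$. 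So the first step is to extract from the $d$-dependent fifth fundamental form expression the part that, at $d=5$, survives as the "energy density"; contracting with $\IIo$ (weight $1$) to build a scalar of weight $-\bar d = -4$, i.e.\ a top-form density on $\Sigma^4$, produces exactly the integrand of \eqref{Samaritan} after using the Gau\ss--Codazzi relations valid in $d=5$ to rewrite ambient quantities (Bach, Cotton) in the displayed hypersurface form.

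Concretely, I would proceed as follows. First, compute the conformal variation: set $\hat g = e^{2\omega}g$, $\hat{\bar g}=e^{2\omega|_\Sigma}\bar{\bar g}$, and track how each term in \eqref{Samaritan} transforms, using the known transformation laws $\IIo^{\hat g}=e^{\omega}\IIo^{g}$ (so $K^{\hat g}=e^{2\omega}K^g$, $\IIIo$ being a genuine third fundamental form transforms as $e^{-\omega}\IIIo$), the transformation of the ambient Cotton tensor $C$, of the hypersurface Schouten $\bar P$ and its trace $\bar J$, of the mean curvature $H$, and of $\bar\Delta$ acting on a weight $w$ density. Since $\ext\mathrm{Vol}(\bar g)$ transforms by $e^{4\omega}\ext\mathrm{Vol}(\bar{\bar g})$ on $\Sigma^4$, the claim is equivalent to showing the integrand is a conformal density of weight $-4$ modulo an exact form $\ext(\cdots)$ on $\Sigma$. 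The efficient route is \emph{infinitesimal}: write $\omega = t\upsilon$ and show $\tfrac{d}{dt}\big|_{t=0}$ of the integral vanishes, which reduces to showing that the $\upsilon$-linear part of the integrand-variation is a total $\bar\nabla$-divergence. This is where the tractor machinery — the Gau\ss--Thomas formula (Theorem~\ref{GTF}) and the Gau\ss--Thomas Equation (Corollary~\ref{GTE}) — does the heavy lifting: rather than brute-forcing Riemannian conformal variations, one exhibits \eqref{Samaritan}, up to an exact term, as $\int_\Sigma$ of a complete contraction of the bulk $W$-tractor restricted to $\Sigma$ against tractors built from $L_{AB}$, $F_{AB}$ and the Thomas-$D$ operator, all of which are manifestly conformally invariant; the residual exact term is controlled because $\Sigma$ is closed.

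The cleanest packaging: use Corollary~\ref{GTE} to express $\overline{W}_{ABCD}$ (hypersurface $W$-tractor, which is conformally invariant on $\Sigma$) in terms of $W^\top_{ABCD}|_\Sigma$ plus fundamental-form tractors $L$, $F$, $K$ and the divergence-type tractors $T_{ABC}$, $V_{AB}$. Contracting $\overline{W}$ (or a suitable quadratic scalar invariant thereof) appropriately against $\IIo$-built tractors and integrating, the $X$-dependent terms drop by tractor-projection identities, the $F$- and $K$-terms reproduce the $\IIo\csdot\bar P\csdot\IIo$, $\bar J K$ and $\IIo\csdot\bar\Delta\IIo$ contributions (the $\bar\Delta$ arising from $\hdb$-squared acting on $K$ inside $T_{ABC}$), the $W^\top_{\hat n ab\hat n}$ and $C^\top_{\hat n(ab)}$ pieces produce the $\IIo\csdot C_n^\top$ and $H$-curvature terms, and the cubic-in-$\IIo$ terms $6H\IIo^3 + 6H\IIo\csdot\IIIo$ come from the $L_{A[C}L_{D]B}$ quadratic-in-fundamental-form pieces expanded via \eqref{Gamma}. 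One then checks that the leftover, non-tractorial remainder is $\int_\Sigma\bar\nabla\csdot(\cdots)=0$.

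The main obstacle I expect is purely computational bookkeeping: matching the specific rational coefficients $-\tfrac12, 6, -4, \tfrac72, 6, 6$ in \eqref{Samaritan} against the $d\to 5$ limit of the ratios of $(d-2),(d-3),(d-4),(d-5),(d-7)$ appearing in the fifth fundamental form and in Corollary~\ref{GTE}, while simultaneously keeping track of which Riemannian integration-by-parts identities on the closed $\Sigma^4$ are being used to put everything in the stated form. A secondary subtlety is verifying that the "obstruction becomes an invariant at the critical dimension" mechanism genuinely applies here — i.e.\ that the pole of the fifth fundamental form at $d=5$ is simple and that its residue, after contraction with $\IIo$ and integration, has no further anomaly; this requires that the lower fundamental forms ($\IIo$, Fialkow, $W_{\hat n ab\hat n}$, the fourth form \eqref{mynumberisugly}) contribute only through conformally covariant combinations or exact forms in $d=5$, which one confirms using their known transformation weights together with the tangentiality property of $\hat D^T$ noted after Theorem~\ref{GTF}.
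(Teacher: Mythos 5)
There is a genuine gap: your proposal never exhibits the manifestly conformally invariant weight $-4$ density that the integrand of \nn{Samaritan} must equal modulo exact terms, and the one candidate you do name fails on weight grounds. A fifth fundamental form is a section of $\odot^2_\circ T^*\Sigma[-2]$, so its contraction with $\IIo^{ab}\in\Gamma(\odot^2 T\Sigma[-3])$ has weight $-5$, whereas invariant integration over $\Sigma^4$ requires weight $-\bar d=-4$. The pole--residue mechanism you invoke is really attached to the \emph{fourth} fundamental form: it is formula \nn{mynumberisugly} that carries the $\tfrac1{d-5}$ pole, and $\IIo\csdot\IVo$ is the weight $-4$ pairing whose failure to exist at $d=5$ the energy \nn{Samaritan} regularizes. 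Beyond this misidentification, neither of your two strategies is actually executed: the infinitesimal-variation route reduces exactly to the coefficient matching you defer as ``bookkeeping'' (the specific coefficients $-\tfrac12,6,-4,\tfrac72,6,6$ are the entire content of the theorem, since they are what make the non-invariant variations assemble into a divergence), and the tractor route names no specific complete contraction and gives no mechanism forcing the remainder to be exact.

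The paper's proof supplies precisely the two ingredients missing here. The anchor invariant is the holographic quantity $\Kdd=\delta_R\, I\csdot\hd\, K^{\rm e}\in\Gamma(\ce \Sigma[-4])$, manifestly invariant by construction and computed explicitly at $d=5$ in Equation \nn{staggering}. The second ingredient is the Codazzi--Mainardi consequence
$$
\bar\nabla^c W^\top_{c(ab)\hat n}=\bar\Delta\IIo_{ab}-\tfrac{d-1}{d-2}\,\bar\nabla_{(a}\bar\nabla\csdot\IIo_{b)\circ}-(d-1)\,\IIo_{(a}\csdot\bar P_{b)\circ}-\bar J\,\IIo_{ab}-\IIo^{cd}\bar W_{cabd}\,,
$$
whose left-hand side is conformally invariant precisely when $d=5$; contracting with $\IIo^{ab}$ converts the non-invariant second-derivative and $\IIo\csdot\bar P\csdot\IIo$ terms of \nn{staggering} into the combination in \nn{Samaritan}, up to total divergences and terms that are already manifestly invariant ($\IIIo^2$, $K^2$, $\IIo^{ad}\IIo^{bc}\bar W_{abcd}$, etc.). To repair your argument you would need either to carry out the variation computation in full or to identify such an anchor invariant and the $d=5$ identity above; as written, the proposal is a plan rather than a proof.
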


\smallskip

Theorem~\ref{GTF} and its Corollary~\ref{GTE} concern the relationship between hypersurface objects and their bulk counterparts. It is also interesting to study how extensions of hypersurface quantities to the bulk vary in directions transverse to the hypersurface. For this one would like to have a theory of conformally invariant normal operators. This has been developed in~\cite{GPt}. For example, if $f \in C^\infty M$ is some extension of $\bar f\in C^\infty \Sigma$, then a conformally invariant second order normal operator is given by
$$
\delta_R^{(2)} f =
\bar{\Delta}\bar f-
(d-3) \big( \hat n^a \hat n^b \nabla_a \nabla_b + H \nabla_{\hat n}\big)   
 f\big|_\Sigma\in \Gamma(\ce \Sigma[-2])\,.
$$
In Lemma~\ref{ho-robin}, in preparation for our study of conformal fundamental forms, we apply the theory of~\cite{GPt} to 
trace-free symmetric tensors.

\bigskip

We now turn to the second main tool: holography. Given an embedded hypersurface $\Sigma\hookrightarrow M$, the
holographic approach is to set up a PDE problem on the ambient
conformal manifold $M$ whose solution is determined sufficiently
accurately by the data $\Sigma\hookrightarrow M$ so that it encodes
 information about hypersurface invariants. Indeed, the
idea is that the PDE solution determines the ambient geometry, at least
to some order, so that  ambient invariants determine 
hypersurface invariants. 

For the construction of extrinsic conformal hypersurface invariants
one begins with the data of a conformal embedding $\Sigma\hookrightarrow(M,\cc)$ 
and seeks  a PDE problem that, given this data,  determines a distinguished metric (again, to some order) in the conformal class $\cc$. 
In~\cite{WillB,Will1}, it was shown that a suitable problem is the one of finding an asymptotic singular Yamabe metric. Uniqueness of solutions to the singular Yamabe problem on round~\cite{Loewner} and asymptotically hyperbolic structures~\cite{Aviles, Maz,ACF} is well understood and the appearance of hypersurface invariants as the obstruction to smooth solutions was first observed in~\cite{ACF}.
Detailed definitions are given in Section~\ref{herecomethI}. The key idea is to find a metric $g^o$ on $M\backslash\Sigma$ whose scalar curvature obeys
\begin{equation}\label{Scoy}
\Sc^{g^o}=-d(d+1) + {\mathcal O}(s^d)\, ,
\end{equation}
where
$$
g^o=s^{-2} g
$$
for some $g\in \cc$ and $s$ a defining function for $\Sigma$. The pair $(g,s)$ determines a distinguished conformal density $\sigma=[g;s]=[e^{2\omega} g;e^{\omega}s]\in \Gamma(\ce M[1])$ (again see Section~\ref{c-sec} for a detailed definition of a weight $w$ conformal density bundle~$\ce M[w]$) from which conformal hypersurface invariants can be efficiently extracted
using tractor calculus methods. 
This program is executed in the series of papers~\cite{WillB,Will1,Will2,hypersurface_old}.

Examining Equation~\nn{Scoy}, it is evident that the amount of extrinsic data encoded by an asymptotic singular Yamabe metric grows as the dimension increases.
On the other hand, the complexity of the extrinsic curvature quantities determined this way escalates rapidly with increasing dimension. 
This can be simplified and captured using 
the conformally invariant higher order normal derivative operators discussed above.

In contrast to an asymptotic singular Yamabe metric, which always exists for generic embedding data, the existence of an asymptotic Poincar\'e--Einstein metric restricts the allowed conformal embeddings. 
This motivates our search for further
conformally invariant obstructions to the existence of Poincar\'e--Einstein metrics. Indeed,
given a defining function $s$ for $\Sigma$, the standard approach, following~\cite{FG}, is to use $s$ as a coordinate 
in a collar neighborhood $I\times \Sigma$, with $I\ni s$, of the boundary and 
then attempt to solve 
 the asymptotic Poincar\'e--Einstein condition~\nn{aPE} 
via a
``Fefferman--Graham'' expansion
$$
g=\bar g + s h_1 + \cdots + s^{d-2} h_{d-2} + {\mathcal O}(s^{d-1})\, .
$$
Here, $\bar g$, $h_1,\ldots, h_{d-2}$ are symmetric rank two tensors on $M$ that are in the kernel of ${\mathcal L}_{\frac{\partial}{\partial s}}$ and~$\iota_{\frac{\partial}{\partial s}} h_k=0$. The tensor $h_1|_\Sigma$   is the second fundamental form of the Riemannian embedding $\Sigma\hookrightarrow (M,g)$ and there always exists a choice of $g\in\cc$ such that this embedding is minimal and~$h_1|_\Sigma$ is  the trace-free second fundamental form. 
A general conformal embedding is not umbilic but, as discussed above, does determine an asymptotic singular Yamabe metric $g^o=g/s^2$. Thus, asking whether the higher tensors $h_k$ with $k\in \{2,\cdots,d-2\}$ are compatible with the Poincar\'e--Einstein condition
gives another perspective for why
 the trace-free second fundamental form $\IIo$ is the first in a sequence of~$d-2$ 
conformally invariant 
tensors characterizing  
the local obstruction to a conformal embedding being asymptotically Poincar\'e--Einstein. 
This sequence of fundamental forms is provided by the  jets of the Hessian expression in Equation~\ref{Rug}; see Section~\ref{can-ext-IIo}.

Our main result establishes that conformal fundamental forms  are the obstruction to a conformal embedding admitting an asymptotically Poincar\'e--Einstein metric. However, the detailed picture here is both rich and subtle. For example, in $d=5$  dimensions the fourth fundamental form displayed in Equation~\nn{mynumberisugly} is  ill-defined. However, for umbilic embeddings, the hypersurface invariant
\begin{equation}
\label{mynumberisveryugly}
C_{\hat  n(ab)}^\top  +H W_{\hat n ab \hat n} 
\end{equation}
is both conformally invariant and has transverse order three. We call a transverse order $n-1$ hypersurface invariant a {\it conditional fundamental form} iff it is an $n$th conformal fundamental form on embeddings  for which some lower transverse order fundamental form vanishes.
This suggests two generalizations of  umbilicity:

\begin{definition}\label{SLAM}
 We say that a conformal hypersurface embedding $\Sigma \hookrightarrow (M^d,\cc)$ is \textit{hyperumbilic} if, 
 for  each and every $n\in \{2,\ldots, \lceil\frac{d+1}{2} \rceil\}$, an $n$th fundamental form vanishes.
\end{definition}

\medskip 

\noindent
Lemma~\ref{Sparky}) establishes that the above and following definitions are well-defined.
As an application,  it follows from Theorem~\ref{GTF}
that $\hd^T_A\stackrel\Sigma =  \hdb_A$ for hyperumbilic embeddings  in dimensions~$d\geq 4$.

\begin{definition}\label{DUNK}
We say that a conformal hypersurface embedding $\Sigma \hookrightarrow (M^d, \cc)$ is \textit{\"uberumbilic} if the embedding is hyperumbilic 
and,
 for  each and every $n\in \{ \lceil\frac{d+3}{2} \rceil,\ldots, d-1\}$, an $n$th conditional fundamental form vanishes.
\end{definition}

The above definitions concern the second through $(d-1)$th fundamental forms. At the weight where one would expect a $d$th fundamental form, 
an obstruction to the Poincar\'e--Einstein problem of a different nature is known to appear, namely the  symmetric, trace-free,
conformally invariant  Fefferman--Graham tensor~\cite{FG}. This tensor is intrinsic to the boundary conformal geometry~$(\Sigma,\cc_\Sigma)$ and must necessarily vanish for $\cc|_{M\backslash\Sigma}$ to contain a smooth Poincar\'e--Einstein metric. The appearance of  conditional fundamental forms for $n\geq \lceil\frac{d+3}{2} \rceil$
in Definition~\ref{DUNK} is a feature of the theory of normal operators of~\cite{GPt}, see in particular Lemma~\ref{tr-deg}.

\smallskip
Our main result solves Problem~\ref{P} in terms  of Definition~\ref{DUNK}:
\begin{theorem} \label{uber-PE}
A conformal class of metrics $\cc$ admits
 an asymptotic Poincar\'e--Einstein metric 
 iff the conformal embedding~$\Sigma \hookrightarrow (M^d, \cc)$  is  \"uberumbilic.
\end{theorem}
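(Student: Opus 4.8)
The plan is to prove Theorem~\ref{uber-PE} by connecting the Poincar\'e--Einstein condition to the jets of the curvature-adjusted Hessian~\nn{Rug}, which is where the conformal fundamental forms were extracted in Section~\ref{can-ext-IIo}. First I would fix a defining function $s$ and work with the singular Yamabe metric $g^o = s^{-2}g$ associated to the asymptotic solution of~\nn{Scoy}; this choice is legitimate because such a metric always exists for generic embedding data and, crucially, it pins down a distinguished conformal density $\sigma = [g;s]$ so that $\mathring P^{g^o}$ is encoded tractorially. The observation underlying~\nn{Rug} is that $s\,\mathring P^{g^o} = (\nabla^2 s + s\Rho)_\circ$ extends smoothly to $\Sigma$ where it equals $\IIo$, so the asymptotic Poincar\'e--Einstein condition $\mathring P^{g^o} = \mathcal O(s^{d-3})$ is equivalent to the transverse jets of $(\nabla^2 s + s\Rho)_\circ$ vanishing along $\Sigma$ up to order $d-3$ (equivalently, the Fefferman--Graham tensors $h_k|_\Sigma$ for $k=1,\dots,d-2$ being compatible with tracelessness).

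The core of the argument is then the identification, carried out in Section~\ref{can-ext-IIo}, of the successive transverse derivatives $\nabla_{\hat n}^{\,k}(\nabla^2 s + s\Rho)_\circ\big|_\Sigma$ with the $(k+1)$th conformal fundamental forms --- or, once lower forms vanish, the conditional fundamental forms in the sense of Definition~\ref{DUNK}. The key subtlety I would flag is precisely the one the introduction highlights: the naive $n$th fundamental form built this way has a pole in the dimension (e.g.~the $\frac1{d-5}$ in~\nn{mynumberisugly}), and when the relevant lower fundamental forms already vanish the residue becomes a genuine, pole-free conformally invariant tensor of transverse order $n-1$ --- this is where the theory of normal operators of~\cite{GPt} (invoked via Lemma~\ref{ho-robin} and Lemma~\ref{tr-deg}) is essential to show that the obstruction at each order is exactly captured by a conditional fundamental form, not something of transverse order too low to matter. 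So the proof splits into two regimes: for $n \leq \lceil\tfrac{d+1}{2}\rceil$ the obstruction is an honest $n$th conformal fundamental form (hyperumbilicity, Definition~\ref{SLAM}); for $\lceil\tfrac{d+3}{2}\rceil \leq n \leq d-1$ the obstruction only makes sense conditionally, which forces the "\"uberumbilic" layering of Definition~\ref{DUNK}.

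For the forward implication (Poincar\'e--Einstein $\Rightarrow$ \"uberumbilic), I would argue inductively on transverse order: assuming $\mathring P^{g^o} = \mathcal O(s^{d-3})$, umbilicity $\IIo = 0$ follows from the known result~\cite{Goal,LeBrun}, and then each higher transverse jet of $(\nabla^2 s + s\Rho)_\circ$ must vanish, which by the Section~\ref{can-ext-IIo} identification is exactly the vanishing of the corresponding (conditional) fundamental form. For the converse, I would run the same induction backwards: \"uberumbilicity kills the obstruction at each order, so the Fefferman--Graham-type expansion $g = \bar g + s h_1 + \dots + s^{d-2}h_{d-2} + \mathcal O(s^{d-1})$ can be solved compatibly with the trace-free Einstein condition order by order, yielding a metric $g^o = s^{-2}g$ with $\mathring P^{g^o} = \mathcal O(s^{d-3})$. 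The main obstacle I anticipate is bookkeeping the dimension-dependent residues: one must verify that at each weight the conditional fundamental form is genuinely conformally invariant (well-definedness, the content of Lemma~\ref{Sparky}) and that no spurious lower-transverse-order obstruction has been overlooked --- in other words, that the list in Definitions~\ref{SLAM} and~\ref{DUNK} is complete and that the $d$th-weight obstruction is precisely the intrinsic Fefferman--Graham tensor and hence outside the scope of Problem~\ref{P} as stated. Controlling these residues uniformly, using the Gau\ss--Thomas formula (Theorem~\ref{GTF}) to reorganize bulk transverse derivatives into hypersurface tractor data, is the technical heart of the proof.
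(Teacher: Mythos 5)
Your starting point coincides with the paper's: the asymptotic Poincar\'e--Einstein condition is tested against $\sigma\hh\mathring P^{g^o}=q^*(\nabla I_\sigma)=\IIo^{\rm e}=(\nabla^2 s+s\Rho)_\circ$, so the theorem reduces to showing that \"uberumbilicity is equivalent to $\nabla_{\hat n}^{\ell}\,\IIo^{\rm e}\big|_\Sigma=0$ for all $0\leq\ell\leq d-3$. The forward implication is fine. But your converse does not close, because of a mismatch you never address: the (conditional) fundamental forms are \emph{hypersurface} tensors, obtained from $\otop\circ\nabla_{\hat n}^{\ell}\,\IIo^{\rm e}$ (plus lower transverse order terms), i.e.\ they see only the trace-free tangential projection of each transverse jet. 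The Poincar\'e--Einstein condition requires the vanishing of the \emph{full} ambient tensor $\nabla_{\hat n}^{\ell}\,\IIo^{\rm e}\big|_\Sigma$, including the components $\hat n^a\nabla_{\hat n}^{\ell}\,\IIo^{\rm e}_{ab}$, $\hat n^a\hat n^b\nabla_{\hat n}^{\ell}\,\IIo^{\rm e}_{ab}$ and the pure-trace tangential part. Recovering these from the vanishing of the fundamental forms is the actual technical heart of the proof: it uses the singular Yamabe normalization of $\sigma$, specifically the identity $I^AP_{AB}=\tfrac{K}{d-2}X_B$, equivalently $\hat n^a\IIo^{\rm e}_{ab}=\tfrac{s\,\nabla\cdot\IIo^{\rm e}}{d-1}$, fed into an induction on $\ell$ (Lemma~\ref{dnIIoe0} and Corollary~\ref{dnIIoe0-2}). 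Without this step, \"uberumbilicity only gives $\otop(\nabla_{\hat n}^{\ell}\,\IIo^{\rm e})\eqSig 0$, which is strictly weaker than $\mathring P^{g^o}=\mathcal O(s^{d-3})$.

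Two further remarks. First, your proposed converse via an order-by-order Fefferman--Graham expansion is a genuinely different route from the paper's, and it carries an unproved burden: you would need to show that the obstruction to solving the trace-free Einstein condition at each order of the expansion is \emph{exactly} the corresponding (conditional) fundamental form, and that the residual gauge freedom in the $h_k$ is compatible with the singular Yamabe normalization. The paper avoids this entirely by fixing the canonical asymptotic unit defining density $\sigma$ once and for all and testing the condition $\IIo^{\rm e}=\sigma^{d-2}T$ directly. Second, you locate the difficulty in ``controlling dimension-dependent residues via the Gau\ss--Thomas formula,'' but Theorem~\ref{GTF} plays no role in the proof of Theorem~\ref{uber-PE}; the well-definedness of the conditional fundamental forms (Lemma~\ref{Sparky}, Proposition~\ref{cond-ffs}) is an input to the \emph{statement} of \"uberumbilicity, not a step in the equivalence itself.
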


\subsection{Riemannian Geometry}\label{RG}

In this section we outline 
our notations for Riemannian  structures. When their meaning is obvious, these conventions will be extended to other bundles. Throughout the article, $M$ will denote a $d$-manifold, which we equip either with a Riemannian metric $g$ or a conformal class of metrics $\cc = [g]=[\Omega^2 g]$ (where $0 < \Omega\in C^\infty M$). 
Unless otherwise indicated, all structures will be assumed smooth. Also, while we work exclusively in Riemannian signature, many results carry over to the pseudo-Riemannian setting upon obvious adjustments.
To avoid confusion, the exterior derivative is denoted by~$ \ext$. For a given metric~$g$, the Riemannian curvature tensor~$R$ for the associated 
Levi-Civita connection~$\nabla$ (we adorn~$\nabla$ and other metric dependent operators/tensors with a superscript $g$ when this dependence is not clear) is defined by
$$
R(x,y)z=(\nabla_x\nabla_y - \nabla_y \nabla_x) z -\nabla_{[x,y]} z\, , 
$$
where $[\bdot,\bdot]$ is the Lie bracket and $x,y,z\in 
 \Gamma(TM)$
are (smooth) vector fields.

We will use an abstract index notation  in which a section of $TM$ is denoted~$v^a$ and, for example~$R(x,y)z$ becomes $x^c y^d R_{cd}{}^a{}_b z^b$. This has the advantage of directly yielding formula applicable to the case where local coordinate choices have been made. In this notation, the isomorphism between tangent and cotangent bundle $TM$ and $T^*M$ given by the metric tensor~$g_{ab}$ allows us to ``raise and lower indices'' and compute traces  in the standard way. A Kronecker delta~$\delta^a_b$ is used to denote the identity endomorphism of the tangent bundle. A dot will be used for abstract index contractions, so that $v^a w_a=:v\hh\hh\csdot\hh\hh w$, $\nabla_a v^a =: \nabla\hh \csdot \hh\hh v$ and $v\hh\csdot\hh v :=v^2$, this notation will also be applied to higher rank symmetric tensors in an obvious way. 
Another succinct contraction notation that we use is to write, for example,~$x^c y^d R_{cd}{}^a{}_b z^b=R_{xy}{}^a{}_z$ or even~$x^{ab}u_a w_b = x_{uw}$ and~$x_{uu}=(u\cdot)^2 x$.
We employ  bracket notations for groups of indices that are totally skew symmetric or symmetric. For example, $x^{ab}=x^{[ab]}+x^{(ab)}$ where $x^{[ab]}:=\frac12 (x^{ab}-x^{ba})$ and $x^{(ab)}:=\frac12 (x^{ab}+x^{ba})$. 
Also $u\wedge v$ denotes $\frac12 (u^a v^b - u^b v^a)$.
A $\circ$ will be used to denote the trace-free part of group of indices, so that
$
x_{(ab)\circ}:=x_{(ab)}-\frac1d g_{ab} x^c{}_c
$. We use standard $\wedge$ and~$\odot$ notations for antisymmetric and symmetric tensor products of vector bundles and~$\odot_\circ$ for  a further decomposition to the trace-free part. 
Given a tensor $v^{abc\cdots e}$  we will use the shorthand ${\mathcal E}(v)$ to denote $v^{abc\cdots e}  t$ where $t$ is an unspecified tensor or tensor-valued operator and $a,b,c,\ldots,e$ are any open indices.
Also,
given an endomorphism acting on sections of  a vector bundle,
 we define its  zeroth 
power  to be the identity.

When specializing to conformal geometries, the decomposition
$$
R_{abcd}=W_{abcd} + g_{ac} P_{bd} 
- g_{bc} P_{ad} 
+ g_{bd} P_{ac} 
- g_{ad} P_{bc}\, , 
$$
where $W_{abcd}$ is the trace-free {\it Weyl tensor}, is particularly useful. In the above, $P_{ab}$ is the symmetric {\it Schouten tensor}, which is related to the {\it Ricci tensor} $Ric_{bd}=R_{ab}{}^a{}_{d}$ in dimensions $d\geq 3$ via the trace adjustment
$$
Ric_{ab}=(d-2)P_{ab}+g_{ab} J\, ,\quad J=P_a^a\, .
$$
We mostly deal with the case $d>2$, but in two dimensions we define $J=Sc/2$ where $Sc:=Ric_a^a$ is the scalar curvature in any dimension. The covariant curl of the Schouten tensor gives the {\it Cotton tensor}
$$
C_{abc}=\nabla_a P_{bc}-\nabla_b P_{ac}\, .
$$

\section{Tractor Calculus}

A conformal structure $(M,\cc)$ is the data of a smooth $d$-manifold
$M$ equipped with a conformal (equivalence) class $\cc$ of Riemannian
metrics meaning that if $\cc$ is a non-empty  set of Riemannian metrics on
$M$, and if $g\in \cc$, then $\hat{g}\in \cc$ iff $\hat{g}=\Omega^2 g$
for some smooth positive function $\Omega\in C^\infty M$. Such
$(M,\cc)$ have, in general, no distinguished connection on the tangent
bundle but there is a canonical connection on a related bundle of rank
$d+2$. This conformal {\em tractor connection} and surrounding tractor
calculus provide the analog for conformal geometry of the
Levi-Civita connection and its (``Ricci'') calculus on Riemannian
manifolds \cite{BEG,CurryGo}.  To treat this carefully we require the notion of a
conformal density.

\subsection{Conformal Densities}\label{c-sec}

A weight $w$ conformal density is a section of a certain line bundle~$\ce M[w]$ defined as follows: A conformal manifold $(M,\cc)$ may be
viewed as a ray subbundle ${\mathcal G}\in \odot^2T^*M$ with fiber at
$P\in M$ given by all possible values of $g_P$ for $g\in\cc$. The
bundle ${\mathcal G}$ is a principal bundle with structure group
${\mathbb{R}}_+$. The density bundle $\ce M[w]$ is the line bundle associated to ${\mathcal G}$ via the irreducible representation ${\mathbb R}_+\in
t\mapsto t^{-\frac w2}\in {\rm End}({\mathbb R})$, for each $w\in
{\mathbb R}$.  Then 
sections of $\Gamma(\ce M[w])$ are equivalent to functions $F$ of
${\mathcal G}$ with the homogeneity property
$$
 F(P,\Omega^2 g)=\Omega^w  F(P,g)\, .
 $$
 Alternately, one may view these as equivalence classes $F = [g;f]=
 [\Omega^2 g; \Omega^w f]$ where $f \in C^\infty M$.  Note that the
 section space of the bundle $\ce M[0]$ is isomorphic to the space of
 smooth functions~$C^\infty M$; we often make this identification
 without comment.  Importantly, a nowhere vanishing weight $w=1$
 conformal density $ \tau=[g;t]$ canonically determines a metric
 $g_\tau\in \cc$ by choosing an equivalence class representative
 $\tau= [g_\tau;1]$. We refer to such a density as a {\it true scale}
 and the corresponding metric in~$\cc$ as a choice of scale.
We will
 also employ the term {\it scale} for weight $w=1$ conformal densities
 whose zero locus is a hypersurface $\Sigma$, since these determine a
 metric on $M\setminus \Sigma$.  
As a point of
 notation, given a vector bundle $\mathcal{B}$, we use
 $\mathcal{B}[w]$ as shorthand for~$\mathcal{B} \otimes \ce M[w]$.
Note that $\bg:=\tau^2g_\tau \in \odot^2T^* M [2]$ is independent of the choice of true scale $\tau$ and is called the {\it conformal metric}, which will be used for index raising and lowering in the obvious way. 
Note that on densities we get a connection 
on densities of weight $w$ by acting with the operator $\tau^{w} \circ \ext \circ \tau^{-w}$. In fact this is the Levi-Civita connection (of the bundle $\ce M[w])$ for the metric $g^\tau$ determined by the scale $\tau$.

A conformal structure also determines log-density bundles, $\mathcal{F}M[w]$; see~\cite{GW} for details. A weight-$w$ \textit{log-density} $\lambda \in \Gamma(\mathcal{F}M[w])$ is an equivalence class of (metric,function) pairs $\lambda = [g; \ell] = [\Omega^2 g; \ell + w \log\Omega]$. If~$\varphi= [g; f] $ is
a positive, weight~$w$ density $0<\varphi \in \Gamma(\ce M[w])$, then $\log \varphi = [g; \log f]$ is a weight-$w$ log density. Also if $\lambda$ is a weight $w$ log density, then for any $r\in {\mathbb R}$, the product $r\lambda$ is a weight $rw$ log density;  in many contexts it thus suffices to focus on weight~$1$ log densities.
\color{black}

\subsection{Tractor Bundles}\label{t-sec}

As noted above, on a general conformal manifold $(M,\cc)$ with $d \geq 3$, there is a canonical
connection on a closely related natural vector bundle of rank $d+2$.
This bundle~$\ct M$  is termed the {\it standard tractor bundle}.  A choice of metric~$g\in \cc$
determines an
isomorphism
\begin{equation}\label{iso}
\ct M \stackrel g\cong 
 \ce M[1]\oplus TM[-1]\oplus \ce M[-1]\, .
 \end{equation}
 This use of the metric to provide this isomorphism  is called a {\em choice
   of splitting}.  An abstract index notation (using capital latin letters) is often used for sections of the
 tractor bundle so, for example,~$T^A\in \Gamma(\ct
 M)$ denotes a standard tractor.  Given $g\in \cc$ and the corresponding
 splitting as in the above display, we can write $T^A\stackrel g =
 (t^+,t^a,t^-)$; we often employ a column vector notation for this and
 will be careful to display the metric dependence unless
 the context makes it clear.  Choosing a conformally related metric
 $\Omega^2 g$, with $\Omega$ a positive smooth function, the
 isomorphism changes according to 
$$
T^A\stackrel{\Omega^2 g}= (t^+,t^a + \Upsilon^a t^+, t^- - \Upsilon\csdot t -\frac 12 |\Upsilon|^2_g \, t^+)\, ,
$$
where $\Upsilon = \Omega^{-1} \ext \Omega$. General 
 tensor products of $\ct M$ with itself are often
denoted~$\ct^\Phi M$, where~$\Phi$ labels the particular tensor
product under consideration. Further tensoring with a weight $w$
conformal density bundle gives (weighted) tractor bundles $\ct^\Phi
M[w]:=\ct^\Phi M\otimes \ce M[w]$. For brevity we often 
also call these
{\it tractor bundles}.  The first non-zero component, as determined by the isomorphism~\nn{iso} above, of a tractor
$T^A\stackrel g = (t^+,t^a,t^-)$ is called its {\it projecting part},
 because it is necessarily conformally
invariant; this is denoted $q^*(T)$. The same notion extends to general tractor
tensors~\cite{BEG} (although  the projecting part may no longer be an irreducible element of the tensor bundle of~$M$). We term $q^*$ the {\it extraction
  map}.

\medskip
The tractor bundles $\ct M[1]$ and $\odot^2\ct^* M$ are endowed
with distinguished canonical sections~$X^A$ and~$h_{AB}$, respectively
termed the {\it canonical tractor} and {\it tractor metric}. In any
choice of splitting these are given by, 
respectively, 
$$
X^A\stackrel{g}=\begin{pmatrix}0\\0\\1
\end{pmatrix}\, ,\qquad
h_{AB}\stackrel{g}=\begin{pmatrix}
0&0&1\\
0& \bg_{ab}&0\\
1&0&0
\end{pmatrix}\, ,
$$
and the inverse tractor metric is denoted by $h^{AB}$. The tractor metric provides an isomorphism between $\ct M$ and its dual $\ct^* M \stackrel{g}{=} \ce
 M[1]\oplus T^*M[1]\oplus \ce M[-1]$.
Note that the canonical tractor is null, {\it i.e.}, $X^2:=h(X,X)=0$.

\smallskip 

The canonical tractor gives the complex
$$
\ce M[-1]
\stackrel {X^A}\longrightarrow
\ct M
\stackrel{X_{\!A}\,}
\longrightarrow
\ce M[1]\, .
$$
In the above, the first map is multiplication by the canonical  tractor. The second map denotes  contraction by $X_A:=h_{AB}X^B$. Here and henceforth, we employ the tractor metric to raise and lower tractor indices and to identify the tractor bundle with its dual. The above sequence of maps underlies the isomorphism of Equation~\nn{iso}. For later use, note that we will employ a tilde notation $\widetilde V^A :=[V^A]=[V^A+X^A U]$, where $U\in \Gamma(\ce M[-1])$ for elements of the cokernel of the map $X^A$ above. Note that, because the skew product of two canonical tractors is zero, then~$X^{[A} \widetilde V^{B]}:=
X^{[A}V^{B]}$ defines a section of $\wedge^2 \ct M[1]$. 
We will  employ a tilde notation for higher tensor analogs of  this cokernel. We will even use analogs of the notation $X^{[A} \widetilde V^{B]}$ involving sums of tilded terms when it can be proved that such combinations define tractor tensors.

The tractor bundle has a canonical {\it tractor connection}
$$\nabla^\ct:\Gamma(\ct M) \to \Gamma(\ct M \otimes T^*M)$$ 
that may be viewed as the conformal analog of the Levi--Civita connection. In the choice of splitting determined by $g\in\cc$, this acts according to
\begin{equation}\label{Iconnect}
\nabla_a^\ct T^B \stackrel g= \begin{pmatrix}\nabla_a t^+- t_a\\ \nabla_a t^b + \delta_{a}^b t^- + \Rho_{a}^b t^+
\\
\nabla_a t^- - P_{ab} t^b
\end{pmatrix}\, .
\end{equation}

\medskip
Recall from above that a 
true scale $\tau\in \Gamma(\ce M[1])$ canonically
defines a metric $g_\tau\in \cc$. Combining $\tau$, the canonical tractor, and connection, we may then form the (one-form)-valued standard tractor
$$
Z_a^A:=\tau \nabla_a^\ct \big(\tau^{-1}X^B\big) \stackrel {g_\tau}= \begin{pmatrix}0\\  \delta_{a}^b 
\\
0\end{pmatrix}\, .
$$
In terms of the tractor connection
coupled to the Levi--Civita connection (of the metric $g^\tau$)---which we henceforth denote simply~$\nabla$---the above display reads $Z=\nabla X$.

Together with the tractor metric, for each true scale $\tau$,  we then can uniquely define the weight~$-1$ tractor $Y^A$ by the decomposition
$$
h^{AB}=X^A Y^B + \bm g^{ab}
 Z_a^AZ_b^B + X^B Y^A\, .
$$
We refer to the triplet $(X,Z,Y)$ above as {\it injecting operators} (or {\it injectors}) because  given a standard tractor in a choice of scale
$T^A\stackrel {g_\tau} = (t^+,t^a,t^-)$, we may write
$$
T^A = t^+ Y^A + t^a Z_a^A + t^- X^A\, .
$$
For later use, let us record how the tractor connection acts on the injectors
\begin{align*}
\nabla _b X^A &= \:\:Z^A_b\, ,\\
 \nabla_b Z^A_a\: \:\:  &= -P_{ab}X^A  -   \bm g_{ab} Y^A\, , \\
\nabla_b  Y^A &= \:\: P^a_b Z^A_a\, .  
\end{align*}
This is simply a rewriting of Equation~\nn{Iconnect}; in the above the tensors $\Rho:=\Rho^{g^\tau}$ is determined by the scale $\tau$. 

By way of notation, given a tractor such as $T^{ABC\cdots}{}_{EFG\cdots}$, then a  quantity such as
$T^{AbC\cdots}{}_{EFg\cdots}$ denotes~$Z^b_B Z_g^G\, T^{ABC\cdots}{}_{EFG\cdots} $ so that in the running example above $T^a = t^a$. Moreover quantities like $X_A T^{ABC\cdots}$ will be denoted $X^{+BC}$ and $T_{EFG} Y^E$ as $T^-{}_{FG}$.
The next section develops machinery
 for efficiently passing between Riemannian quantities and their tractor counterparts.

\newcommand{\nabt}[1]{\stackrel{#1} \nabla \hspace{-1mm}{}^\ct}

\subsection{The Thomas-$D$ Operator}\label{D-sec}

The Levi--Civita-coupled tractor connection~$\nabla$ can be used to define an invariant, second order operator on tractors that plays the {\it r\^ole} of a conformal gradient.
This is termed the 
{\it Thomas-$D$ operator} which is the map 
$$\Gamma(\ct^\Phi M[w])\to \Gamma(\ct M\otimes \ct^\Phi M[w-1])$$
defined acting on $T\in \Gamma(\ct^\Phi M[w])$ by, in a scale $\tau$,
\begin{equation}
\label{vaccinateThomas}
D^A T := (d+2w-2)w\hh Y^A T + (d+2w-2)\bm g^{ab}Z^{A}_a \nabla_b T - X^A (\Delta+w J^{g_\tau})T\, .
\end{equation}
In the above $\Delta := \bm g^{ab} \nabla_a \hh \nabla_b$ and the injectors $(Y,Z)$ are determined in terms of $\tau$ as explained above.

\smallskip

The Levi--Civita connection can also act on log densities: 
for $\lambda\in  \Gamma({\mathcal F}M[w])$, we define, in the scale $\tau$,
$$
\nabla \lambda :=\ext (\lambda - w \log \tau )
\in \Gamma(T^*M)\, .
$$ 
Here we used that the combination $\lambda - w \log \tau$ defines an element of $C^\infty M$ in the obvious way. Note that 
$\nabla^{\tau'} \lambda$ in a scale $\tau'=\Omega\tau$ equals $\nabla^{\tau} \lambda - w \Upsilon$, where $\Upsilon = \ext \log \Omega$.

Acting on weight $w$ log densities we define
$$
D:\Gamma({\mathcal F}[w])\to \Gamma(\ct M[-1])
$$
acting on $\lambda\in \Gamma({\mathcal F}[w])$ by (in the scale $\tau$)
\begin{equation}
\label{vaccinateThomas-log}
D^A \lambda := (d-2)w\hh Y^A \lambda + (d-2)\bm g^{ab}Z^{A}_a \nabla_b \lambda
 - X^A (\Delta\lambda+w J^{g_\tau})\, .
\end{equation}

\begin{lemma}\label{Thomas-D}
$D^A T$ and $D^A \lambda$, as respectively given in Equations~\nn{vaccinateThomas} and~\nn{vaccinateThomas-log}, are defined independently of the choice of true scale $\tau$.
\end{lemma}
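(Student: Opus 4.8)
The plan is to verify directly that the defining formulae for $D^A T$ and $D^A \lambda$ are invariant under the change of true scale $\tau \mapsto \tau' = \Omega \tau$, with $\Omega \in C^\infty M$ positive. First I would recall the transformation laws of all the ingredients. Under this change, the injectors transform as $Y^A \mapsto Y^A - \Upsilon^a Z^A_a - \tfrac12 |\Upsilon|^2_{g_\tau} X^A$, $Z^A_a \mapsto Z^A_a + \Upsilon_a X^A$, and $X^A$ is unchanged; here $\Upsilon = \Omega^{-1}\ext\Omega = \ext\log\Omega$. For a weight $w$ tractor $T$, the coupled Levi-Civita connection changes by the standard Riemannian conformal rescaling rule, and the Schouten trace transforms as $J^{g_{\tau'}} = \Omega^{-2}\bigl(J^{g_\tau} - \nabla\csdot\Upsilon - \tfrac{d-2}{2}|\Upsilon|^2_{g_\tau}\bigr)$, while $\Delta$ picks up the usual first-order correction terms $\Delta \mapsto \Omega^{-2}(\Delta + (d-2+2w)\Upsilon\csdot\nabla + \cdots)$ when acting on weight $w$ objects. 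For the log-density case, the key extra fact is $\nabla^{\tau'}\lambda = \nabla^\tau\lambda - w\Upsilon$, already recorded in the excerpt.

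The main step is then a bookkeeping computation: substitute the primed injectors, primed connection, and primed $J$ into the right-hand side of~\nn{vaccinateThomas} (respectively~\nn{vaccinateThomas-log}), expand, and collect terms according to which injector ($Y^A$, $Z^A_a$, or $X^A$) they multiply. The coefficient of $Y^A$ is manifestly unchanged. The terms proportional to $Z^A_a$ that arise from rescaling $Y^A$ and from the $\Upsilon$-correction to $\nabla_b T$ must cancel; this cancellation is precisely the reason for the relative coefficient $(d+2w-2)$ between the $Y$ and $Z$ terms. Finally the coefficient of $X^A$ receives contributions from the $-\tfrac12|\Upsilon|^2$ piece of $Y^A \mapsto Y^A + \cdots$, from the $\Upsilon_a X^A$ piece of $Z^A_a \mapsto Z^A_a + \cdots$ acting against $\nabla_b T$, from the correction terms in $\Delta$, and from the correction in $wJ$; these must sum to zero. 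I would organize this by grouping the $|\Upsilon|^2$-type terms separately from the $\Upsilon\csdot\nabla$-type terms, since each group must cancel on its own once the $Z^A_a$-coefficient identity is used.

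The hard part — or at least the only place where care is genuinely required — is matching the $X^A$-coefficient: one must correctly track the weight-dependence of the correction to $\Delta$ acting on a weighted tractor (not just on a function), and remember that the tractor $T$ itself, as an abstract section, does not transform, only the splitting operators do, so all the scale-dependence is funneled through $(Y,Z)$, through $\nabla$, and through $J$. For the log-density version the algebra is lighter because there is no $Y^A\lambda$ quadratic-in-$\Upsilon$ complication beyond what $\nabla\lambda \mapsto \nabla\lambda - w\Upsilon$ produces, and the coefficient is $(d-2)$ rather than $(d+2w-2)$ precisely because a log-density behaves, for connection purposes, like a weight-$0$ density with an affine shift. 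One efficient route to present is: reduce to an infinitesimal check (take $\Omega = e^{\epsilon\omega}$ and work to first order in $\epsilon$), since scale changes form a group and first-order invariance under all $\omega$ plus the cocycle property of the transformation laws already forces full invariance; this shortens the quadratic-in-$\Upsilon$ bookkeeping considerably. Either way, no conceptual obstacle arises — the content of the lemma is exactly that the coefficients $(d+2w-2)$, $(d+2w-2)$, $-1$ (resp.\ $(d-2)$, $(d-2)$, $-1$) and the $+wJ$ term are tuned to make the scale-dependence cancel, and the proof is the explicit demonstration of that tuning.
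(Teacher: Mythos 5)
Your proposal is correct and takes essentially the same route the paper relies on: the paper's own proof simply cites the explicit scale-transformation computation in \cite{GOpet}, which is precisely the direct verification you describe (transform the injectors, the coupled connection, $\Delta$ and $J$ under $\tau\mapsto\Omega\tau$, and check that the relative coefficients $(d+2w-2)$ and the $+wJ$ term make the $Z$- and $X$-slot corrections cancel, with the log-density case following \emph{mutatis mutandis} from $\nabla^{\tau'}\lambda=\nabla^{\tau}\lambda-w\Upsilon$). Your stated transformation laws for $Y^A$, $Z^A_a$, $X^A$ and $J$ are the correct ones, and the reduction to an infinitesimal check is a legitimate shortcut.
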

\begin{proof}
The proof of this lemma is contained in \cite[Section 2]{GOpet} for the case $D^AT$ where $T$ is a tractor; the log-density statement can established using the same proof {\it mutatis mutandis}.\end{proof}
\begin{remark}
Given the above lemma, we will sometimes write abbreviated formul\ae\ such as 
$$
D^A T = \begin{pmatrix}
w(d+2w-2) T\\[1mm]
(d+2w-2) \nabla T\\[1mm]
-(\Delta+ w J) T
\end{pmatrix}
$$
for the action of the Thomas-$D$ operator on weighted tractors. 
We also omit the decoration $\tau$ when the dependence on it drops out.
\end{remark}

\noindent
A remarkable property of the Thomas-$D$ operator is that it is null~\cite{GOpar}, meaning 
$$D^A \circ D_A = 0\, .$$ 

Because the Thomas-$D$ operator is second order, it does not obey a Leibniz rule. However, upon a slight modification, the failure of the Leibniz property is rather mild. For that we first make a definition:
\begin{definition}
Suppose that $w \neq 1 - \tfrac{d}{2}$. The {\it hatted-$D$ operator}
$$\hd^A : \Gamma(\ct^\Phi M[w]) \rightarrow \Gamma(\ct M\otimes \ct^\Phi M[w-1])$$
is defined acting on $T\in\Gamma(\ct^\Phi M[w]) $  by
$$\hd^A T := \frac{1}{d+2w-2} \hh D^A T\,.$$
\end{definition}

\noindent
The following result of~\cite{Taronna} is proved in~\cite{Will1}.
\begin{proposition}\label{leib-failure}
Let $T_i \in \Gamma(\ce^\Phi M[w_i])$ for $i = 1,2$, and $h_i := d+2w_i$, $h_{12} := d+2w_1 + 2w_2 - 2$ with $h_i \neq 0 \neq h_{12}$. Then,
$$\hd^A (T_1 T_2) - (\hd^A T_1) T_2 - T_1 (\hd^A T_2) = -\frac{2}{h_{12}} X^A (\hd_B T_1) (\hd^B T_2)\,. $$
\end{proposition}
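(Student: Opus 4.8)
The plan is to prove Proposition~\ref{leib-failure} by direct computation in a choice of scale $\tau$, using the explicit formula~\nn{vaccinateThomas} for $D^A$ and then dividing through by the appropriate weight factors to pass to $\hd^A$. First I would fix a scale $\tau$ and write $D^A T_i = h_i w_i Y^A T_i + h_i Z^A_a \nabla^a T_i - X^A(\Delta + w_i J) T_i$ (with $h_i = d + 2w_i$, noting the weight of $T_i$ is $w_i$ so $d + 2w_i - 2 = h_i - 2$; I will need to be careful about this shift, since the operator $D^A$ as written in~\nn{vaccinateThomas} has coefficient $d+2w-2$, whereas the statement uses $h_i = d+2w_i$ — so in fact it is cleaner to set $h_i := d + 2w_i - 2$ matching the hatted-$D$ normalization, and I would double-check the paper's indexing here; I'll assume the statement's $h_i$ is $d+2w_i$ and the operator coefficient on a weight-$w_i$ tractor is $h_i - 2$, reconciling signs at the end). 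The product $T_1 T_2$ has weight $w_1 + w_2$, so $D^A(T_1 T_2)$ carries coefficient $d + 2(w_1+w_2) - 2 = h_{12}$ on its $Y$ and $Z$ slots.

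The key computational steps are: (i) expand $D^A(T_1 T_2)$ using the Leibniz rule for $\nabla$ (which \emph{does} hold, since $\nabla$ is the Levi-Civita-coupled tractor connection) on the $Z^A_a \nabla^a(T_1 T_2)$ term and on the $\Delta(T_1 T_2) = (\Delta T_1) T_2 + 2 \nabla_a T_1 \nabla^a T_2 + T_1 \Delta T_2$ term; (ii) expand the weight of the product as $w_1 + w_2$ in the $Y^A$ term and in the $wJ$ term; (iii) collect terms proportional to $Y^A T_1 T_2$, $Z^A_a(\nabla^a T_1) T_2$, $Z^A_a T_1 \nabla^a T_2$, and $X^A(\cdots)$, and compare with $(\hd^A T_1) T_2 + T_1 (\hd^A T_2)$. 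Crucially, $\hd^A T_i = \frac{1}{h_i - 2} D^A T_i$ has clean coefficients: $\hd^A T_i = w_i Y^A T_i + Z^A_a \nabla^a T_i - \frac{1}{h_i-2} X^A(\Delta + w_i J)T_i$. The $Y^A$ and $Z^A$ slots of $\hd^A(T_1T_2) - (\hd^A T_1)T_2 - T_1(\hd^A T_2)$ then cancel identically (since $w_1 + w_2$, and $\nabla^a T_1 \cdot T_2 + T_1 \nabla^a T_2$, distribute exactly), leaving only an $X^A$-term. The remaining $X^A$ coefficient will be a combination of $\Delta$, $J$, and the cross term $-\frac{2}{h_{12}} \nabla_a T_1 \nabla^a T_2$ coming from $\Delta(T_1 T_2)$, plus leftover mismatches from the differing normalizations $\frac{1}{h_{12}-2}$ versus $\frac{1}{h_i - 2}$ acting on $(\Delta + w_i J)T_i$.

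The main obstacle — really the only nontrivial point — is showing that all the $X^A$-terms \emph{except} the cross term $-\frac{2}{h_{12}} X^A \nabla_a T_1 \nabla^a T_2$ cancel, and then recognizing that $Z^A_a \nabla^a T_i$ has projecting/middle-slot behaviour such that $\nabla_a T_1 \nabla^a T_2 = (\hd_B T_1)(\hd^B T_2)$ modulo the $X$ and $Y$ slots of the tractor metric contraction. For this I would use the decomposition $h^{AB} = X^A Y^B + X^B Y^A + \bm g^{ab} Z^A_a Z^B_b$: contracting $\hd_B T_1$ with $\hd^B T_2$, the $Z$-$Z$ piece gives exactly $\bm g^{ab}(\nabla_a T_1)(\nabla_b T_2)$, while the $X$-$Y$ cross pieces pair a $Y$ slot (weight term $w_i T_i$) with an $X$ slot ($-\frac{1}{h_j-2}(\Delta + w_j J)T_j$); I must verify these cross contributions either vanish or get absorbed — this is where the condition $w \neq 1 - d/2$ (i.e. $h_i \neq 0$) and the hypotheses $h_i, h_{12} \neq 0$ are used, and where I expect the bookkeeping to be delicate. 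A clean alternative, if the direct collection gets unwieldy, is to invoke that $D^A$ is known to be scale-independent (Lemma~\ref{Thomas-D}) and that the defect $\hd^A(T_1T_2) - (\hd^A T_1)T_2 - T_1(\hd^A T_2)$ is therefore a conformally invariant bilinear differential operator of the right weight and tractor type landing in $\Gamma(\ct M \otimes \ct^\Phi M[w_1+w_2-1])$; a short representation-theoretic / leading-symbol argument pins it down to a multiple of $X^A (\hd_B T_1)(\hd^B T_2)$, and evaluating the leading ($X^A \Delta$-free) symbol in one scale fixes the constant $-2/h_{12}$. I would present the direct calculation as the primary proof and mention the invariance argument as the conceptual reason it works, citing~\cite{Taronna,Will1} as the source.
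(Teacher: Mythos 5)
Your plan is correct, and the computation does close exactly as you outline. Note first that the paper itself supplies no proof of this proposition --- it simply records that the result of~\cite{Taronna} is proved in~\cite{Will1} --- so the direct scale computation you propose is precisely what is required, and it is the standard route. On the one point you flag as delicate: the $Y$--$X$ cross terms in the contraction $(\hd_B T_1)(\hd^B T_2)$ do \emph{not} vanish, they are exactly what absorbs the normalization mismatch. Concretely, using $h^{AB}=X^AY^B+Y^AX^B+\bm g^{ab}Z^A_aZ^B_b$ one finds in a scale
\begin{equation*}
(\hd_B T_1)(\hd^B T_2)=\nabla_a T_1\,\nabla^a T_2-\tfrac{w_1}{d+2w_2-2}\,T_1(\Delta+w_2J)T_2-\tfrac{w_2}{d+2w_1-2}\,T_2(\Delta+w_1J)T_1\,,
\end{equation*}
while the left-hand side of the proposition is purely an $X^A$ term whose coefficient is $-\tfrac{1}{h_{12}}\big(\Delta+(w_1+w_2)J\big)(T_1T_2)+\tfrac{1}{d+2w_1-2}\big((\Delta+w_1J)T_1\big)T_2+\tfrac{1}{d+2w_2-2}T_1(\Delta+w_2J)T_2$; expanding $\Delta(T_1T_2)$ by Leibniz, the two sides match term by term via the elementary identity $\tfrac{1}{h_{12}}-\tfrac{1}{d+2w_1-2}=\tfrac{-2w_2}{h_{12}(d+2w_1-2)}$ (and its $1\leftrightarrow2$ partner), applied separately to the $\Delta T_i$ and $JT_i$ coefficients. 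Two small corrections to your bookkeeping: with the paper's $h_i:=d+2w_i$, the excluded weight $w_i=1-\tfrac d2$ corresponds to $h_i=2$, not $h_i=0$, and the denominators actually arising are $h_i-2=d+2w_i-2$ and $h_{12}=d+2(w_1+w_2)-2$; the hypotheses genuinely used are therefore $h_i\neq2$ and $h_{12}\neq0$ (the stated $h_i\neq0$ appears to be an imprecision of the source). Your fallback invariance-plus-symbol argument is a reasonable conceptual gloss but is not needed once the direct collection is carried out.
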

\noindent
Another important identity is
\begin{equation}\label{DX}
\hd^A X^B = h^{AB}\, .
\end{equation}
Using that
$$
X^A D_A T=w(d+2w-2) T\, ,
$$
for any weight $w$ tractor, Equation~\nn{DX} is then an easy corollary of Proposition~\ref{leib-failure}.

\medskip

It is useful to introduce the {\it weight operator} $\underline{w}$
defined on sections $T \in \Gamma(\ce M[w])$ by $\underline{w} T = wT$. 
It is easy to check that $\underline{w} $ is  a derivation acting on tractors. Acting on log-densities~$\lambda \in \Gamma(\mathcal{F}M[w])$, we define (see~\cite{GW})
$$
\underline{w} \lambda=w\, .
$$
Moreover, for $T \in \Gamma(\ct^\Phi M[w])$ and $\lambda \in \Gamma(\mathcal{F}M[w'])$, we extend  $\underline{w}$ (and similarly $\nabla$) to act on the product $\lambda T\in \Gamma( \ct^\Phi M[w] \otimes \mathcal{F}M[w'])$ by the Leibniz property:
$$\underline{w} (\lambda T) 
= \lambda \underline{w} T + T \underline{w} \lambda
=w \lambda T + w' T  \in \Gamma( \ct^\Phi M[w] \, \oplus\, \ct^\Phi M[w] \otimes \mathcal{F}M[w'])\,.$$

We can now write a universal formula for the Thomas-$D$ operator on any tensor product bundle of density and log-density bundles:
$$D^A := Y^A (d+2\underline{w}-2)\underline{w}\hh + \bm g^{ab}Z^{A}_a \nabla_b (d+2\underline{w}-2) - X^A (\Delta+J\underline{w})\, .$$

Using these notations, acting on weight $w\neq 1-\frac d2$ tractors, the operator $\hd$ can be written in terms of $D$ via $\hat D= D \circ \frac{1}{d+2\underline w-2}$, where we define the operator $\frac{1}{d+2\underline w-k}$ on a weight $w'$  log-density~$\lambda$ (for $k\neq d)$ by
\begin{equation}\label{theonebeforethenextone}
\frac1{d+2\underline w-k} 
\,  \lambda := \frac{\lambda}{d-k}-\frac{2w'}{(d-k)^2}\in \Gamma\big(
\mathcal{F}M[w']\oplus\ce M[0]\big)\, ,
\end{equation}
and
\begin{equation}\label{w-expansion}\frac1{d+2\underline w-k} \, (\lambda T) 
:= T \frac1{d+2\underline w+2w-k} \hh
\lambda   \in \Gamma( \ct^\Phi M[w] \, \oplus\, \ct^\Phi M[w] \otimes \mathcal{F}M[w'])\,.
\end{equation}
\begin{remark}
It is even possible to define
an operator $\frac{1}{\alpha \underline{w}+\beta}$ acting on sections of 
$\otimes^k \mathcal{F}M[w']$
by appealing to the formal power series expression
\begin{align*} \label{w-expansion}
\frac{1}{\alpha \underline{w} + \beta} = \frac{1}{\beta} - \frac{\alpha}{\beta^2} \, \underline{w} + \frac{\alpha^2}{\beta^3}\,  \underline{w}^2 + \ldots\, ,
\end{align*}
since only finitely many terms are ever needed.
\end{remark}

In general we prefer to avoid working with sections of Whitney sum bundles such as those arising when the inverse weight operator above acts on log-density bundles. Happily, the following result shows that composing the Thomas-$D$ operator with the action of 
$\frac{1}{d+2\underline w-2}$ on log densities results in a tractor.
\begin{lemma}
For any $0\neq \beta \in \mathbb{R}$,
$$\Big(D\circ \frac{1}{\alpha \underline w + \beta}\Big)\lambda =\frac1{\beta}
D \lambda
\in \Gamma(\ct M[-1])\, . 
$$
\end{lemma}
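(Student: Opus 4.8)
The plan is to evaluate the left-hand side directly from the definitions, exploiting that $\underline w$ acts on a log-density as multiplication by a \emph{constant}. First I would write the operator $\tfrac{1}{\alpha\underline w+\beta}$ as the formal power series $\tfrac1\beta-\tfrac{\alpha}{\beta^{2}}\underline w+\tfrac{\alpha^{2}}{\beta^{3}}\underline w^{2}-\cdots$ and apply it to the weight-$w$ log-density $\lambda$. Since $\underline w\lambda=w$ is a weight-zero density, it is annihilated by any further application of $\underline w$, so the series terminates after two terms:
$$
\tfrac{1}{\alpha\underline w+\beta}\,\lambda \;=\; \tfrac1\beta\,\lambda-\tfrac{\alpha w}{\beta^{2}}\ \in\ \Gamma\big(\mathcal F M[w]\oplus\ce M[0]\big)\,.
$$

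Next I would apply $D^{A}$ to each summand. On the $\ce M[0]$ summand $-\tfrac{\alpha w}{\beta^{2}}$, which is a constant function, formula~\nn{vaccinateThomas} collapses to $D^{A}=(d-2)\bm g^{ab}Z^{A}_{a}\nabla_{b}-X^{A}\Delta$ (the $Y$-slot drops out because the weight is zero), and $\nabla_{b}$ and $\Delta$ both annihilate a constant; hence this summand contributes nothing. On the log-density summand $\tfrac1\beta\lambda$ I would use that rescaling $\lambda$ by $\tfrac1\beta$ rescales, by the common factor $\tfrac1\beta$, its weight, its representative $\lambda-w\log\tau$ in any scale $\tau$, and hence $\nabla_{b}\lambda$ and $\Delta\lambda$ as well; so each of the three slots of~\nn{vaccinateThomas-log} is multiplied by $\tfrac1\beta$, giving $D^{A}\big(\tfrac1\beta\lambda\big)=\tfrac1\beta D^{A}\lambda$. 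Summing the two contributions yields $D^{A}\big(\tfrac{1}{\alpha\underline w+\beta}\lambda\big)=\tfrac1\beta D^{A}\lambda$, which is a section of $\ct M[-1]$ because $D^{A}\lambda$ is, by Lemma~\ref{Thomas-D} (in the log-density case, established {\it mutatis mutandis}).

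The one point that needs care is the bookkeeping: one must check that $D^{A}$ acts termwise on a section of the Whitney sum $\Gamma(\mathcal F M[w]\oplus\ce M[0])$ in the intended way, and --- exactly as in the proof of Lemma~\ref{Thomas-D} --- that the $\log\tau$-terms entering the scale-$\tau$ representatives of the $Y$- and $Z$-slots cancel, so that the output is genuinely independent of the scale. I expect this weight/scale bookkeeping to be the main obstacle; it is nonetheless routine, and once the definitions are unwound the identity is an immediate substitution.
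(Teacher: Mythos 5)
Your strategy is the paper's own: expand $\tfrac{1}{\alpha \underline{w}+\beta}\lambda$ via Equation~\nn{theonebeforethenextone} into the log-density $\tfrac1\beta\lambda$ plus the constant $-\tfrac{\alpha w}{\beta^2}$, observe that $D$ annihilates the constant, and verify $D(\tfrac1\beta\lambda)=\tfrac1\beta D\lambda$ slot by slot in a choice of scale. The conclusion is correct, but the one step that carries the whole content of the lemma --- ``each of the three slots of~\nn{vaccinateThomas-log} is multiplied by $\tfrac1\beta$'' --- is not justified by the formula as you cite it. As printed, the $Y$-slot of~\nn{vaccinateThomas-log} reads $(d-2)w\hh Y^A\lambda$, i.e.\ it contains \emph{both} the weight and the log-density itself. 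Since, as you correctly note, replacing $\lambda$ by $\tfrac1\beta\lambda$ rescales the weight to $\tfrac{w}{\beta}$ \emph{and} the scale-$\tau$ representative by $\tfrac1\beta$, that slot would scale by $\tfrac1{\beta^2}$, not $\tfrac1\beta$, and the claimed identity would fail.

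The resolution is that the top slot must be read as the constant $(d-2)w$ with no factor of $\lambda$. This is what the paper's universal formula $D^A=Y^A(d+2\underline{w}-2)\underline{w}+\cdots$ produces, since $\underline{w}\lambda=w$ is a weight-zero constant and hence $(d+2\underline{w}-2)\underline{w}\,\lambda=(d-2)w$; it is also forced by scale-independence (a top slot $(d-2)w\ell$ would shift by $(d-2)w^2\log\Omega$ under change of scale, and nothing in the other slots can absorb a $Y^A$ term). With that reading the $Y$-slot of $D(\tfrac1\beta\lambda)$ is $(d-2)\tfrac{w}{\beta}Y^A$, which is indeed $\tfrac1\beta$ times that of $D\lambda$, and your computation closes; the $Z$- and $X$-slots scale linearly exactly as you argue. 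The remaining bookkeeping you flag is already covered: $\tfrac1\beta\lambda$ is itself a (weight-$\tfrac{w}{\beta}$) log-density, so $D(\tfrac1\beta\lambda)$ is a well-defined section of $\ct M[-1]$ by Lemma~\ref{Thomas-D}, and $D$ acts termwise on the Whitney sum by construction, so no further cancellation check is needed.
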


\begin{proof}
This follows from a straightforward computation in a choice of scale, or otherwise can be seen as a direct consequence of Equation~\nn{theonebeforethenextone}. 
\end{proof}

There is an analog of Proposition~\nn{leib-failure} for the algebra of Thomas-$D$ operators and log densities; this is another case where the need for Whitney bundles is obviated.

\begin{lemma}\label{Dlog} 
Let $\lambda$ be any log density and let $w \neq \frac{2-d}{2}$. Then,
$$\hd \circ \lambda- \lambda \circ \hd : \Gamma(\ct^\Phi M[w]) \rightarrow \Gamma(\ct M \otimes \ct^\Phi M[w-1])\,,$$
 and moreover 
$$\hd \circ \lambda- \lambda \circ  \hd =  
(\hd \lambda) 
- \tfrac{2}{d+2 \underline{w}}  X\hh (\hd \lambda)\hh \csdot \hh \hd\,.$$
\end{lemma}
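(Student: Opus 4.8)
The plan is to prove Lemma~\ref{Dlog} by reducing the statement about log densities to the already-established Leibniz-failure identity for ordinary densities (Proposition~\ref{leib-failure}), together with the universal formula for $D$ in terms of the weight operator, gradient and Laplacian. The key observation is that although a log density $\lambda\in\Gamma(\mathcal F M[w])$ is not an honest density, its Levi--Civita derivative $\nabla\lambda$ is an honest one-form (of weight zero) and, crucially, the combination $e^{s\lambda}$ for a formal parameter $s$ behaves like a weight-$sw$ density whose $s$-linear part recovers $\lambda$. So first I would work in a fixed scale $\tau$ and write out both $\hd\circ\lambda$ and $\lambda\circ\hd$ acting on $T\in\Gamma(\ct^\Phi M[w])$ using the explicit formula
$$
D^A = Y^A(d+2\underline w-2)\underline w + \bm g^{ab}Z^A_a\nabla_b(d+2\underline w-2) - X^A(\Delta + J\underline w)\, ,
$$
being careful that multiplication by $\lambda$ shifts neither the tractor weight $w$ nor the density weight, so the $\underline w$ eigenvalue seen by $D$ acting on $\lambda T$ is unchanged; only $\nabla$ and $\Delta$ pick up $\lambda$-dependence via $\nabla(\lambda T)=(\nabla\lambda)T+\lambda\nabla T$ and $\Delta(\lambda T)=(\Delta\lambda)T + 2(\nabla\lambda)\cdot\nabla T + \lambda\,\Delta T$.

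Next I would assemble the commutator. The $Y^A$-term cancels identically (no derivative hits $\lambda$). The $Z$-term contributes $(d+2w-2)\bm g^{ab}Z^A_a(\nabla_b\lambda)T$, which is exactly $(d+2w-2)(\hd\lambda)\,T$ up to the normalization in the definition of $\hd^A\lambda$ via Equation~\nn{vaccinateThomas-log}; here I must track that $\hd\lambda$ is defined with the $(d-2)$ normalization appropriate to log densities (weight-$w$ log density, so $h=d-2$ rather than $d+2w-2$), and reconcile this with the $(d+2w-2)$ appearing because $D$ acts on the weight-$w$ object $\lambda T$. The $X^A$-term contributes $-X^A\big((\Delta\lambda)T + 2(\nabla\lambda)\cdot\nabla T\big)$; the $(\Delta\lambda)T$ piece combines with the $Z$-term to complete $(\hd\lambda)$, while the $2(\nabla\lambda)\cdot\nabla T$ piece is rewritten, using $Z^A_a\nabla_b$ applied to $T$, as $-\tfrac{2}{d+2\underline w}X^A(\hd_B\lambda)(\hd^B T)$ — this is where the factor $\tfrac{2}{d+2\underline w}$ and the requirement $w\neq\tfrac{2-d}{2}$ enter, since converting $\nabla T$ to $\hd T$ costs a factor $\tfrac{1}{d+2w-2}$ and the contraction $(\nabla\lambda)\cdot\nabla T = (\hd_B\lambda)(\nabla^B T)$ needs the $Z$-projection identity. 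Finally I would invoke Lemma~\ref{Thomas-D} (scale-independence of $D$ on both tractors and log densities) to conclude the resulting expression, being manifestly a section of $\Gamma(\ct M\otimes\ct^\Phi M[w-1])$, is in fact scale-independent, which also disposes of the first (type) assertion in the statement.

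The main obstacle I anticipate is bookkeeping the normalizations consistently: the definition of $\hd^A$ on a weight-$w$ tractor carries $\tfrac{1}{d+2w-2}$, whereas the log-density operator $D^A\lambda$ in Equation~\nn{vaccinateThomas-log} is normalized with $(d-2)$, and when $\lambda$ multiplies a weight-$w$ tractor $T$ the composite object $\lambda T$ has weight $w$ so $D$ sees $d+2w-2$. Getting the cross-terms to collapse precisely into $(\hd\lambda) - \tfrac{2}{d+2\underline w}X(\hd\lambda)\cdot\hd$ requires that these factors align exactly, and I expect the cleanest route is to first verify the identity in an arbitrary scale by brute force on the three graded components, then appeal to scale-invariance rather than trying to argue invariantly from the start. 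A useful sanity check along the way is to apply both sides to $T=X^B$, where $\hd^A X^B = h^{AB}$ by Equation~\nn{DX} and $\hd_B\lambda$ contracted appropriately against $X^B$ vanishes (as $X^2=0$ and $X\cdot\hd\lambda$ is proportional to $\underline w\lambda=w$), giving a low-dimensional consistency test of the coefficients.
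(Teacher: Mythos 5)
Your strategy coincides with the paper's: the paper's entire proof of Lemma~\ref{Dlog} is ``an easy computation in a choice of scale,'' and your plan --- expand $D$ via the universal formula in a fixed scale $\tau$, apply the Leibniz rule to $\nabla$ and $\Delta$, match the three injector slots, and invoke scale-independence at the end --- is exactly that computation. One intermediate assertion needs repair before you execute it: the claim that ``the $Y^A$-term cancels identically (no derivative hits $\lambda$)'' is only true for a weight-zero log density. Under the paper's conventions $\underline{w}(\lambda T)=w\,\lambda T+w'\,T$ for $\lambda\in\Gamma(\mathcal F M[w'])$, so the weight operator does hit $\lambda$ and the $Y$-slot of the commutator works out to $w'\,Y^A T$ --- which is precisely the top slot of $(\hd\lambda)$ on the right-hand side (compare $X_A\hd^A\lambda=\underline{w}\lambda=w'$), not zero. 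The same oversight infects your proposed sanity check on $T=X^B$, where you assert that $X\csdot\hd\lambda$ ``vanishes'' while simultaneously noting it is proportional to $\underline{w}\lambda$; for $w'\neq 0$ it does not vanish. Since your stated fallback is to brute-force all three graded components, this would surface and correct itself, but the term-matching as sketched (``the $(\Delta\lambda)T$ piece combines with the $Z$-term to complete $(\hd\lambda)$'') is incomplete without the $Y$-piece accounted for.
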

\begin{proof} 
This is an easy computation in a choice of scale.
\end{proof}

\medskip

A particularly  useful application of the Thomas-$D$ operator is  the construction of tractors from 
 Riemannian tensors.

\subsection{Tractor Insertion}
Given a conformal density-valued, weight  $w+r$, trace-free, rank $r$ Riemannian tensor $t_{abc\cdots}$, there exists a canonical map (see for example~\cite{GOadv}) to insert $t_{abc\cdots}$ into a tractor $T^{ABC\cdots}$ with the same symmetries as $t_{abc\cdots}$. This map is denoted by $q$; there are three particular instances relevant for our computations:

\begin{lemma} \label{insertions} 
Let $g\in \cc$.
\begin{enumerate}[(i)]
\item
Given $v_a\in \Gamma(T^*M[w+1])$ where $w\neq 1-d$, then
\begin{align*}
q(v_a) &=: V^A \in\Gamma(\ct M[w]) \\[1mm]
&\stackrel{g}= \begin{pmatrix}
0 \\
v^a \\
- \frac{\nabla \cdot v}{d + w-1} \end{pmatrix}\, ,
\end{align*}
where
$$
D_A V^A = X_A V^A =0\, .
$$
\item
Given $t_{ab}\in \Gamma(\odot_\circ^2T^*M[w+2])$ where $w\neq -d,1-d$, then
\begin{align*}
q(t_{ab}) &=: T^{AB} \in\Gamma(\odot^2_\circ \ct M[w]) \\[1mm]
&\stackrel{g}= 
\begin{pmatrix}
0 & 0 & 0 \\
0 &t^{ab} & -\frac{\nabla \cdot t^a}{d+w} \\
0 & -\frac{\nabla \cdot t^b}{d+w} & \frac{\nabla \cdot \nabla \cdot t + (d+w) P_{ab} t^{ab}}{(d+w)(d+w-1)} \end{pmatrix}\, ,
\end{align*}
where
$$
D_A T^{AB} =0= X_A T^{AB} \, .
$$
\item
Given $t_{abcd}\in \Gamma(\otimes^4 T^*M[w+4])$, where $w\neq 1-d,2-d$, such that $t$ has the algebraic symmetries of the Riemann tensor and is trace-free, then
$$
q(t_{abcd}) =: T^{ABCD} \in\Gamma(\otimes^4\ct M[w])
$$
where
\begin{align*}
{T}^{abcd} &\stackrel g= t^{abcd} \\
{T}^{abc-} &\stackrel g=- \frac{\nabla_d t^{dabc}}{d+w-1} \\
{T}^{a-b-} &\stackrel g= \frac{\nabla_a \nabla_c t^{abcd} + (d+w-1) P_{ac} t^{abcd}}{(d+w-1)(d+w-2)}\, ,
\end{align*}
where $T$ also has the algebraic symmetries of the Riemann tensor and 
$$
D_A T^{ABCD} = X_A T^{ABCD} = 0 = h_{AC} T^{ABCD}\, .
$$
\end{enumerate}
\end{lemma}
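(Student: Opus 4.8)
The plan is to verify Lemma~\ref{insertions} by direct computation in a fixed choice of scale $g\in\cc$, establishing in each of the three cases (i)--(iii) two things: first, that the displayed formula for the splitting components is conformally invariant (so that $q$ is well-defined independent of the representative $g$), and second, that the resulting tractor is annihilated by $X_A$, by $D_A$, and (in case (iii)) is trace-free and has Riemann symmetries. The key observation that makes this efficient is that all three maps can be realized via the Thomas-$D$ operator: up to normalization, $q(t)$ is obtained by applying (a suitable power/combination of) $\hd$ to the density $t$ viewed as a section of a weighted tractor bundle, and then projecting onto the appropriate irreducible tractor component. Since $\hd$ is conformally invariant and null ($\hd^A\circ\hd_A=0$), both the invariance and the $D_A$-annihilation follow essentially for free, and the explicit component formulae come from expanding $\hd$ using Equation~\nn{vaccinateThomas} together with the connection action on the injectors $X$, $Z$, $Y$ recorded above.

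Concretely, for (i) I would set $\sigma\in\Gamma(\ce M[w+1])$ and note that $\hd^A$ applied to a weight-$(w+1)$ density, after stripping the $Y^A$ and $X^A$ pieces, recovers $v^a=\nabla^a\sigma$ up to the projecting part; more directly, since we are given $v_a$ as a one-form rather than a gradient, I would simply posit $V^A\stackrel{g}{=}(0,v^a,-\tfrac{\nabla\cdot v}{d+w-1})$ and check conformal invariance by hand using $\Upsilon=\Omega^{-1}\ext\Omega$ and the tractor transformation law, together with the standard conformal transformation of $\nabla\cdot v$ for a weight-$(w+1)$ one-form; then $X_A V^A=0$ is immediate from the top slot vanishing, and $D_A V^A=0$ follows from the formula for $D$ acting on $V$ (the $Y$-component of $V$ is zero, so the weight term drops, and the divergence and Laplacian terms cancel against the bottom slot by construction of the normalization $d+w-1$). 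The constant $d+w-1$ is exactly what is forced by demanding $D_AV^A=0$, so this computation simultaneously derives and verifies the formula. Cases (ii) and (iii) are structurally identical but with more indices: the normalizations $d+w$, $(d+w)(d+w-1)$ in (ii) and $d+w-1$, $(d+w-1)(d+w-2)$ in (iii) are pinned down by iterating the same divergence-cancellation, and the trace-free/Riemann-symmetry claims in (iii) reduce to the corresponding algebraic properties of $t_{abcd}$ plus the observation that the $Z$-$Z$ contraction $h_{AC}T^{ABCD}$ picks out exactly $t^c{}_{c}{}^{bd}=0$ in the relevant slot while the $X$-involving slots contribute via divergences of traces of $t$, which vanish.

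The main obstacle I anticipate is bookkeeping rather than conceptual: in case (iii) one must carefully track how $X_A$, contracted into a rank-four tractor with Riemann symmetries, interacts with the pair symmetry and the first Bianchi identity, since $X_AT^{ABCD}=0$ is not one condition but must be checked to be compatible with (and in fact follows from, given trace-freeness and the Bianchi identity for $t$) the stated component formulae; likewise verifying $D_AT^{ABCD}=0$ requires the contracted second Bianchi identity for $t$ relating $\nabla_a\nabla_c t^{abcd}$ to curvature corrections, which is where the Schouten-tensor term $(d+w-1)P_{ac}t^{abcd}$ in the $T^{a-b-}$ slot originates. I would handle this by first doing case (i) in full, then (ii) by analogy pointing out only the new features (the second divergence and the $P_{ab}t^{ab}$ term), and finally (iii) citing \cite{GOadv} for the general existence of the map $q$ and restricting the hand-computation to confirming the three displayed slots and the four annihilation/symmetry conditions, flagging the Bianchi identities at the precise points they enter.
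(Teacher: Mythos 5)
Your plan coincides with the paper's proof in both strategy and scope: the paper also works in a fixed scale $g\in\cc$, treats only the rank-one case explicitly, and obtains the bottom slot by writing out the constraints $D_AV^A=0=X_AV^A$, deferring ranks two and four to \cite{GOadv} or to ``explicit computation.'' Your logical order is reversed (you posit the components and verify invariance plus the annihilation conditions, whereas the paper derives the components from the constraints), but as you observe the normalizations $d+w-1$, etc., are forced either way. One small advantage of your direct conformal-invariance check is that it automatically covers the exceptional weight $w=-\tfrac d2$, where the overall factor $d+2w$ renders the constraint $D_AV^A=0$ vacuous; the paper has to append a separate transformation argument for exactly this case.

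Two cautions. First, the opening claim that $D_A$-annihilation follows ``essentially for free'' from $\hd^A\circ\hd_A=0$ is only valid when the tractor is exact, i.e.\ of the form $\hd\sigma$; a general $v_a$ is not a gradient, and since you correctly abandon this in the concrete plan, the heuristic should simply be dropped. Second, and more substantively: in case (iii) the tensor $t_{abcd}$ is assumed only to have the \emph{algebraic} symmetries of the Riemann tensor and to be trace-free; it need not satisfy the differential (second) Bianchi identity, so that identity cannot be invoked when verifying $D_AT^{ABCD}=0$ or when explaining the $T^{a-b-}$ slot. The Schouten term $(d+w-1)P_{ac}t^{abcd}$ arises from the $P$-entries of the tractor connection (Equation~\nn{Iconnect}) and from commuting covariant derivatives on $t$, which produce algebraic curvature contractions of $t$ --- not from a Bianchi identity satisfied by $t$. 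If your verification of the bottom slots genuinely used $\nabla_{[e}t_{ab]cd}=0$, it would establish the lemma only for curvature-like tensors rather than for the stated class, which must include, e.g., the insertion of the trace-free Fialkow tensor and other non-curvature data used later in the paper.
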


\begin{proof}
The proofs of the above three results can either be given in a much more general setting (see~\cite{GOadv}), or by explicit computation whose intricacy increases in concordance with tensor rank. Here we give the lowest rank example.

The ``bottom slot'' $V^-$ of $q(v_a) := V^A\stackrel g=(v^+,v^a,v^-)$ can be computed by writing out  the constraint~$D_A V^A \stackrel g= 0$ for some $g\in \cc\hh$:
\begin{align*}
0 &= (d+2w-2)(w v^- + \nabla \cdot v + Jv^+ + dv^-) - (\Delta + (w-1)J)v^+ + 2 \nabla \cdot v + dv^- \\
&= (d+2w)(\nabla \cdot v + (d+w-1)v^-)\, ,
\end{align*}
where the second equality comes from the requirement that $X_A V^A = 0$. When $w\neq -\frac d2$, this  yields the quoted result. If $w=-\frac d2$, we need to verify that the result for $V^A$ given for a choice of~$g\in \cc$ defines a section of $\Gamma(\ct M[w])$. This is easily established by transforming the quoted result to a conformally related metric. 
\end{proof}

Note that the insertion operator $q$ is a right-inverse of the extraction map  $$q^*\hspace{-0.5mm}\circ q = \text{Id}\,,$$ but $q$ is \textit{not} a left-inverse of the extraction map, $$q \circ q^* \neq \text{Id}\,.$$ The above operator, acting on rank-$2$ tractors, is computed in the following lemma for use later. 

\begin{lemma} \label{D-free-tractor} 
Let the tractor $T \in \Gamma(\odot^2_\circ \ct M[w])$, where $w \neq 1 - \frac{d}{2},-\frac d2,-d,1-d$, obey  $$X_A T^{AB} = 0\:
\mbox{ and }\: q^*(T)\in \Gamma(\odot_\circ^2 T^*M[w+2])\, .$$
Then 
$$(q \circ q^*)(T)=
\tilde T\, ,
$$
where
$$
\tilde T_{AB}:=
 T_{AB} - 
\tfrac{2}{(d+w)(d+2w)} X_{(A} D^C T_{B)C} + \tfrac{1}{(d+w)(d+w-1)(d+2w)} X_A X_B D^C \hd^D T_{CD}\, .$$
\end{lemma}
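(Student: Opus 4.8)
The plan is to compute $(q\circ q^*)(T)$ directly in a choice of metric $g\in\cc$ and to verify that the formula for $\tilde T_{AB}$ reproduces this computation. The point of the lemma is that a tractor $T_{AB}$ with vanishing ``top slot'' $X_A T^{AB}=0$ is not in general in the image of the insertion map $q$, since $q(T)$ has a definite ``bottom slot'' fixed by the curl--divergence expression of Lemma~\ref{insertions}(ii), whereas the middle--middle slot of a general such $T$ need not match; the correction terms built from $X$ and the Thomas-$D$ operator precisely restore this compatibility while leaving the projecting part $q^*(T)=t_{ab}$ untouched.

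First I would write $T^{AB}\stackrel g=(t^{++},t^{+b},t^{ab},t^{+-},t^{-a},t^{--})$ in the splitting; the hypothesis $X_AT^{AB}=0$ kills the slots involving $Y$ once contracted with $X$, so $T^{AB}$ has the schematic form of Lemma~\ref{insertions}(ii) except that its $(-)$ slots $t^{-a}$ and $t^{--}$ are \emph{a priori} arbitrary rather than the canonical divergence expressions. I would then compute $D^CT_{BC}$ and $D^C\hd^D T_{CD}$ in the scale using Equation~\nn{vaccinateThomas}, extracting the $Z$-slot of the former and the $Y$-slot of the latter (the contractions with $X_{(A}$ in the correction terms mean only those particular slots of the Thomas-$D$ images contribute, since $X^2=0$ and $X\cdot Z=0$). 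The middle slot of $X_{(A}D^CT_{B)C}$ produces exactly $(d+2w)/2$ times the discrepancy between $t^{-b}$ and $-\nabla\cdot t^b/(d+w)$, so the first correction term fixes the $Z$-slot; then with that slot corrected, the second correction term — involving $D^C\hd^D T_{CD}$, whose $Y$-slot is the iterated divergence — fixes the remaining $X$-slot to the canonical value $\big(\nabla\cdot\nabla\cdot t+(d+w)P_{ab}t^{ab}\big)/\big((d+w)(d+w-1)\big)$. Since every correction term carries an explicit factor of $X$ and $q^*$ reads off the projecting ($t^{ab}$) slot, we get $q^*(\tilde T)=q^*(T)=t_{ab}$, hence $\tilde T=q(q^*(T))=(q\circ q^*)(T)$ by Lemma~\ref{insertions}(ii).

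The main obstacle will be the bookkeeping of the constant coefficients: one must track the weight shifts carefully ($T$ has weight $w$, $D^CT_{BC}$ weight $w-1$, $D^C\hd^DT_{CD}$ weight $w-2$, with the $\hd$ supplying the $1/(d+2\underline w-2)$ rescaling), and also the contribution of the $wJ$ and Schouten terms in the Thomas-$D$ formula when extracting the bottom slot. The hypotheses $w\neq 1-\frac d2,-\frac d2,-d,1-d$ are exactly the poles of the denominators $(d+2w-2)$, $(d+2w)$, $(d+w)$, $(d+w-1)$ that appear in these manipulations, so one should check along the way that each division is legitimate. Once the two correction slots are matched, the conclusion that $\tilde T$ is a genuine tractor (not merely a slotwise expression) is automatic since it is manifestly built by tractor operations $D$, $\hd$, $X$ from the tractor $T$.
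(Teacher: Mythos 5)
Your plan is correct in spirit but takes a genuinely different route from the paper's. The paper never opens up the slots: it observes that $(q\circ q^*)(T)$ is the unique trace-free symmetric tractor with projecting part $q^*(T)$ satisfying $X_A S^{AB}=0=\hd_A S^{AB}$ (this is the content of Lemma~\ref{insertions}(ii)), and then verifies these three conditions for $\tilde T$ scale-independently, by repeated use of the operator identity $\hd^A\circ X^B = X^B\hd^A + h^{AB} - \tfrac{2}{d+2w}X^A\hd^B$ of Equation~\nn{usemeoft}; the whole proof reduces to evaluating $\hd^A(X_A\hd^CT_{BC})$, $\hd^A(X_B\hd^CT_{AC})$ and $\hd^A(X_AX_B\hd^C\hd^DT_{CD})$. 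Your slotwise computation in a chosen scale is more elementary and makes transparent what the corrections are doing (repairing the $(a,-)$ and $(-,-)$ slots to the canonical divergence expressions of Lemma~\ref{insertions}(ii) while leaving $t^{ab}$ alone), but it is considerably heavier: expanding $D^CT_{BC}$ and $D^C\hd^DT_{CD}$ through the injectors produces many Schouten and $J$ terms, all of which must conspire to give exactly the curvature-adjusted double divergence in the bottom slot.

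Two points you must make explicit for the argument to close. First, your final inference ``$q^*(\tilde T)=q^*(T)$, hence $\tilde T=(q\circ q^*)(T)$'' is not valid on its own, since $q\circ q^*\neq\operatorname{Id}$; what establishes the lemma is that \emph{every} slot of $\tilde T$ matches the corresponding slot of $q(t_{ab})$, which is what the body of your plan actually sets out to do. Second, the correction $X_{(A}D^CT_{B)C}$ a priori contributes to the $(+,-)$ slot an amount proportional to $X^BD^CT_{BC}$, which would both spoil $X_A\tilde T^{AB}=0$ and contaminate the projecting part; you must therefore check that $X^BD^CT_{BC}=0$. This is true, but it is not a consequence of ``$X^2=0$ and $X\cdot Z=0$'' alone: it follows from the symmetry and tracelessness of $T$ together with $X\cdot T=0$ via Equation~\nn{usemeoft} (one finds $X^A\hd^BT_{AB}=\tfrac{2}{d+2w}X^B\hd^AT_{AB}$, forcing it to vanish for the allowed weights), which is precisely the first step of the paper's proof. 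With those two additions, and with the constant bookkeeping you already flag as the main labour (note your quoted coefficient $(d+2w)/2$ for the middle-slot discrepancy should come out as $(d+w)(d+2w)/2$ for the stated normalization to work), your approach goes through.
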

\begin{proof}
We will establish that there is a unique $\tilde T$ that
satisfies
$$\hd^A \tilde{T}_{AB} = X^A \tilde{T}_{AB} = 0 = h^{AB} \tilde{T}_{AB}\, ,$$
and obeys $q^*(\tilde T) = q^*(T)$ whenever $q^*(T)\in \Gamma(\odot_\circ^2 T^*M[w+2])$. This ensures that $(q\circ q^*)(T)=\tilde T$.
For that, we use the operator version of Proposition~\ref{leib-failure}, valid acting on tractors of weight $w\neq 1-d/2,-d/2$:
\begin{equation}
\label{usemeoft}
\hd^A \circ X^B = X^B \hd^A+  h^{AB} - \tfrac{2}{d+2w} X^A \hd^B\,.
\end{equation}

We first verify that $X \csdot \tilde{T} = 0$. Because $X^2 = 0 = X^A T_{AB}$, we simply need to check that~$X^A \hd^B T_{AB}$ vanishes. Applying Equation~\nn{usemeoft} we have that
\begin{align*}
X^A \hd^B T_{AB} &= \left(\hd^B X^A - h^{AB} + \tfrac{2}{d+2w} X^B \hd^A \right)T_{AB} \\
&= \tfrac{2}{d+2w} X^B \hd^A T_{AB}\,,
\end{align*}
where we have used that $h^{AB} T_{AB} = 0 = X^A T_{AB}$. Because $T$ is symmetric, we are left with the identity
$$\tfrac{d+2w-2}{d+2w} X^A \hd^B T_{AB} = 0\,.$$
Thus, thanks to the weight assumptions, it follows that $X^A \hd^B T_{AB} = 0$, and hence $X^A \tilde{T}_{AB} = 0$. Similarly, we have that $h^{AB} \tilde{T}_{AB} = 0$.

Finally, we check that $\hd^A \tilde{T}_{AB} = 0.$ We do this in stages. First, we evaluate $\hd^A (X_A \hd^C T_{BC})$:
\begin{align*}
\hd^A( X_A \hd^C T_{BC}) &= \left[  (w-1) + d+2 - \tfrac{2(w-1)}{d+2(w-1)} \right] \hd^C T_{BC} \\
&= \tfrac{(d+w-1)(d+2w)}{d+2w-2} \hd^C T_{BC}\,.
\end{align*}
Next, we evaluate the term $\hd^A (X_B \hd^C T_{AC})$:
\begin{align*}
\hd^A (X_B \hd^C T_{AC}) &= X_B \hd^A \hd^C T_{AC} + \hd^C T_{BC} - \tfrac{2}{d+2(w-1)} X^A \hd_B \hd^C T_{AC} \\
&= X_B \hd^A \hd^C T_{AC}  + \hd^C T_{BC} 
\\&\hspace{1.3cm}
- \tfrac{2}{d+2(w-1)} \left[ \hd_B (X^A \hd^C T_{AC}) - h^A_B \hd^C T_{AC} + \tfrac{2}{d+2(w-1)} X_B \hd^A \hd^C T_{AC} \right] \\[1mm]
&= \tfrac{(d+2w)(d+2w-4)}{(d+2w-2)^2} X_B \hd^A \hd^C T_{AC} + \tfrac{d+2w}{d+2w-2} \hd^C T_{BC}\,.
\end{align*}
Last, we evaluate the term $\hd^A (X_A X_B \hd^C \hd^D T_{CD})$:
\begin{align*}
\hd^A (X_A X_B \hd^C \hd^D T_{CD}) =  \tfrac{(d+w-1)(d+2w)}{d+2w-2} X_B \hd^C \hd^D T_{CD}\,.
\end{align*}
Combining these terms, we find that $\hd^A \tilde{T}_{AB} = 0$, thus completing the proof.
\end{proof}

\medskip

\begin{remark}
Note that if a weight $w\neq 1-\frac d2,-\frac d2$ tractor $\tilde T^{AB\cdots}$ obeys $$X_A \tilde T^{AB\cdots}=0=\hd_A \tilde T^{AB\cdots}\, ,$$ then it follows directly from Equation~\nn{usemeoft} that
$$
X_A \hd^C \tilde T^{AB\cdots} = - \tilde T^{CB\cdots}\, .
$$
\end{remark}

\color{black}

\noindent
Often it is useful to change the projecting part of tractor; the operator in  the following lemma is an instance of this.
\begin{lemma}\label{Pantheon}
Let $V^A \in \Gamma(\ct M[w])$ and $T \in \Gamma(\odot^2_{\circ} M[w])$. Then if $w\neq -1,-1-\frac d2$,
$$r(V^A) := V^A - \frac{1}{w+1}\hh  \hd^A \big(X_B V^B\big)
$$
obeys
$$
X_A \, r(V^A)=0\, ,
$$
while if $w\neq 0,-1,-\frac d2,-1-\frac d2,
-2-\frac d2$,
\begin{multline*} 
r(T^{AB \circ}):=
T^{(AB)\circ}-
\tfrac{2}w \hd^{(A} \big(X_C T^{|C|B)\circ }\big)
+\tfrac1{w(w+1)} \hd^{(A} \hd^{B)\circ} \big(X_C X_D T^{CD}\big)
\\
-\tfrac{8}{wd(d+2w+2)}
X^{(A} \hd^{B)\circ} \big(\hd_C (X_D T^{CD})\big)\, ,
\end{multline*} 
obeys
$$
X_A \, r(T^{AB}) = 0=h_{AB}\,  r(T^{AB})\, .
$$
\end{lemma}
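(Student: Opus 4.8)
The plan is to verify all three identities by direct manipulation with the standard tractor toolkit; since $r$ is built only from the conformally invariant operations $\hd$, $X$ and $h$, each assertion is conformally invariant and no choice of scale is required. Four facts do essentially all the work: (a) $X^A\hd_A\Psi=w\,\Psi$ for a tractor $\Psi$ of weight $w$ (a rewriting of $X^AD_A T=w(d+2w-2)T$); (b) the non-Leibniz commutator $\hd^A\circ X^B=X^B\hd^A+h^{AB}-\tfrac{2}{d+2\underline w}X^A\hd^B$ of Equation~\nn{usemeoft} (equivalently Proposition~\ref{leib-failure}); (c) $X^2=0$ and $\hd^AX^B=h^{AB}$; and (d) the identity $\hd_A\hd^A=0$, which follows from the nullity $D_A D^A=0$ of the Thomas-$D$ operator since $\hd$ differs from $D$ only by weight-dependent scalar factors.

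For the first claim, $X_BV^B$ has weight $w+1$, so (a) gives $X_A\hd^A(X_BV^B)=(w+1)\,X_BV^B$ and hence $X_A\,r(V^A)=X_AV^A-\tfrac1{w+1}(w+1)\,X_BV^B=0$; this is the entire content of that part.

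For the second claim, $h_{AB}\,r(T^{AB})=0$ is immediate: by the $(\cdots)\circ$ notation, every one of the four terms defining $r(T^{AB\circ})$ is symmetric and trace-free in its free tractor pair. The substance is $X_A\,r(T^{AB})=0$. I would proceed by writing $\hat T^{AB}:=T^{(AB)\circ}$ and introducing the contractions $t^B:=X_C\hat T^{CB}$, $u:=X_Bt^B$ and $v:=\hd_Ct^C$, of weights $w$, $w+1$, $w+2$ and $w$. Applying (b) to $\hd^B(X_Ct^C)=\hd^Bu$ yields the key intermediate identity $X_C\hd^Bt^C=\hd^Bu-t^B+\tfrac{2}{d+2w+2}X^Bv$. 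Combining this with (a), (c) and (d) gives
\begin{align*}
X_A\,\hd^{(A}t^{B)\circ}&=\tfrac w2\,t^B+\tfrac12\,\hd^Bu+\Big(\tfrac1{d+2w+2}-\tfrac1{d+2}\Big)X^Bv\,,\\
X_A\,\hd^{(A}\hd^{B)\circ}u&=(w+1)\,\hd^Bu\,,\\
X_A\,X^{(A}\hd^{B)\circ}v&=\tfrac{wd}{2(d+2)}\,X^Bv\,,
\end{align*}
where (d) was used to discard the $X^B\hd_A\hd^Au$ contributions in the middle line. Substituting these, together with $X_A\hat T^{AB}=t^B$ and the stated coefficients $\tfrac2w$, $\tfrac1{w(w+1)}$ and $\tfrac8{wd(d+2w+2)}$, into $X_A\,r(T^{AB})$: the $t^B$ terms cancel, the $\hd^Bu$ terms cancel between the first and second corrections, and the remaining multiple of $X^Bv$ vanishes by the elementary identity $\tfrac1{d+2w+2}-\tfrac1{d+2}=\tfrac{-2w}{(d+2)(d+2w+2)}$. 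Hence $X_A\,r(T^{AB})=0$.

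I expect the main obstacle to be purely the bookkeeping: the $(\cdots)\circ$ projections inject $h^{AB}$-terms, and each use of the non-Leibniz commutator (b) spawns further $X^B$-proportional pieces, so the cancellation is global rather than term-by-term and relies on the precise rational coefficients in the definition of $r$. The genericity conditions on $w$ in the statement are exactly those that keep every $\hd$ (in particular on the weight-$(w+1)$ and weight-$(w+2)$ densities $t^B$ and $u$) and every inverse-weight factor appearing above well-defined.
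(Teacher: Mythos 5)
Your proof is correct and follows exactly the route the paper indicates: the identity $X_A\hd^A T=wT$, the nullity of $X$ and $D$, and the operator identity~\nn{usemeoft}, applied term by term to the corrections in $r$. You have simply written out in full the computation that the paper's one-line proof leaves implicit, and your intermediate identities and the final cancellation of the $X^B\hd_C(X_DT^{CD})$ terms all check out.
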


\begin{proof}
The proof is an elementary application of the  identity
$$
X_A \hd^A T = w T\, ,
$$
valid for any weight $w\neq 1-\frac d2$ tractor $T$, the fact that $X$ and $D$ are null, and Equation~\nn{usemeoft}.
\end{proof}

\begin{remark}
The map $r$ of the above lemma can be generalized, modulo distinguished weights, to tractors $T$ of arbitrary tensor type so that $r(T)$ is both tractor trace-free and in the kernel of contraction by the canonical tractor $X$.
\end{remark}

\medskip

The algebra of multiple Thomas-$D$ operators, and in particular commutators of these, will be of particular importance. This leads to a 
tractor tensor that generalizes the Weyl tensor.

\subsection{The $W$-Tractor}

To begin with, we consider the curvature of the tractor connection.  The commutator of tractor connections 
gives the {\it tractor curvature} $\Omega_{ab}{}^{AB}\in \Gamma(\wedge^2 T^*M\otimes \wedge^2 \ct M)$ which acts on a standard tractor~$V^A$ according to 
$$
\left[ \nabla_a, \nabla_b \right] V^A
={{\Omega_{ab}}^{A}}_{B} V^B =:
\Omega_{ab}{}^\hash V^A\in \Gamma(\wedge^2 T^*M\otimes \ct M)\, .
$$
The operator $\Omega_{ab}{}^\hash$
extends by linearity to act on general tractor tensors.
In general, if $\Omega^{AB}$ is any skew-symmetric tractor, then $\Omega^\hash$ denotes the operator on tractors $V^{AB\cdots}$ defined by~$\Omega^\hash V^{AB\cdots}:=\Omega^A{}_C V^{CB\cdots}+\Omega^B{}_C V^{AC\cdots}+\cdots$. 
For a choice of metric  $g \in \mathbf{c}$, the tractor curvature is given in an obvious matrix notation by
\begin{align*}
{{\Omega_{ab}}^C}_D = \begin{pmatrix}
    0       & 0 & 0  \\
    {C_{ab}}^c       & {{W_{ab}}^c}_d & 0  \\
    0       &  -C_{abd}  & 0
\end{pmatrix}\, .
\end{align*} 
The projecting part of the tractor curvature  is the Weyl tensor, which is indeed conformally invariant. In three dimensions, the Weyl tensor vanishes, and the Cotton  tensor is then the projecting part and hence invariant in this dimension.

\newcommand{\Wt}{{\hh\widetilde{\!W\!}\hh\hh}{}}

At this juncture, to build its tractor analog, we could either  insert the Weyl tensor into  a tractor or instead view the Thomas-$D$ operator as the tractor  analog of the gradient operator and  study commutators of Thomas-$D$ operators. Both routes will lead us to  the so-called~$W$-tractor. 
Let us begin with the first route.
In dimension $d>4$,
using Proposition~\ref{insertions}, we  can define the weight~$-2$, 
 rank~4, 
 {\it $W$-tractor} 
  by 
 $$
{W}^{ABCD}:=q(W_{abcd})\, ,
 $$
 where~$W_{abcd}$ is the Weyl tensor. By construction, $W$ has the symmetries of the Weyl tensor 
 $$
 {W}^{[ABC]D}={W}^{ABCD}+{W}^{BACD}={W}^{ABCD}-{W}^{CDAB}=0={W}^A{}_{A}{}^{CD}\, .
 $$
We have used the same letter $W$ for both the Weyl curvature and $W$-tractor because the projecting part of the latter, $W^{abcd}$ is---by construction---the Weyl tensor. 
The remaining tensor
   content of the $W$-tractor  is given below.
\begin{lemma} \label{W-tractor}
Let $d>4$. Then
in any scale $g\in\cc$, the ${W}$-tractor is given by 
\begin{align}
{W}^{abc-} &= 
C^{abc}
 \, ,\nonumber\\\label{Warsaw}
{W}^{a-b-} &= \frac{ \Delta P^{ab} - \nabla_c \nabla^b P^{ca} + P_{cd} W^{adbc}}{d-4}\, .
\end{align}
All other components are either zero or determined by the symmetry of the $W$-trac\-tor.
\end{lemma}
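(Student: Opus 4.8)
The plan is to compute the components of $W^{ABCD} := q(W_{abcd})$ directly from the insertion formula in part (iii) of Lemma~\ref{insertions}, applied to the Weyl tensor $W_{abcd}$ with weight $w = -2$ (so that $W_{abcd} \in \Gamma(\otimes^4 T^*M[w+4]) = \Gamma(\otimes^4 T^*M[2])$, the natural weight of the Weyl tensor after raising indices with the conformal metric). The Weyl tensor has the algebraic symmetries of the Riemann tensor and is trace-free, so Lemma~\ref{insertions}(iii) applies provided $w \neq 1-d, 2-d$, i.e. provided $d \neq 3, 4$; since we assume $d > 4$ this is satisfied. The formulas there give $T^{abcd} \stackrel{g}{=} t^{abcd}$, $T^{abc-} \stackrel{g}{=} -\tfrac{\nabla_e t^{eabc}}{d+w-1}$, and $T^{a-b-} \stackrel{g}{=} \tfrac{\nabla_a\nabla_c t^{abcd} + (d+w-1)P_{ac}t^{abcd}}{(d+w-1)(d+w-2)}$, with all remaining components either zero, fixed by the Riemann-type symmetries, or determined by the trace-freeness conditions $D_A T^{ABCD} = X_A T^{ABCD} = 0 = h_{AC}T^{ABCD}$.

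First I would substitute $w = -2$, so that $d+w-1 = d-3$ and $d+w-2 = d-4$. The ``middle slot'' $W^{abcd} = W_{abcd}$ is immediate by construction. For $W^{abc-}$, plugging into the formula gives $W^{abc-} = -\tfrac{1}{d-3}\nabla_e W^{eabc}$. Here I would invoke the contracted second Bianchi identity for the Weyl tensor, which in our conventions reads $\nabla_e W^{eabc} = -(d-3)\, C^{abc}$ (this is the standard relation expressing the divergence of the Weyl tensor in terms of the Cotton tensor $C_{abc} = \nabla_a P_{bc} - \nabla_b P_{ac}$; the sign and the factor $d-3$ follow from the curvature decomposition recorded in Section~\ref{RG}). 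This yields $W^{abc-} = C^{abc}$ as claimed.

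Next, for the ``top slot'' $W^{a-b-}$, the formula gives $W^{a-b-} = \tfrac{1}{(d-3)(d-4)}\big(\nabla_a\nabla_c W^{abcd} + (d-3)P_{ac}W^{abcd}\big)$. To reduce this, I would again use the contracted Bianchi identity $\nabla_c W^{abcd}$ in the form above to write $\nabla_a\nabla_c W^{abcd} = -(d-3)\nabla_a C^{bad}$ in suitable index positions, then expand $C^{bad} = \nabla^b P^{ad} - \nabla^a P^{bd}$ and commute covariant derivatives, picking up Riemann curvature terms which I then re-decompose into Weyl plus Schouten via the formula in Section~\ref{RG}. The Schouten contributions should organize into the combination $P_{cd}W^{adbc}$ (after the $(d-3)P_{ac}W^{abcd}$ term combines with curvature terms from the commutator) while the second-derivative terms assemble into $\Delta P^{ab} - \nabla_c\nabla^b P^{ca}$. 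Dividing by $(d-3)(d-4)$ and cancelling the common factor $(d-3)$ leaves $W^{a-b-} = \tfrac{1}{d-4}\big(\Delta P^{ab} - \nabla_c\nabla^b P^{ca} + P_{cd}W^{adbc}\big)$, as stated.

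The main obstacle is the commutator-of-derivatives bookkeeping in the top-slot computation: getting all the curvature terms from $[\nabla_a,\nabla_c]$ acting on $P$, together with the explicit $(d-3)P_{ac}W^{abcd}$ term, to collapse cleanly into exactly $P_{cd}W^{adbc}$ without stray Schouten-squared or scalar-curvature pieces. This requires careful use of the trace-freeness and Riemann symmetries of $W$ (e.g. $W^{abcd}P_{bd}$-type contractions vanish or simplify because $W$ is totally trace-free) and of the Cotton tensor's own contracted Bianchi identity $\nabla^a C_{abc} = 0$ when $d \neq 3$. Once those identities are marshalled the algebra is routine, and the stated formula~\nn{Warsaw} follows. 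The fact that all other components vanish or are determined is then immediate from the symmetries and the defining trace conditions $D_AW^{ABCD} = X_AW^{ABCD} = h_{AC}W^{ABCD} = 0$ recorded in Lemma~\ref{insertions}(iii).
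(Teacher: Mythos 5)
Your proposal is correct and takes essentially the same route as the paper: the paper's proof consists of applying Lemma~\ref{insertions}(iii) at weight $w=-2$ together with the two curvature identities $C^{abc}=\tfrac{1}{d-3}\nabla_d W^{dcab}$ and $\nabla_a\nabla_c W^{abcd}=(d-3)\big(\Delta P^{bd}-\nabla_c\nabla^b P^{cd}\big)$, which are precisely the contracted-Bianchi facts you invoke (and whose derivation you sketch). The only caveat is to keep the index ordering in the Weyl-divergence identity consistent with the conventions of Section~\ref{RG} and the slot ordering of the insertion formula, but this is a bookkeeping matter handled at the same level of detail as in the paper itself.
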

\begin{proof}
This lemma follows from Proposition~\ref{insertions} 
and the curvature identities
 $$C^{abc}=\frac{\nabla_d W^{dcab}}{d-3}\, ,\qquad
 \nabla_a \nabla_c W^{abcd}=(d-3) \big(\Delta P^{bd} -\nabla_c \nabla^b P^{cd}\big)\, .
 $$
\end{proof}

\begin{remark}
In $d=4$ dimensions, the ${W}$-tractor is not well-defined. Instead we may consider the equivalence class of tractors defined by the relation $\Wt^{ABCD}\sim \Wt^{ABCD}+ X^{[A} V^{B][D}X^{C]}$ for~$V^{BD}$ any rank 2, symmetric, trace-free, weight $-4$ tractor. In any choice of scale $g\in \cc$
there is always a representative 
$\Wt^{abcd} = W^{abcd}$,  
$\Wt^{abc-} = C^{abc} $
and $\Wt^{a-b-} = 0$.

Note that the residue of the $d-4$ pole in Equation~\nn{Warsaw} continued to four dimensions is precisely the four dimensional  Bach tensor
\begin{equation}\label{notjohann}
B^{ab} = \Delta P^{ab} - \nabla_c \nabla^b P^{ca} + P_{cd} W^{adbc}\, .
\end{equation}
For that reason, the $W$-tractor  is often defined with an overall factor of $d-4$ so that it is well-defined in four dimensions; this has the disadvantage 
that in higher dimensions, the projecting part only equals the Weyl tensor up to an overall non-zero multiplicative factor.
Also note that we use the above tensor expression to define a Bach tensor $B^{ab}\stackrel g= (d-4) W^{a-b-}$ in dimensions higher than four.
\end{remark}

Turning to the other route, the commutator of Thomas-$D$ operators is given below:
\begin{proposition} \label{DD-comm}
In dimension $d\neq 4$, the commutator of Thomas-$D$ operators obeys
\begin{align*}
\left[D_A, D_B \right] = (d+2w-4)(d+2w-2)W_{AB}{}^\hash +4 X_{[A} W_{B]C}{}^\hash\circ  D^C.
\end{align*}
In $d=4$, the commutator of two Thomas-$D$ operators is
\begin{align*}
\left[D_A, D_B \right] = 4w(w+1) \hh
\Wt_{AB}{}^\sharp +4 X_{[A} \Wt_{B]C}{}^\sharp \circ D^C,
\end{align*}
and the sum of equivalence classes on the right hand side is a tractor-valued operator.
\end{proposition}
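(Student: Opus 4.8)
The plan is to establish the identity by a direct computation in a fixed choice of scale $g\in\cc$. Since each $D_A$ is conformally invariant (Lemma~\ref{Thomas-D}), the commutator $[D_A,D_B]$ is too, so it suffices to verify the formula for one metric in the conformal class. I would work with the universal split form
$$D^A = Y^A(d+2\underline{w}-2)\underline{w}\hh + \bm g^{ab}Z^A_a\nabla_b\,(d+2\underline{w}-2) - X^A(\Delta+J\underline{w})\, ,$$
so that the weight operator $\underline{w}$ automatically installs the correct scalar coefficients when the outer $D^A$ acts on $D^BT$ (which has weight $w-1$ but carries one extra free tractor index).

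First I would expand $D^AD^BT$ by applying the outer operator to each of the three slots of $D^BT$ and commuting every $\nabla_b$ past the injectors using $\nabla_bX^A=Z^A_b$, $\nabla_bZ^A_a=-P_{ab}X^A-\bm g_{ab}Y^A$ and $\nabla_bY^A=P^a_bZ^A_a$; subtracting the same expression with $A,B$ swapped, the curvature-free terms cancel and the difference organises into two kinds of contributions. The undifferentiated part collects: the double-gradient term, which reduces to $\tfrac12 Z^{[A}_aZ^{B]}_b[\nabla^a,\nabla^b]T$ with $[\nabla^a,\nabla^b]=\Omega_{ab}{}^\hash$ on the tractor indices of $T$; the remainders produced when $\nabla$ strikes the injectors inside $D^BT$; and the $P$ and $\nabla P$ terms coming from the explicit Schouten-dependent slots of $D$. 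Using the Weyl decomposition of $R_{abcd}$ and the identities behind Lemma~\ref{W-tractor} (notably $C^{abc}=\tfrac{1}{d-3}\nabla_dW^{dcab}$ and the contracted Bianchi identity for $\nabla\nabla W$), these reassemble into the operator $W_{AB}{}^\hash$ built from the full $W$-tractor, with scalar prefactor $(d+2w-4)(d+2w-2)$. The remaining contributions are first order in derivatives of $T$: they arise when $\Omega_{ab}$ hits the new tractor index of $D^BT$ and when the middle slot $Z^a_{[A}\nabla_a$ of one $D$ meets the bottom slot $-X_{B]}(\Delta+\underline{w}J)$ of the other, the commutator $[\Delta,\nabla]$ contributing Cotton/Weyl terms; rewriting the residual $\Delta T$ and $\nabla T$ back in terms of $D^CT$ with the help of $X_AD^AT=\underline{w}(d+2\underline{w}-2)T$ turns them precisely into $4X_{[A}W_{B]C}{}^\hash\circ D^C$.

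The genuinely delicate step is the bookkeeping of the scalar prefactors: tracking the repeated factors $(d+2\underline{w}-k)$ and the single unit of weight shift produced by the extra tractor index is exactly what yields the coefficient $(d+2w-4)(d+2w-2)$, while the nullity $D^A\circ D_A=0$, the identity $\hd^AX^B=h^{AB}$, and $X_AD^AT=\underline{w}(d+2\underline{w}-2)T$ all provide useful internal consistency checks. For $d=4$ one first notes that $(d+2w-4)(d+2w-2)\big|_{d=4}=2w(2w+2)=4w(w+1)$, so the scalar prefactor specialises correctly, but the slot $W^{a-b-}$ carries a $\tfrac1{d-4}$ pole, so $W^{ABCD}$ is ill-defined there. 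The resolution is that $W_{ABCD}$ enters the assembled commutator only through pole-free combinations, equivalently only through the equivalence class $\Wt_{ABCD}$: one checks that $\Wt_{ABCD}\mapsto\Wt_{ABCD}+X_{[A}V_{B][D}X_{C]}$, for $V_{BD}$ a symmetric trace-free weight $-4$ tractor, changes $4w(w+1)\Wt_{AB}{}^\hash$ and $4X_{[A}\Wt_{B]C}{}^\hash\circ D^C$ by equal and opposite amounts, using only $X^2=0$ and $X_AD^AT=\underline{w}(d+2\underline{w}-2)T$. I expect this $d=4$ argument, together with the length and coefficient-care of the general expansion, to be the main labour; there is no conceptual obstacle.
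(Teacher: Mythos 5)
Your proposal is correct, but it is worth noting that it is not the route the paper actually takes for the main identity: the paper's proof of Proposition~\ref{DD-comm} simply cites the ambient construction of~\cite{GOmin} for the $d\neq 4$ formula, and only remarks that a direct verification "at the cost of quite some computation" using Equation~\nn{vaccinateThomas} and the injectors is possible --- which is precisely the computation you plan to carry out. Your outline of that computation (antisymmetrize, push $\nabla$ past the injectors, identify the zeroth-order part with $W_{AB}{}^\hash$ via the slot formulas of Lemma~\ref{W-tractor} and the identities $C^{abc}=\tfrac{1}{d-3}\nabla_dW^{dcab}$, $\nabla\nabla W\sim(d-3)(\Delta P-\nabla\nabla P)$, and repackage the residual first-order terms as $4X_{[A}W_{B]C}{}^\hash\circ D^C$ using $X_AD^AT=w(d+2w-2)T$) is sound; the ambient route buys one a computation-free derivation and the correct coefficients for free, while your route is self-contained but demands the careful bookkeeping you acknowledge. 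For $d=4$ your treatment is essentially the paper's: the paper verifies representative-independence of the right-hand side by contracting with $Y^AY_C$ (the only contraction that can see the ambiguous $\Wt^{-B-E}$ slots) and exhibiting the cancellation between $4w(w+1)\Wt_{AB}{}^\sharp$ and the $Y$-part of $D^C$ in the second term, which is exactly your "equal and opposite shifts" mechanism --- though be aware that this cancellation uses the $d=4$ numerology $2w(d+2w-2)=4w(w+1)$ and the Weyl symmetries of $\Wt$, not merely $X^2=0$ and the eigenvalue identity as you state.
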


\begin{proof}
This result was first given in~\cite{GOmin} 
using an 
 ambient construction.
 Alternatively one can---at the cost of quite some computation---verify the first displayed equation using  Equation~\nn{vaccinateThomas} and 
 the injectors introduced above. 
 The same applies for the second display, but we must first verify that its right hand side is defined as a tractor operator independently of the choice of equivalence class representative chosen for $\Wt$. It is sufficient to check that the right-hand side of the second display, acting on a tractor $V^C \in \Gamma(\ct M[w])$ and contracted with $Y^A Y_C$, does not depend on the choice of metric representative.
\begin{align*}
Y_A Y_C \Big[4w(w\!  &+\hh 1)\hh \Wt^{ABCE} V_E  +\,  4 X^{[A} \Wt^{B]ECF} D_E V_F \Big] \\
&=4w(w+1) \Wt^{-B-E} V_E + 2 \Wt^{BE-F} D_E V_F - 2X^B \Wt^{-E-F} D_E V_F \\
&= 4w(w+1) \Wt^{-B-E} V_E + 2 w (d+2w-2)\Wt^{B--F} V_F \\
&\qquad\quad+ 2 (d+2w-2) \Wt^{Be-F} \nabla_e V_F - 2X^B \Wt^{-E-F} D_E V_F \\
&=  4(w+1) \Wt^{Be-F} \nabla_e V_F  - 2X^B \Wt^{-E-F} D_E V_F \\
&= 4(w+1) \left[X^B \Wt^{-e-F} + Z^B_b \Wt^{be-F} \right] \nabla_e V_F - 2(d+2w-2)X^B \Wt^{-e-F} \nabla_e V^F \\
&=4(w+1) Z^B_b \Wt^{be-F} \nabla_e V_F.
\end{align*}
In the above computation, we used that $ \Wt^{+ABC}=0$ and that the Thomas-$D$ can be expanded in terms of the injectors according to~\nn{vaccinateThomas}. 
Importantly, in the second and fourth equalities, we rely on cancellations peculiar to $d=4$.
Because $\Wt^{be-F} \sim \Wt^{be-F} + X^{[b}V^{e][F} X^{-]} = \Wt^{be-F}$, it follows that the above display defines  a tractor.
\end{proof}

\medskip

\renewcommand{\II}{\Two}
\renewcommand{\IIo}{\TwoN}

\section{Conformally Embedded Hypersurfaces} \label{CEH}

We now develop the invariant theory for a  hypersurface $\Sigma$ smoothly embedded in 
a conformal manifold~$(M,\cc)$.
Here we mean that $\Sigma$ is a smoothly embedded codimension-$1$ submanifold of~$(M,\cc)$, and denote this by $\Sigma\hookrightarrow (M,\cc )$.
It is simplifying, although not strictly necessary for the study of local invariant theory, to assume that $\Sigma$ is closed and orientable, and that its embedding
in $M$ is separating. In this case $M=M^+\sqcup\Sigma\sqcup M^-$. 
Given a metric $g\in\cc$, we denote the unit conormal by $\hat n$, and take as convention that it points in the direction of $M^+$. 
The first fundamental form ${\rm I}_{ab}=(g_{ab}-\hat n_a \hat n_b)|_{\Sigma}$
equals the induced metric $\bar g_{ab}$ of $\Sigma$.
The second fundamental form is then 
\begin{equation}\label{IIis}
\II_{ab} = {\rm I}_a^c\nabla_c \hat n_b^{\rm e} \big |_\Sigma=\nabla^\top_a \hat n_b^{\rm e}\big|_\Sigma \in \Gamma(\odot^2 T^*\Sigma) \, ,
\end{equation}
where 
$\hat n^{\rm e}$ on the right hand side of the above display is any smooth extension of $\hat n$ to $M$. 
Note that the image of the projection of the tangent bundle $TM$ along the hypersurface by the endomorphism $I^a_b=g^{ac}I_{bc}$ is isomorphic to $T\Sigma$. We therefore identify these spaces and use the same abstract index notation for hypersurface tensors as for ambient ones.
We use a $\top$ notation 
for projection to $T\Sigma$, and also for more general tensors. For example if $\omega$ is a one-form in $\Gamma(T^*M)|_\Sigma$, then $\top (\omega_a)\equiv\omega^\top_a:=I_a^b \hh \omega_b\in \Gamma(T^*\Sigma)$. Contraction by the unit conormal, followed by projection is denoted, for example  for a two-form, by $\omega_{\hat n b}^\top:= (\hat n^a  \omega_{ac})I^c_b$. 
When $\omega\in \Gamma(T^*M)$ we use $\omega^\top$ to denote~$(\omega|_\Sigma)^\top$.
A ring over the projection symbol~$\mathring\top$ denotes an additional projection to the (hypersurface) trace-free part.
In these terms, the Gau\ss\ formula then reads
\begin{equation}\label{GF}
\bar \nabla_a \bar v^b=
\nabla_a^\top v^b\big|_\Sigma
+ \hat n^b \II_{ac} \bar v^c
  \, ,
\end{equation}
 where in the first term on the right hand side, $v^a$ denotes any smooth extension of a hypersurface vector  $\bar v^a\in \Gamma(T\Sigma)$ to~$M$. Also, $\bar \nabla$ is the Levi-Civita connection of the induced metric $\bar g$. 
 Quite generally, we use a bar notation on objects to indicate when they belong to the hypersurface $\Sigma$.
 For example, curvature quantities 
 intrinsic to the hypersurface will be denoted by bars as will be the  hypersurface dimension, so $d-1=\bar d$. 
 The Gau\ss\ equation is then given by
 \begin{align}\label{GE}
R^\top_{abcd} = \overline{R}_{abcd} - \Two_{ac} \Two_{bd} + \Two_{ad} \Two_{bc}\, .
\end{align}
We also record the trace-free and traced Codazzi--Mainardi equations:
\begin{eqnarray}
\bar \nabla_{[a} \IIo_{b]c}-
\tfrac1{d-2} \hh\bar g_{c[a} \bar \nabla \csdot \IIo_{b]}
=
\tfrac12 W^\top_{abc\hat n }\, ,\label{trfreecod}\\
\bar \nabla_a H-\tfrac1{d-2}
\bar \nabla\csdot \IIo_{a}
=-  \Rho^\top_{a\hat n}\, .\qquad
\label{trcod}\nonumber
\end{eqnarray}

\subsection{Hypersurface Tractor Calculus}\label{HTC}

Tractor calculus is a  key technical tool for studying conformal embeddings. A basic observation~\cite{BEG}  is that the unit conormal $\hat n^g$ and mean curvature~$H^g$ of an embedded hypersurface depend on the conformal representative $g\in\cc$ as
$$
\big(\hat n^{\Omega^2 g}, H^{\Omega^2 g}\big)=\big(\Omega \hh \hat n^g , \Omega^{-1} (H^g + \hat n^g \cdot \ext\log \Omega)\big)\, ,
$$
where $0<\Omega\in C^\infty M$ and its exterior derivative appearing above are restricted to $\Sigma$. 
Henceforth we reuse the notation $\hat n^a$ and $H$ for the corresponding  density-valued objects.
So, given a metric $g\in \cc$, the triple 
$$
N^A\stackrel g=\begin{pmatrix}
0\\  \, \hat n^a
\\ 
-H
\end{pmatrix}\stackrel\Sigma= \hat n^a Z^A_a
- H Y^A
$$
labels a section of the tractor bundle $\ct M|_\Sigma$. The section $N^A$ is termed the {\it normal tractor} and obeys
$$
h(N,N)=1\, .
$$ 
Note that the projecting part of $N$ is the unit normal, which  indeed defines a vector-valued weight $w=-1$ conformal density.
The normal tractor mimics the {\it r\^ole} of the unit conormal in Riemannian hypersurface theory. In particular, 
projection of the tractor bundle along~$\Sigma$ by~${\rm I}^A_B:= \delta^A_B-N^A N_B$ 
  gives a bundle  isomorphic to the   tractor bundle~$\ct \Sigma$ of $(\Sigma, \bar \cc)$, where $\bar \cc$ is the induced conformal class of metrics~\cite{Goal}.
The tractor ${\rm I}_{AB}:= h_{AB}|_\Sigma-N_A N_B$ is the
  {\it tractor first fundamental form}.
Hence, we shall identify these spaces and  use the same abstract index notation for ambient and hypersurface tractors.
In particular the hypersurface tractor metric is given by~$\bar h_{AB}={\rm I}_{AB}$. 
Moreover, together the normal tractor and tractor metric give an isomorphism
$$
\ct M|_\Sigma \cong {\mathcal N}\Sigma \oplus \ct \Sigma\, ,
$$
where  ${\mathcal N}\Sigma$ is the tractor normal bundle (defined by orthogonal decomposition completely analogously to the normal bundle for Riemannian hypersurface embeddings).
Note that for the canonical tractor $X^A$, we have ${\rm I}^A_B X^B=X^A$, whose image under the bundle isomorphism is the hypersurface canonical tractor $\bar X^A$; thus we often drop the bar and denote this by $X^A$.

Recall that the trace-free combination 
$$
\TwoN:=\Two-H\, \bar g\, ,
$$
is conformally invariant and defines a section of $\Gamma(\odot^2_\circ T^*\Sigma[1])$. In dimensions $d>3$, it can be inserted into a symmetric, trace-free hypersurface tractor 
$$
L^{AB}:= \bar q(\TwoN_{ab})\in \Gamma(\odot^2_\circ\ct \Sigma[-1])
$$
termed the {\it tractor second fundamental form}~\cite{Grant,Stafford}. In the above, we have applied the insertion map of Lemma~\ref{insertions}  on the conformal manifold~$(\Sigma,\bar \cc)$. Note that in a choice of scale $g$, 
$$
L^{AB}\stackrel g=
\begin{pmatrix}
0&0&0\\
0&\IIo^{ab}&
-\tfrac1{d-2} \bar \nabla\csdot \IIo^a\\[1mm]
0& -\tfrac1{d-2} \bar \nabla\csdot \IIo^b &
\frac{\bar \nabla\cdot\bar\nabla \cdot \IIo + (d-2)\bar \Rho^{ab} \IIo_{ab}}
{(d-2)(d-3)}
\end{pmatrix}.
$$
The hypersurface conformal curvature quantity $L^2 = \TwoN^2 =: K \in \Gamma(\ce \Sigma[-2])$ was termed the 
rigidity density
because it was employed to describe rigid strings in~\cite{Polyakov}. 

\medskip

\begin{remark}
In the theory of surfaces in ${\mathbb R}^3$, the square of the second fundamental form  is sometimes termed the third fundamental form $\III$. If $\Sigma\hookrightarrow {\mathbb R^3}$ is a surface expressed as the level set of a unit defining function $s:{\mathbb R^3}\to {\mathbb R}$, then the second fundamental form $\II$ is the Hessian of~$s$, so that $\II=\nabla n$ where $n=\nabla s$. The third fundamental form then obeys
$$
\III_{ab}= -\nabla_n \nabla_a n_b\big|_\Sigma = \II_{ab}^2\, .
$$
Indeed  one might even  define higher fundamental forms by taking successive normal derivatives, because for $k\in {\mathbb Z}_{\geq 0}$,
$$
\II^{k+1}_{ab}=\tfrac{(-1)^{k}}{ k!} \nabla_n^{k} \nabla_a n_b\big|_\Sigma\, .
$$ 
In Section~\ref{herecomethI} we give a  definition of the {\it transverse order} of an hypersurface invariant by viewing formul\ae\ such as that above as functionals of a general curved ambient metric and then counting normal/transverse derivatives on this metric. The third fundamental form for a hypersurface embedded in a Riemannian manifold
defined by $\III_{ab}:= -\nabla_n \nabla_a n_b\big|_\Sigma=\II_{ab}^2-R_{\hat n ab \hat n}$
 then has transverse order two. A main aim of this article is to develop a theory of conformally invariant higher fundamental forms. 
\end{remark}

\medskip
In dimensions $d>3$, manipulating the trace of the Gau\ss\ equation~\nn{GE} produces a conformally invariant third fundamental form, termed the Fialkow tensor $F \in\Gamma(\odot^2T^* \Sigma[0])$ and defined by~\cite{Fialkow} (see also~\cite{Vyatkin}),
\begin{equation}\label{Aaron}
\Big(
\TwoN^2_{ab} - \tfrac{1}{2(d-2)} 
 \TwoN^2
\bar g_{ab} - W_{\hat n ab \hat n} \Big)
=
(d-3)\big(
\Rho^\top_{ab} - \bar \Rho_{ab} + H \TwoN_{ab} + \tfrac{1}{2} H^2 \bar g_{ab}\big) 
=:(d-3) F_{ab}\, .
\end{equation}
We shall call the above relation the {\it Fialkow--Gau\ss\ equation}.

The trace-free Fialkow tensor, denoted $\Fo$, is also of special interest. In particular, in dimensions~$d > 4$, it too can be inserted (using Lemma~\ref{insertions}) into a symmetric, trace-free tractor
$$F^{AB} := q(\Fo_{ab}) \in \Gamma(\odot^2_\circ \ct \Sigma[-2])$$
termed the \textit{Fialkow tractor}. In a choice of scale $g$, 
$$
F^{AB}\stackrel g=
\begin{pmatrix}
0&0&0\\
0&\Fo^{ab}&
-\tfrac1{d-3} \bar \nabla\csdot \Fo^a\\[1mm]
0& -\tfrac1{d-3} \bar \nabla\csdot \Fo^b &
\frac{\bar \nabla\cdot\bar\nabla \cdot \Fo + (d-3)\bar \Rho^{ab} \Fo_{ab}}
{(d-3)(d-4)}
\end{pmatrix}.
$$
It is also useful to define a weight $-1$, rank three tractor that combines the Fialkow and trace-free second fundamental forms, 
\begin{align}\label{Gamma}
\Gamma_{ABC} := 2 N_{[C} L_{B]A} + 2 X_{[C} F_{B]A} + \tfrac{K}{\bar{d}(\bar{d}-1)} X_{[C} \bar h_{B]A}.
\end{align}

From its Definition~\nn{Aaron}, we see that Fialkow tensor measures the difference between the bulk and hypersurface tractor connections through their 
 respective Schouten tensors. 
This leads to an analog of Equation~\nn{GF} (see~\cite{Will2}), that we term the Fialkow--Gau\ss\ formula
\begin{align} \label{FG-simple}
 \bar{\nabla}_a  \bar{V}^B
=\nabla^\top_a {V}^B\big|_\Sigma  -  {{\Gamma_a}^B}_C \bar{V}^C \, .\end{align}
Here $\bar{V} \in \Gamma(\ct \Sigma)$ is any standard hypersurface tractor and we have used the isomorphism discussed above between the hypersurface tractor bundle  and the projection of the bulk tractor bundle  restricted to~$\Sigma$, to extend this to $V\in \Gamma(\ct M)$. The displayed formula does not depend on the choice of such an extension.

Formul\ae\ like the right hand side of Equation~\nn{FG-simple} 
that are expressed in terms of an operator acting on an extension of an object defined only along the hypersurface play an important {\it r\^ole} in the study of extrinsic geometry.
 In general, we call an operator  ${\rm O}$ acting on sections $a$ of a 
 vector bundle over the bulk manifold
  \textit{tangential}
  if $(Oa)|_\Sigma$ only depends on the restriction~$\bar a = a|_{\Sigma}$
  ({\it i.e.}, it is independent of  the choice of extension $a$ of $\bar a$).
  This defines an operator ${\rm O}_{\sss\Sigma}$
 along~$\Sigma$ by ${\rm O}_{\sss\Sigma} \, \bar a := ({\rm O} a) |_\Sigma$. 
 Note that the right hand side 
is 
 a {holographic formula} for the quantity ${\rm O}_{\sss\Sigma} \hh \bar a$; this  notion is developed in detail in Section~\ref{herecomethI}.
  The operator $\nabla^\top$ is tangential. Our next task is to develop a tangential analog of the Thomas-$D$ operator in order to relate the bulk Thomas-$D$ operator to its hypersurface counterpart.

\begin{proposition} \label{Dt-complex}
Let  $w + \tfrac{d}{2} \neq 1, \tfrac{3}{2},2$
and $N^{\rm e}$ be any extension of the normal tractor. Then, the operator $$\hd^T : \Gamma(\ct^\Phi M[w]) \to \Gamma(\ct M \otimes \ct^\Phi M[w-1])\, ,
$$  given by 
$$\hd^T_A := \hd_A - N_A^{\rm e} N^B_{\rm e} \hd_B + \frac{X_A}{d+2w-3} \left(N^B_{\rm e} N^C_{\rm e} \hd_B \hd_C + \frac{wK}{d-2} \right)\, ,$$
is tangential. Moreover, if $\operatorname{O}^A$ is any operator acting on tractors  of weight $\frac{1-d}2$ that obeys
 $$\operatorname{O}^A\circ X_A=0\, ,$$ then the operator
$$
\operatorname{O}\, \csdot\,\hh  \tilde D^T:=
\operatorname{O}^A\circ \hh \big(\hd_A - N_A^{\rm e} N^B_{\rm e} \hd_B\big)
$$
is also tangential.
\end{proposition}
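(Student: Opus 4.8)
The plan is to get both assertions of the proposition from a single computation in a choice of scale $g\in\cc$. Recall that a bulk operator is tangential iff it kills, after restriction to $\Sigma$, every section $U$ with $U|_\Sigma=0$, and that for such a $U$ the Gau\ss\ formula gives $\nabla^\top_aU|_\Sigma=0$, hence $\nabla_aU|_\Sigma=\hat n_a\,c$ with $c:=\nabla_{\hat n}U|_\Sigma$. First I would compute $\bigl(\hd_A-N^{\rm e}_AN^B_{\rm e}\hd_B\bigr)U$ along $\Sigma$ for such a $U$. Expanding $\hd_A=\tfrac1{d+2w-2}D_A$ through the injectors $(X,Z,Y)$ of the scale and using $\hat n^aZ^A_a=N^A+HX^A$ along $\Sigma$ together with $h(N,N)=1$ and $h(N,X)=0$, one finds that the normal projection exactly removes the $N_A$-component of $\hd_AU|_\Sigma$, leaving
\[
\bigl(\hd_A-N^{\rm e}_AN^B_{\rm e}\hd_B\bigr)U\,\big|_\Sigma=\Bigl(Hc-\tfrac1{d+2w-2}(\Delta+wJ)U\big|_\Sigma\Bigr)X_A .
\]
This holds for every $w\neq 1-\tfrac d2$ (it does not use $w+\tfrac d2\neq\tfrac32,2$), and the essential feature is that the right-hand side is proportional to $X_A$.

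For the tangentiality of $\hd^T_A$ I would then add back the term $\tfrac{X_A}{d+2w-3}\bigl(N^B_{\rm e}N^C_{\rm e}\hd_B\hd_C+\tfrac{wK}{d-2}\bigr)$; since $KU|_\Sigma=0$, tangentiality is then equivalent to the scalar identity
\[
\bigl(N^B_{\rm e}N^C_{\rm e}\hd_B\hd_C U\bigr)\big|_\Sigma=(d+2w-3)\Bigl(\tfrac1{d+2w-2}(\Delta+wJ)U\big|_\Sigma-Hc\Bigr),
\]
which I would verify by a second computation in the scale, tracking the transverse $1$- and $2$-jets of $U$ and the Schouten and second fundamental form terms produced whenever $\nabla$ strikes an injector or $N^{\rm e}$. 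This double Thomas-$D$ evaluation is the one genuinely laborious step and is the expected main obstacle; the three excluded weights $w+\tfrac d2\in\{1,\tfrac32,2\}$ are precisely where the denominators $d+2w-2$ (inner $\hd$), $d+2(w-1)-2$ (outer $\hd$ acting on weight $w-1$) and $d+2w-3$ (the prefactor) degenerate.

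For the final statement, write $M_A:=\hd_A-N^{\rm e}_AN^B_{\rm e}\hd_B$. Since $\operatorname{O}^A$ acts on weight $\tfrac{1-d}2$ tractors, here $M_A$ acts at weight $w=\tfrac{3-d}2$, for which $d+2w-2=1\neq0$ but $d+2w-3=0$ --- exactly the weight at which $\hd^T_A$ itself degenerates. Nevertheless the displayed identity of the first paragraph still applies at this weight, so $M_AU|_\Sigma$ is a section times $X_A$ for every $U$ with $U|_\Sigma=0$. Hence for any two extensions $T,T'$ of the same hypersurface data we have $M_A(T-T')|_\Sigma=X_A\phi$, and since $\operatorname{O}^A$ annihilates $X_A$ times anything we obtain $\operatorname{O}^AM_A(T-T')|_\Sigma=0$; that is, $\operatorname{O}\csdot\tilde D^T$ is tangential. (At weights where $\hd^T_A$ is defined the same argument is the identity $\operatorname{O}^AM_A=\operatorname{O}^A\hd^T_A$, the $X_A$-proportional correction term being killed by $\operatorname{O}^A$, so tangentiality is simply inherited from $\hd^T_A$.)
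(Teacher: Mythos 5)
First, note that the paper itself does not prove this proposition---after the statement it defers to~\cite{Will2}---so there is no internal argument to measure you against. Your strategy is the natural one and your first displayed identity is correct: for $U$ of weight $w$ with $U|_\Sigma=0$ one has $\nabla_aU|_\Sigma=\hat n_a c$, hence $\hd_AU|_\Sigma=(N_A+HX_A)\hh c-\tfrac{X_A}{d+2w-2}(\Delta+wJ)U|_\Sigma$ and $N^B\hd_BU|_\Sigma=c$, so the normal-projected operator indeed lands in ${\mathcal E}(X)$ along $\Sigma$. This already yields the final assertion about $\operatorname{O}\csdot\tilde D^T$ and the weight bookkeeping for the three excluded weights is right (one caveat: your argument for the last part tacitly treats $\operatorname{O}^A$ as algebraic along $\Sigma$; if $\operatorname{O}^A$ were allowed to be differential, knowing $M_AU|_\Sigma\in{\mathcal E}(X)$ only \emph{on} $\Sigma$ would not suffice, so you should state that reading explicitly).

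The genuine gap is in the first (and main) assertion: you reduce tangentiality of $\hd^T_A$ to the scalar identity $N^B_{\rm e}N^C_{\rm e}\hd_B\hd_CU|_\Sigma=(d+2w-3)\bigl(\tfrac1{d+2w-2}(\Delta+wJ)U|_\Sigma-Hc\bigr)$ and then declare its verification to be ``the expected main obstacle'' without carrying it out. That identity is the entire content of the proposition---it is exactly what singles out the coefficient $\tfrac1{d+2w-3}$ of the $X_A$ correction (the $\tfrac{wK}{d-2}$ term being irrelevant here, as you note)---so omitting it leaves the proposal as a correct reduction rather than a proof. The identity is in fact true, and the computation closes quickly if you pick $g\in\cc$ with $|\ext s|_g=1$ near $\Sigma$ and write $U=s\tilde U$: then $c=\tilde U|_\Sigma$, $(\Delta+wJ)U|_\Sigma=(d-1)Hc+2\nabla_n\tilde U|_\Sigma$, and applying $N^B\hd_B=\nabla_{\hat n}-(w-1)H$ to the weight $w-1$ tractor $\hd_CU$, using $\nabla_bY^A=P_b{}^aZ^A_a$, $\nabla_bZ^A_a=-P_{ab}X^A-\bm g_{ab}Y^A$ and $\hat n^b\hat n^c\nabla_b\nabla_cU|_\Sigma=2\nabla_n\tilde U|_\Sigma$, one finds
\[
N^B_{\rm e}N^C_{\rm e}\hd_B\hd_CU\big|_\Sigma=2(1-w)Hc+2\nabla_n\tilde U\big|_\Sigma-\tfrac{1}{d+2w-2}\bigl((d-1)Hc+2\nabla_n\tilde U|_\Sigma\bigr)\,,
\]
which equals the required right-hand side by elementary algebra. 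Please include this (or an equivalent) computation; as written the proposal only names the step that must be checked.
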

\noindent
The tangential Thomas-$D$ operator was first introduced in~\cite{Forms}; the proof of the above result is 
 given in~\cite{Will2}, 
 and is simplified using holographic ideas. Note that, unlike the Levi-Civita connection, which becomes tangential upon projection by the first fundamental form, projection by the tractor first fundamental form alone does not render the Thomas-$D$ tangential. Also note that the last term proportional to $K$ in the definition of $\hd^T$ is separately tangential, but has been added because the operator in parentheses will play a special {\it r\^ole}.

 An immediate application of the  {\it tangential Thomas-$D$ operator} is the tractor analog of the defining equation for the  second fundamental form~\nn{IIis}:
\begin{lemma} \label{thisismylabel}
Let $d>3$ and $N^{\rm e}$ be any extension of the normal tractor. Then the tractor second fundamental form obeys 
$$
L_{AB}=\hd^T_{(A} N_{B)}^{\rm e}\big|_\Sigma -\tfrac1{d-3} \, \big(X_{(A} N_{B)} K 
+X_A X_B M\big)\, ,
$$
with $K=L^2=(\hd N^{\rm e})^2\big|_\Sigma$ 
and $M=L_{AB} F^{AB}=F^{AB}(\hd_A ^{\phantom{e}}N^{\rm e}_B) \big|_\Sigma$.
\end{lemma}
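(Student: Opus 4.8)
The plan is to reduce the identity to a computation in one choice of scale. By Proposition~\ref{Dt-complex} the operator $\hd^T$ is tangential, so $\hd^T_{(A}N^{\rm e}_{B)}\big|_\Sigma$ is independent of the extension $N^{\rm e}$, and since $L_{AB}$, $X_A$, $N_A$, $K$ and $M$ on the right-hand side are all conformally invariant hypersurface objects, it suffices to verify the identity for one $g\in\cc$ and one convenient extension. I would fix $g\in\cc$ and extend the conormal by $\hat n^{\rm e}=\ext\sigma$, where $\sigma$ is a $g$-unit-speed defining function of $\Sigma$, so that $\nabla_a\hat n^{\rm e}_b$ is symmetric, $\nabla_{\hat n}\hat n^{\rm e}=0$ near $\Sigma$, and $\nabla^\top_a\hat n^{\rm e}_b\big|_\Sigma=\II_{ab}=\IIo_{ab}+H\bar g_{ab}$; then extend $H$ arbitrarily and write $N^{\rm e}_B=\hat n^{\rm e\,c}Z_{cB}-H^{\rm e}X_B$ in this scale. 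The statement is the tractor analog of the defining formula~\nn{IIis} for $\II$, and the proof follows that analogy.

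First I would expand $\hd^T_{(A}N^{\rm e}_{B)}$ into injectors. Since $N^{\rm e}$ has weight zero, Equation~\nn{vaccinateThomas} reduces to $\hd^A T=\bg^{ab}Z^A_a\nabla_b T-\tfrac1{d-2}X^A\Delta T$; applying this together with the injector identities $\nabla_bX^A=Z^A_b$, $\nabla_bZ^A_a=-P_{ab}X^A-\bg_{ab}Y^A$, $\nabla_bY^A=P^a_bZ^A_a$ produces $\hd_A N^{\rm e}_B$, and iterating produces $\hd_B\hd_C N^{\rm e}_A$, explicitly in terms of $\nabla\hat n^{\rm e}$, $\nabla H^{\rm e}$, their second covariant derivatives, and the Schouten tensor $P$; contracting the double-$\hd$ expression twice with $N^{\rm e}$ then gives the term of $\hd^T$ built from $N^{\rm e\,C}N^{\rm e\,D}\hd_C\hd_D$. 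I would then restrict to $\Sigma$ and simplify: substitute $\nabla^\top\hat n^{\rm e}\big|_\Sigma=\II$; use the trace-free and traced Codazzi--Mainardi equations~\nn{trfreecod} and~\nn{trcod} to turn the $X_{(A}Z_{B)}{}^c$ contributions into $-\tfrac1{d-2}\bar\nabla\csdot\IIo_c$; and use the Gau\ss\ equation~\nn{GE} together with the Fialkow--Gau\ss\ equation~\nn{Aaron} to rewrite the remaining curvature terms in the $X_AX_B$ slot in terms of $\IIo$, $H$, $\bar P$ and $\Fo$. Comparing slot by slot with the component formula for $L^{AB}=\bar q(\IIo_{ab})$ recorded above, the $Z^a_AZ^b_B$, $X_{(A}Z_{B)}{}^a$ and $X_AX_B$ parts reproduce exactly those of $L_{AB}$, and the leftover is $\tfrac1{d-3}\big(X_{(A}N_{B)}\,\IIo^2+X_AX_B\,\Fo_{cd}\IIo^{cd}\big)$; recognizing $K=\IIo^2=L^2$ and $M=\Fo_{cd}\IIo^{cd}=L_{AB}F^{AB}$ gives the displayed identity. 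The two additional characterizations $K=(\hd N^{\rm e})^2\big|_\Sigma$ and $M=F^{AB}(\hd_A N^{\rm e}_B)\big|_\Sigma$ then follow by contracting the identity with $L^{AB}$, respectively $F^{AB}$, and simplifying: these hypersurface tractors are killed by $X_A$ and by $N_A$, so all the $X$- and $N$-correction terms drop out, as does the difference between $\hd^T$ and $\hd$ (here one also uses $X^AN_A=0$).

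The main obstacle is the bottom ($X_AX_B$) slot. Isolating it cleanly requires combining three separate sources of $X_AX_B$ terms --- the second-derivative piece $-\tfrac1{d-2}X_A\Delta N^{\rm e}_B$ buried inside each $\hd$, the $\tfrac{X_A}{d-3}N^{\rm e\,C}N^{\rm e\,D}\hd_C\hd_D$ correction in the definition of $\hd^T$, and the curvature terms produced on commuting covariant derivatives --- and then checking that, after the Gau\ss, Codazzi and Fialkow substitutions, every extension-dependent quantity ($\nabla_{\hat n}H^{\rm e}$, the normal second derivatives of $\hat n^{\rm e}$, \emph{etc.}) cancels and the remainder assembles precisely into the bottom slot of $L_{AB}$ plus the stated multiples of $K$ and $M$. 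One should also keep the distinguished-weight hypotheses in view: at $w=0$ the condition $w+\tfrac d2\neq1,\tfrac32,2$ of Proposition~\ref{Dt-complex} excludes $d=4$, while $d>3$ is needed so that $\IIo$, $\Fo$ and their tractor lifts are defined; the $d=4$ case, if desired, follows from the explicit formulae by analytic continuation in the dimension.
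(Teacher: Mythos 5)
Your opening reduction---that $\hd^T$ is tangential, so the left-hand side is extension-independent and the identity may be checked in a single scale with a single convenient extension---is sound and is also the implicit starting point of the paper. From there the routes diverge genuinely. You propose a direct injector, slot-by-slot computation of $\hd^T_{(A}N^{\rm e}_{B)}$ against the matrix for $L^{AB}=\bar q(\IIo)$, closing the bottom slot with the Gau\ss, Codazzi--Mainardi and Fialkow--Gau\ss\ equations. The paper (Section~\ref{can-ext-IIo}) instead takes the \emph{canonical} extension $N^{\rm e}=I_\sigma$ for an asymptotic unit defining density and assembles the result from three pre-established tractor identities: the holographic formula for $\hd^T$ of Lemma~\ref{Dt-simple}, the identity $I\csdot\hd\hh I_A=\tfrac{(\hd I)^2X_A}{d-2}+{\mathcal O}(\sigma^2)$, and the formula $\hd_AI_B|_\Sigma=L_{AB}+\tfrac{2}{d-2}I_{(A}X_{B)}K+\tfrac{3d-8}{(d-2)(d-3)}X_AX_BM$ imported from~\cite{Will2}. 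The paper's route buys a short, coefficient-safe derivation; yours is self-contained in principle but defers all of the difficulty to the bottom slot, which you name as "the main obstacle" without carrying it out. Two points there are not merely bookkeeping: the bulk injectors $(X,Z,Y)$ must be converted to the hypersurface injectors $(\bar X,\bar Z,\bar Y)$ (they differ by $\hat n$- and $H$-dependent terms) before any comparison with $\bar q(\IIo)$ is meaningful, and the cancellation of the extension-dependent normal jets together with the emergence of the precise $\tfrac{1}{d-3}$ coefficients is exactly the content to be exhibited, not asserted.

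There is also a concrete gap in your derivation of the parenthetical identifications. Contracting the displayed identity with $L^{AB}$ and using $L\csdot X=0=L\csdot N$ yields $K=L^{AB}\hd_AN^{\rm e}_B\big|_\Sigma$, \emph{not} $K=(\hd N^{\rm e})^2\big|_\Sigma$: the latter is the full square of the two-index object $\hd_AN^{\rm e}_B$ and receives contributions from its components along $N\odot N$ and from its antisymmetric part, none of which are seen by contraction with $L$. Indeed, replacing $N^{\rm e}$ by $(1+\sigma v)N^{\rm e}$ gives $\hd_AN^{\rm e}_B\big|_\Sigma\mapsto \hd_AN^{\rm e}_B\big|_\Sigma+v\hh N_AN_B+{\mathcal E}(X)$ and hence shifts $(\hd N^{\rm e})^2\big|_\Sigma$ by $v^2$; so the identification $K=(\hd N^{\rm e})^2\big|_\Sigma$ really relies on the canonical (asymptotically unit) extension and on the full decomposition of $\hd_{(A}I_{B)}\big|_\Sigma$, as in the paper, and cannot be read off from the main identity by one contraction. (Your contraction with $F^{AB}$ does correctly give $M=F^{AB}\hd_AN^{\rm e}_B\big|_\Sigma$.) Finally, on $d=4$: since $M=L_{AB}F^{AB}$ involves the Fialkow tractor, which exists only for $d>4$, that dimension is better viewed as excluded by the statement itself than as recoverable by continuation in $d$.
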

\noindent
The proof of this lemma is given in Subsection~\ref{can-ext-IIo} and relies on holographic ideas.

As promised in the introduction, the following subsections provide proofs  of Theorem~\ref{GTF} and Corollary~\ref{GTE}.

 \bigskip

\subsection{Proof of Theorem~\ref{GTF}} \label{GTform-proof}
We prove the Gauss-Thomas formula~\ref{GTF} acting on a tractor vector of arbitrary weight and then generalize. That is, we look to prove the following lemma.

\begin{lemma}\label{Dt-Dbar}
Let $V \in \Gamma(\ct \Sigma[w])$. Then, the bulk tangential and hypersurface Thomas-$D$ operators obey
\begin{multline*}
\big(\hd^T_A\big)_\Sigma V^B =  \hdb_A V^B +\Gamma_A {}^\sharp V^B \\- \frac{X_A}{\bar{d} + 2w-2} \bigg{\{} 
2\Gamma_C{}^\sharp \circ \hdb^C V^B + \Gamma^C{}^\sharp \circ \Gamma_C{}^\sharp V^B
\\+ \frac{1}{\bar{d}(\bar{d}-1)} \left[\left(\hdb K \right)  \wedge X \right]^\sharp V^B - \frac{(3\bar{d}+2)wK}{2\bar{d}(\bar d-1)} V^B \bigg{\}}\, .
\end{multline*}
\end{lemma}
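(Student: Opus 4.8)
The plan is to first prove Lemma~\ref{Dt-Dbar} for a standard tractor $V\in\Gamma(\ct\Sigma[w])$, and then bootstrap to the general tensor type $\Phi$ of Theorem~\ref{GTF}. Because $\hd^T$ is tangential (Proposition~\ref{Dt-complex}), I may evaluate $(\hd^T_A)_\Sigma V^B$ on \emph{any} convenient bulk extension of $V$. So I fix a scale $g\in\cc$ --- this fixes the unit conormal $\hat n$, hence an extension $N^{\rm e}$ of the normal tractor and the injectors $(X,Z,Y)$ --- and extend the base data of $V$ off $\Sigma$ so that it is $\nabla_{\hat n}$-parallel to the order required. I then expand the bulk operator $\hd_A$ via Equation~\nn{vaccinateThomas} in terms of the injectors, and split the bulk Levi-Civita connection into its tangential part $\nabla^\top$ and the normal derivative $\nabla_{\hat n}$.

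For the tangential contributions: the Fialkow--Gau\ss\ formula~\nn{FG-simple} rewrites $\nabla^\top_a V^B\big|_\Sigma = \bar\nabla_a V^B + \Gamma_a{}^\sharp V^B$, which supplies the $\Gamma_A{}^\sharp V^B$ term once the weight and normal slots of $\hd^T$ are assembled --- the $N_{[C}L_{B]A}$ part of $\Gamma$ (cf.\ Equation~\nn{Gamma}) being produced by the weight slot of $\hd^T$ together with Lemma~\ref{thisismylabel}, which identifies $L$ with $\hd^T N^{\rm e}\big|_\Sigma$. For the second-order (Laplacian) slot of $\hd^T$ one iterates~\nn{FG-simple}, generating $\bar\Delta V^B$, a $\Gamma^C{}^\sharp\!\circ\Gamma_C{}^\sharp V^B$ term, and a ``divergence of $\Gamma$'' term $\bar\nabla^a\Gamma_a{}^\sharp V^B$. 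This last piece carries the real work: using the explicit form~\nn{Gamma} of $\Gamma$, the trace-free Codazzi equation~\nn{trfreecod}, the tractor-insertion formul\ae\ of Lemma~\ref{insertions} for $L$ and $F$, and the algebraic identities $N\csdot X=0$, $N^2=1$, $X^2=0$, one shows that all would-be $\hdb L$ and $\hdb F$ contributions cancel, leaving precisely $\tfrac1{\bar d(\bar d-1)}\big[(\hdb K)\wedge X\big]^\sharp V^B$ together with the algebraic term $-\tfrac{(3\bar d+2)wK}{2\bar d(\bar d-1)}V^B$.

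For the normal contributions: the subtraction $-N^{\rm e}_A N^B_{\rm e}\hd_B$ in the definition of $\hd^T$ removes the pure normal first derivative, while the $X_A$-correction involving $N^B_{\rm e}N^C_{\rm e}\hd_B\hd_C$ removes the normal--normal Hessian, and --- via the $-HY$ part of $N$ --- the mixed $H\nabla_{\hat n}$ terms as well; the residual $\tfrac{wK}{d-2}$ is matched after using $K=(\hd N^{\rm e})^2\big|_\Sigma$ from Lemma~\ref{thisismylabel}. Collecting tangential and normal parts and comparing against $\hdb_A V^B$ written in the hypersurface injectors $(\bar X,\bar Z,\bar Y)$ yields the stated identity. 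To pass from Lemma~\ref{Dt-Dbar} to Theorem~\ref{GTF} on $\Gamma(\ct^\Phi M[w])$, I observe that both $\hd^T_A$ and the right-hand side operator are built from $\hd$-type operators, hence obey the \emph{same} modified Leibniz rule of Proposition~\ref{leib-failure} up to $X_A$-proportional lower-order corrections; one checks that the difference of the two sides is a genuine tractor operator vanishing on standard tractors and on densities, with matching failure-of-Leibniz terms, so it vanishes on all tensor products.

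The main obstacle throughout is the bottom-slot ($X_A$-proportional) bookkeeping: concretely, the cancellation of the $\hdb L$ and $\hdb F$ terms inside $\bar\nabla^a\Gamma_a{}^\sharp$ down to the single $\hdb K$ term, and the cancellation of the normal Hessian against the precise design of $\hd^T$. Everything else --- the injector expansions, the iterated Fialkow--Gau\ss\ substitutions, and the final repackaging --- is lengthy but mechanical.
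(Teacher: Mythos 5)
Your strategy converges on the same essential computation as the paper's proof, but it is organized differently. The paper does not rederive the restriction of $\hd^T$ to $\Sigma$ from Proposition~\ref{Dt-complex}: it imports the closed-form injector expression for $(\hd^T)_\Sigma$ (with $\nabla^\top$ in the gradient slot and $-\tfrac{\Delta^\top+w\bar J}{\bar d+2w-2}+\tfrac{wK}{2(\bar d-1)(\bar d+2w-2)}$ in the bottom slot) from the earlier reference, and then \emph{verifies} the claimed identity by contracting both sides with each hypersurface injector $X^A$, $\bar Z^A_a$, $\bar Y^A$. The first two contractions are immediate ($X\csdot\Gamma=0$, and the Fialkow--Gau\ss\ formula~\nn{FG-simple}, respectively); all the work sits in the $\bar Y^A$ slot, i.e.\ in computing $\Delta^\top-\bar\Delta$ by iterating~\nn{FG-simple}, splitting $\Gamma=\Gamma_\perp+2NL$, and evaluating $\bar\nabla_a\Gamma_\perp^{aBC}$ and $\nabla^\top_a(N^{[C}L^{B]a})$ via $\hdb_A L^{AB}=0=\hdb_A F^{AB}$ --- exactly the ``divergence of $\Gamma$'' reduction you single out as the crux. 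Your plan instead rederives the restriction formula from the definition of $\hd^T$ by choosing a normal-parallel extension and tracking the cancellation of the normal first and second derivatives against the $N^{\rm e}_A N^B_{\rm e}\hd_B$ and $N^B_{\rm e}N^C_{\rm e}\hd_B\hd_C$ subtractions; that buys self-containedness at the cost of reproducing material the paper simply cites. Two small corrections: the $N_{[C}L_{B]A}$ part of $\Gamma$ is not produced by the weight slot of $\hd^T$ --- it lives in the gradient slot, arising because $\nabla^\top$ fails to preserve the splitting $\ct M|_\Sigma\cong{\mathcal N}\Sigma\oplus\ct\Sigma$ (this is the content of the Fialkow--Gau\ss\ formula itself, not of Lemma~\ref{thisismylabel}); and the $\hdb L$, $\hdb F$ contributions do not all cancel outright --- they reassemble into a $\Gamma^{-BC}V_C$ term that is absorbed by the $\bar Y$-component of the top-level $\Gamma_A{}^\sharp V^B$, leaving only the $\hdb K$ and algebraic $wK$ pieces inside the $X_A$ bracket. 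Your closing bootstrap to general tensor type matches the paper's one-line remark that the general case follows by accounting for the linear action on multiple indices.
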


\begin{proof}
We rely heavily on~\cite[Lemma 4.9]{Will2}, which states that
\begin{align}\label{Dt-matrix}
\left(\hd^T{}^A\right)_{\Sigma} = \begin{pmatrix}
w \\[1mm]
\nabla^\top{}^a\\[1mm]
-\frac{\Delta^\top + w \bar{J}}{\bar d+2w-2} + \frac{wK}{2(\bar d -1)(\bar d+2w-2)} 
\end{pmatrix}
\end{align}
We check the lemma by contracting both sides with the injectors and checking that it holds for all three. First note that $w = \bar{w}$ and $X_A \Gamma^A{}_{BC} = 0$. Thus, the lemma holds upon contraction with $X^A$.

Next, we check the lemma upon contraction with $\bar{Z}^A_a$. Note that, according to Equation~\nn{Dt-matrix}, $\bar{Z}^A_a \hd^T{}_A = \left(\nabla^\top_a\right)_{\Sigma}$. Because $X_A \bar{Z}^A_a = 0$, we have that
\begin{align*}
\left(\nabla^\top_a \right)_{\Sigma} V^B = \bar{\nabla}_a V^B + {{\Gamma_a}^B}_C \bar{V}^C \,,
\end{align*}
which is the Fialkow-Gauss equation. Thus, the lemma holds upon contraction with $\bar{Z}^A_a$.

Finally, we  check that the identity holds upon contraction with $\bar{Y}^A$. From Equation.~\nn{Dt-matrix}, we have that 
\begin{align*}
\bar{Y}^A \left[\hd^T_A - \hdb_A \right] =& \left(-\frac{\Delta^\top + w \bar{J}}{\bar d+2w-2} + \frac{wK}{2(\bar d -1)(\bar d+2w-2)} \right) +\frac{1}{\bar d +2w-2} \left(\bar \Delta +w \bar J \right) \\
=& -\frac{1}{\bar d +2w-2} \left(\Delta^\top - \bar \Delta - \frac{wK}{2(\bar d-1)} \right).
\end{align*}
We explicitly compute the tractor Laplacian difference $\Delta^\top - \bar{\Delta}$.

From the definition of the tractor Laplacian and defining $\Gamma^{ABC}_\perp := \Gamma^{ABC} - 2 N^{[C} L^{B]A} \in \Gamma(\ct^3 \Sigma[-1])$, we have
\begin{align*}
\Delta^\top V^B & = \nabla_a^\top \left(\bar \nabla^a V^B + {\Gamma^{aBC}} V_C \right) \\
&= \nabla^\top_a \left(\bar \nabla^ a V^B + \Gamma^{aBC}_\perp V_C + 2 N^{[C} L^{B]a} V_C \right) \\
& = \bar \Delta V^B + {\Gamma^{aB}}_C \bar \nabla_a V^C 
+ \bar \nabla_a (\Gamma^{aBC}_\perp V_C) + {{\Gamma_a}^B}_E \Gamma^{aEC}_\perp V_C+ \nabla^\top_a \left( 2 N^{[C} L^{B]a} V_C \right)\\
& = \bar \Delta V^B + {\Gamma^{aB}}_C \bar \nabla_a  V^C + \left( \bar \nabla_a \Gamma^{aBC}_\perp \right)  V_C + \left({\Gamma^a}^{BC}- 2 N^{[C} L^{B]a} \right) \bar \nabla_a V_C \\&\;\;\;\;+ {{\Gamma_a}^B}_E \Gamma^{aEC} V_C -  2 {{\Gamma_a}^B}_EN^{[C} L^{E]a}  V_C + 2\left(\nabla^\top_a (N^{[C} L^{B]a}) \right)  V_C + 2 N^{[C} L^{B]a} \nabla^\top_a  V_C \\
& = \bar \Delta  V^B + 2{\Gamma^{aB}}_C \bar \nabla_a  V^C +  \left( \bar \nabla_a \Gamma^{aBC}_\perp \right) V_C  \\&\;\;\;\;+ {{\Gamma_a}^B}_E \Gamma^{aEC} V_C -  2 {{\Gamma_a}^B}_E  N^{[C} L^{E]a}    V_C + 2\left(\nabla^\top_a (N^{[C} L^{B]a}) \right) V_C+2 N^{[C} L^{B]a} {\Gamma_{aC}}^{E} V_E\, .
\end{align*}
Writing $\Gamma = \Gamma_\perp+ 2NL$
allowed us to use the Fialkow--Gau\ss\ equation~\nn{FG-simple} for the above simplifications.

We now break up this calculation into smaller parts.
\begin{align*}
\bar \nabla_a \Gamma^{aBC}_\perp &= \bar \nabla_a \left(\bar Z^a_A \Gamma^{ABC}_\perp \right) \\
&= \bar Z^a_A \bar\nabla_a \Gamma^{ABC}_\perp + \Gamma^{ABC}_\perp \left(\bar J \bar X_A - \bar d \bar Y_A \right) \\
&= \bar Z^a_A \bar\nabla_a \Gamma^{ABC}_\perp - \bar d \Gamma^{-BC}_\perp.
\end{align*}
Here, the second equality comes from the fact that $\bar \nabla_a \bar Z^b_A = -\bar P^b_a \bar X_A - \bar g^b_a \bar Y_A$ and the last equality holds because $X_A \Gamma^{ABC} = 0$. Similarly,
\begin{align*}
\nabla^\top_a N^{[C} L^{B]a} &=\left(\nabla^\top_a N^{[C} \right) L^{B]a} + N^{[C} \left[\bar \nabla_a \left( L^{B]A} \bar Z^a_A \right) + {{\Gamma_a}^{B]}}_E L^{Ea} \right] \\
&= \left(\nabla^\top_a N^{[C} \right) L^{B]a} + N^{[C} \left[\left(\bar \nabla_a  L^{B]A} \right) \bar Z^a_A - \bar d L^{B]-} + {{\Gamma_a}^{B]}}_E L^{Ea} \right]
\end{align*}

In order to simplify the above two displays, we need results that follow from Equation~\nn{Iconnect}:
\begin{align*}
\bar Z^a_A \bar \nabla_a L^{AC} &= 2 L^{-C}\,, \\
\bar Z^a_A \bar \nabla_a F^{AC} &= 3F^{-C}\,, \\
\nabla^\top_a N^B &= L_a^B.
\end{align*}
Using the above identities, we can write
\begin{align*}
\bar \nabla_a \Gamma^{aBC}_\perp &= 2 \left(\bar \nabla_a X^{[C} \right) F^{B]a} + 6X^{[C} F^{B]-} + \frac{1}{\bar d(\bar d-1)} \left(\nabla_a K \right) X^{[C} \bar h^{B]a} - \bar d \Gamma^{-BC}_\perp \\
&= 4 X^{[C} F^{B]-} + \frac{1}{\bar d (\bar d -1)} \left(X^{[C} \hdb^{B]} K + 2 K X^{[C} \bar h^{B]-} \right)  - \bar d \Gamma^{-BC}_\perp\\
&= (2 - \bar d) \Gamma^{-BC}_\perp + \frac{1}{\bar d(\bar d-1)} X^{[C} \hdb^{B]} K
\end{align*}
and
\begin{align*}
2 \nabla^\top_a N^{[C} L^{B]a} = 2(2-d) N^{[C} L^{B]-} + N^C \Gamma^{ABE} L_{AE} - N^B \Gamma^{ACE} L_{AE}.
\end{align*}

We can now use these formulas to write
\begin{align*}
\Delta^\top \bar V^B &= \bar \Delta V^B + 2{\Gamma^{aBC}} \bar \nabla_a V_C +  (2 - \bar d) \Gamma^{-BC}_\perp V_C + \frac{V_C}{\bar d(\bar d-1)} X^{[C} \hdb^{B]} K + {{\Gamma_a}^B}_E \Gamma^{aEC} V_C \\&\;\;\;\; -  2 {{\Gamma_a}^B}_E  N^{[C} L^{E]a}   V_C +   2 N^{[C} L^{B]a} {\Gamma_{aC}}^{E} V_E \\&\;\;\;\; + \left(2(2-d) N^{[C} L^{B]-} + N^C \Gamma^{ABE} L_{AE} - N^B \Gamma^{ACE} L_{AE} \right) V_C \\
&= \bar \Delta \bar V^B + 2{\Gamma^{aBC}} \bar \nabla_a V_C +  (2 - \bar d) \Gamma^{-BC} V_C + \frac{ V_C}{\bar d(\bar d-1)} X^{[C} \hdb^{B]} K  + {{\Gamma_a}^B}_E \Gamma^{aEC} V_C \\&\;\;\;\; -  2 {{\Gamma_a}^B}_E  N^{[C} L^{E]a}   V_C +   2 N^{[C} L^{B]a} {\Gamma_{aC}}^{E} V_E + \left(N^C \Gamma^{ABE} L_{AE} - N^B \Gamma^{ACE} L_{AE} \right) V_C \\
&= \bar \Delta V^B + 2{\Gamma^{aBC}} \bar \nabla_a V_C +  (2 - \bar d) \Gamma^{-BC} V_C + \frac{ V_C}{\bar d(\bar d-1)} X^{[C} \hdb^{B]} K  + {{\Gamma_a}^B}_E \Gamma^{aEC} V_C \\ 
&= \bar \Delta V^B + 2{\Gamma^{ABC}} \hdb_A V_C - (\bar d+2w-2) \Gamma^{-BC} V_C + \frac{V_C}{\bar d(\bar d-1)} X^{[C} \hdb^{B]} K  + {{\Gamma_A}^B}_E \Gamma^{AEC} V_C \\
&= \bar \Delta V^B + 2 \Gamma^A{}_A{}^C \hdb_C V^B - \frac{(\bar d +1) w K}{\bar d (\bar d - 1)} V^B + 2{\Gamma^{ABC}} \hdb_A V_C - (\bar d+2w-2) \Gamma^{-BC} V_C \\&\;\;\;+ \frac{V_C}{\bar d(\bar d-1)} X^{[C} \hdb^{B]} K  + {{\Gamma_A}^B}_E \Gamma^{AEC} V_C \\
&= \bar \Delta V^B + 2 \Gamma^A{}^\sharp \circ \hdb_A V^B  - (\bar d+2w-2) \Gamma^{-BC} V_C + \frac{V_C}{\bar d(\bar d-1)} X^{[C} \hdb^{B]} K  \\&\;\;\;+ \Gamma^A{}^\sharp \circ \Gamma_A{}^\sharp V^B - \frac{(\bar d +1) w K}{\bar d (\bar d - 1)} V^B\, .
\end{align*}
In the display above, the second equality comes the definition of $\Gamma_\perp$. The third equality is a result of the last four terms canceling, and the fourth equality comes from the fact that $\Gamma \in \ker X_\lrcorner$ (where $X_\lrcorner$ denotes contraction by $X$). The last inequality follows from $\Gamma_A{}^A{}_E \Gamma^{EBC} V_C = 0$.

But,
\begin{align*}
\left(\hd^T_A - \hdb_A \right) V^B = \bar Z_a^A {{\Gamma_A}^B}_C  V^C - \frac{X_A}{\bar d +2w-2} \left(\Delta^\top - \bar \Delta - \frac{wK}{2(\bar d-1)} \right) V^B,
\end{align*}
so the proof is completed by combining terms involving $wK$.
\end{proof}

\begin{remark}
Applying the same techniques in the proof above but accounting for the possible linear actions on multiple indices, the proof of the general Gau\ss--Thomas formula follows easily.
\end{remark}

\noindent

\subsection{Proof of Corollary~\ref{GTE}} \label{GTE-proof}

In this section, we provide a proof of Corollary~\ref{GTE} as well as an additional corollary.

\begin{proof}[Proof of Corollary~\ref{GTE}]
Recall that the Gau\ss\ equation is a corollary of the 
Gau\ss\ formula, in the sense that it is obtained by applying the latter to $[\nabla_a^\top, \nabla_b^\top] v_c$ where $v_c$ is an extension of a hypersurface tangent vector. Similarly, the present proof could be completed by applying the Gau\ss--Thomas formula to  $[\hd^T_A, \hd^T_B] V_C$. But, because $\hd$ is not a derivation, that computation is rather involved. Instead, we approach the proof via equality of all possible contractions (in some scale $g\in \cc$) 
by hypersurface injectors $(X^A,\bar Z^A_a,\bar Y^A)$
on both sides of the lemma's result. 
Note that it is unnecessary to check  contractions with more than one~$\bar{Y}$---this only probes $V_{AB}$. Also, without loss of generality, we may choose $g$ to be a scale in which the mean curvature $H^g$ of the embedding $\Sigma\hookrightarrow (M,g)$ vanishes.

We begin by contracting with a single $X$. For that, we first use  
 Proposition~\ref{leib-failure} and the Fialkow tractor identities 
$\hdb^A F_{AB}=0=X^A F_{AB}$, $0=F_A{}^A$ as well as the ansatz $X^AV_{AB}=X_B V$,
to obtain 
\begin{align*}
X^A T_{ABC}\, =& \tfrac{1}{(d-1)(d-2)} X_{[B} \hdb_{C]} K\, , \\
X^C T_{ABC} \,=& -F_{AB} - \tfrac{K}{(d-1)(d-2)} \bar{h}_{AB} - \tfrac{1}{2(d-1)(d-2)} X_A \hdb_B K \, ,\\
 X^A X_{[B} V_{C]A}=&\: 0\, .
\end{align*}
Now $X^A W_{ABCD}^\top = 0$, so we need to show contraction of the right-hand side of Equation~\nn{Wt-Wb} with $X^A$ vanishes. Clearly $X^A \bar{W}_{ABCD}=0$ and the contraction of $X$ with the second term also vanishes because $X^A L_{AB}  =0$. Using $X^AF_{AB}=0$ along with the identities of the above display, the remaining terms are
\begin{align*}
-2 X_{[C} F_{D]B} + \tfrac{2}{(d-1)(d-2)} X_{[D} \bar{h}_{C]B} K &\\
+2 X_{[C} F_{D]B} - \tfrac{2}{(d-1)(d-2)} X_{[D} \bar{h}_{C]B} K - \tfrac{1}{(d-1)(d-2)} X_B X_{[C} \hdb_{D]} K  & \\
+ \tfrac{1}{(d-1)(d-2)} X_B X_{[C} \hdb_{D]} K & = 0\, .
\end{align*}
Because the $W$-tractor has Weyl curvatures symmetries this establishes consistency of the identity when any index is contracted with a canonical tractor.
\medskip

Next, note that $\bar{Z}^A_a \bar{Z}^B_{b} \bar{Z}^C_c \bar{Z}^D_dW^\top_{ABCD}=W^\top_{abcd}$ and that the
 trace-free Gau\ss\ equation says
\begin{align*} \label{weylt-weylb}
W_{abcd}^\top = \bar{W}_{abcd} - 2 \IIo_{a[c} \IIo_{d]b} - 2 \bar{g}_{a[c} \Fo_{d]b} + 2 \bar{g}_{b[c} \Fo_{d]a} - \tfrac{2}{(d-1)(d-2)} \bar{g}_{a[c} \bar{g}_{d]b} K.
\end{align*}
It is easy to check, using $X_A \bar Z^A_a=0$, that this is the right hand side of
Equation~\nn{Wt-Wb}  when contracted with this combination of injectors.
\medskip

The last case to check is contraction of Equation~\nn{Wt-Wb} by $\bar{Z}^A_a \bar{Z}^B_b \bar{Z}^C_c \bar{Y}^D$.  By directly applying the definitions of $L_{AB}$, $F_{AB}$, $\bar W_{ABCD}$, and the hatted hypersurface Thomas-$D$ operator, after some computation, 
we find for the right-hand side
\begin{align*}
&\bar{Z}^A_a \bar{Z}^B_b \bar{Z}^C_c \bar{Y}^D \Big[\overline{W}_{ABCD} - 2 L_{A[C} L_{D]B} - 2 \overline{h}_{A[C} F_{D]B} + 2 \overline{h}_{B[C} F_{D]A} - \tfrac{2}{(d-1)(d-2)} \overline{h}_{A[C} \overline{h}_{D]B} K \\[1mm]
&\phantom{\bar{Z}^A_a \bar{Z}^B_b \bar{Z}^C_c \bar{Y}^D \Big[\overline{W}_{ABCD}}+ 2 X_{[A} T_{B]CD} +2 X_{[C} T_{D]AB} \Big] \\
=&\ \overline{C}_{abc}  + \tfrac{2}{d-2} \IIo_{c[a} \bar \nabla \csdot \IIo_{b]}
+ 2\bar \nabla_{[a} \Fo_{b]c}
 - \tfrac{1}{(d-1)(d-2)} \bar{g}_{c[a} \bar \nabla_{b]} K.
\end{align*}
We must then contract with the left-hand side with the same injector product. 
Because we use a scale where $H^g=0$,
$$\bar{Z}^A_a \bar{Z}^B_b \bar{Z}^C_c \bar{Y}^D W_{ABCD}^\top = C_{abc}^\top \big|_{\Sigma}\, .$$
Showing that this contraction yields equality in Equation~\nn{Wt-Wb} is now equivalent to showing that, when $H^g = 0$,
the projected Cotton tensor is related to the hypersurface Cotton tensor by
$$C_{abc}^\top \big|_{\Sigma} = \overline{C}_{abc} + 2\bar \nabla_{[a} \Fo_{b]c} + \tfrac{2}{d-2} \IIo_{c[a} \bar \nabla \csdot \IIo_{b]} - \tfrac{1}{(d-1)(d-2)} \bar{g}_{c[a} \bar \nabla_{b]} K.$$
For that, first observe that
the projected covariant derivative of the first fundamental
  form obeys
$$\nabla^\top_a {\rm I}_{bc} \big|_{\Sigma} = -\II_{ab} \hat n_c - \II_{ac} \hat n_b \stackrel{\sss H^{\sss g}\!=0} = -\IIo_{ab} \hat n_c - \IIo_{ac} \hat n_b.$$
Applying this identity, the trace-free Fialkow-Gauss Equation~\nn{Aaron}, and the traced Codazzi--Mainardi equation, the projected Cotton tensor can be written in terms of the hypersurface Cotton tensor:
\begin{align*}
C_{abc}^\top \big|_{\Sigma} &= \left(\nabla_a \Rho_{bc} \right)^\top - (a \leftrightarrow b) \\
&= \nabla^\top_a \Rho^\top_{bc} +\IIo_{ab} \Rho_{\hat{n} c}^\top + \IIo_{ac} \Rho_{\hat{n} b}^\top + \hat n_b \IIo_a^{d} \Rho_{d c}^\top + \hat n_c \IIo_a^{d} \Rho_{b d}^\top - ( a \leftrightarrow b)  \\
&= \nabla^\top_a \Rho^\top_{bc} + \hat n_b \IIo_a^{d} \Rho_{d c}^\top + \hat n_c \IIo_a^{d} \Rho_{b d}^\top - ( a \leftrightarrow b) + \tfrac{2}{d-2} \IIo_{c[a} \bar \nabla \csdot \IIo_{b]} \\
&= \bar \nabla_a \Rho^\top_{bc} - ( a \leftrightarrow b) + \tfrac{2}{d-2} \IIo_{c[a} \bar \nabla \csdot \IIo_{b]} \\
&= \bar \nabla_a \bar{\Rho}_{bc} + \bar \nabla_a \Fo_{bc} + \tfrac{\bar g_{bc}}{2(d-1)(d-2)} \bar \nabla_a K - ( a \leftrightarrow b) + \tfrac{2}{d-2} \IIo_{c[a} \bar \nabla \csdot \IIo_{b]} \\
&= \overline{C}_{abc} + 2\bar \nabla_{[a} \Fo_{b]c} + \tfrac{2}{d-2} \IIo_{c[a} \bar \nabla \csdot \IIo_{b]} - \tfrac{1}{(d-1)(d-2)} \bar{g}_{c[a} \bar \nabla_{b]} K.
\end{align*}
The second above relies on previous display, while the third relies the trace of the Codazzi--Mainardi equation.
The second last line uses the Fialkow--Gau\ss\ equation.
This completes the proof.

\end{proof} 

\begin{remark}
The corollary does not contain an explicit formula for the tractor $V_{AB}$ for reasons of brevity only. It  
measures  the difference between hypersurface and bulk Bach tensors. While explicit knowledge of the tensor  content of $V_{AB}$ is unnecessary for the computations that follow, it is nonetheless interesting. 
A computer-aided computation gives
$$V_{AB} = \bar q(U_{ab}) + \tfrac{1}{(d-1)(d-4)(d-5)}\hh \bar{h}_{AB} U,$$
where, for $d \neq 7$, \begin{equation}
\begin{split}
\Gamma(\ce \Sigma[-4]) \ni U 
=&\phantom{+}\tfrac{d-3}{d-1} K^2 + 2\IIo^{ad} \IIo^{bc} \overline{W}_{abcd} - 2(d-3) \IIo \csdot \Fo \csdot \IIo + (d-3)(d-5) \Fo^2  \\
&+ \tfrac{1}{d-7} \left(\bar{D}_A L_{BC} \right) \left(\hdb^A L^{BC} \right) -  L^{BC} N^A N^D \delta_R W_{ABCD}\, ,
\end{split}
\label{dasboot}
\end{equation}
\begin{align*}
\Gamma(\odot^2 T^*_{\circ} \Sigma[-2]) \ni
U_{ab} =&\,  \tfrac{1}{d-5} \overline{B}_{ab}
- \tfrac{1}{d-4} B^\top_{(ab)\circ}  
+\tfrac 2{d-7} E \Fo_{ab}
\\&
+ \tfrac{1}{6(d-1)(d-2)} \bar{\nabla}_{(a} \bar{\nabla}_{b)\circ} K
- \tfrac{1}{(d-2)(d-3)}  \TwoN_{ab} \bar{\nabla}\csdot \bar{\nabla} \csdot \TwoN 
+ \tfrac{1}{(d-2)^2} \bar{\nabla}\csdot \TwoN_{(a} \bar{\nabla}\csdot \TwoN_{b)\circ} 
\\&
+ 2 H C^\top_{\hat n(ab)} 
+  H^2 W_{\hat nab\hat n}  
- \tfrac{1}{3(d-1)(d-2)} K \overline{\Rho}_{(ab)\circ} 
- \tfrac{1}{d-3} \TwoN_{ab} \TwoN  \csdot \overline{\Rho}  
\, .
\end{align*}
Here the operator $E\in \operatorname{End}\big(\Gamma(\odot_\circ^2 T^*\Sigma))$ is defined by
$$E  \mathring X_{ab}:=
\bar \Delta \mathring X_{ab}- 
\bar\nabla_{(a}\nabla \csdot \mathring X_{b)\circ}
-(d-3) \bar \Rho_{c(a}\mathring X^c_{b)\circ}
-2 \bar J \mathring X_{ab}\, .$$
When  $\bar d=6$, the operator $E$ defines a conformally invariant map
$\Gamma(\odot_\circ^2 T^*\Sigma)\to \Gamma(\odot_\circ^2 T^*\Sigma[-2])$. Note that this calculation recovers the fifth fundamental form described in the introduction.
\end{remark}

\color{black}

A further corollary of the Gau\ss--Thomas equation in Theorem~\ref{GTF} characterizes the Fialkow tractor in terms of the $W$-tractor and tractor second fundamental form and
generalizes Equation~\nn{Aaron}.
\begin{corollary} \label{Fialkow-tractor1}
Let $7\neq d>5$. Then the Fialkow tractor obeys
\begin{align*}
(d-3) F_{AB} &= \left( L^C_A L_{CB} - \tfrac{1}{d-1} K \overline{h}_{AB} - W_{NABN} \right) - \tfrac{1}{d-1} X_{(A} \hdb_{B)} K
- \tfrac{1}{(d-4)(d-5)} X_A X_B U \, ,
\end{align*}
where $U\in \Gamma(\ce \Sigma[-4])$ is the density built from curvatures given in Equation~\nn{dasboot}.
\end{corollary}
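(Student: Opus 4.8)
The plan is to obtain the identity by taking a single tractor trace of the Gau\ss--Thomas equation of Corollary~\ref{GTE}. Concretely, I would contract both sides of Equation~\nn{Wt-Wb} with the inverse hypersurface tractor metric $\overline h^{AC}$, which along $\Sigma$ agrees with $h^{AC}$ on indices already tangential. For the left-hand side, since the $W$-tractor is totally trace-free with respect to $h$ (Lemma~\ref{insertions}(iii)) and $\overline h^{AC}=h^{AC}-N^AN^C$ on $\Sigma$, the Weyl symmetries of the $W$-tractor collapse this trace to the double-normal component $W_{NABN}$, whose projecting part is precisely the $W_{\hat n ab\hat n}$ appearing in the Fialkow--Gau\ss\ equation~\nn{Aaron}; this is already consistent with the expected projecting part of $(d-3)F_{AB}$ via~\nn{Aaron}.

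Next I would process the right-hand side term by term, using that $L_{AB}$ and $F_{AB}$ are symmetric, trace-free and annihilated by contraction with $X$; that $\overline h^A{}_A=\bar d+2$; the ansatz $X^AV_{AB}=X_BV$; the nullity $\hdb^A\hdb_A K=0$ and the weight identity $X^A\hdb_A T=wT$ for weight-$w$ tractors; and the non-derivation relation~\nn{usemeoft} for the hatted hypersurface Thomas-$D$ operator. The $\overline W_{ABCD}$ term drops by trace-freeness; the $L_{A[C}L_{D]B}$ term yields $L^C_AL_{CB}$; the two $\overline h F$ terms give $-(d-1)F_{AB}$, while the $X_{[A}T_{B]CD}$ and $X_{[C}T_{D]AB}$ terms contribute a further $+2F_{AB}$, obtained from the primary-part relation $X^C\hdb_D F_{CB}=-F_{DB}$ (itself a consequence of~\nn{usemeoft} since $X^CF_{CB}=0=\hdb^CF_{CB}$), so the $F$-coefficient assembles to $-(d-3)$; the $\overline h_{A[C}\overline h_{D]B}K$ term together with the $K$-parts of the $T$-contractions combines to $-\tfrac1{d-1}K\overline h_{AB}$; and the $X\hdb K$ pieces from the $T$-contractions together with the $X_AX_{[C}\hdb_{D]}\hdb_B K$ terms combine to $-\tfrac1{d-1}X_{(A}\hdb_{B)}K$, here using $X^A\hdb_D\hdb_A K=-3\hdb_D K$ (from $X^A\hdb_A K=-2K$, $\hdb^A\hdb_A K=0$ and~\nn{usemeoft}). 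Finally the $X_AX_{[C}V_{D]B}$ terms collapse to $X_AX_B(2V-V^C{}_C)$.

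To finish, I would substitute the explicit shape of $V_{AB}$ recorded in the Remark preceding the corollary, $V_{AB}=\bar q(U_{ab})+\tfrac1{(d-1)(d-4)(d-5)}\overline h_{AB}U$, which gives $V=\tfrac{U}{(d-1)(d-4)(d-5)}$ and $V^C{}_C=\tfrac{(d+1)U}{(d-1)(d-4)(d-5)}$, hence $2V-V^C{}_C=-\tfrac{U}{(d-4)(d-5)}$; this produces exactly the $-\tfrac1{(d-4)(d-5)}X_AX_BU$ tail. Collecting and solving for $(d-3)F_{AB}$ then yields the stated formula; the restrictions $d>5$ (already needed for Corollary~\ref{GTE}) and $d\neq7$ (needed both for $U$, $V_{AB}$ and for the $\hdb$-algebra acting on $\hdb K$) are precisely what keep all operators non-degenerate in this computation. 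I expect the main obstacle to be the bookkeeping of the non-derivation failure terms for $\hdb$ when the canonical tractor is contracted into $\hdb F$, $\hdb K$ and $\hdb\hdb K$: one must verify that the several independently $K$-weighted contributions really do conspire into the single coefficient $-\tfrac1{d-1}$ multiplying both $K\overline h_{AB}$ and $X_{(A}\hdb_{B)}K$, which is the only delicate point. (Alternatively one could verify directly, using Lemma~\ref{D-free-tractor}, that the right-hand side is $X$- and $\hdb$-primary and trace-free with the correct projecting part $(d-3)\Fo_{ab}$, but the trace-of-Gau\ss--Thomas route above is more economical.)
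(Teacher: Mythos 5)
Your proposal is correct and follows exactly the route the paper takes: the paper's proof of Corollary~\ref{Fialkow-tractor1} consists precisely of tracing Equation~\nn{Wt-Wb} with the hypersurface tractor metric and reading off $U$ from the preceding remark. Your term-by-term bookkeeping (including the use of~\nn{usemeoft} to handle $X^C\hdb_D F_{CB}$ and $X^A\hdb_D\hdb_A K$, and the role of $d\neq 7$ in keeping $\hdb$ defined on the weight $-2$ density $K$) simply supplies the details the paper leaves implicit.
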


\begin{proof}
The proof  amounts to tracing Equation~\nn{Wt-Wb} with the hypersurface tractor metric. Note that $U$ is given in the previous remark.
\end{proof}

 \bigskip

\subsection{Normal Operators}

Conformally invariant  operators that take derivatives in directions normal to the hypersurface are important, especially for the construction of higher fundamental forms.
The first of these is obtained by asking
 how the Thomas-$D$ operator acts in normal directions; indeed
 there exists  a canonical conformal Robin operator produced this way~\cite{Goal}:

\begin{definition}\label{swallow}
The operator
$$
\delta_R: \Gamma(\ct^\Phi M[w])
\to \Gamma(\ct^\Phi M[w-1])\big|_\Sigma
$$
defined, for $w\neq 1-\frac d2$, by 
$$
\delta_R T := N^A \hd_A T \big|_\Sigma\, ,
$$
and, for $w=1-\frac d2$, by \begin{equation}\label{Rob}
\delta_R T \stackrel g{:=} \big(\nabla_{\hat n} -(1-\tfrac d2) H^g\big) T\big|_\Sigma\, ,
\end{equation}
for any $g\in\cc$,
is termed the {\it tractor Robin operator}.
\end{definition} 

\begin{remark}\label{throwmehere}
In fact, for any weight $w$ tractor $T$ and $g\in \cc$, the tractor Robin operator obeys
$$
\delta_R T\stackrel g=
(\nabla_{\hat n} T - w H^g T)|_\Sigma\, ,
$$
which shows, by a dimensional continuation argument, that Equation~\nn{Rob} defines a tractor in the stated codomain.
Also, in the special case that $T$ is a conformal density, the above is the conformally invariant Robin combination of Neumann and Dirichlet operators first constructed by Cherrier~\cite{Cherrier}.
\end{remark}

Higher order conformally invariant analogs of the Robin operator acting on  a weight $w$ tractor~$T$, can  be defined as follows~\cite{GPt} 
$$
\Gamma(\ct^\Phi M[w-k])\big|_\Sigma\ni\delta^{(k)}_R T:=
 N^{A_1} \cdots
N^{A_{k-1}} 
\delta_{\rm R}
D_{A_1}\cdots D_{A_{k-1}}T\, .
$$
It is not difficult to verify that the operator $\delta^{(k)}_R$ has transverse order at most~$k$ (see Section~\ref{herecomethI}).  This bound is saturated at generic weights. One such example of the above family of operators is given below.
\begin{lemma} 
Let $\tau \in \Gamma(\ce M[w])$. Then, given $g \in \cc$ where $\tau=[g;t]$, 
\begin{align*}
\delta_R^{(2)} t \stackrel g=& (d+2w-3) \Big[ \hat n^a \hat n^b \nabla_a \nabla_b t - (2w-1) H \nabla_{\hat n} t + w(P_{\hat n \hat n} + \tfrac12 (2w-1) H^2 )t \Big] \\
& -\Big[\bar{\Delta} + w \Big(\bar{J} + \tfrac{1}{2(d-2)} K \Big) \Big] t\,.
\end{align*}
When $w = \frac{3-d}{2}$,
$$\delta_R^{(2)} =
-\square_Y -\tfrac{d-3}{4(d-2)} K \, ,$$
where $\square_Y := \bar{\Delta}+\tfrac{3-d}{2} \bar{J}$ is the Yamabe operator of $(\Sigma,\bar \cc)$. \end{lemma}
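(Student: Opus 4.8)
The plan is to compute $\delta_R^{(2)} t = N^A \delta_R D_A t$ directly in a chosen scale $g \in \cc$, using the explicit formula~\nn{vaccinateThomas} for the Thomas-$D$ operator and Remark~\ref{throwmehere} for the Robin operator. First I would expand $D_A t$ for $\tau = [g;t] \in \Gamma(\ce M[w])$: since $t$ is a density, its tractor slots are simple and $D_A t = (d+2w-2)w\, Y_A t + (d+2w-2) Z_{aA} \nabla^a t - X_A(\Delta + wJ)t$. Then I would apply $\delta_R D_{A}$, using that $\delta_R$ acting on a weight $w-1$ tractor is $(\nabla_{\hat n} - (w-1)H)(\,\cdot\,)|_\Sigma$, and contract with $N^A \stackrel{g}{=} \hat n^a Z^A_a - H Y^A$ along $\Sigma$. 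This requires knowing how $\nabla$ acts on the injectors $X, Y, Z$ (recorded after Equation~\nn{Iconnect}) and the contractions $N^A Y_A = -H$, $N^A Z_{aA} = \hat n_a$, $N^A X_A = 0$ on $\Sigma$.

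The bookkeeping naturally organizes into the three slots. The $Y_A t$ term, after the Robin derivative and contraction with $N^A$, produces the $w(d+2w-2)$ times a curvature-and-$H$ combination, together with a $\nabla_{\hat n}$ contribution from differentiating $t$; the $Z_{aA}\nabla^a t$ term produces the $\hat n^a \hat n^b \nabla_a\nabla_b t$ piece (from $\hat n^a \nabla_{\hat n}\nabla_a t$) plus tangential pieces that must be reorganized into a hypersurface Laplacian using the Gau\ss\ formula~\nn{GF} and the definition of $\II$; and the $X_A$ slot, which is annihilated by $N^A X_A = 0$ but still contributes through $\nabla_{\hat n} X_A = Z_{\hat n A}$, delivers the remaining $-(\Delta + wJ)t$ which must be split as $\Delta = \nabla_{\hat n}\nabla_{\hat n} + H\nabla_{\hat n} + \bar\Delta + (\text{lower order})$ using the standard decomposition of the bulk Laplacian along a hypersurface. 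Collecting all terms, converting $P_{\hat n\hat n}$, $\bar J$, $H$, and $\IIo^2 = K$ appropriately via the Fialkow--Gau\ss\ equation~\nn{Aaron} and the trace of the Gau\ss\ equation, and carefully tracking every coefficient of $w$ should reproduce the first displayed formula.

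For the specialization $w = \tfrac{3-d}{2}$, one substitutes $d + 2w - 3 = 0$, which kills the entire first bracket, leaving only $-[\bar\Delta + w(\bar J + \tfrac{1}{2(d-2)}K)]t = -[\bar\Delta + \tfrac{3-d}{2}\bar J]t - \tfrac{3-d}{2}\cdot\tfrac{1}{2(d-2)}K\, t = -\square_Y t - \tfrac{d-3}{4(d-2)} K\, t$, where the sign works out since $\tfrac{3-d}{2}\cdot\tfrac{1}{2(d-2)} = -\tfrac{d-3}{4(d-2)}$; this is immediate once the general formula is in hand. The main obstacle I expect is not any single identity but the disciplined organization of the hypersurface decomposition of $\Delta t$ and of $\hat n^a\nabla_a$ acting on $Z$-slot terms: one must use the Gau\ss\ formula, the definition~\nn{IIis} of $\II$, and the conversion $\II = \IIo + H\bar g$ consistently, and keep the distinction between intrinsic $\bar\nabla,\bar\Delta,\bar J$ and the ambient $\nabla,\Delta,J,P$ straight, so that the curvature terms assemble exactly into the stated combination $P_{\hat n\hat n} + \tfrac12(2w-1)H^2$ and $\bar J + \tfrac{1}{2(d-2)}K$ with no residual uncancelled pieces. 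A useful sanity check along the way is to verify conformal invariance of the final expression, or equivalently to confirm agreement with the second-order normal operator $\delta_R^{(2)} f$ quoted in the introduction when $w = 0$.
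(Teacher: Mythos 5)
Your strategy is sound and, unlike the paper---which disposes of this lemma with a bare citation to the reference for normal operators---it amounts to a genuine, self-contained proof. Expanding $D_A t$ in the scale $g$ via Equation~\nn{vaccinateThomas}, applying $\delta_R=(\nabla_{\hat n}-(w-1)H)|_\Sigma$ to the weight $w-1$ tractor $D_At$, and then contracting with $N^A$ using $\nabla_bX^A=Z^A_b$, $\nabla_bZ^A_a=-P_{ab}X^A-\bm g_{ab}Y^A$, $\nabla_bY^A=P^a_bZ^A_a$ does work: the $Z$-slot and the $X$-slot (which, as you correctly note, survives only through $\nabla_{\hat n}X_A=Z_{\hat nA}$) combine to give the coefficient $(d+2w-2)-1=d+2w-3$ of $\hat n^a\hat n^b\nabla_a\nabla_bt$, and the remaining bookkeeping---$\Delta t|_\Sigma=\hat n^a\hat n^b\nabla_a\nabla_bt+\bar\Delta\bar t+(d-1)H\nabla_{\hat n}t$ together with $J|_\Sigma=\bar J+P_{\hat n\hat n}-\tfrac{d-1}{2}H^2+\tfrac{1}{2(d-2)}K$ from tracing the Fialkow--Gau\ss\ equation~\nn{Aaron}---reproduces every coefficient in the first display. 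I have checked the $H\nabla_{\hat n}t$, $P_{\hat n\hat n}t$ and $H^2t$ coefficients and they assemble exactly as stated.

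The one concrete defect is in the specialization $w=\tfrac{3-d}{2}$, which is the only step you carry out explicitly. The $K$-term of the general formula is $-\tfrac{w}{2(d-2)}Kt$, and at $w=\tfrac{3-d}{2}$ this equals $-\tfrac{3-d}{4(d-2)}Kt=+\tfrac{d-3}{4(d-2)}Kt$. Your own parenthetical identity $\tfrac{3-d}{2}\cdot\tfrac{1}{2(d-2)}=-\tfrac{d-3}{4(d-2)}$ shows precisely this, yet you conclude $-\tfrac{d-3}{4(d-2)}Kt$: the overall minus in front of the bracket has been dropped. In fact the two displays of the lemma as printed are mutually inconsistent by exactly this sign; the direct computation supports the first display, whose specialization is $-\square_Y+\tfrac{d-3}{4(d-2)}K$. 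A correct write-up must either fix this arithmetic or explicitly flag the discrepancy rather than assert that "the sign works out". Relatedly, your proposed sanity check against the introduction's $w=0$ formula would not confirm agreement: the lemma gives $(d-3)\big(\hat n^a\hat n^b\nabla_a\nabla_b+H\nabla_{\hat n}\big)f-\bar\Delta\bar f$, which is the \emph{negative} of the expression quoted in the introduction, so that check detects an overall normalization mismatch in the paper rather than validating your computation.
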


\begin{proof}
The proof can be found in \cite{GPt}.
\end{proof}

\color{black}

Observe that we can extend $\delta^{(k)}_R$ to
a map $\Gamma(T^\varphi M[w])\to 
\Gamma(T^\varphi \Sigma[w-k])$
 acting on trace-free sections of
 $T^\varphi M[w]$,
 by the composition of maps
 $
\bar q{}^*\circ \bar r \circ \otop \circ
\delta^{(k)}_R\circ
q 
$ whenever this is defined at weight~$w$, 
and will denote this also by
$\delta^{(k)}_R$ with $\delta^{(1)}_R\equiv \delta_R$. We use the same notation for the operator generalizing this to a slightly larger class of weights.  
\begin{lemma} \label{ho-robin} 
Let $t \in \Gamma(\odot^2_\circ T^*M[w])$. Then, given $g\in \cc$ and when $w\neq 3$,
\begin{eqnarray*}
\delta_R\: t_{ab}&\stackrel g=& \otop \left[\left(\nabla_{\hat n} - (w-2) H \right) t_{ab} + \tfrac{2}{w-3} \bar{\nabla}_{(a} t_{\hat n b) \circ}^\top  \right]
\, ,
\end{eqnarray*}
and when $w\neq 4,-d,2-d$,  
\begin{small}
\begin{eqnarray*}
\delta_R^{(2)} t_{ab}&\stackrel g= \otop \!\!\!\!\!&
\Big\{(d+2w-7) \Big[ \left(\hat n^c \hat n^d \nabla_c \nabla_d t_{ab} \right)
- (2w-5) H \left(\nabla_{\hat n} t_{ab} \right)
+ \tfrac{4}{w-4} \bar{\nabla}_{(a} \left( \hat{n} \csdot \nabla_{\hat n} t \right)_{b)}^\top \\&&\qquad\qquad\qquad
+ (w-2)\Big( P_{\hat n \hat n} + \tfrac{1}{2} (2w-5) H^2\Big) t_{ab} 
- \tfrac{4(w-2)}{w-4} H \bar{\nabla}_{(a} t_{\hat{n} b)}^\top
 - \tfrac{2w}{w-4} (\bar{\nabla}_{(a} H) t_{{\hat n} b)} \Big] \\[2mm]&&
- \tfrac{4}{d+w-2} \IIo_{ab} \bar{g}^{cd} \left(\nabla_{\hat n} t_{cd} \right)  \\[3mm]&&
- \big(\bar{\Delta}+(w-2)\bar{J} \big)t_{ab}^\top
-\tfrac{4}{w-4} \bar{\nabla}_{(a} \bar{\nabla} \csdot t_{b)}^\top
+ 4 \bar{P}_{(a} \csdot t_{b) }
+ 4 \Fo_{(a} \csdot t_{b)}
\\[.5mm]&&
+\tfrac{4}{d+w-2} \IIo_{ab} \bar{\nabla} \csdot t_{\hat{n}}^\top
- 4 \IIo_{c(a} \bar{\nabla}^c t_{\hat{n} b)}^\top 
- \tfrac{4}{w-4} \bar{\nabla}_{(a} \big(\IIo_{b)}^c t_{\hat{n} c}^\top\big)
- \tfrac{4(d+w-5)}{d-2} (\bar{\nabla} \csdot \IIo)_{(a} t_{\hat{n} b)}
 \\[1mm]&&
+\tfrac{2(d+2w-6)}{(w-3)(w-4)} \bar{\nabla}_a \bar{\nabla}_b t_{\hat n \hat n} 
- \tfrac{2(d+2w-6)}{w-3} \bar{P}_{ab} t_{\hat n \hat n}
- \tfrac{4(w-2)}{d+w-2} H \IIo_{ab} t_{\hat n \hat n}
\\[2mm]&&
+ 2 \IIo^2_{(a} \csdot t_{b)}
- \tfrac{4}{d+w-2} \IIo_{ab} \IIo \csdot t
- \left( \tfrac{2}{d-1} + \tfrac{w-6}{2(d-2)} \right) K    t_{ab} 
- 2 \IIo_{ab}^2 t_{\hat n \hat n}
 \Big\} \,.
\end{eqnarray*} 
\end{small}
\end{lemma}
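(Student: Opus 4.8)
The plan is to compute the two operators as tractor expressions and project to tensors only at the very end. By the definition recalled before the statement, on trace-free symmetric tensors $\delta_R^{(k)}$ is the composite $\bar q^{*}\circ\bar r\circ\otop\circ\delta_R^{(k)}\circ q$, so the first step is to insert $t_{ab}$ into the weight $w-2$ tractor $T^{AB}:=q(t_{ab})$ using the explicit splitting of Lemma~\ref{insertions}(ii); note that the lower slots of $T^{AB}$ carry $\bar\nabla\csdot t$ and $\bar\nabla\csdot\bar\nabla\csdot t$, and these feed the $\bar\nabla$-terms in the final formulae. All computations may be carried out in a fixed scale $g\in\cc$, and it is convenient to pick one with $H^g=0$ at the point of evaluation and to restore the $H$-dependence afterwards using the conformal covariance of $\delta_R^{(k)}$.

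For the first-order identity, apply the Thomas-$D$ operator to $T^{AB}$ via the injector expansion $D^A=Y^A(d+2\underline w-2)\underline w+\bm g^{ab}Z^A_a\nabla_b(d+2\underline w-2)-X^A(\Delta+\underline w J)$, contract with the normal tractor (equivalently use $\delta_R\stackrel g=\nabla_{\hat n}-\underline wH$ on $\Sigma$, as in Remark~\ref{throwmehere}), restrict to $\Sigma$, and then apply $\otop$, $\bar r$ and $\bar q^{*}$. Rewriting the ambient derivatives that appear — $\nabla_{\hat n}$ on the top slot and the $\nabla^\top$ produced by the slot-mixing of the tractor connection — in terms of $\bar\nabla$ by the Gauss formula~\nn{GF}, and absorbing the $\bar\nabla\csdot\IIo$ piece using the traced Codazzi--Mainardi equation, produces $\otop\big[(\nabla_{\hat n}-(w-2)H)t_{ab}+\tfrac{2}{w-3}\bar\nabla_{(a}t^{\top}_{\hat n b)\circ}\big]$; the $d$-dependent factors coming from the insertion and extraction maps combine to leave exactly the stated coefficient $\tfrac{2}{w-3}$.

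The second-order formula is obtained the same way but with far more bookkeeping. By definition $\delta_R^{(2)}T=N^A N^B\hd_B D_A T|_\Sigma$ with $N$ replaced by any extension (the answer being extension-independent), and since $N^AN^B$ is symmetric the $W$-tractor commutator term of Proposition~\ref{DD-comm} drops out, so this equals $N^AN^B D_{(A}D_{B)}T|_\Sigma$ up to a weight factor. One then expands both Thomas-$D$'s in injectors, pushes every $\nabla$ past $X$, $Z$, $Y$ using $\nabla_bX^A=Z^A_b$, $\nabla_bZ^A_a=-P_{ab}X^A-\bm g_{ab}Y^A$, $\nabla_bY^A=P^a_bZ^A_a$, restricts to $\Sigma$, and systematically eliminates every ambient quantity in favour of $\bar g$, $\bar\nabla$, $\bar P$, $\IIo$, $H$, the Fialkow tensor, the normal--normal Schouten component $P_{\hat n\hat n}$, and the projections $t^{\top}_{ab},\ t^{\top}_{\hat n a},\ t_{\hat n\hat n}$, using the Gauss formula~\nn{GF}, the Gauss equation~\nn{GE}, the Fialkow--Gauss equation~\nn{Aaron}, and the trace-free and traced Codazzi--Mainardi equations~\nn{trfreecod}. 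Finally $\otop\circ\bar r$ removes the $X$-slots and trace parts and $\bar q^{*}$ extracts the displayed tensor.

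The main obstacle is the length and delicacy of this last computation rather than any conceptual point. Most importantly, the naive transverse order of $N^AN^BD_AD_B$ is four, so one must verify that all transverse order three and four contributions — which involve normal derivatives of $P$, of the conormal $\hat n$, and of the Weyl and Cotton curvatures restricted to $\Sigma$ — cancel after restriction and contraction with the normal tractors; this cancellation is forced by conformal invariance, but checking it requires the same curvature manipulations used for $C^{\top}_{abc}|_\Sigma$ in Section~\ref{GTE-proof}. Useful internal consistency checks are that the density-valued specialisation reproduces the scalar $\delta_R^{(2)}$ formula of~\cite{GPt}, and that every coefficient is consistent with the conformal transformation of $\delta_R^{(2)}$.
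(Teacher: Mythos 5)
Your strategy coincides with the paper's: insert $t$ into a trace-free tractor via Lemma~\ref{insertions}, apply the tractor-level normal operators, expand in injectors in a chosen scale, and return to tensors with $\bar q^{*}\circ\bar r\circ\otop$. The paper organizes the bookkeeping slightly differently — it first derives a closed formula for $\bar q^{*}\circ\bar r\circ\otop$ on a general element of $\Gamma(\odot^2_\circ \ct M[w])$ and then only computes the few components of $\delta_R^{(k)}q(t)$ that enter it — but that is an efficiency choice, not a different method. Three remarks. First, for the first-order identity no Codazzi--Mainardi identity is needed: the only non-tangential slot, $-\tfrac{2}{d+w-2}\hat n_{(a}\nabla\csdot t_{b)}$, is annihilated outright by $\otop$, and the coefficient $\tfrac{2}{w-3}$ is just the middle-slot coefficient of the general extraction formula evaluated at weight $w-3$. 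Second, the cancellation of the naive transverse-order-$3$ and $-4$ contributions in $N^AN^B\hd_BD_A$ is purely algebraic — $N\csdot X=0$ kills the $X$-slot Laplacians upon contraction, and the surviving second normal derivative arises from $\nabla_{\hat n}$ hitting the $X$-slot of $D_AT$ — so it does not require curvature manipulations of the kind used in Section~\ref{GTE-proof}. Third, and most importantly, for the second display your proposal is a computational programme rather than a proof: the content of the lemma is the precise list of coefficients, and these are not derived; the paper itself only obtains them via the computer algebra system FORM. Relatedly, the tactic of computing at $H=0$ and restoring the $H$-dependence by conformal covariance is legitimate in principle, but the restoration is itself a substantial conformal-transformation computation that the paper sidesteps by working in a general scale throughout.
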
 
\begin{proof}
We proceed by working in a generic dimension $d$ and with generic weight $w$ rank-$2$ trace-free symmetric tensors $t \in \Gamma(\odot^2_\circ T^*M[w])$. For generic weights and dimensions, we may compute the composition of maps $\bar q{}^*\circ \bar r \circ \otop \circ \delta^{(k)}_R\circ q$. In both the cases $k=1,2$, we must compute $\bar{q}^* \circ \bar{r} \circ \otop$, so  we first  compute this operator on a general rank-$2$ tractor $T \in \Gamma(\odot^2_\circ \ct M[w])$. Note that $\otop$ maps $T \mapsto \mathring{\bar{T}}$ where $\mathring{\bar{T}} := (I_A^{A'} I_B^{B'} - \tfrac{1}{d+1} I_{AB} I^{A'B'}) T_{A'B'}|_{\Sigma}$. Because $\bar{r}$ achieves $X \csdot 
\bar r\big(
\mathring{\bar{T}} \big)= 0$, we have that $(\bar{q}^* \circ \bar{r} \circ \otop)(T)_{ab} = \bar{Z}_{Aa} \bar{Z}_{Bb} \bar{r}(\mathring{\bar{T}})^{AB}$. Thus, for generic dimensions and weights,  we have that
\begin{align*}
(\bar{q}^* \circ \bar{r} \circ \otop)(T)_{ab} 
=& \phantom{-} \bar{Z}_{Aa} \bar{Z}_{Bb} \mathring{\bar{T}}^{AB} \\
&-\tfrac{2}{w} \bar{Z}_{B(a} \bar{\nabla}_{b)} (X_C \mathring{\bar{T}}^{CB}) 
+ \tfrac{2}{w(d+1)} \bar{\bm g}_{ab} \hdb_C (X_D  \mathring{\bar{T}}^{CD}) \\
&+\tfrac{1}{w(w+1)} \bar{Z}_{Bb} \bar{\nabla}_a \hdb^B (X_C X_D  \mathring{\bar{T}}^{CD}) \\
&+ \tfrac{8 \bar{\bm g}_{ab}}{(d-1)(d+1)(d+2w+1)} \hdb_C (X_D  \mathring{\bar{T}}^{CD})\, .
\end{align*}
We now  compute each of the terms above. First, using Equation~\nn{Iconnect}, we obtain
$$\bar{Z}_{Bb} \bar{\nabla}_a (X_C \mathring{\bar{T}}^{CB}) = {\bar{\nabla}}_a^{} \mathring{\bar{T}}^{+}_b + \bar{\bm g}_{ab} \mathring{\bar{T}}^{+-}  + \bar{P}_{ab} \mathring{\bar{T}}^{++}  \,,$$
$$\bar{Z}_{Bb} \bar{\nabla}_a \hdb^B (X_C X_D  \mathring{\bar{T}}^{CD}) =\big[
 \bar{\nabla}_a \bar{\nabla}_b- \tfrac{ \bar{\bm g}_{ab}}{d+2w+1} (\bar{\Delta} + (w+2) \bar{J})  + (w+2) \bar{P}_{ab} \big]  \mathring{\bar{T}}^{++}\,.$$
Next (see for example the Appendix B of~\cite{Abrar}), we have
$$\hdb_C (X_D  \mathring{\bar{T}}^{CD}) = \tfrac{1}{d+2w-1} \big(-[\bar{\Delta} - (d+w-1) \bar{J}] \mathring{\bar{T}}^{++} + (d+2w+1) \bar{\nabla}_a \mathring{\bar{T}}^{+a} + (d+w-1)(d+2w+1) \mathring{\bar{T}}^{+-} \big)\,.$$
Finally, because $ \mathring{\bar{T}}^{AB} $ is hypersurface {tractor}-trace-free, we have that $0 =  \mathring{\bar{T}}_a^a + 2  \mathring{\bar{T}}^{+-}$. Thus, 
$$\bar{Z}_{Aa} \bar{Z}_{Bb} \mathring{\bar{T}}^{AB} + \tfrac{2 \bar{\bm g}_{ab}}{d-1}  T^{+-} = \mathring{\bar{T}}_{ab} - \tfrac{1}{d-1} \bar{\bm g}_{ab} \mathring{\bar{T}}_c^c =: \mathring{\bar{T}}_{(ab)\circ}\,.$$
Substituting these identities into the above display for $(\bar{q}^* \circ \bar{r} \circ \otop)(T) $ gives
\begin{align*}
(\bar{q}^* \circ \bar{r} \circ \otop)(T)_{ab} 
=& 
 \tfrac{1}{w(w+1)} \bar{\nabla}_{(a} \bar{\nabla}_{b)\circ} \mathring{\bar{T}}^{++} 
 - \tfrac{2}{w} \bar{\nabla}^{}_{(a}  \mathring{\bar{T}}^{+}_{b) \circ}
- \tfrac{1}{w+1} \bar{P}_{(ab)\circ}  \mathring{\bar{T}}^{++} 
+ \mathring{\bar{T}}_{(ab)\circ} \,.
\end{align*}
Proving the lemma now amounts to computing the components of $\mathring{\bar{T}}$  when $T = \delta_{\rm R} \circ q(t)$ or $T = \delta_{\rm R}^{(2)} \circ q(t)$. Note that by construction,
\begin{align*}
\mathring{\bar{T}}^{++} &\eqSig T^{++} \, ,\\
\mathring{\bar{T}}^{+}_{b} &\eqSig \bar{\bm g}^{c}_{b} T^{+}_{c}\, , \\
\mathring{\bar{T}}_{(ab)\circ} &\eqSig \bar{\bm g}^{c}_a \bar{\bm g}^{d}_b T_{cd} + \tfrac{1}{d-1}
\bar{\bm g}_{ab} (2 T^{+-} + T_{\hat n \hat n} )\,.
\end{align*}
Thus, we can simplify our calculations by only computing the components of $T$ appearing on the right hand side above.

\bigskip
\noindent
We begin with $T = \delta_{\rm R} \circ q(t)$. We can use Equation~\nn{usemeoft} and Definition~\nn{swallow} to show that
\begin{align*}
X_A T^{AB} = X_A \delta_{\rm R} q(t)^{AB} &= \delta_{\rm R} X_A q(t)^{AB} - N_A q(t)^{AB} \\
&= - N_A q(t)^{AB} \,,
\end{align*}
where the second equality holds because $X \csdot q(t_{ab}) = 0$ by definition. Thus,  using~\nn{insertions}, we have
$$T^{++} = 0\,, \qquad T^{+}_a = -t_{\hat{n} a}\,, \qquad T^{+-} = \frac{\hat{n} \csdot \nabla \csdot t}{d+w-2}\,.$$
Using again~\nn{insertions} as well as Equation~\nn{Iconnect}, Remark~\nn{throwmehere}, and the fact that $q(t)$ has weight~$w-2$, we have that
$$T_{ab} = \nabla_{\hat n} t_{ab} - (w-2) H t_{ab} - \frac{2 \hat{n}_{(a} \nabla \csdot t_{b)}}{d+w-2}\, ,$$
so that (in the scale $g$)
\begin{align*}
\mathring{\bar{T}}_{(ab)\circ} &\eqSig \top [\nabla_{\hat n} t_{ab} - (w-2) H t_{ab}] + \tfrac{1}{d-1}\bar{g}_{ab} (\hat{n}^a \hat{n}^b \nabla_{\hat n} t_{ab} - (w-2) H t_{\hat n \hat n}) \\
&\eqSig  \otop [\nabla_{\hat n} t_{ab} - (w-2) H t_{ab}]\,.
\end{align*}
Combining  the above and noting that $T$ has weight $w-3$, we have
$$\bar{q}^* \circ \bar{r} \circ \otop \circ \delta_{\rm R} \circ q(t) =  \otop \Big[\nabla_{\hat n} t_{ab} - (w-2) H t_{ab} + \tfrac{2}{w-3} \bar{\nabla}_{(a} t_{\hat n b)\circ}^\top \Big]\,.$$

\bigskip
\noindent
We finish the proof by handling $T = \delta_{\rm R}^{(2)} \circ q(t)$. The tractor computations in this case become unwieldy but not difficult. To manage these, we used the computer algebra system FORM~\cite{Jos};  this computation is documented in~\cite{FormFiles}. The lemma results from this computation. 
\end{proof}

To uncover further invariants of the embedding $\Sigma\hookrightarrow (M,\cc)$ and associated operators, we introduce more powerful holographic machinery.

\subsection{Holography}\label{herecomethI}

A useful way to treat hypersurfaces is in terms of defining functions: recall that $s$ is a {\it defining function}
for $\Sigma$ if $ s \in C^\infty{M}$, $\Sigma=\{P\in {M}\,  |\, s(P)=0\}$, and $\ext s|_\Sigma$ is nowhere vanishing. We shall also assume that $s>0$ on $M^+$.
A weight $w=1$ conformal density given by $\sigma=[g;s]$, where $s$ is a defining function and $g\in\cc$,  is called a {\it defining density}.
Defining functions can be used to analyze hypersurface invariants; for example, the second fundamental form can be written as $\II_{ab} = {\mathscr P}_{ab}\big|_{\Sigma}
$
where 
$$
{\mathscr P}_{ab}=
\Big(
\nabla_a - |\ext s|_g^{-1}\, (\nabla_a s) \nabla_{|\ext s|_g^{-1} \operatorname{grad}  s}\Big)\, 
\frac{\nabla_b s}{\hh |\ext s|_g}\, .
$$ The quantity $ {\mathscr P}_{ab}$ is an example of a {\it
  preinvariant}, namely a smooth (and suitably polynomial) function of
generic metrics $g$ and defining function $s$, whose restriction to
$\Sigma$ is independent of the choice of defining function $s$, and
therefore defines some hypersurface invariant, see~\cite{Will1} for
the detailed definition. Let us choose local coordinates $(s,y^i)$ in a
neighborhood of $\Sigma$ where $i=1,\cdots, d-1$ 
 such that along $\Sigma$  the vector fields $\partial/\partial y^i$ are tangent to
$\Sigma$. Then the {\it transverse order} of a preinvariant~${\mathscr P}$
is defined by writing~${\mathscr P}$ as a function of the coordinate components of $g$ and their
derivatives, and computing the minimum, over all such coordinate representations of~${\mathscr P}$, of the highest order of $\partial/\partial s$
derivatives of~$g$ upon restriction 
to $\Sigma$. 
The transverse order is an invariant of the corresponding hypersurface
invariant: for example, the second fundamental form has transverse
order one.  We will also say that an operator $\operatorname{O}$ has
transverse order $k\in {\mathbb Z}_{\geq 0}$ when there exists $v$ in
the domain of $\operatorname{O}$ such that $ \operatorname{O}(s^k
v)\big|_\Sigma\neq 0$, but $ \operatorname{O}(s^{k+1}
v^\prime)\big|_\Sigma=0$, for all $v^\prime$ in the domain of
$\operatorname{O}$.

We can use this notion of preinvariants to show that Definitions~\ref{SLAM} and~\ref{DUNK} are well-posed, as established by the following result.

\begin{lemma}\label{Sparky}
Suppose $2 \leq n\leq d-1$. Then if the conformal embedding $\Sigma\hookrightarrow (M,\cc)$ is such that at least one $\ell$th fundamental form vanishes for every $2\leq \ell <n$, then up to an overall non-zero coefficient, there is a unique ${\it n}$th fundamental form. 
\end{lemma}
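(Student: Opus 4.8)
The plan is to argue by induction on $n$. Existence of at least one $n$th fundamental form is already furnished by the canonical extension construction of Section~\ref{can-ext-IIo} (for $n>\lceil\tfrac{d+1}{2}\rceil$ that construction only yields a \emph{conditional} fundamental form, but the running hypothesis---that some lower fundamental form vanishes at every intermediate order---is exactly what promotes it to a genuine $n$th fundamental form), so the real content of the lemma is uniqueness up to scale. For the base case $n=2$ the hypothesis is vacuous, and one checks directly that a natural, trace-free, symmetric section of $\odot^2 T^*\Sigma$ of transverse order at most one that transforms by $\Omega$ under $g\mapsto\Omega^2 g$ must be a multiple of $\IIo$: no other natural term is available at that transverse jet-order and weight (the candidate $\bar\nabla_{(a}\bar\nabla\csdot\IIo_{b)\circ}$ is of the right type but fails conformal covariance), and there is no purely intrinsic conformal invariant of $\Sigma$ of this rank and weight. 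For the inductive step I assume the statement for all $n'<n$; then, under the running hypothesis, every $\ell$th fundamental form with $2\le\ell<n$---being unique up to scale and admitting a vanishing representative---is identically zero. Let $L$ and $L'$ be two $n$th fundamental forms, each of transverse order $n-1$ and obeying $L^{\Omega^2 g}=\Omega^{3-n}L^g$; we must show $L'$ is a constant multiple of $L$.

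The first substantive step is to pin down the \emph{leading symbol}, i.e.\ the transverse-order-$(n-1)$ part. I would show that this part of any $n$th fundamental form coincides, up to an overall nonzero constant, with a fixed canonical tensor---for definiteness, the transverse-order-$(n-1)$ part of the appropriate jet of the trace-free Hessian $(\nabla^2 s+sP)_\circ$ produced in Section~\ref{can-ext-IIo}, equivalently $\delta^{(n-1)}_R$ applied to a canonical extension of the normal data. Concretely: represent $L$ by a preinvariant built from a generic metric and a defining function as in Section~\ref{herecomethI}; its top transverse-order part is linear in the $(n-1)$st transverse derivative of the metric with coefficients of strictly lower transverse order, and conformal covariance of $L$ forces this linear leading term to agree, modulo strictly lower transverse order, with a fixed multiple of the canonical symbol. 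The required uniqueness of this leading symbol is a transverse-degree statement of the type recorded in Lemma~\ref{tr-deg}, combined with the classification of leading conformally invariant symbols in the normal-operator calculus of~\cite{GPt}. I expect this to be the main obstacle: it is the single point where soft arguments do not suffice and the full holographic/normal-operator machinery is genuinely needed (and where the distinguished-weight exclusions implicit in the range $2\le n\le d-1$ must be tracked).

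Granting the leading-symbol step, fix the nonzero constant $\lambda$ so that $L$ and $\lambda L'$ share the same leading symbol, and set $T:=L-\lambda L'$. Then $T$ is a natural, trace-free, symmetric section of $\odot^2 T^*\Sigma$ of transverse order at most $n-2$ that still obeys $T^{\Omega^2 g}=\Omega^{3-n}T^g$, and it suffices to prove $T\equiv 0$. By the structure theory of natural hypersurface tensors---again through the canonical extension of Section~\ref{can-ext-IIo}, whereby anything of transverse order $k$ is, modulo transverse order $<k$, built from the $(k{+}1)$st fundamental form together with data intrinsic to $\Sigma$---and using that all the fundamental forms of orders $2,\dots,n-1$ vanish by the inductive step, an iteration from the top transverse order downward shows that every term of $T$ involving a transverse derivative of the metric carries a factor of one of these vanishing fundamental forms, so $T$ reduces to its purely intrinsic part, of transverse order zero. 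But a purely intrinsic, conformally covariant, trace-free symmetric $2$-tensor on the $\bar d$-manifold $\Sigma$ of weight $3-n$ cannot exist when $n\le d-1$, since then $3-n>2-\bar d$ while the lowest-weight such intrinsic invariant is the Fefferman--Graham obstruction tensor, of weight $2-\bar d$ (cf.~\cite{BEGr,GOadv} and~\cite{FG}). Therefore $T=0$, so $L=\lambda L'$, which closes the induction.
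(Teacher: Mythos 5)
Your proposal follows the same architecture as the paper's proof---pin down the transverse order $n-1$ part up to an overall constant, subtract, and push the difference down through the transverse orders using the hypothesis that the lower fundamental forms vanish---but two steps do not hold up. The leading-symbol step, which you rightly call the crux, is asserted rather than proved: Lemma~\ref{tr-deg} computes the transverse order of the specific operators $\delta^{(k)}_{d,w}$ and says nothing about the leading term of an arbitrary natural conformally covariant tensor, and the appeal to a ``classification of leading symbols'' in~\cite{GPt} is a citation, not an argument. You also misjudge the difficulty here: the paper's argument is soft. The leading term has the form ${\rm O}_{(ab)}{}^{cd}\,\partial_s^{n-1}g_{cd}|_\Sigma$ with ${\rm O}$ of transverse order zero; since $\partial_s$ carries weight $-1$ and the invariant has weight $3-n$, the operator ${\rm O}$ has weight $0$, hence is purely algebraic in $g$, $g^{-1}$ and the conormal, and $O(d)$/$O(d-1)$ representation theory then forces it to be a nonzero multiple of the trace-free tangential projector. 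No normal-operator machinery is needed.

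The more serious problem is your closing step. Having reduced $T=L-\lambda L'$ to a purely intrinsic, conformally covariant, trace-free symmetric two-tensor of weight $3-n$, you claim no such tensor exists for $n\le d-1$ on the grounds that the Fefferman--Graham obstruction tensor, of weight $2-\bar d$, is the ``lowest-weight'' intrinsic invariant. That is false: the obstruction tensor is distinguished by its derivative order, not by any weight extremality, and for instance $\big(\overline{W}_{acde}\overline{W}_{b}{}^{cde}\big)_{(ab)\circ}$ is a natural, conformally covariant, trace-free symmetric two-tensor on $(\Sigma,\cc_\Sigma)$ of weight $-2=3-5$, generically nonzero once $\bar d\ge 5$; algebraic Weyl contractions supply further intrinsic invariants at weights between $-2$ and $2-\bar d$. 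So your argument cannot exclude an intrinsic remainder already at $n=5$. Your instinct to confront the transverse-order-zero residue is sound, but the justification offered does not work; the paper instead runs the downward iteration all the way to the base case, reducing the whole question to uniqueness of the second fundamental form, where the relevant weight is $+1$ and the elementary weight/representation-theoretic exclusion genuinely applies. To repair your write-up you would need either to follow that reduction or to supply a correct reason why the intrinsic remainder vanishes at the intermediate weights.
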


\begin{proof}
We begin by considering the leading transverse order term in the preinvariant expression for an $n$th fundamental form.
Using coordinates $\{s, y^i\}$ in a collar neighborhood $I \times \Sigma \subset M$, we can express any preinvariant in terms of a defining function $s$, partial derivatives $\partial_s$ and $\partial_i=\partial/\partial y^i$, the metric components~$g_{ab}$, and the components of its inverse~$g^{ab}$. Because fundamental forms are conformally invariant, it is useful to view part of the preinvariant alphabet---the metric~$g$ and the defining function~$s$---as representatives of weighted densities: the metric is a weight $2$ representative of the conformal class of metrics $\cc$ and the defining function is a representative of a defining density $\sigma = [g;s]$ with weight $1$. We next determine the leading transverse order term in the preinvariant expression for an $n$th fundamental form using Definition~\ref{ff-def}.

From the definition of transverse order, the leading derivative term in the preinvariant for the~$n$th fundamental form must be of the form
$${\rm O}_{(ab)}{}^{cd}\, \partial_s^{n-1} g_{cd} |_{\Sigma}\, ,$$
where, as an operator, ${\rm O}_{(ab)}{}^{cd}$ has transverse order $0$. 
Moreover the above is annihilated by the normal and hypersurface trace.
Note that the weight of an $n$th fundamental form is $3-n$ and its transverse order is $n-1$. By considering only conformal transformations by a constant we may still  analyze expressions such as that displayed above in terms of weights. Because the weight of the operator $\partial_s$ is $-1$, the weight of the above display is $3-n+w_{\rm O}$ where the operator~${\rm O}_{(ab)}{}^{cd}$ has weight~$w_{\rm O}$. Hence we must  have that~$w_{\rm O} = 0$. By an elementary weight argument, ones sees that this operator is algebraic and therefore made only from the metric, its inverse, and a preinvariant for the conormal. Together with
elementary $O(d)$ and $O(d-1)$ representation theory, this   implies that (along $\Sigma$) this operator must be a non-zero multiple of the trace-free hypersurface projector, and hence 
proportional to
$$\otop_{\!\rm e}\hh\big(\partial_s^{n-1} g_{ab} 
\big)\big|_{\Sigma}\,,$$
where $\otop_{\!\rm e}$ is any preinvariant expression for the operator $\otop$.

Now suppose that $L^{(n)}$ and~$L^{(n)}{}'$ are two $n$th fundamental forms with the same coefficient for the above-displayed term and the conformal embedding $\Sigma \hookrightarrow (M,\cc)$ is such that an $\ell$th fundamental form vanishes for every $\ell < n$. We then seek to show that  $L^{(n)} - L^{(n)}{}' \eqSig 0$. Clearly, because $L^{(n)}$ and~$L^{(n)}{}'$ have the same leading term, their difference must have transverse order at most $n-2$. Put another way,
$$L^{(n)}_{ab} - L^{(n)}_{ab}{}' = {\rm P}_{(ab)}{}^{cd} \partial_s^{n-2} g_{cd}\big|_{\Sigma} + \text{ lower-order terms}\,,$$
where ${\rm P}_{(ab)}{}^{cd}$ is a preinvariant operator with weight $-1$ and transverse operator order $0$. But then $\partial_s^{n-2} g_{ab}|_{\Sigma}$ can be rewritten as an $(n-1)$th fundamental form plus lower-order terms. 
Thus, a proof via induction only requires that we check that the second fundamental form is unique up to an overall non-zero  coefficient, which is again easily verified
by an elementary weight and representation theoretic argument. 
\end{proof}

Given a hypersurface embedding $\Sigma \hookrightarrow (M,{\mathcal S})$,
where $M$ is a smooth manifold endowed with any structure ${\mathcal S}$, and a hypersurface invariant $\overline{\mathcal P}$, we call a smooth extension ${\mathcal P}$ of $\overline{\mathcal P}$ to $M$, a {\it holographic formula} when 
${\mathcal P}$ is canonically determined by 
the structure ${\mathcal S}$. 
In many cases, we use this terminology also when ${\mathcal P}$ (or ${\mathcal P}({\mathcal S})$)  is only uniquely determined up to some asymptotic order in a defining function. The notion of a holographic formula extends to  geometric differential operators in a straightforward way.

\medskip

The possibility  to define hypersurface invariants
in terms of preinvariants that do not depend on any particular choice of defining function can  be leveraged by a clever choice of defining function; such a choice leads to holographic formul\ae.
In fact, a key result~\cite{Loewner,Aviles,Maz,ACF} is that there is a unique metric~$g^o$
on the $d$-dimensional manifold  $M^+$ with boundary $\partial M^+=\Sigma$, whose scalar curvature obeys
\begin{equation}\label{Sco}
Sc^{g^o}=-d(d-1)\, ,
\end{equation}
and such that in $M^+$
$$g^o=s^{-2} g\, ,$$
for some suitably smooth defining function~$s$ and $g\in \cc$.
The problem of finding $s$ such that  the pair $(g,s)$ give such a metric~$g^o$ is a general  case of the Loewner--Nirenberg problem~\cite{Loewner}. The metric ~$g^o$ is called the {\it singular 
Yamabe metric}. Observe that if $(g,s)$ determines the singular Yamabe metric~$g^o$, then so does  $(\Omega^2g,\Omega s)$. In other words, $g^o$ is determined by the conformal density $\sigma:=[g;s]\in \Gamma(\ce M[1])$.

An important insight is that, because $\sigma$ is uniquely defined on $M^+$ by the data $\Sigma\hookrightarrow (M,\cc )$, the jets  of $\sigma$ can be used to efficiently study the conformal hypersurface embedding. Recasting Equation~\nn{Sco} as a tractor equation is extremely helpful for this. To understand why, first recall that formul\ae\ for Riemannian hypersurface invariants simplify when given a {\it unit defining function}~$s$, {\it viz.} a defining function that satisfies
$$g(n,n)=|\ext  s|_g^2=1\, ,$$ 
where $n:=\ext s$. (It suffices for many applications to find $s$ obeying the above condition in a neighborhood of $\Sigma$.)
For example, the second fundamental form is given by the Hessian of~$s$ restricted to $\Sigma$.
There is a neat conformal
analog of this picture: A {\it unit conformal defining density}
 is a weight one  conformal density~$\sigma$ subject to the condition
 \begin{equation}\label{II}
 h(I_\sigma,I_\sigma)=1\, ,
 \end{equation}
 where $I_\sigma:=\hat D \sigma$. 
 In general, we call a tractor obtained by acting with the hatted Thomas-$D$ operator on a scale, a {\it scale tractor}, and drop the subscript $\sigma$ when it is clear to do so (the same convention will be applied to other objects whose dependence of $\sigma$ is denoted this way).
 
 Now, note that
 given $g\in \cc$ where $\sigma=[g;s]$, 
\begin{equation}\label{Is}
I^A =\hd \sigma \stackrel g= \begin{pmatrix}
s \\
\ext  s\\
-\tfrac1d(\Delta^g s + J^g s)
\end{pmatrix}\, ,
\end{equation}
so that
$$
I^2:=h(I,I)\stackrel g = |\ext  s|_g^2 -\frac2d\,  s (\Delta^g s + J^g s)\, .
$$
Away from $\Sigma$, written in terms of the metric $g^o=s^{-2}g$, the right hand side of the above display equals $-2 J^{g^o}/d=-\Sc^{g^o}/\big(d(d-1)\big)$.
Hence the unit conformal Condition~\nn{II} is equivalent to the singular Yamabe Equation~\nn{Sco}.

Given the data $\Sigma\hookrightarrow (M,\cc)$, the one-sided solution $g^o$ to the singular Yamabe problem of Equation~\nn{Sco} in $M^+$ depends on  global information of $M^+$ and $\cc$. However,  locally determined metrics  $g^o$ on~$M^+ \subset M^d$ that are smooth up to the boundary and obey 
$$
\Sc^{g^o}=-d(d-1)+{\mathcal O}(s^d)
$$
in an open neighborhood of $\Sigma$,
where $s$ is any defining function for $\Sigma$,  always exist~\cite{ACF} and are easily constructed~\cite{Will1}. We term such a metric $g^o$ an {\it asymptotic singular Yamabe metric}.
Here the notation~${\mathcal O}(s^k)$ denotes $s^k$ times any smooth function, and we will use a similar notation involving powers of densities in an obvious way.
In terms of defining densities~$\sigma$ and their corresponding scale tractor, the above-displayed equation stipulates  that
\begin{equation}\label{stip}
I^2_\sigma = 1 + \sigma^d {\mathcal B}\, ,
\end{equation}
where ${\mathcal B}\in \Gamma(\ce M[-d])$. 
Solutions $\sigma$ to the above equation are termed {\it asymptotic unit defining densities}.
While these  are not unique, the quantity
\begin{equation}\label{fruity}
B_\Sigma := {\mathcal B}\big|_\Sigma \in \Gamma(\ce \Sigma[-d]) 
\end{equation}
is, and is termed the {\it obstruction density}. The obstruction density~$B_\Sigma$ is a local invariant of the conformal embedding $\Sigma\hookrightarrow (M,\cc)$, and is the obstruction to solving $I_\sigma^2=1$ smoothly on~$\overline{M^+}$~\cite{ACF,Will1}. Moreover, $B_\Sigma$ is variational, meaning that it is the functional gradient with respect to variations of the embedding $\Sigma\hookrightarrow M$
of a Willmore-type energy functional~\cite{GrSing}
(see also~\cite{RenVol}).

The key point now is that the first $d$ jets of $\sigma$ are uniquely defined by solving Equation~\nn{stip}, and therefore can be used to efficiently construct invariants of the conformal embedding~$\Sigma\hookrightarrow (M,\cc)$.
These are given as holographic formul\ae. 
 For example, the first two jets  are required for the scale tractor $I_\sigma|_\Sigma$ and this gives a holographic formula for the normal tractor.
  \begin{lemma}[Gover~\cite{Goal}]\label{LemGo}
Let $\sigma$ be a defining density for $\Sigma$ that obeys
$$
I_\sigma^2 = 1+{\mathcal O}(\sigma^2)\, ,
$$
where $I_\sigma= \hd \sigma$.
Then 
$$
I_\sigma\big|_\Sigma = N\, .
$$
 \end{lemma}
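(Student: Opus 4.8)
The plan is to compute $I_\sigma = \hat D \sigma$ explicitly in a chosen scale and evaluate it along $\Sigma$, then match the result with the known formula for the normal tractor $N$. First I would pick a metric $g \in \cc$ with $\sigma = [g;s]$ for a defining function $s$, and use Equation~\nn{Is}, which gives
$$
I^A \stackrel g= \begin{pmatrix} s \\ \ext s \\ -\tfrac1d(\Delta^g s + J^g s) \end{pmatrix}\, .
$$
Restricting to $\Sigma$, the top slot vanishes since $s|_\Sigma = 0$. So the task reduces to identifying the middle and bottom slots along $\Sigma$ with $\hat n^a$ and $-H$ respectively, which is the content of $N^A \stackrel g= (0, \hat n^a, -H)$ from Section~\ref{HTC}.

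The key input is the hypothesis $I_\sigma^2 = 1 + {\mathcal O}(\sigma^2)$. Using the formula $I^2 \stackrel g= |\ext s|_g^2 - \tfrac2d s(\Delta^g s + J^g s)$, restriction to $\Sigma$ forces $|\ext s|_g^2\big|_\Sigma = 1$, since the second term carries an explicit factor of $s$. Hence $\ext s|_\Sigma$ is a unit conormal, so $\hat n_a = (\ext s)_a|_\Sigma$ and the middle slot matches. For the bottom slot, I would differentiate the relation $|\ext s|^2_g = 1 + {\mathcal O}(s^2)$ (or argue directly) to control $\nabla_{\hat n}|\ext s|_g$ along $\Sigma$; then rewrite $\tfrac1d(\Delta^g s + J^g s)$ in terms of the mean curvature. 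Concretely, since $\II_{ab} = \nabla^\top_a\hat n^{\rm e}_b|_\Sigma$ with $\hat n^{\rm e} = \ext s$ a valid extension (as $|\ext s|_g = 1 + {\mathcal O}(s^2)$ means $\ext s$ is a unit conormal to leading order near $\Sigma$), one has $H = \tfrac1{\bar d}\bar g^{ab}\nabla_a (\ext s)_b|_\Sigma$, and a short computation relating $\bar g^{ab}\nabla_a\nabla_b s$ to $\Delta^g s$ using the Gau\ss\ formula and $|\ext s|^2_g = 1 + {\mathcal O}(s^2)$ yields $\tfrac1d(\Delta^g s + J^g s)|_\Sigma = H$. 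This identifies the bottom slot as $-H$, completing the match $I_\sigma|_\Sigma = N$.

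The main obstacle I expect is the bottom-slot computation: carefully extracting $H$ from $\tfrac1d(\Delta^g s + J^g s)|_\Sigma$ requires using the ${\mathcal O}(\sigma^2)$ hypothesis to kill the right terms (in particular the $J^g s$ term vanishes on $\Sigma$ trivially, but the $\Delta^g s$ term needs the decomposition into tangential and normal second derivatives, and one must confirm $\hat n^a\hat n^b\nabla_a\nabla_b s|_\Sigma$ contributes nothing beyond what the unit condition allows). An alternative, cleaner route avoids slots entirely: since $I_\sigma = \hat D\sigma$ is conformally invariant and $I_\sigma^2 = 1 + {\mathcal O}(\sigma^2)$, the projecting (top) part of $I_\sigma|_\Sigma$ is $\sigma|_\Sigma = 0$, so $I_\sigma|_\Sigma$ lies in the image of the injectors $(Z,X)$; being a unit-norm tractor with vanishing projecting part, its $Z$-component is a unit conormal, and conformal invariance plus the normalization pins it down uniquely to $N$. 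This is essentially the argument of~\cite{Goal}, and I would cite it while presenting the explicit-slot verification as the self-contained proof.
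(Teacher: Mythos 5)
Your overall strategy is the right one and is essentially the standard argument (the paper itself only cites~\cite{Goal} for this lemma; the correct version of the key expansion appears in the paper's proof of the lemma immediately following, on the trace-free second fundamental form). The top and middle slots are handled correctly: $s|_\Sigma=0$ and $I^2|_\Sigma = |\ext s|_g^2|_\Sigma = 1$ force the projecting part to vanish and the middle slot to be the unit conormal.

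The gap is in the bottom slot, and it is concrete: the hypothesis $I_\sigma^2=1+{\mathcal O}(\sigma^2)$ does \emph{not} give $|\ext s|_g^2 = 1+{\mathcal O}(s^2)$. Since $I^2 \stackrel g= |\ext s|_g^2 - \tfrac2d s(\Delta s + Js)$, what it gives is
$$
|\ext s|_g^2 \;=\; 1 + \tfrac{2s}{d}\big(\Delta s + Js\big) + {\mathcal O}(s^2)\, ,
$$
i.e.\ only $1+{\mathcal O}(s)$, with a linear term that is generically nonzero (it equals $2Hs$ to leading order). This term is exactly what you need: writing $n=\ext s$ and decomposing $\Delta s|_\Sigma = \bar g^{ab}\nabla_a n_b + \hat n^a\hat n^b\nabla_a n_b = (d-1)H + \tfrac12\nabla_{\hat n}|n|_g^2|_\Sigma$, the display above gives $\tfrac12\nabla_{\hat n}|n|_g^2|_\Sigma = \tfrac1d\Delta s|_\Sigma$, whence $\Delta s|_\Sigma = dH$ and the bottom slot is $-H$ as required. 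If instead you literally use $|n|_g^2=1+{\mathcal O}(s^2)$, the normal--normal Hessian term vanishes and you land on $-\tfrac{d-1}{d}H$, which is wrong. Note also that your proposed ``cleaner route'' does not actually close this step: a tractor of the form $(0,\hat n, c)$ has unit norm for \emph{any} $c$ (since $X$ is null), so unit norm plus vanishing projecting part does not pin down the bottom slot; the computation of $-\tfrac1d(\Delta s+Js)|_\Sigma=-H$ cannot be avoided.
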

 Recall that the normal tractor $N$ encodes both the 
 unit normal 
 and mean curvature (in any scale). The trace-free part of the second fundamental form can be obtained holographically in several ways, the first example is given below.
 \begin{lemma}
Let $\sigma$ be a defining density for $\Sigma$ that obeys
$$
I_\sigma^2 = 1+{\mathcal O}(\sigma^2)\, ,
$$
where $I_\sigma= \hd \sigma$.
Then the projecting part of $\nabla I_\sigma|_\Sigma$ equals the trace-free second fundamental form.
 \end{lemma}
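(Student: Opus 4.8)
The plan is to compute $\nabla I_\sigma$ directly in a scale and read off its projecting part, exploiting a feature peculiar to the scale tractor: the $\ce M[1]$-slot of $\nabla^\ct_a I_\sigma^B$ vanishes identically, which is what makes the projecting part rank two (in contrast to the case of $\nabla N$, whose top slot is $-\hat n_a$). First I would fix $g\in\cc$ with $\sigma=[g;s]$, so that by Equation~\nn{Is} one has $I_\sigma^B\stackrel g=\big(s,\ \ext s,\ -\tfrac1d(\Delta s+Js)\big)$. Feeding this into the tractor connection formula~\nn{Iconnect}, the $\ce M[1]$-slot of $\nabla^\ct_a I_\sigma^B$ is $\nabla_a s-\nabla_a s=0$; hence the projecting part $q^*\big(\nabla^\ct_a I_\sigma^B\big)$ is the $TM[-1]$-slot, which --- writing the free tractor index downstairs, in the scale $g$ --- equals
$$\nabla_a\nabla_b s+s\,\Rho_{ab}-\tfrac1d\, g_{ab}\big(\Delta s+Js\big)\, .$$
Restricting to $\Sigma$, the two terms proportional to $s$ drop out, leaving $S_{ab}-\tfrac1d g_{ab}\,\Delta s\big|_\Sigma$, where $S_{ab}:=\nabla_a\nabla_b s\big|_\Sigma$ is the Hessian of $s$ along $\Sigma$.

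Next I would extract from the hypothesis $I_\sigma^2=1+\mathcal O(\sigma^2)$ exactly the data needed to decompose $S_{ab}$. Using $I_\sigma^2\stackrel g=|\ext s|_g^2-\tfrac2d\, s(\Delta s+Js)$ and restricting gives $|\ext s|_g^2\big|_\Sigma=1$, so $\hat n_a:=\nabla_a s\big|_\Sigma$ is the unit conormal and $\ext s$ is an admissible extension of $\hat n$. Applying $\nabla_a$ to $I_\sigma^2-1=\mathcal O(\sigma^2)$ and restricting to $\Sigma$ annihilates the right-hand side, yielding $2\hat n^b S_{ab}=\tfrac2d\hat n_a\,\Delta s\big|_\Sigma$; hence the tangent--conormal mixed part of $S$ vanishes and $S_{\hat n\hat n}=\tfrac1d\Delta s\big|_\Sigma$. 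By the defining Equation~\nn{IIis} (with the extension $\ext s$) the tangential--tangential part of $S$ is the second fundamental form $\II_{ab}$, so $S_{ab}=\II_{ab}+\tfrac1d\big(\Delta s|_\Sigma\big)\,\hat n_a\hat n_b$. Taking a $g$-trace and using $\tr\II=(d-1)H$ then forces $\Delta s\big|_\Sigma=dH$, i.e. $S_{ab}=\II_{ab}+H\,\hat n_a\hat n_b$.

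Substituting this into the displayed projecting part and writing $g_{ab}\big|_\Sigma=\bar g_{ab}+\hat n_a\hat n_b$ gives
$$S_{ab}-\tfrac1d g_{ab}\,\Delta s\big|_\Sigma=\II_{ab}+H\hat n_a\hat n_b-H\big(\bar g_{ab}+\hat n_a\hat n_b\big)=\II_{ab}-H\,\bar g_{ab}=\IIo_{ab}\, ,$$
which is the assertion; note the result is automatically trace-free and tangential in both indices and carries conformal weight $1$, consistently with $\IIo$ being the first conformal fundamental form (and with Lemma~\ref{LemGo}, which already gives $I_\sigma\big|_\Sigma=N$). There is no genuine obstacle in this argument: the only points that require care are correctly identifying which slot is the projecting part --- the vanishing of the $\ce M[1]$-slot is the crux --- and the elementary bookkeeping that decomposes the Hessian $S_{ab}$ and pins $\Delta s\big|_\Sigma$ to $dH$. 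One could alternatively obtain $S_{ab}=\II_{ab}+H\hat n_a\hat n_b$ tractorially from Lemmas~\ref{LemGo} and~\ref{thisismylabel}, but the direct computation in a scale is the shortest route.
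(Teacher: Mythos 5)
Your proof is correct and follows essentially the same route as the paper's: compute in a chosen scale, observe that the top slot of $\nabla^\ct_a I_\sigma^B$ vanishes so the projecting part is the Hessian-type middle slot $\nabla_a n_b+sP_{ab}+\rho\, g_{ab}$, and use $I_\sigma^2=1+\mathcal O(\sigma^2)$ to first order to reduce its restriction to $\IIo_{ab}$. The only cosmetic difference is that you determine $\Delta s|_\Sigma=dH$ and the normal components of the Hessian by differentiating the constraint and tracing, where the paper instead invokes Lemma~\ref{LemGo} together with the identity $\nabla_n n=\tfrac12\ext|n|_g^2$.
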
 
 
 \begin{proof}
 Let $g\in \cc$ and $\sigma=[g;s]$.
 First note that $I_\sigma^2 = 1+{\mathcal O}(\sigma^2)$
 implies that
 $$
 |n
 |_g^2 = 1 + \tfrac{2s}{d}  (\nabla \csdot n  + Js) + {\mathcal O}(s^2)\, . 
 $$
 Then using the scale tractor
Equation~\nn{Is} and tractor connection Equation~\nn{Iconnect}, it follows that the projecting part of
$\nabla I_\sigma|_\Sigma$ 
is 
$$\Big(\nabla n  - \tfrac1d g \nabla\csdot n \Big)\Big|_\Sigma
=
\Big(\nabla^\top n + \hat n \nabla_{ n} n
- g H\Big)\Big|_\Sigma\, .
$$
In the above $n:=\ext s$ and, using the first display of this proof,  it follows that $n|_\Sigma = \hat n$.
In the above we have also used that Lemma~\ref{LemGo}
implies that $\tfrac 1d \nabla \csdot n|_\Sigma = H$. Moreover $\nabla_n n = \frac12 \ext |n|_g^2
=\frac1d n \nabla \!\cdot\! n +{\mathcal O}(s)$ which equals $\hat n H$ along $\Sigma$. Using $\bar g = g|_\Sigma - \hat n \odot \hat n$, and $\nabla^\top n |_\Sigma = \nabla^\top(n/|n|_g)  |_\Sigma $, the result now follows.

 \end{proof}

 Thanks to Lemma~\ref{LemGo}  we now have a canonical extension $I_\sigma$ of the normal tractor~$N$, and thus can construct a  canonical extension of the Robin operator of Definition~\nn{swallow}.
 \begin{lemma}
The operator 
$$
I\csdot D:\Gamma(\ct^\Phi M[w])
\to \Gamma(\ct^\Phi M[w-1])\, ,
$$
where 
$
I\csdot D\, T := I^A D_A T
$,
obeys
$$
I\csdot D \, T\big|_\Sigma = (d+2w-2) \, \delta_{\rm R} T\, .
$$
 \end{lemma}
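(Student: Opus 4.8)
The plan is to compute $I\csdot D\,T$ in a choice of scale $g\in\cc$ with $\sigma=[g;s]$ and then restrict to $\Sigma$. First I would write $I^A = \hd^A\sigma$ using the explicit column form in Equation~\nn{Is}, namely $I^A\stackrel g=\big(s,\ \ext s,\ -\tfrac1d(\Delta s+Js)\big)$, and combine it with the expansion of the Thomas-$D$ operator in terms of injectors from Equation~\nn{vaccinateThomas}. Contracting $I^A$ with $D_A T$ amounts to contracting the three slots of $I$ against the three slots of $DT$ via the tractor metric $h_{AB}$: the top slot of $I$ pairs with the bottom slot of $DT$ (the $-(\Delta+wJ)T$ term), the middle slot of $I$ pairs with the middle slot of $DT$ (the $(d+2w-2)\nabla T$ term) through the conformal metric, and the bottom slot of $I$ pairs with the top slot of $DT$ (the $w(d+2w-2)T$ term). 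This gives
$$
I\csdot D\,T \stackrel g= -s(\Delta+wJ)T + (d+2w-2)\,(\ext s)\csdot\nabla T - \tfrac1d(\Delta s+Js)\,w(d+2w-2)\,T\, .
$$

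Next I would restrict this expression to $\Sigma$. Since $s|_\Sigma=0$, the first term drops out entirely. For the remaining terms I would use that $\sigma$ is a defining density, so $\ext s|_\Sigma$ is (up to normalization) the conormal; more precisely, I want to invoke Lemma~\ref{LemGo} together with the hypothesis implicit in the statement that $I_\sigma^2=1+\OO(\sigma^2)$, which the surrounding text attaches to $\delta_R$ via $I_\sigma$. Under that normalization $|\ext s|_g^2=1+\OO(s)$ so $\ext s|_\Sigma=\hat n$, and $\tfrac1d(\Delta s + Js)|_\Sigma = -I^-|_\Sigma$, which by Lemma~\ref{LemGo} equals the bottom slot of the normal tractor $N$, namely $H^g$. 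Substituting, the restriction becomes
$$
I\csdot D\,T\big|_\Sigma \stackrel g= (d+2w-2)\big(\nabla_{\hat n}T - wH^g T\big)\big|_\Sigma\, .
$$
Finally, by Remark~\ref{throwmehere}, the right-hand side is exactly $(d+2w-2)\,\delta_R T$, which completes the proof; one then checks that the combination is scale-independent either directly or by appealing to the conformal invariance of $D$, $I_\sigma$, and $\delta_R$ already established.

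The main obstacle I anticipate is bookkeeping the precise normalization hypotheses: the statement of the lemma as written does not repeat the condition $I_\sigma^2=1+\OO(\sigma^2)$, so I would need to state clearly which extension $I$ of $N$ is intended (a genuine scale tractor $\hd\sigma$ with $\sigma$ a defining density satisfying the asymptotic unit condition), and verify that the $\OO(s)$ corrections to $|\ext s|_g^2$ and to $\tfrac1d(\Delta s+Js)$ genuinely vanish upon restriction to $\Sigma$ so that no extra terms survive. The tractor-contraction algebra itself is routine given Equations~\nn{vaccinateThomas}, \nn{Is}, and the form of $h_{AB}$; the only subtlety is correctly matching slots and signs, and remembering that contraction with $X^A D_A$ produces the weight factor $w(d+2w-2)$ used implicitly above.
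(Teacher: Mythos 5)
Your proposal is correct and follows essentially the same route as the paper: the paper likewise expands $I\csdot D\,T$ in a scale as $(d+2w-2)(\nabla_n+w\rho_s)T-s(\Delta+wJ^g)T$ with $\rho_s=-\tfrac1d(\Delta^g s+sJ^g)$, restricts to $\Sigma$ using $I_\sigma|_\Sigma=N$, and invokes Remark~\ref{throwmehere}. (Only a cosmetic slip: the bottom slot of $N$ is $-H$, so $-I^-|_\Sigma=H$; your final display is nonetheless correct.)
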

 
 \begin{proof}
 The  proof of the above lemma is given in~\cite{Goal} (see also~\cite{BGnonlocal}) 
 and uses that $I\csdot D\,  T$, in a choice of scale
 $g\in \cc$ for which $\sigma=[g;s]$, is   given by
 $$
 I\csdot D T = (d+2w-2)(\nabla_n + w \rho_s) T
 -s (\Delta + w J^g) T\, ,
 $$  
 where $\rho_s:=-\frac1d (\Delta^g s + s J^g)$.
 \end{proof}

The above lemma presages a useful algebraic relationship between several objects present in the calculus so far.
\begin{proposition}[\cite{GW}] \label{sl2-algebra}
Suppose $\sigma \in \Gamma(\ce M[1])$ obeys $I_\sigma^2 \neq 0$, and denote by ${\sf h} : \Gamma(\ct^\Phi M[w]) \rightarrow \Gamma(\ct^\Phi M[w])$ the operator defined by ${\sf h}f = (d+2w)f$.
Then, viewing 
$x:=\sigma:\Gamma(\ct^\Phi M[w]) \rightarrow \Gamma(\ct^\Phi M[w+1])$
as a multiplicative operator and $y:= -\frac{1}{I^2} I \csdot D : \Gamma(\ct^\Phi M[w]) \rightarrow \Gamma(\ct^\Phi M[w-1])$ as a differential operator, commutators of the operators $(x, {\sf h}, y)$ satisfy the $\mathfrak{sl}(2)$ defining relations,
$$[{\sf h},x] = 2x \,, \;\; \left[x, y \right] = {\sf h}\,, \;\; \left[{\sf h}, y \right] = -2 y\, .$$
\end{proposition}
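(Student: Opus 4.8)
The plan is to verify the three $\mathfrak{sl}(2)$ bracket relations directly by computing each commutator as an operator on $\Gamma(\ct^\Phi M[w])$, using the formula for $I\csdot D$ recorded just above together with the weight-shifting behavior of each constituent. First I would compute $[{\sf h},x]$: since $x=\sigma$ raises weight by one and ${\sf h}$ acts as multiplication by $d+2w$ on weight-$w$ tractors, one has ${\sf h}\circ x = (d+2(w+1))\,x = (d+2w+2)\,x$ on weight $w$, while $x\circ {\sf h}=(d+2w)\,x$; subtracting gives $[{\sf h},x]=2x$. The same bookkeeping, with $y$ lowering weight by one, gives $[{\sf h},y]=-2y$: on weight $w$, ${\sf h}\circ y=(d+2(w-1))\,y=(d+2w-2)\,y$ while $y\circ {\sf h}=(d+2w)\,y$, so $[{\sf h},y]=-2y$. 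These two are essentially automatic from the definition of ${\sf h}$ and the fact that $x,y$ are weight-homogeneous of degree $\pm 1$.

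The substantive relation is $[x,y]={\sf h}$. Here I would write $y=-\frac{1}{I^2}\,I\csdot D$ and use the explicit scale formula
$$
I\csdot D\,T = (d+2w-2)(\nabla_n+w\rho_s)T - s(\Delta+wJ)T
$$
in a scale $g\in\cc$ with $\sigma=[g;s]$, where $\rho_s=-\tfrac1d(\Delta^g s+sJ^g s)$ and $n=\ext s$. Multiplying on the left by $x=\sigma$ (i.e. by $s$ in this scale) and comparing with first multiplying by $s$ and then applying $-\frac{1}{I^2}I\csdot D$ at weight $w+1$, the key computation is that the commutator $[s,\ I\csdot D]$ acting on a weight-$w$ tractor produces exactly $-I^2$ times $(d+2w)$, up to the $\frac{1}{I^2}$ prefactors. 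Concretely, the two summands contribute: the second-order piece $-s(\Delta+wJ)$ commuted with multiplication by $s$ yields $-s\,[\Delta,s] = -s(\,2\nabla_n + (\Delta s)\,)$-type terms, while the first-order piece $(d+2w-2)(\nabla_n+w\rho_s)$ commuted with $s$ yields $(d+2w-2)|\ext s|_g^2$ plus a term proportional to $s$. Assembling everything and recognizing $I^2 = |\ext s|_g^2 - \tfrac2d s(\Delta s+Js)$, the $s$-proportional remainders combine to exactly reconstruct a further factor of $I^2$, so that $[x,y] = (d+2w)\,\mathrm{Id} = {\sf h}$ on weight $w$. One must be careful that $\nabla$ here denotes the Levi-Civita-coupled tractor connection, so that $\nabla_n$ genuinely commutes past the scalar multiplication operators only up to the Leibniz terms just described, and that $\rho_s$ depends on $s$, contributing $(\nabla_n \rho_s)$-type terms that must be tracked; these conspire, via the definition of $I^2$, to leave no residue.

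The main obstacle is the bookkeeping in $[x,y]={\sf h}$: keeping the factors of $I^2$, the derivatives falling on $s$ versus on $T$, and the $\rho_s$ terms all straight, and confirming that the would-be obstruction terms (those proportional to $s$, which would spoil the bracket being a pure multiple of the identity) cancel identically against the $-\tfrac2d s(\Delta s+Js)$ part of $I^2$. Since only $I^2\neq 0$ is assumed—no unit-scale or Einstein hypothesis—the identity must hold at the level of operators for arbitrary $\sigma$, so the cancellation is algebraic and exact rather than asymptotic. An alternative, cleaner route is to cite the $\mathfrak{sl}(2)$ structure from \cite{GW} and merely note that the operators $(x,{\sf h},y)$ defined here coincide with the ones there under the dictionary $\sigma\leftrightarrow x$, $-\tfrac1{I^2}I\csdot D\leftrightarrow y$; but for completeness I would still present the weight-homogeneity argument for the two easy brackets and a one-line indication of the scale computation for $[x,y]$.
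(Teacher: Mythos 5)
Your proposal is correct. Note first that the paper itself supplies no proof of this proposition --- it is simply cited to \cite{GW} --- so a direct verification is a legitimate, self-contained alternative. The two weight-homogeneity brackets are indeed immediate, and your key computation for $[x,y]={\sf h}$ checks out: writing $(I\csdot D)_w T=(d+2w-2)(\nabla_n+w\rho_s)T-s(\Delta+wJ)T$ with $\rho_s=-\tfrac1d(\Delta s+sJ)$ (you have a stray $s$ in your transcription of $\rho_s$), one finds that in $\sigma\,(I\csdot D)_wT-(I\csdot D)_{w+1}(\sigma T)$ the $\nabla_nT$ and $\Delta T$ terms cancel outright, the $J$ terms cancel after using $s\Delta s=-ds\rho_s-Js^2$, and the remainder is exactly $-(d+2w)\bigl(|\ext s|_g^2+2s\rho_s\bigr)T=-(d+2w)I^2T$; since $I^2$ is a weight-zero density (a function) the prefactor $\tfrac1{I^2}$ commutes with multiplication by $\sigma$, and the bracket collapses to $(d+2w)\,\mathrm{Id}={\sf h}$ with no residue. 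Two small remarks: the $(\nabla_n\rho_s)$-type terms you flag as needing tracking do not in fact arise, because $\rho_s$ enters only as a zeroth-order multiplier, so the cancellation is even cleaner than you anticipate; and you should make explicit that the coefficient of $J$ in the second-order piece shifts from $w$ to $w+1$ when the operator acts at weight $w+1$, since that shift is what produces the surviving $Js^2$ term that cancels against the one from $s\Delta s$. With those points made precise, your scale computation is a complete proof, and arguably more informative for the reader than the bare citation the paper gives.
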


\begin{remark}
In many cases, we assume that $\sigma$ solves the singular Yamabe problem to some order $k \geq 2$. In these situations, we may neglect the $\frac{1}{I^2}$ coefficient when using the above-displayed relations because it equals unity  to sufficiently high order for the problem at hand. 
\end{remark}

The next lemma gives a holographic formula for the tangential Thomas-$D$ operator of Proposition~\ref{Dt-complex}.
\begin{lemma}\label{Dt-simple}
Let $w + \frac{d}{2} \neq 1, \frac{3}{2}, 2$ and $\sigma$ 
be a
unit conformal defining density for $\Sigma$ 
that obeys $I_{\sigma}^2 = 1 + \mathcal{O}(\sigma^2)$. 
Then, along $\Sigma$, 
\begin{equation}\label{DT}
\hd^T_A = \hd_A - I_A I \csdot \hd + \frac{X_A}{d+2w-3}\,  I\csdot \hd^2\, .
\end{equation}
\end{lemma}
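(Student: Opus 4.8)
The plan is to prove Lemma~\ref{Dt-simple} by comparing the general formula for $\hd^T_A$ given in Proposition~\ref{Dt-complex}, which is written in terms of an arbitrary extension $N^{\mathrm e}$ of the normal tractor, with the holographic data furnished by the scale tractor $I_\sigma$. The key observation is that Lemma~\ref{LemGo} tells us that $I_\sigma|_\Sigma = N$ whenever $I_\sigma^2 = 1 + \mathcal{O}(\sigma^2)$, so $I_\sigma$ is itself a \emph{canonical} extension of the normal tractor off $\Sigma$. Since $\hd^T_A$ is tangential (that is the content of Proposition~\ref{Dt-complex}), the value of $(\hd^T_A)_\Sigma$ does not depend on which extension of $N$ we plug into the defining expression; hence we are free to substitute $N^{\mathrm e} = I_\sigma$ in the formula
$$\hd^T_A = \hd_A - N_A^{\rm e} N^B_{\rm e} \hd_B + \frac{X_A}{d+2w-3} \Big(N^B_{\rm e} N^C_{\rm e} \hd_B \hd_C + \frac{wK}{d-2} \Big)\, .$$

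First I would substitute $N^{\mathrm e}_A \to I_A$ throughout, giving $\hd^T_A \stackrel\Sigma= \hd_A - I_A I\csdot\hd + \frac{X_A}{d+2w-3}\big(I\csdot\hd^2 + \frac{wK}{d-2}\big)$, where by $I\csdot \hd^2$ I mean $I^B I^C \hd_B \hd_C$ acting with the appropriate weight bookkeeping (note the inner $\hd$ lowers the weight, so the scalar factor in $\hd$ must be read off at the shifted weight). So the only thing left to show is that the extra term $\frac{wK}{d-2}$, which appears in the general formula, is absorbed into $I\csdot\hd^2$ once we use the specific extension $I_\sigma$ — equivalently, that with this choice of extension one has $I^B I^C \hd_B\hd_C$ already containing a contribution $-\frac{wK}{d-2}$ relative to the naive expectation, or more precisely that the combination $N^B_{\rm e}N^C_{\rm e}\hd_B\hd_C + \frac{wK}{d-2}$ evaluated on $I_\sigma$ equals $I\csdot\hd^2$ as written in \eqref{DT}. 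The cleanest route is to compute $I^B\hd_B(I^C\hd_C T)$ directly and use the Leibniz-failure identity of Proposition~\ref{leib-failure} together with $\hd^A X^B = h^{AB}$, the fact that $D$ is null, and the defining relation $X\csdot \hd T = wT$ (up to the relevant scalar). One needs the derivative of $I$ along $\Sigma$: from Lemma~\ref{thisismylabel}, $\hd^T_{(A} I_{B)}\big|_\Sigma = L_{AB}$ up to $X$-terms, and $(\hd I)^2\big|_\Sigma = K$; this is exactly where the rigidity density $K$ enters and why the $\frac{wK}{d-2}$ term is the correct compensation.

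The main obstacle I anticipate is the careful bookkeeping of the weight-dependent scalar coefficients in $\hd$ when it is applied twice and when it acts on $I_\sigma T$ versus $T$, since $\hd^A = \frac{1}{d+2w-2}D^A$ changes the prefactor at each application; sloppiness here is the likely source of error. A secondary subtlety is that the identity must be established \emph{along} $\Sigma$ only, so one must track which terms are $\mathcal{O}(\sigma)$ and drop them — in particular $I_\sigma^2 = 1 + \mathcal{O}(\sigma^2)$ means $I^2$ may be treated as $1$ to the order needed, and $\hd\sigma = I$ with $\sigma|_\Sigma = 0$ lets one discard $\sigma$-proportional pieces after the derivatives have acted. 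I would organize the computation by writing $I\csdot\hd^2 T := I^B \hd_B(I^C\hd_C T) - (\hd_B I^C)(\hd^B \Smidge {}_C \text{-type terms})$ carefully, expand using Proposition~\ref{leib-failure}, collect the $X_A$-free part (which must reproduce $\hd_A - I_A I\csdot\hd$) and the $X_A$-proportional part (which must reproduce $\frac{1}{d+2w-3}(I\csdot\hd^2 + \frac{wK}{d-2})$ in the Proposition~\ref{Dt-complex} normalization, i.e.\ $\frac{1}{d+2w-3}I\csdot\hd^2$ in the \eqref{DT} normalization), invoking $\hd_{(A}I_{B)} = L_{AB} + (\text{$X$-terms})$ and $(\hd I)^2 = K$ from Lemma~\ref{thisismylabel} at the crucial step. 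Since both sides are manifestly tangential and conformally invariant, it then suffices to match coefficients, completing the proof.
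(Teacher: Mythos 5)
Your proposal is correct and follows essentially the same route as the paper: substitute the canonical extension $N^{\rm e}=I_\sigma$ (valid by Lemma~\ref{LemGo} and tangentiality) into Proposition~\ref{Dt-complex}, then verify via Proposition~\ref{leib-failure} that $(I\csdot\hd)^2\big|_\Sigma = N^B N^C\hd_B\hd_C+\tfrac{wK}{d-2}$, which is exactly the paper's one-line computation. The only small imprecision is your appeal to Lemma~\ref{thisismylabel} for the derivative of $I$: the identity actually needed is $I\csdot\hd\, I_A=\tfrac{K X_A}{d-2}+\mathcal{O}(\sigma^2)$, obtained directly by applying $\hd$ to $I^2=1+\mathcal{O}(\sigma^2)$, together with $X\csdot\hd\hh T=wT$.
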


\begin{proof}
Using $I^2 = 1 + \mathcal{O}(\sigma^2)$ and Proposition~\ref{leib-failure}, 
$$\IdD^2 \big|_{\Sigma} = \left.\left[ I_A I \csdot \hd \hd^A + \frac{X_A K}{d-2} \hd^A \right]\right|_{\Sigma} = N_A N_B \hd^B \hd^A + \frac{wK}{d-2}\, .$$
Substituting this identity into
the  definition of $\hd^T$ 
in Proposition~\ref{Dt-complex}
completes the proof.
\end{proof}

 \section{Fundamental Forms}
 \label{forms-section}

The singular Yamabe problem 
provides a canonical extension of $\IIo$.
Hence, in the spirit of holography, canonical  fundamental forms 
can be constructed by applying high transverse order  operators  to  this extension.

\subsection{The Canonical Extension of $\IIo$} \label{can-ext-IIo}
As noted in Lemma~\ref{thisismylabel}, the tractor analog $L$ of $\IIo$ is (essentially) given 
 by the tangential Thomas operator $\hd^T$ acting on an extension $N^{\rm e}$ of $N$. The scale tractor $I_\sigma$ of a unit conformal  defining density $\sigma$ gives a  canonical extension of the normal tractor $N$ and, in turn, of the tractor second fundamental form $L$. The projecting part of  the latter gives a conformally-invariant canonical extension of $\IIo$. We will compute this extension explicitly, but doing so requires the promised proof of Lemma~\ref{thisismylabel}.

\begin{proof}[Proof of Lemma~\ref{thisismylabel}]
Let $\sigma$ be an asymptotic  unit conformal defining density for $\Sigma \hookrightarrow (M,\cc)$. 
When not stated explicitly, 
in what follows $P^{AB}$ denotes $\hat D^A I_\sigma^B$ for $\sigma$ an asymptotic unit defining density. Because  $d>3$ we have that 
$I^2 = 1 + \mathcal{O}(\sigma^4)$. We will  apply Lemma~\ref{Dt-simple} and  that
the normal tractor obeys $N=I|_{\Sigma} $. Evaluating $\hd_A I^2$ yields, after some massaging using Proposition~\ref{leib-failure}, the identity \begin{equation}\label{thestatementwearelookingfor}I\csdot\hd I_A = \frac{\big(\hd I \big)^2\,  X_A}{d-2} + \mathcal{O}(\sigma^2)\, .\end{equation}
Defining the extension $K^{\rm e} := \big(\hd I_{\sigma} \big)^2$ of the rigidity density $K$, it follows using the above identity that $$\delta_R K^{\rm e}|_{\Sigma} = -2(d-3) M\, ,$$ see \cite{Will2}. Thus, noting that $I\csdot \hd X = I$ (see Equation~\nn{DX}), and employing the holographic formula for $\hd^T$ in Lemma~\ref{Dt-simple}, we then have the following identity:
$$\hd_{(A}^T I^{}_{B)} \big|_{\Sigma} = \hd_{(A} I_{B)} \big|_{\Sigma} - \tfrac{d-4}{(d-2)(d-3)} I_{(A} X_{B)} K |_{\Sigma} - \tfrac{2}{d-2} X_A X_B M\, .$$
A holographic formula for the tractor second fundamental form is given in~\cite{Will2}:
\begin{align} \label{L-P-formula}
\hd^{}_A I^{}_B |_{\Sigma} = L_{AB} + \tfrac{2}{d-2} I_{(A} X_{B)} K |_{\Sigma} + \tfrac{3d-8}{(d-2)(d-3)} X_A X_B M\,.
\end{align}
This formula applied to the second display above gives
$$\hd_{(A}^T I^{}_{B)} \big|_{\Sigma} = L_{AB} + \tfrac1{d-3} I_{(A} X_{B)} K |_{\Sigma} + \tfrac1{d-3} X_A X_B M\, .$$
This completes the proof.
\end{proof} 

\begin{remark}
In the proof above, the tractor $P:=\hd \hh I$ played a key {\it r\^ole} generating an extension of the tractor second fundamental form. The same tractor (and  higher transverse order counterparts) can also be used to  construct  tractor higher fundamental forms as well as  the rigidity density $K$ (and normal derivatives thereof). 
Note that we use the notation $P$ for the tractor $\hat D \hh I$ because, away from $\Sigma$, and computed in the scale $\sigma$,  one has that $q^*(P)$ equals the Schouten tensor of the singular metric $g^o$.
In what follows we will see that the tractor $P$ provides an interpolation between the Schouten tensor of the singular metric $g^o$ and trace-free second fundamental form.
\end{remark}

Note that $X \csdot P = 0$ because $X \csdot \hd \hh  T= wT$ for any weight $w$ tractor $T$, so  $q^*(P_{AB}) \in \Gamma(\odot_\circ^2 T^*M[1])$. Hence, writing $\sigma = [g;s]$, the canonical extension of $\IIo$ is given by
\begin{equation}\label{showmykeys}\IIo^{\rm e}_{ab} := q^*(P_{AB}) \stackrel g= \nabla_a n_b + s P_{ab} + \rho g_{ab}\,,\end{equation}
where $\rho = -\frac{1}{d} (\Delta^g s + J^gs)$. This result was computed by directly computing $\hd I$. Note that the above may be written $(\nabla_{(a} \nabla_{b)\circ}+\Rho_{(ab)\circ})s$, where the second order, conformally invariant operator in parentheses is termed the {\it almost Einstein operator}.
\medskip

\subsection{Canonical Transverse Differential Operators}

One could construct higher fundamental forms by extending Lemma~\ref{ho-robin} to $\delta_{\rm R}^{(k)}$ for $k>2$ and applying these operators to $\IIo^{\rm e}$, but this is inefficient. Fortunately, holography 
provides a canonical  extension $I \csdot D$ of the Robin operator. Indeed, applying powers of 
the operator $\ID_\sigma$, generically given by
\begin{equation}\label{home-brand}
\ID_\sigma = q^* \circ r \circ I \csdot D \circ q\, ,
\end{equation}
to $\IIo^{\rm e}$
 is the key step for producing formul\ae\ for higher fundamental forms. 
To compute $\ID$ (dropping the subscript $\sigma$ for brevity), we first need the following result.

\begin{lemma}\label{conformal-grad} 
Let $\theta = [g;t] \in \Gamma(\otimes^r T^* M[w])$ and $\sigma = [g;s] \in \Gamma(\ce M[1])$. Then
$$\nabla^\sigma \theta := [g; s \nabla t - (w-r) \ext s \otimes t +  (\ext s \circledast g)^\sharp t]\in\Gamma(\otimes^{r+1} T^* M[w+1]) \,,$$
where  for  a general covector $\omega$, we denote $(\omega \circledast g)_a^\sharp t^{}_b :=  \omega_b^{}  t_a^{} - g_{ba}^{} \omega^c t_c^{}$ (when $r=1$) and extends, in the standard Leibniz way, to higher rank $r$ tensors.
\end{lemma}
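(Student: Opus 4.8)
The plan is to verify the claimed formula for $\nabla^\sigma\theta$ by a direct computation in a choice of scale, checking conformal invariance afterwards. The operator $I\csdot D$ acting on tractors is conformally invariant by construction (Lemma~\ref{Thomas-D} and the lemma relating $I\csdot D$ to $\delta_R$), and the maps $q$, $r$, $q^*$ are conformally invariant at generic weights; so $\ID_\sigma=q^*\circ r\circ I\csdot D\circ q$ is automatically a conformally invariant operator on trace-free symmetric tensors. Hence it suffices to compute its restriction to the scale $g$ in which $\sigma=[g;s]$, and this computation is what Lemma~\ref{conformal-grad} packages as a preliminary step: the operator $\nabla^\sigma$ is really the ``bulk gradient'' piece that enters before one projects, symmetrizes and removes traces. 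So the first step is to write out $I^A D_A$ acting on $q(\theta)$ using the explicit form of the scale tractor $I^A\stackrel g=(s,\ext s,-\tfrac1d(\Delta s+Js))$ from Equation~\nn{Is}, the expansion of $D^A$ in injectors from Equation~\nn{vaccinateThomas}, and the insertion formulas of Lemma~\ref{insertions}.

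Concretely, I would take $\theta$ of rank $r$ with $q(\theta)=:T$, whose top slot is $t_{abc\cdots}$ and whose lower slots are fixed by Lemma~\ref{insertions}. Then $I\csdot D\, T$ has a top (projecting) slot obtained by contracting the $Z$-part of $D^A$ against $I$, namely $(d+2w-2)$ times $\bm g^{ab}Z^A_a I_A\,\nabla_b T$ plus the $Y^A I_A$ and $X^A I_A$ pieces. Using $Z^A_a I_A= \ext s_a$, $Y^A I_A = s$ (the projecting part of $I$ is $s$ as a density), and $X^A I_A = -\tfrac1d(\Delta s+Js)\cdot 0$ — one tracks exactly which slots survive after the subsequent application of $r$ and $q^*$. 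The map $r$ adjusts the projecting part so that the result is in the kernel of $X\lrcorner$ and trace-free, and $q^*$ then extracts the top Riemannian slot. The net effect on the top slot is: $s\,\nabla t$ from the density factor of $I$ acting through the weight/derivative terms, a term $-(w-r)\,\ext s\otimes t$ coming from the weight operator $\underline w$ combined with the rank-$r$ shift built into the insertion (the $r$ appears because $T$ has tractor weight $w-r$ while its top slot $t$ has density weight $w$), and the ``rotation'' term $(\ext s\circledast g)^\sharp t$ coming from the pieces of $D^A$ that mix the middle slot of $T$ with its neighbours via the injector derivatives $\nabla Z$, $\nabla Y$ in Equation~\nn{Iconnect}. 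Verifying that these are exactly the three terms displayed, with the stated coefficients, is the bulk of the work; the Leibniz extension to higher $r$ then follows because $I\csdot D$ acts slot-by-slot through the injector structure, matching the stated Leibniz rule for $\circledast$.

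The main obstacle I expect is bookkeeping rather than anything conceptual: keeping straight which components of the rank-$(r+1)$ tractor $I\csdot D\,q(\theta)$ are killed by the subsequent $X$-contraction, trace removal and extraction, so that one correctly isolates only the surviving top slot, and tracking the weight shifts carefully enough to get the coefficient $-(w-r)$ (as opposed to $-(w-r\pm\text{something})$) and the precise normalization of the $\circledast$ term. A clean way to control this is to do the $r=1$ case in full first — there $T^A\stackrel g=(0,t^a,-\tfrac{\nabla\cdot t}{d+w-1})$ — read off the three terms, and then argue the general-$r$ statement by the Leibniz property of $I\csdot D$ over tensor products of injectors together with Proposition~\ref{leib-failure} governing the failure of the naive Leibniz rule (whose correction is proportional to $X^A$ and hence dies under $q^*$). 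I would then close by noting that since every operator in $\ID_\sigma = q^*\circ r\circ I\csdot D\circ q$ is conformally invariant, the right-hand side formula, though written in the scale $g$, is independent of the choice of representative $(g;s)$ of $\sigma$, which is what is meant by the equivalence-class notation $[g;\,\cdots]$.
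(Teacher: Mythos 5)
There is a genuine gap here: your proposal does not prove what the lemma asserts, and the route you choose cannot be set up in the generality required. Lemma~\ref{conformal-grad} is a self-contained covariance statement: it claims that the explicit first-order expression $s\,\nabla t-(w-r)\,\ext s\otimes t+(\ext s\circledast g)^{\sharp}t$, written in the scale $g$, defines an element of $\Gamma(\otimes^{r+1}T^*M[w+1])$, i.e.\ that replacing $(g,t,s)$ by $(\Omega^2g,\Omega^w t,\Omega s)$ multiplies the whole bracket by $\Omega^{w+1}$. The paper's proof is exactly that check: one records $\nabla^{\Omega^2g}(\Omega^w t)=\Omega^w\bigl(\nabla^g t+(w-r)\Upsilon\otimes t-(\Upsilon\circledast g)^{\sharp}t\bigr)$ with $\Upsilon=\ext\log\Omega$, notes $\ext(\Omega s)=\Omega(\ext s+s\Upsilon)$ and the corresponding transformation of $(\ext s\circledast\Omega^2 g)^{\sharp}$, and observes that the $\Upsilon$-dependent terms cancel. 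Your proposal never performs this computation; instead it tries to realize $\nabla^\sigma$ as a fragment of $\ID_\sigma=q^*\circ r\circ I\csdot D\circ q$ and to inherit invariance from that composite.

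That strategy fails for three concrete reasons. First, the insertion map $q$ of Lemma~\ref{insertions} is only defined for trace-free tensors of ranks one, two, and Weyl-type rank four, whereas the lemma concerns arbitrary sections of $\otimes^rT^*M[w]$; so the composition you want to compute does not exist for the objects the lemma covers. Second, the bookkeeping does not match: $\nabla^\sigma$ is first order, raises the rank by one and the weight by one, while $I\csdot D$ is second order and lowers the weight by one; the logical dependence in the paper in fact runs the other way, since Definition~\ref{ID-tensor-def} builds $\sigma\,\ID_\sigma$ out of \emph{two} copies of $\nabla^\sigma$, so one needs Lemma~\ref{conformal-grad} before $\ID_\sigma$ can even be defined. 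Third, even if an identification were available, conformal invariance of a composite operator does not imply conformal invariance of a scale-dependent piece extracted from it in one choice of $g$; the decomposition into ``pieces'' is itself scale-dependent, so your closing step is a non sequitur. The fix is simply to carry out the direct transformation computation above, which is short and elementary.
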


\begin{proof}
First note that, using the notation provided in the lemma, the Levi-Civita connection acting on $\theta := [g,t] \in \Gamma(\otimes^r T^*M[w])$ obeys 
$$\nabla^{\Omega^2 g} t^{\Omega^2 g} = \nabla^{\Omega^2 g}\big( \Omega^w t^g\big) = \Omega^w \left(\nabla^g t^g + (w-r) \Upsilon \otimes t^g -  (\Upsilon \circledast g)^\sharp t^g \right).$$
Here $\Upsilon := \ext \log \Omega$.
Further, denoting $n:=\ext s$, $n^{\Omega^2 g} = \ext (\Omega s) = \Omega (n^g + \sigma \Upsilon)$. 
Therefore, $n^{\Omega^2 g} \otimes t^{\Omega^2 g} = \Omega^{w+1} (n^g + \sigma \Upsilon) \otimes t^g$ and $(n^{\Omega^2 g} \circledast \Omega^2 g)^\sharp = \Omega ([n^g + s \Upsilon] \circledast g)^\sharp $. Combining these conformal transformations with the above display completes the proof.
\end{proof}

\noindent
Note that if $\sigma$ is a defining density for a hypersurface $\Sigma$ subject to $I_\sigma^2 = 1+{\mathcal O}(\sigma)$ then, along~$\Sigma$, the operator $\nabla^\sigma$ is tensor multiplication by the unit conormal $\hat n$. 

\smallskip

The operator $\ID$ of Equation~\nn{home-brand} is not defined for many weights, so we instead make the following definition.
\begin{definition} \label{ID-tensor-def} 
Let $\sigma\in \Gamma(\ce M[1])$ be any weight one density, $I_{\sigma} = \hd \sigma$, and $\tau_{ab} \in \Gamma(\odot^2_\circ T^*M[w])$ be any rank two, trace-free density-valued symmetric tensor where $w\neq 3,2-d$. Also let $\widehat M := M\setminus\mathcal{Z}(\sigma)$. Then we define the map
$$\ID_\sigma : \Gamma(\odot^2_\circ T^*\widehat M[w]) \rightarrow \Gamma(\odot^2_\circ T^*\widehat M[w-1])$$
by the following formula:
\begin{multline*}
\sigma\hh \ID_\sigma \tau_{ab} := -\bg^{cd} \Big(\nabla^\sigma_c \nabla^\sigma_d \tau_{ab}^{} + \tfrac{2d}{(w-3)(d+w-2)} \,  \nabla^\sigma_{(a} \nabla^\sigma_{|c} \tau_{d|b) \circ}^{} - \tfrac{4}{d} [\nabla^\sigma_{(a}, \nabla^\sigma_{|c}] \tau_{d|b)\circ}^{} \Big)\\\hspace{-.5cm} - \tfrac{4}{d} \sigma^2\hh W^c{}_{ab}{}^d \tau_{cd}^{} + \Big[
\Big(w-2+\tfrac{d-1}2\Big)^2
-\Big(\tfrac{d-1}2\Big)^2+2
 \Big] I_{\sigma}^2 \, \tau_{ab}^{} \, .
\end{multline*}
\end{definition}

The 
combination of terms in the above display is distinguished because 
they in fact allow the operator $\ID_\sigma$ to be defined along $\mathcal{Z}(\sigma)$. This result is described in the following lemma.

\begin{lemma} \label{ID-tensor} 
Let $\sigma\in \Gamma(\ce M[1])$ be any weight one density and $\tau_{ab} \in \Gamma(\odot^2_\circ T^*M[w])$
any rank two, trace-free density-valued symmetric tensor where $w\neq 3,2-d$.
Let $g\in \cc$ for which $\sigma=[g;s]$, $\tau_{ab}=[g;t_{ab}]$, and $\ID_\sigma \tau_{ab} = [g; \ID_s t_{ab}]$. Then
\begin{align*}
\ID_s t_{ab}{ =}& (d\!+\!2w\!-\!6) \!\left(\left[\nabla_n \!+ (w\!-\!2) \rho \right] t_{ab} - \tfrac{2(w-2)}{(w-3)(d+w-2)} n_{(a} \nabla \csdot t_{b) \circ} + \tfrac{2}{w-3} \left[n_c \nabla_{(a} t_{b) \circ}^c \!+ \!(\nabla n)_{(a} \csdot t_{b)\circ} \right] \right)  \\
&- s \left( \Delta t_{ab} + (w-2) J t_{ab} + \tfrac{2d}{(w-3)(d+w-2)} \nabla_{(a} \nabla \csdot t_{b)\circ}  - 4 P_{(a} \csdot t_{b) \circ}  \right)\, ,
\end{align*}
with $n:=\ext s$ and $\rho=-\frac 1d(\Delta s + J s)$, and $\ID_\sigma$ is a well-defined map
$$\ID_\sigma : \Gamma(\odot^2_\circ T^*M[w]) \rightarrow \Gamma(\odot^2_\circ T^*M[w-1])\,.$$
\end{lemma}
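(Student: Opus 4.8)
The plan is to verify the claimed formula for $\ID_s t_{ab}$ by direct computation in a choice of scale $g\in\cc$ with $\sigma=[g;s]$, and then to read off well-definedness along $\mathcal{Z}(\sigma)$ from the fact that the right-hand side, after the overall factor of $d+2w-6$ is accounted for, carries no uncancelled factor of $s^{-1}$. First I would unpack the operator $\nabla^\sigma$ using Lemma~\ref{conformal-grad}: acting on $t_{ab}=[g;t_{ab}]$ of weight $w$ we have $\nabla^\sigma_c \tau_{ab} = [g;\, s\nabla_c t_{ab} - (w-2)n_c t_{ab} + (n\circledast g)_c^\sharp t_{ab}]$, and then iterate to get $\nabla^\sigma_c\nabla^\sigma_d \tau_{ab}$, being careful that the second application sees a weight-$(w+1)$, rank-$3$ tensor so the shift is $(w+1)-3 = w-2$ and the $\circledast$ term now distributes over three lower indices. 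This produces terms proportional to $s^2\nabla\nabla t$, $s\,n\otimes\nabla t$, $s\,(\nabla n)\otimes t$, $n\otimes n\otimes t$, and $g$-contracted analogs of these; the combination $-\bg^{cd}(\nabla^\sigma_c\nabla^\sigma_d\tau_{ab} + \tfrac{2d}{(w-3)(d+w-2)}\nabla^\sigma_{(a}\nabla^\sigma_{|c}\tau_{d|b)\circ} - \tfrac4d[\nabla^\sigma_{(a},\nabla^\sigma_{|c}]\tau_{d|b)\circ})$ then has to be expanded and organized by powers of $s$.

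The key bookkeeping step is to collect the $s^0$ (i.e. $|n|_g^2$-type) terms. When $\sigma$ merely obeys $I_\sigma^2\neq 0$ one has $|n|_g^2 - \tfrac{2s}{d}(\Delta s + Js) = I_\sigma^2$, and I would use this to trade every bare $|n|_g^2$ for $I_\sigma^2$ plus an $s$-proportional remainder; the residual $s$-terms get absorbed into the $-s(\cdots)$ block. The $I_\sigma^2$-proportional piece must reassemble into $\big[(w-2+\tfrac{d-1}{2})^2 - (\tfrac{d-1}{2})^2 + 2\big]I_\sigma^2\tau_{ab}$ exactly so as to cancel against the matching term in Definition~\ref{ID-tensor-def}, leaving only the $s$-times-smooth remainder. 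Similarly the explicit $-\tfrac4d\sigma^2 W^c{}_{ab}{}^d\tau_{cd}$ term in the definition is designed to cancel the $s^2$-Weyl curvature contribution arising from commuting the two $\nabla$'s via $[\nabla_c,\nabla_d]t_{ab} = -R_{cda}{}^e t_{eb} - R_{cdb}{}^e t_{ae}$ and decomposing $R$ into Weyl plus Schouten; the Schouten pieces there are what supply the $-4P_{(a}\csdot t_{b)\circ}$ and $(w-2)J$ contributions in the final answer. The upshot is that every surviving term is either part of the displayed $(d+2w-6)(\cdots)$ bracket or part of the $-s(\cdots)$ bracket, so dividing by $\sigma$ is legitimate on all of $M$ and $\ID_\sigma$ extends across $\mathcal{Z}(\sigma)$ with the stated codomain.

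\textbf{The main obstacle} I anticipate is purely the combinatorial control of the symmetrized, trace-adjusted terms: tracking how $\bg^{cd}$, the index symmetrization $(ab)$, the trace-free projection $\circ$, and the three separate $\nabla^\sigma$-expansion terms interact, especially verifying that the specific rational coefficients $\tfrac{2d}{(w-3)(d+w-2)}$ and $-\tfrac4d$ are precisely the ones that kill the $s^{-1}$-singular part and collapse the $I_\sigma^2$ terms to the quoted coefficient. There is no conceptual difficulty, but the identity is tight: a single mistracked trace term would spoil the cancellation. I would organize the verification by first checking the purely algebraic (weight-only, curvature-free, $\nabla$-free) part to fix the $I_\sigma^2$ coefficient, then the $s^2$-curvature part against the Weyl counterterm, then the $s^1$-first-derivative part, and finally confirm that what remains at order $s^0$ beyond $I_\sigma^2\tau_{ab}$ vanishes identically. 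Since the weight restrictions $w\neq 3, 2-d$ are exactly where the coefficients $\tfrac{1}{w-3}$ and $\tfrac{1}{d+w-2}$ blow up, these hypotheses enter precisely as the conditions under which the rearrangement is valid.
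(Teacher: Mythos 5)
Your proposal is correct and follows essentially the same route as the paper's proof: a direct computation in a choice of scale, applying Lemma~\ref{conformal-grad} twice and using the identity $I_\sigma^2 \stackrel{g}{=} |n|_g^2 + 2s\rho$ to show the combination in Definition~\ref{ID-tensor-def} is proportional to $s$, so the singularity along $\mathcal{Z}(\sigma)$ is removable. The paper states this in two sentences and leaves the bookkeeping implicit; your outline simply makes that bookkeeping explicit.
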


\begin{proof}
The proof amounts to a computation of $\ID_\sigma \tau_{ab}$ in a choice of scale away from $\mathcal{Z}(\sigma)$, applying Lemma~\ref{conformal-grad} twice,
and the identity $I_\sigma^2 \stackrel{g}= n^2 + 2 s \rho$. The resulting tensor is proportional to $s$ and hence the apparent singularity of $\ID_\sigma$ along $\mathcal{Z}(\sigma)$ in Definition~\ref{ID-tensor-def} is removable.
\end{proof}

For weights at which the composition of maps $q^* \circ r \circ I \csdot D \circ q$ acting on trace-free symmetric rank-2 tensors is defined, it is not difficult (but somewhat tedious) to show that Equation~\nn{home-brand} holds.
In this sense, the operator $\ID_\sigma$ is the action of $I \csdot D$ on rank-two symmetric tensors.  Also we define the analog of the operator $I \csdot \hd$ when $w \neq 3 -\frac{d}{2}$ by
$$\IDh_\sigma := \tfrac{1}{d+2w-6} \ID_\sigma\,.$$
For weights at which the appropriate composition of maps is defined, we also have that $\IDh_\sigma = q^* \circ r \circ I \csdot \hd \circ q$. \color{black}

Note that, given 
a defining density 
$\sigma$ subject to $I_\sigma^2|_\Sigma=1$, when $w\neq 3,3-\frac d2$,
the   operator~$\delta_R$ acting on tensors is expressed in terms of $\IDh_\sigma$ according~to  
$$\delta_R = \mathring\top \circ    \IDh_\sigma
  - \tfrac{2}{w-3} \IIo\, (\hat n \hh \cdot \, )^2  \, .$$
\color{black}

\medskip

\begin{remark}
Proposition~\ref{sl2-algebra} shows that  the operator $I \csdot D$ obeys an $\mathfrak{sl}(2)$ Lie algebra. 
Moreover, it is not hard to check that 
$$[\nabla^\sigma, \sigma] = 0\, . $$
Hence
 we may expect that $\ID_\sigma$ and $\sigma$ obey a similar algebra. Indeed
suppose $\sigma \in \Gamma(\ce M[1])$ obeys $I_\sigma^2 \neq 0$, and call ${\sf h} : \Gamma(\odot^2_\circ T^*M[w]) \rightarrow \Gamma(\odot^2_\circ T^*M[w])$ the operator defined by ${\sf h}\tau = (d+2w-4)\tau$
and view  
$\sigma:\Gamma(\odot^2_\circ T^*M[w]) \rightarrow \Gamma(\odot^2_\circ T^*M[w+1])$
as a multiplicative operator.
Then, acting on the subspace of sections $\tau_{ab}$ subject to the divergence condition
$$
\bg^{ab} \nabla^\sigma_a \tau_{bc}^{}=0\, ,
$$
straightforward algebra shows that commutators of the operators $(\sigma, h, -\frac{1}{I^2} \ID )$ satisfy the $\mathfrak{sl}(2)$ defining relations,
$$[{\sf h},\sigma] = 2\sigma \,, \;\; \left[\sigma, -\tfrac{1}{I^2} \ID \right] = {\sf h}\,, \;\; \left[{\sf h}, -\tfrac{1}{I^2} \ID \right] = -2 \left(-\tfrac{1}{I^2} \ID \right)\, .$$
Away from $\Sigma$ and in the $g^o=\bg /\sigma^2$ scale, the operator $\ID$ acting on a trace- and divergence-free symmetric tensor $\tau_{ab}$ is given by
$$
\ID\,  \tau_{ab} \stackrel{g^o}=
-\Big(\Delta
-\tfrac{2J}d\big[
\big(w-2+\tfrac{d-1}2\big)^2
-\big(\tfrac{d-1}2\big)^2-2
 \big] 
\Big)
 \tau_{ab}
+ 4  P^{}_{(a} \csdot \tau^{}_{b)\circ}\, .
$$
The above algebra 
is likely a key ingredient for constructing a calculus for the study of
tensor wave equations (see~\cite{Forms} for  the analogous problem for differential forms).
\end{remark}

\medskip

\subsection{Constructing Higher Fundamental Forms}
One might think that 
having constructed the canonical conformally-invariant operator $\ID_{\sigma}$ with transverse order $1$, as well as a canonical extension of $\IIo$, that we could now directly compute  higher fundamental forms by applying powers of $\ID_{\sigma}$ to $\IIo^{\rm e}$. However, as could be predicted by~\cite{GPt}, the $k$th power of 
$\ID_{\sigma}$ does not always produce an operator of transverse order $k$. 
More precisely, the candidate $(k-2)$th 
fundamental form
$$
\ID_{\sigma}^k \IIo^{\rm e}\big|_\Sigma\, ,
$$
has transverse order strictly less than $k+1$ when $k\geq\frac{d-1}2$. In this section, we first prove that fundamental forms with transverse order $n \geq \frac{d+1}{2}$ can indeed not  be constructed
by applying powers of $\ID$ to $\IIo^{\rm e}$.
We will then explicitly construct canonical fundamental forms with transverse order $n < \frac{d+1}{2}$. Finally we  construct canonical conditional fundamental forms with transverse order $n \geq \frac{d+1}{2}$ 
and use these ingredients to prove Theorem~\ref{uber-PE}.

\medskip

We begin by  defining operators that  
often
have the  appropriate transverse order.

\begin{definition}
Let $\Sigma \hookrightarrow (M^d, \cc)$ be a conformal embedding with asymptotic unit defining density $\sigma$.
Then, when $k=0$,  define $\delta_{d,w}^{(0)}:=\otop:\Gamma(\odot^2_\circ T^*M[w]) \rightarrow \Gamma(\odot^2_\circ T^*\Sigma[w])$.
For $k=1$ and $w\neq 3$,  call $$\delta_{d,w}^{(1)}:=\delta_{\rm R}: \Gamma(\odot^2_\circ T^*M[w]) \rightarrow \Gamma(\odot^2_\circ T^*\Sigma[w-1])\, ,$$
while for $k\in {\mathbb Z}_{\geq 2}$ and $w\notin
\{2-d,\ldots, k-d\}\cup
 \{3,\ldots ,k+2\}
$,  let
$$\delta_{d,w}^{(k)} := \delta_{\rm R} \circ \ID^{k-1}_\sigma : \Gamma(\odot^2_\circ T^*M[w]) \rightarrow \Gamma(\odot^2_\circ T^*\Sigma[w-k])
\, .$$
\end{definition}

Acting on sections of $\odot_{\circ}^2 T^* M[w]$ for $w\neq 3$, the transverse order of $\delta_{d,w}^{(1)}$ is $1$. When $w \not \in \mathbb{Z}$, the operator $\delta_{d,w}^{(k)}$ is always defined.
Moreover, when $2w$ is not an integer the operators $\delta^{(k)}_{d,w}$ for $k\geq 2$ 
have 
transverse order~$k$
(see Equation~\nn{finallyIhaveaname} below). However, when $k \in \mathbb{Z}_{\geq 2}$ and $2w \in \mathbb{Z}$, the operator $\delta_{d,w}^{(k)}$ may not be defined or it could fail to have transverse order $k$. In particular, for~$w$ an integer and $k\geq 2$, the operators $\delta_{d,w}^{(k)}$ are only defined in the three regions where $w$ obeys
$w<2-d$, $k-d<w<3$, or $k+2<w$
(the second of these could be empty).
 The following lemma characterizes the transverse order of $\delta_{d,w}^{(k)}$ in these cases:

\begin{lemma} \label{tr-deg}
Fix $d \geq 3$ and $ k\geq 2$, and let $w \in \mathbb{Z}$ be such that 
$$w<2-d\, , \: k-d<w<3\, , \:\mbox{ or } \: k+2<w\, .$$
Then, the transverse order of $\delta_{d,w}^{(k)}$ is strictly less than $k$ if and only if $$\frac{7-d}{2} \leq w<3\: \mbox{ and }\:\frac{d+2w-3}{2} \leq k \leq d + 
2w - 5
\,.$$
\end{lemma}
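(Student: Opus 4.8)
The plan is to analyze the transverse order of $\delta_{d,w}^{(k)} = \delta_{\rm R} \circ \ID_\sigma^{k-1}$ by tracking, explicitly, how the leading transverse derivative term propagates through successive applications of $\ID_\sigma$, and then through the final application of $\delta_{\rm R}$. First I would recall from Lemma~\ref{ID-tensor} that, acting on a section $\tau_{ab} \in \Gamma(\odot^2_\circ T^*M[w])$, the operator $\IDh_\sigma$ has a ``normal transport'' term of the schematic form $(\nabla_n + (w-2)\rho)\tau_{ab}$ together with lower-transverse-order tangential corrections. Applied to $s^m v_{ab}$ with $v_{ab}$ generic and restricted to $\Sigma$, the key observation is that $\delta_{\rm R}$ and $\IDh_\sigma$ each act on $s^m(\cdot)$ essentially as multiplication by a linear-in-$m$ factor determined by the weight bookkeeping: tracking the composition $\delta_{\rm R}\circ\IDh_\sigma^{k-1}(s^m v)\big|_\Sigma$ will produce a product of $k$ linear factors in $m$, the zeros of which are exactly the values of $m$ for which the leading symbol drops order. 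The transverse order of $\delta_{d,w}^{(k)}$ is then $k$ minus the number of integers $m\in\{0,1,\dots,k\}$ at which this product of linear factors vanishes (a Pochhammer-type count), provided one checks that no ``accidental'' cancellation among the subleading tangential terms removes further order — which it does not, because those terms carry strictly lower transverse order and cannot conspire to kill the top symbol once it is nonzero.

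Concretely, I would compute the ``weight shift'' coefficient. Applying $\IDh_\sigma$ (weight $w \to w-1$) to $\sigma^m \tau$ with $\tau$ of weight $w-m$: from the $(\nabla_n + (w-m-2)\rho)(\sigma^m\tau)$ term one extracts, using $n = \ext s$ and $I_\sigma^2|_\Sigma = 1$, a factor linear in $m$ — roughly $m + (w-m-2) = $ const, but the precise coefficient must come from carefully expanding $\nabla_n(s^m t) = m s^{m-1}|n|^2 t + s^m \nabla_n t$ and combining with $\rho$; since $|n|^2 = I^2 - 2s\rho$, the $s^{m-1}$ coefficient is $m|n|^2 + (w-m-2)\cdot(\text{from }\rho)$, which upon restriction gives something of the form $(c_1 m + c_2)$ with $c_1,c_2$ depending on $d,w$ and the step index. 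Iterating $k-1$ times and then applying $\delta_{\rm R}$ (which contributes one more such linear factor, via Definition~\ref{swallow} and Lemma~\ref{ho-robin}), the composite leading coefficient acting on $s^m v|_\Sigma$ is a product
$$
\prod_{j=0}^{k-1} \big(\alpha_j m + \beta_j\big)\, ,
$$
where the $\alpha_j,\beta_j$ are affine in $w$ and $d$ and linear in $j$. The statement to prove is then purely combinatorial: the transverse order is $<k$ iff this product has a root at some integer $m$ with $0\le m\le k$, and solving $\alpha_j m + \beta_j = 0$ for integer $m\in[0,k]$ should pin down exactly the inequalities $\frac{7-d}{2}\le w<3$ and $\frac{d+2w-3}{2}\le k\le d+2w-5$.

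The main obstacle — and where most care is needed — is computing the $\alpha_j,\beta_j$ correctly, i.e., tracking the exact linear-in-$m$ coefficient produced at each step of $\IDh_\sigma$, and in particular getting the denominators $(w-3)$ and $(d+w-2)$ from Lemma~\ref{ID-tensor} to interact correctly with the powers of $\sigma$. There is a real risk of sign or off-by-one errors in the weight accounting, since at the $j$-th step the tensor being acted on has weight $w - j$ and carries $\sigma^{m-j}$ (if one started from $\sigma^m$), so the ``$w-2$'' in Lemma~\ref{ID-tensor} becomes $w-j-2$ at step $j$, while $\delta_{\rm R}$ from Definition~\ref{swallow} contributes via $N^A \hd_A$ at weight $w-k+1$. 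I would organize this by introducing the indeterminate $m$ early, writing $\IDh_\sigma(\sigma^{m-j}\,\tau^{(j)})\big|_\Sigma$ with $\tau^{(j)}$ the "partially processed" tensor, and extracting the coefficient of $\sigma^{m-j-1}$ as a polynomial in $m$; the claimed root structure then follows by factoring. A secondary, lighter obstacle is confirming that for $w\notin\mathbb{Z}$ or $2w\notin\mathbb{Z}$ the product has no integer root in $[0,k]$, so that the transverse order is genuinely $k$ in those cases — this is immediate once the $\alpha_j,\beta_j$ are known, since the roots are then non-integral or non-half-integral. Finally, I would verify the boundary cases of the two inequality regions (e.g.\ $w = \frac{7-d}{2}$, $k = \frac{d+2w-3}{2}$, $k = d+2w-5$) separately, as these correspond to a root landing exactly at $m=0$ or $m=k$ and require checking whether the corresponding order-dropping actually occurs at the endpoint of the allowed range.
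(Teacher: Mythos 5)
Your proposal is correct in substance and follows essentially the same route as the paper: the paper likewise isolates the normal-derivative part $(d+2w-2i-6)\nabla_n - s\Delta$ of $\ID_\sigma$ at each successive weight, composes inductively to obtain the leading coefficient $\prod_{i=1}^{k-1}(d+2w-k-i-4)$ of $\nabla_n^k$, and reads off the stated inequalities from the vanishing of one of these factors. One caution on your bookkeeping: the transverse order drops below $k$ precisely when this single product (equivalently, the coefficient of $v|_\Sigma$ in $\delta^{(k)}_{d,w}(s^k v)|_\Sigma$) vanishes --- not, as your first paragraph suggests, by an amount equal to the number of integer roots $m\in\{0,\dots,k\}$ of your factorized expression, since the factors of $m$ produced by differentiating $s^m$ always have the root $m=0$ without affecting the order.
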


\begin{proof}

To evaluate the transverse order of $\delta_{d,w}^{(k)}$, we compute the coefficient of $\nabla_{n}^{k}$. For that  we first examine the leading derivative structure of the operator $\ID_\sigma$. From  Lemma~\ref{ID-tensor}, acting on a weight $w$ tensor $t_{ab} \in \Gamma(\odot_\circ^2 T^* M[w])$ 
 the only terms with non-zero transverse order are  $\nabla_n t_{ab}$, $n_{(a} \nabla \cdot t_{b)\circ}$, $n_c \nabla_{(a}^{} t^c_{b)\circ}$, $s \Delta t_{ab}$, and $s \nabla_{(a} \nabla \cdot t_{b) \circ}$ (here~$\sigma=[g;s
]$ and we work in the scale $g$). In a choice of  coordinates $(s, y^1, \ldots, y^{n-1})$,  we can write
$$\nabla_a = n_a \partial_s + \text{ltots}\,,$$
where $n = \ext s$ and $\text{ltots}$ denotes
terms of lower transverse order.
Thus, we have that 
\begin{align*}
n_c \nabla_{(a}^{} t^c_{b)\circ} &= n_c n_{(a} \partial_s t_{b)\circ}^c + \text{ltots} \; \text{ and } \; 
s \nabla_{(a} \nabla \cdot t_{b) \circ}=
s n_{(a} \partial_s  \nabla \cdot t_{b) \circ} + \text{ltots}\,.
\end{align*}
Because $\sigma$ is an asymptotic unit defining density, we have that $\nabla_n \circ n \eqSig n (\nabla_n + H)$ and so
the operator $\delta_{\rm R}$ composed with the conormal $n_a$ has transverse order zero.
 Therefore, only the terms~$\nabla_n$ and $s \Delta$ in $\ID_{\sigma}$ can contribute to the leading transverse derivatives in the operator~$\delta_{d,w}^{(k)}$.

From the above, and again consulting Lemma~\ref{ID-tensor}, we  conclude that
$$\delta_{d,w}^{(k)} = \otop \circ \nabla_n \circ \prod_{i=0}^{k-2}  \left[ (d+2w - 2i -6)  \nabla_n - s \Delta \right] + \text{ltots}\,.$$
Because $\Delta = \nabla_n^2 + \text{ltots}$, we have that 
$$\delta_{d,w}^{(2)} \eqSig \otop \circ  (d+2w-7) \nabla_n^2 + \text{ltots}\,.$$
We now proceed  inductively to find the general coefficient of the leading transverse derivative term. Suppose that for $k \geq 3$,
$$\delta_{d,w}^{(k-1)} = \otop\circ \left[ \prod_{i=1}^{k-2} (d+2w-k-i-3) \right] \nabla_n^{k-1} + \text{ltots}\,.$$
Then, because
$$\delta_{d,w}^{(k)} = \delta_{d,w-1}^{(k-1)} \circ \ID_\sigma\,,$$
we have that
\begin{equation*}
\nonumber
\delta_{d,w}^{(k)} = 
\otop\circ 
\left[ \prod_{i=1}^{k-2} (d+2w-k-i-5) \right] \nabla_n^{k-1} \circ \left[(d+2w-6) \nabla_n - s \Delta \right] + \text{ltots} \, .\end{equation*}
Hence, as required, we find
\begin{equation}
\delta_{d,w}^{(k)} 
=
\otop\circ 
\left[ \prod_{i=1}^{k-1} (d+2w-k-i-4) \right] \nabla_n^{k} + \text{ltots} \,.\label{finallyIhaveaname}
\end{equation}

To find when $\delta_{d,w}^{(k)}$ has transverse order strictly less than $k$, we study when  the leading coefficient in the above display vanishes. That is, we wish to find $k$ and $w$ that obey
\begin{align} \label{k-ineq}
d+2w-2k-3 \leq 0 \leq d+2w-k-5\,.
\end{align}
Because $k \geq 2$, we find that the right inequality implies that $w \geq \frac{7-d}{2}$. Further, the inequality above can be rewritten in terms of $k$ to obtain $\frac{d+2w-3}{2} \leq k \leq d + 2w - 5$. 
Equation~\ref{k-ineq} has no solutions for $w$ in the range  $w<2-d$ since that would require $d<-3$.
Therefore, when $w < 2-d$, the transverse order of $\delta_{d,w}^{(k)}$ is $k$. 
The left inequality of~\ref{k-ineq} rules out $w>k+2$ so 
the only remaining case is
$k-d<w<3$, 
which, in combination with Equation~\ref{k-ineq}, gives the ranges of $k$ and $w$ quoted in the Lemma.

\end{proof}

An asymptotic unit defining density~$\sigma$ only allows the construction of local invariants with transverse order at most $d-1$, so the highest fundamental form we could
determine from it is
the $d$th fundamental form. 
In view of the above lemma,  we  next focus on fundamental forms built from $\sigma$ of transverse order less than $\frac{d+1}2$.

\begin{definition} \label{ff-lower}
Let $d \geq 3$ and let $2 \leq n < \frac{d+3}{2}$. The \textit{canonical $n$th fundamental form} $\mathring{\underline{\overline{\rm{n}}}}$ is  defined by
$$\mathring{\underline{\overline{\rm{n}}}} := \delta_{d,1}^{(n-2)} \IIo^{\rm e}\, .$$
\end{definition}

\begin{corollary} \label{cff-ff}
The canonical $n$th fundamental form is a fundamental form.
\end{corollary}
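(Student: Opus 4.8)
The plan is to verify the two defining properties of a conformal fundamental form (Definition~\ref{ff-def}): conformal invariance with the correct weight, and the correct transverse order $n-1$. Both follow by assembling results already established in the excerpt. For conformal invariance, recall that $\IIo^{\rm e} := q^*(P_{AB})$ with $P := \hd \hh I_\sigma$, and that $I_\sigma = \hd\sigma$ is built from an asymptotic unit defining density $\sigma$, which is canonically associated to the conformal embedding data $\Sigma\hookrightarrow(M,\cc)$; hence $\IIo^{\rm e}$ is a conformally invariant section of $\odot^2_\circ T^*M[1]$. The operators $\delta_{d,1}^{(n-2)} = \delta_{\rm R}\circ \ID_\sigma^{n-3}$ (with the obvious lower-order conventions when $n-2\leq 1$) are each built from the conformally invariant operators $\delta_{\rm R}$ and $\ID_\sigma$ (the latter via Definition~\ref{ID-tensor-def} and the invariance established through Lemma~\ref{ID-tensor}); applying them to $\IIo^{\rm e}$ therefore produces a conformally invariant, trace-free, symmetric section of $\odot^2 T^*\Sigma$. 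Tracking weights: $\IIo^{\rm e}$ has weight $1$, the operator $\ID_\sigma$ lowers weight by $1$ and is applied $n-3$ times, and $\delta_{\rm R}$ lowers weight by a further $1$, so $\mathring{\underline{\overline{\rm{n}}}}$ has weight $1-(n-3)-1 = 3-n$. By Definition~\ref{ff-def} this is exactly the weight required of an $n$th conformal fundamental form, i.e. $L^{\Omega^2 g} = \Omega^{2-(n-1)}L^g = \Omega^{3-n}L^g$.

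For the transverse order, I would first note that $\IIo^{\rm e}$ has transverse order one: from Equation~\nn{showmykeys}, $\IIo^{\rm e}_{ab} \stackrel g= (\nabla_{(a}\nabla_{b)\circ}+\Rho_{(ab)\circ})s$, which involves at most one transverse derivative of the metric upon restriction, matching the known transverse order of $\IIo$ itself. Next, each application of $\ID_\sigma$ raises the transverse order by one in the range of weights and exponents relevant here: this is precisely the content of the computation in the proof of Lemma~\ref{tr-deg}, culminating in Equation~\nn{finallyIhaveaname}, where the leading transverse-derivative coefficient of $\delta_{d,w}^{(k)}$ is $\prod_{i=1}^{k-1}(d+2w-k-i-4)$. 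For $\mathring{\underline{\overline{\rm{n}}}}$ we have $w=1$ and $k = n-2$, so the leading coefficient is $\prod_{i=1}^{n-3}(d+2n-n-i-4-1)$, i.e. $\prod_{i=1}^{n-3}(d+n-i-5)$; under the hypothesis $n<\tfrac{d+3}{2}$ one checks that $2n-3 < d$, hence $d+2w-2k-3 = d-2n+5 > 0$, so the inequality~\nn{k-ineq} that would force a drop in transverse order fails, and the product is non-zero. Combining: $\IIo^{\rm e}$ contributes transverse order one, and $\delta_{d,1}^{(n-2)}$ contributes $n-2$ more, giving total transverse order $1+(n-2) = n-1$, as required for an $n$th fundamental form.

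The main obstacle I anticipate is not conceptual but bookkeeping: one must confirm that the weight $w=1$ and exponents $k = 0,1,\ldots, n-3$ encountered while building up $\delta_{d,1}^{(n-2)}$ all avoid the forbidden values listed in the definition of $\delta_{d,w}^{(k)}$ and of $\ID_\sigma$ (namely $w\neq 3$, $w\neq 2-d$, and $w\notin\{2-d,\ldots,k-d\}\cup\{3,\ldots,k+2\}$), and that along the way no spurious pole is hit in the constructions of Lemmas~\ref{conformal-grad}, \ref{ID-tensor}, and \ref{ho-robin}. The constraint $n<\tfrac{d+3}{2}$ is exactly what keeps the relevant weights below $3$ and in the admissible ``middle region'' $k-d<w<3$ throughout the iteration, so the corollary is in effect a packaging of Lemma~\ref{tr-deg} together with the conformal invariance of the ingredient operators. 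I would close the proof by remarking that trace-freeness and the symmetric $\odot^2 T^*\Sigma$-valued nature are automatic since $\otop$ (which caps off $\delta_{\rm R}$ and each $\ID_\sigma$ only implicitly — more precisely $\delta_{\rm R}=\otop\circ\IDh_\sigma - \tfrac{2}{w-3}\IIo\,(\hat n\cdot)^2$ and the final $\otop$ projects) lands in $\Gamma(\odot^2_\circ T^*\Sigma[\,\cdot\,])$, so all hypotheses of Definition~\ref{ff-def} are met.
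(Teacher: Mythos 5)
Your verification of the weight and of conformal invariance is fine and matches the paper's one-line treatment of that point ("by construction, $\mathring{\underline{\overline{\rm{n}}}}$ is a conformal tensor density of weight $3-n$"). The genuine gap is in the transverse-order argument. Lemma~\ref{tr-deg}, via Equation~\nn{finallyIhaveaname}, tells you only that the \emph{operator} $\delta_{d,1}^{(n-2)}$ has leading term $\alpha\,\otop\circ\nabla_n^{n-2}$ with $\alpha\neq 0$ when $n<\tfrac{d+3}{2}$; it does not follow that applying it to the particular preinvariant $\IIo^{\rm e}$ yields a hypersurface invariant of transverse order $n-1$. Transverse order counts $\partial_s$-derivatives of the metric in the resulting formula, and once the normal derivatives land on $\IIo^{\rm e}_{ab}\stackrel{g}{=}\nabla_{(a}n_{b)\circ}+s\mathring{P}_{ab}$ the top-order metric jets can, and partially do, cancel. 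The paper's proof is devoted almost entirely to controlling exactly this: in a scale with $|\ext s|_g=1$ one has $\nabla_n\nabla_a n_b=R_{\hat n ab\hat n}-(\nabla_a n^c)(\nabla_c n_b)$, which combines with the Leibniz derivative of the $s\mathring{P}_{ab}$ term to give
$$\nabla_n^k\,\IIo^{\rm e}_{ab}\ \eqSig\ \nabla_n^{k-1}W_{\hat n ab\hat n}+(k-1)\,\nabla_n^{k-1}\mathring{P}_{ab}+2n_{(a}\nabla_n^{k-1}P_{\hat n b)\circ}+\text{ltots}\,;$$
note that the $-P_{ab}$ produced by commuting derivatives has already absorbed one unit of the $k\,\nabla_n^{k-1}\mathring{P}_{ab}$ term, so ``orders add'' is demonstrably not automatic. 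One must then argue that the surviving combination does not vanish identically; the paper does this by testing against conformally flat ambient metrics, where $W=0$ but $\mathring{P}$ is generically non-zero with transverse order two, so $(k-1)\nabla_n^{k-1}\mathring{P}_{ab}$ carries the full order $k+1$.

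Without some version of this leading-symbol computation on $\IIo^{\rm e}$ itself, your argument establishes only that the transverse order is \emph{at most} $n-1$, not the required equality. Two smaller slips: substituting $w=1$, $k=n-2$ into Equation~\nn{finallyIhaveaname} gives factors $d-n-i$, not $d+n-i-5$, and the quantity $d+2w-2k-3$ evaluates to $d-2n+3$ rather than $d-2n+5$; your conclusion that the operator's leading coefficient is non-zero for $n<\tfrac{d+3}{2}$ is nevertheless correct.
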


\begin{proof}
From Equation~\ref{showmykeys}, in a choice of scale $\sigma=[g;s]$, we have $\IIo^{\rm e}_{ab} = \nabla_{(a} n_{b)\circ} + s \mathring{P}_{ab}$. Therefore, we have that $\nabla_n^k \IIo^{\rm e}_{ab} \eqSig \nabla_n^{k}  \nabla_{(a} n_{b)\circ} + k \nabla_n^{k-1} \mathring{P}_{ab}$ for any positive integer $k$. There exists a scale~$g$ 
for which~$s$ is a unit defining function  for~$\Sigma$ (see, for example~\cite{GrahamLee}); thus in what follows, we can assume that $|ds|^2 = 1$, so that
$\nabla_n n_a = 0$. Therefore, we can write that
\begin{align*}
\nabla_n \nabla_{a} n_{b} &= R_{n ab n} - (\nabla_{a} n^c)(\nabla_{c} n_b) \\
&= W_{nabn}  - P_{ab} + 2n_{(a} P_{b)n} - g_{ab} P_{nn} + \text{ltots}\,.
\end{align*}
Applying the above display to $\nabla_n^k \IIo^{\rm e}_{ab}$, we have
\begin{align*}
\nabla_n^k \IIo^{\rm e}_{ab} &\eqSig \nabla_n^{k-1} W_{nabn} + (k-1) \nabla_n^{k-1} \mathring{P}_{ab} +2 n_{(a} \nabla_n^{k-1} P_{nb)\circ} + \text{ltots}\,.
\end{align*}
To verify that the first two transverse order $k+1$ terms in the above do not cancel in general, 
suppose that $(M,\cc)$ is  conformally flat: For generic $\Sigma$, the metric $g\in \cc$ for which $|ds|^2=1$ has  $P \neq 0$ which  has transverse order $2$. Because the Weyl tensor here vanishes, 
the combination $\nabla_n^{k-1} W_{nabn} + (k-1) \nabla_n^{k-1} \mathring{P}_{ab}$ has transverse order $k+1$; this must hold for general conformally-curved manifolds.

Using Lemma~\ref{tr-deg}, if $n \in \mathbb{Z}$ satisfies $2\leq n < \frac{d+3}{2}$, then 
$$\delta_{d,1}^{(n-2)} = \alpha\,  \otop \circ \nabla_n^{n-2} + \text{ltots}$$
for some non-zero coefficient $\alpha$. Thus, in a choice of scale where $|ds|^2 = 1$, we have
$$\mathring{\underline{\overline{\rm{n}}}}_{ab} = \alpha\,  \otop \circ \nabla_n^{n-2} \IIo^{\rm e}_{ab} + \text{ltots} = \alpha \otop \circ \nabla_n^{n-3}  (W_{nabn} + (n-3) \mathring{P}_{ab}) + \text{ltots}$$
and so $\mathring{\underline{\overline{\rm{n}}}}$ has transverse order $n-1$.
Further, by construction,  $\mathring{\underline{\overline{\rm{n}}}}$ is  a conformal tensor density of weight $3-n$. The corollary follows.
\end{proof}

\begin{remark}
In dimensions
$d=3,4$, the canonical $n$th fundamental form is defined for all~$n \leq d-1$. In $d = 4$ dimensions a well-defined  fourth fundamental form also exists  and was given in Equation~\nn{mynumberisugly}.
When $d > 3$, the operator $\delta_{d,1}^{(1)}$ can be used to compute
\begin{eqnarray*}
\IIIo_{ab} :=
& \delta_{d,1}^{(1)} \IIo_{ab}^{\rm e}
&=
-\IIo^2_{(ab)\circ}
+W_{\hat n ab \hat n}
\, .\\[1mm]
\end{eqnarray*}
For $d > 5$,  applying the operator~$\delta_{d,1}^{(2)}$   gives
\begin{eqnarray*}
 \IVo_{ab} :=
 &\delta_{d,1}^{(2)}  \IIo_{ab}^{\rm e}
 &=-(d-4)(d-5) C_{n(ab)}^\top  - (d-4)(d-5) H W_{\hat n ab \hat n} - (d-4) \bar \nabla^c W_{c(ab) \hat n}^\top \\&&\phantom{=}+ 2 W_{c \hat n \hat n (a}^{} \IIo_{b)\circ}^c + (d^2 - 7d+18) \Fo_{(a} \csdot \IIo_{b)\circ} + (d-6) \bar{W}^c{}_{ab}{}^d \IIo_{cd} \\&&\phantom{=}+ \tfrac{d^3-10d^2+25d-10}{(d-1)(d-2)} K \IIo_{ab}
 \, .
\end{eqnarray*}
The above computation was performed using the symbolic manipulation program FORM~\cite{Jos}. Documentation of our FORM code can be found in~\cite{FormFiles}. The above canonical fundamental forms have leading transverse derivative terms that are identical to the fundamental forms  displayed in the introduction up to an overall non-zero coefficient. Also note that, in dimensions~$d > 3$, the canonical third fundamental form $\IIIo$ recovers the trace-free Fialkow tensor:
\begin{equation}
\label{whysonegative}
\IIIo = -(d-3) \Fo\,.
\end{equation}
\end{remark}

We now turn our attention to conditional fundamental forms for 
$d \geq 5$. 
This relies on hyperumbilicity.
Corollary~\ref{cff-ff} lets us reframe  hyperumbilicity in terms of canonical fundamental forms. In particular, a conformal embedding is hyperumbilic iff $\IIo = \dots = \mathring{\underline{\overline{\rm{k}}}}  = 0$, where $k = \lceil \frac{d+1}{2} \rceil$. We now study consequences of hyperumbilicity.

Our next task is to construct  canonical criteria for conditional  fundamental forms. We begin with technical lemmata.

\begin{lemma} \label{dnIIoe0}
Let $d \geq 3$, $k = \lceil \frac{d+1}{2} \rceil$, and $\Sigma \hookrightarrow (M^d, \cc)$ be a hyperumbilic conformal embedding with corresponding asymptotic unit defining density $\sigma$.
Then, for all $0 \leq \ell \leq k-2$,
$$\nabla_n^{\ell} \IIo^{\rm e} \eqSig 0\,.$$
\end{lemma}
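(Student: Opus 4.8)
The plan is to prove this by induction on $\ell$. The base case $\ell = 0$ requires showing $\IIo^{\rm e} \eqSig 0$, which is immediate: by Corollary~\ref{cff-ff} the canonical second fundamental form $\mathring{\underline{\overline{2}}} = \delta_{d,1}^{(0)} \IIo^{\rm e} = \otop \IIo^{\rm e}$ is exactly the trace-free second fundamental form $\IIo$ (its value on $\Sigma$), and for a hyperumbilic embedding $\IIo = 0$. More precisely, $\IIo^{\rm e}_{ab} \stackrel g= \nabla_{(a} n_{b)\circ} + s \mathring P_{ab}$ from Equation~\nn{showmykeys}, so on $\Sigma$ it restricts to the trace-free second fundamental form, which vanishes.

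For the inductive step, suppose $\nabla_n^m \IIo^{\rm e} \eqSig 0$ for all $m \leq \ell - 1$ with $\ell \leq k-2$. The key input is that hyperumbilicity gives the vanishing of all canonical fundamental forms $\mathring{\underline{\overline{j}}} = \delta_{d,1}^{(j-2)}\IIo^{\rm e}$ for $2 \leq j \leq k$, hence in particular $\delta_{d,1}^{(\ell)}\IIo^{\rm e} \eqSig 0$ (valid since $\ell \leq k-2$). Now, by Equation~\nn{finallyIhaveaname},
$$\delta_{d,1}^{(\ell)} = \otop \circ \Big[\prod_{i=1}^{\ell-1}(d + 2 - \ell - i - 4)\Big]\nabla_n^\ell + \text{ltots}\, ,$$
with leading coefficient $\prod_{i=1}^{\ell-1}(d - \ell - i - 2)$. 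One checks this coefficient is nonzero: by Lemma~\ref{tr-deg} with $w=1$, the leading coefficient of $\delta_{d,1}^{(\ell)}$ vanishes only when $\tfrac{d-1}{2} \leq \ell \leq d-3$ and $\ell \geq \tfrac{7-d}{2}$; since $\ell \leq k - 2 = \lceil\tfrac{d+1}{2}\rceil - 2 < \tfrac{d-1}{2}$ (one verifies $\lceil\tfrac{d+1}{2}\rceil - 2 < \tfrac{d-1}{2}$ in each parity of $d$), we are outside the bad range, so the coefficient is nonzero and $\delta_{d,1}^{(\ell)}$ has transverse order exactly $\ell$.

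It then remains to show that the lower-transverse-order terms ($\text{ltots}$) in $\delta_{d,1}^{(\ell)}\IIo^{\rm e}$ all vanish on $\Sigma$ under the inductive hypothesis. Here I would argue that each such term, coming from the expansion of $\delta_{\rm R}\circ\ID_\sigma^{\ell-1}$ via Lemma~\ref{ID-tensor}, is a sum of terms of the schematic form (tangential operator)$\circ \nabla_n^m \IIo^{\rm e}$ with $m < \ell$, evaluated on $\Sigma$ — since $\ID_\sigma$ acting on a tensor produces, aside from the $\nabla_n$ and $s\Delta$ terms, only terms with $n_a$, $\nabla n$, $s$, or $\bar\nabla$-type factors that either lower transverse order or are multiplied by factors killed along $\Sigma$ (as in the proof of Lemma~\ref{tr-deg}, where $\nabla_n \circ n \eqSig n(\nabla_n + H)$ makes $\delta_{\rm R}$ composed with $n_a$ have transverse order zero). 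Tracking this, every monomial in $\text{ltots}$ acting on $\IIo^{\rm e}$ is, modulo terms that vanish on $\Sigma$ for structural reasons, a tangential operator applied to $\nabla_n^m \IIo^{\rm e}$ with $m \leq \ell - 1$; by induction each $\nabla_n^m \IIo^{\rm e} \eqSig 0$, so $\text{ltots} \eqSig 0$. Combined with $\delta_{d,1}^{(\ell)}\IIo^{\rm e}\eqSig 0$ and the nonvanishing of the leading coefficient, we conclude $\otop \nabla_n^\ell \IIo^{\rm e} \eqSig 0$; finally one upgrades this to $\nabla_n^\ell \IIo^{\rm e} \eqSig 0$ (not just its trace-free tangential projection) by noting that the non-tangential and trace parts of $\nabla_n^\ell\IIo^{\rm e}$ on $\Sigma$ are themselves determined by lower transverse-order data — e.g. the trace $\bar g^{ab}\nabla_n^\ell\IIo^{\rm e}_{ab}$ and normal components $n^a\nabla_n^\ell\IIo^{\rm e}_{ab}$ reduce via the Leibniz rule and $\nabla_n n = \text{ltots}$ to expressions in $\nabla_n^m\IIo^{\rm e}$, $m < \ell$, which vanish by induction.

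The main obstacle I anticipate is precisely this last bookkeeping: showing that all lower-order terms generated by the repeated application of $\ID_\sigma$ — and the passage from the trace-free tangential projection back to the full tensor $\nabla_n^\ell \IIo^{\rm e}$ — can be organized so as to fall under the inductive hypothesis, rather than introducing genuinely new transverse data. This requires care with the structure of $\ID_\sigma$ in Lemma~\ref{ID-tensor}, in particular handling the terms $n_{(a}\nabla\cdot t_{b)\circ}$, $n_c\nabla_{(a}t^c_{b)\circ}$ and $s\,\nabla_{(a}\nabla\cdot t_{b)}$, all of which must be shown to contribute only terms that either are multiplied by $s$ (hence vanish on $\Sigma$ after at most $\ell \leq k-2 < d$ transverse derivatives hit the available factors of $s$) or are tangential operators on lower transverse-order jets of $\IIo^{\rm e}$. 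A clean way to do this is to observe that $\IIo^{\rm e}_{ab}$, being the almost-Einstein operator applied to $s$, satisfies $\IIo^{\rm e} = \mathcal O(\sigma)$-free but its normal jets are controlled by the recursion $\nabla_n\nabla_a n_b = W_{nabn} + \text{(Schouten + lower)}$ used in Corollary~\ref{cff-ff}; iterating this identity expresses $\nabla_n^\ell\IIo^{\rm e}$ as $\nabla_n^{\ell-1}W_{nabn}$ plus lower-transverse-order terms plus $(\ell-1)\nabla_n^{\ell-1}\mathring P_{ab}$, and hyperumbilicity forces the combination $\nabla_n^{\ell-1}W_{nabn} + (\ell-1)\nabla_n^{\ell-1}\mathring P_{ab}$ to vanish on $\Sigma$, completing the induction.
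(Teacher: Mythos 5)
Your overall strategy --- induction on $\ell$, with hyperumbilicity plus the non-vanishing of the leading coefficient of $\delta_{d,1}^{(\ell)}$ killing the trace-free tangential projection $\otop \nabla_n^{\ell}\IIo^{\rm e}$ --- matches the paper's proof, and your check that $\ell \leq k-2 < \tfrac{d-1}{2}$ keeps you outside the degenerate range of Lemma~\ref{tr-deg} is correct. The gap is in the final upgrade from $\otop\nabla_n^{\ell}\IIo^{\rm e}\eqSig 0$ to the full statement. You assert that the normal components $n^a\nabla_n^{\ell}\IIo^{\rm e}_{ab}$ ``reduce via the Leibniz rule \dots to expressions in $\nabla_n^m\IIo^{\rm e}$ with $m<\ell$.'' They do not. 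The Leibniz rule gives $n^a\nabla_n^{\ell}\IIo^{\rm e}_{ab}\eqSig\nabla_n^{\ell}(n^a\IIo^{\rm e}_{ab})$ modulo terms that vanish by induction, but $n^a\IIo^{\rm e}_{ab}$ is not itself controlled by lower jets of $\IIo^{\rm e}$: the essential input is the identity $n^a\IIo^{\rm e}_{ab}=\tfrac{s}{d-1}\,\nabla\csdot\IIo^{\rm e}_{b}$, a consequence of the tractor relation $I^AP_{AB}=\tfrac{K}{d-2}X_B$ (i.e.\ of $\sigma$ being an asymptotic unit defining density; see Equation~\nn{thestatementwearelookingfor}). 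Expanding $\nabla_n^{\ell}(s\,\nabla\csdot\IIo^{\rm e}_{b})$, the factor of $s$ absorbs one normal derivative, but the surviving term $\ell\,\nabla_n^{\ell-1}\nabla^a\IIo^{\rm e}_{ab}$ still contains the full $\ell$-th normal jet through the normal part of the divergence; splitting $\nabla^a=\nabla^{\top a}+n^a\nabla_n$ and using the inductive hypothesis yields the self-referential equation $n^a\nabla_n^{\ell}\IIo^{\rm e}_{ab}\eqSig\tfrac{\ell}{d-1}\,n^a\nabla_n^{\ell}\IIo^{\rm e}_{ab}$, which forces vanishing only because $\ell\leq k-2<d-1$, so that $\tfrac{\ell}{d-1}\neq 1$. (The trace then comes for free: $g^{ab}\IIo^{\rm e}_{ab}=0$ gives $\bar g^{ab}\nabla_n^{\ell}\IIo^{\rm e}_{ab}=-n^an^b\nabla_n^{\ell}\IIo^{\rm e}_{ab}$.)

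Your fallback in the closing paragraph --- iterating $\nabla_n\nabla_an_b=W_{nabn}+\cdots$ and invoking hyperumbilicity --- does not close this gap either, because the vanishing of the canonical fundamental forms only constrains the trace-free tangential projections $\otop\nabla_n^{m}\IIo^{\rm e}$, never the components contracted with $n$. The missing idea is precisely the divergence identity and the resulting fixed-point argument; without it the induction does not close.
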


\begin{proof}
We proceed by induction on $\ell$. The $\ell = 0$ case is clear since hyperumbilicity implies that~$\IIo = 0$. Next, suppose that $\nabla_n^{\ell -1} \IIo^{\rm e} \eqSig 0$ for all $1 \leq \ell \leq k-2$. We then compute directly:
\begin{align*}
\nabla_n^{\ell} \IIo_{ab}^{\rm e} \eqSig& (\bar{g}_a^c + n_a n^c)(\bar{g}_b^d + n_b n^d) \nabla_n^\ell \IIo_{cd}^{\rm e} \\
\eqSig& \otop \circ \nabla_n^\ell \IIo_{ab}^{\rm e} + \frac{ \bar{g}_{ab}}{d-1} \bar{g}^{cd} \nabla_n^\ell \IIo_{cd}^{\rm e} + 2 n_{(a} (n \csdot \nabla_n^\ell \IIo_{b)}^{\rm e})^\top + n_a n_b \,n \csdot (\nabla_n^\ell \IIo^{\rm e}) \csdot n \\
\eqSig& \otop \circ \nabla_n^\ell \IIo_{ab}^{\rm e} + 2 n_{(a} (n \csdot \nabla_n^\ell \IIo_{b)}^{\rm e})^\top + (n_a n_b - \tfrac{ \bar{g}_{ab}}{d-1})  \hh n \csdot (\nabla_n^\ell \IIo^{\rm e}) \csdot n\,,
\end{align*}
where the third line holds because $g^{ab} \IIo^{\rm e}_{ab} = 0$. By hyperumbilicity, we have that $\mathring{\underline{\overline{\rm{\ell\!-\!2}}}}  = \delta_{d,1}^{(\ell)} \IIo^{\rm e} = 0$ for $0 \leq \ell \leq k-2$. Consulting Equation~\ref{finallyIhaveaname},  we see that $\otop\circ \nabla_n^{\ell} \IIo^{\rm e}|_{\Sigma} = 0$ for all $0 \leq \ell \leq k-2$ (the lower order terms in Equation~\ref{finallyIhaveaname} also vanish by hyperumbilicity). Thus, it only remains to show  that $n^a 
\nabla_n^\ell \IIo_{ab}^{\rm e} \eqSig 0$.

By the inductive assumption and Leibniz rule, we also have that
$$n^a \nabla_n^\ell \IIo_{ab}^{\rm e} \eqSig \nabla_n^\ell (n^a \IIo_{ab}^{\rm e})\,.$$
Using the tractor identity  $I^A P_{AB} = \frac{K X_B}{d-2}$ and that $P = q(\IIo^{\rm e})$, we have  $n^a \IIo_{ab}^{\rm e} = \frac{s \nabla \cdot \IIo^{\rm e}}{d-1}$ where $\sigma$ is $s$ in the scale $g$. Applying this identity to the above display, we have that
\begin{align*}
n^a \nabla_n^\ell \IIo_{ab}^{\rm e} \eqSig& \tfrac{1}{d-1} \nabla_n^{\ell} (s \nabla^a \IIo_{ab}^{\rm e}) \\
\eqSig& \tfrac{\ell}{d-1} \nabla_n^{\ell -1} \nabla^a \IIo_{ab}^{\rm e} \\
\eqSig& \tfrac{\ell}{d-1} \nabla^a \nabla_n^{\ell -1} \IIo_{ab}^{\rm e} \\
\eqSig& \tfrac{\ell}{d-1} \big(\nabla^{\top \!\hh a}\hh \nabla_n^{\ell -1} \IIo_{ab}^{\rm e} + n^a \nabla_n^\ell \IIo_{ab}^{\rm e} \big) \\
\eqSig& \tfrac{\ell}{d-1} n^a \nabla_n^\ell \IIo_{ab}^{\rm e}\,,
\end{align*}
where the second, third,  and fifth lines follow from the inductive assumption. Thus, we have that $n^a \nabla_n^\ell \IIo_{ab}^{\rm e} \eqSig 0$ so long as $\ell \neq d-1$, but $\ell \leq k-2 < d-1$, so the lemma follows.

\end{proof}

\begin{lemma} \label{PsQ}
Let $d \geq 5$, $k = \lceil \frac{d+1}{2} \rceil$, and $\Sigma \hookrightarrow (M^d, \cc)$ be a hyperumbilic conformal embedding with corresponding asymptotic unit defining density $\sigma$.
Then,
$$P_{AB} = \sigma^{k-1} Q_{AB} + \sigma^{k-2} X_{(A} T_{B)} + \sigma^{k-3} X_A X_B U\,,$$
where $Q \in \Gamma(\odot^2_\circ \mathcal{T} M [-k]) \cap \ker X$, $T \in \Gamma(\mathcal{T} M[-k]) \cap \ker X$, and $U \in \Gamma(\ce M [-k])$.
\end{lemma}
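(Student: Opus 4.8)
The plan is to establish this structural decomposition by extracting the $\sigma$-order behavior of the tractor $P_{AB}:=\hd_A I^B_\sigma$ from the vanishing statements provided by Lemma~\ref{dnIIoe0}. First I would fix a scale $g\in\cc$ with $\sigma=[g;s]$, preferably one in which $s$ is a unit defining function (such a scale exists, as used in the proof of Corollary~\ref{cff-ff}). In this scale the projecting slot of $P_{AB}$ is $q^*(P_{AB})=\IIo^{\rm e}_{ab}=\nabla_{(a}n_{b)\circ}+s\mathring P_{ab}$, by Equation~\nn{showmykeys}. Lemma~\ref{dnIIoe0} asserts $\nabla_n^\ell\IIo^{\rm e}\eqSig 0$ for all $0\le \ell\le k-2$, which is precisely the statement that $\IIo^{\rm e}=\sigma^{k-1}\cdot(\text{smooth})$, i.e. the projecting part of $P_{AB}$ vanishes to order $\sigma^{k-1}$. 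Writing $\tau_{ab}$ for the smooth weight $-k$ trace-free symmetric tensor with $\IIo^{\rm e}=\sigma^{k-1}\tau$, I would then set $Q_{AB}:=q(\tau_{ab})$ on $\widehat M=M\setminus\mathcal Z(\sigma)$, which lies in $\Gamma(\odot^2_\circ\ct M[-k])\cap\ker X$ by the properties of the insertion map $q$ in Lemma~\ref{insertions}(ii), and argue that $Q$ extends smoothly across $\Sigma$ because its only singularities would be the $\nabla\cdot\tau$ and $\nabla\cdot\nabla\cdot\tau$ slots, which are controlled.

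The remaining step is to control the difference $P_{AB}-\sigma^{k-1}Q_{AB}$. Both tractors have the same projecting part to order $\sigma^{k-1}$ and both are annihilated by $X^A$ (since $X\csdot P=0$ because $P=\hd I$ acts on a weight-one density, and $X\csdot Q=0$ by construction). A tractor in $\ker X$ with projecting part of order $\sigma^{k-1}$ must, in the splitting, have its middle slot of order $\sigma^{k-1}$ and hence its form is $\sigma^{k-1}Q_{AB}+X_{(A}(\text{middle/bottom corrections})$; peeling off the $X_{(A}T_{B)}$ term with $T$ of the lowest possible $\sigma$-order and then the $X_AX_B U$ term is a matter of reading off the component structure of $P_{AB}$ in the scale $g$ and matching orders in $s$. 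Concretely, $P^{+B}=0$, $P^{aB}$ carries the $\IIo^{\rm e}$ data at order $s^{k-1}$, and the $P^{--}$ slot carries the lowest-order correction; I would use the explicit formula for $q$ together with $I\csdot\hd I_A=\tfrac{K}{d-2}X_A+\mathcal O(\sigma^2)$ (Equation~\nn{thestatementwearelookingfor}) and $\delta_R K^{\rm e}=-2(d-3)M$ to identify that the $X_{(A}T_{B)}$ term appears at order $\sigma^{k-2}$ and the $X_AX_BU$ term at order $\sigma^{k-3}$, consistent with the weights $[-k]$ claimed for $Q$, $T$, $U$. That $T\in\ker X$ is automatic since $X^2=0$.

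The main obstacle I anticipate is not the algebra of slots but the \emph{smoothness and removability} argument: a priori $Q$, $T$, $U$ are only defined on $\widehat M$, and one must check that dividing $P_{AB}$ by the appropriate powers of $\sigma$ yields sections that extend smoothly to all of $M$, not merely meromorphically. This is exactly the kind of subtlety that the construction of $\ID_\sigma$ in Definition~\ref{ID-tensor-def} and Lemma~\ref{ID-tensor} was designed to handle—apparent singularities along $\mathcal Z(\sigma)$ that turn out to be removable. I expect the cleanest route is to phrase the whole argument in terms of the canonical transverse operators: since $\delta_{d,1}^{(\ell)}\IIo^{\rm e}=\mathring{\underline{\overline{\rm{\ell\!+\!2}}}}=0$ for the relevant range of $\ell$ by hyperumbilicity, repeated application of $\ID_\sigma$ (which is a \emph{well-defined} map on $M$, not just $\widehat M$, by Lemma~\ref{ID-tensor}) shows that $\IIo^{\rm e}$ together with its first $k-2$ transverse jets vanish along $\Sigma$ in a controlled way, and a Taylor expansion in $s$ then produces the smooth tensor $\tau$ with $\IIo^{\rm e}=\sigma^{k-1}\tau$ globally near $\Sigma$. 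The tractor decomposition then follows by applying $q$ and tracking the $X$-dependent tail, with each coefficient tractor inheriting smoothness from $\tau$ and from the curvature quantities $K$, $M$ that appear.
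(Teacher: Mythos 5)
Your proposal is correct and follows essentially the same route as the paper: hyperumbilicity via Lemma~\ref{dnIIoe0} gives $\IIo^{\rm e}=\sigma^{k-1}Q_{ab}$ by Taylor's theorem, one sets $Q_{AB}=q(Q_{ab})$, and the $X$-tail is exactly the commutator $[q,\sigma^{k-1}]Q$, which by the explicit formula for the insertion map in Lemma~\ref{insertions} loses at most two powers of $\sigma$ and is supported on $X_{(A}T_{B)}$ and $X_AX_BU$ terms. The removability issue you flag as the main obstacle does not actually arise on this route, since $q(Q_{ab})$ and the commutator terms are manifestly smooth on all of $M$ (the only caveat being the weight restrictions $k\neq d,d-1$, which hold because $k\leq \tfrac d2+1$), so no division by $\sigma$ is ever performed.
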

\begin{proof}
As usual $P_{AB}$ denotes $ \hd_A \hd_B \sigma$.
From Lemma~\ref{dnIIoe0} we have that $\nabla_n^{m} \IIo^{\rm e}|_{\Sigma} = 0$ for all $0 \leq m \leq k-2$, so that $\IIo^{\rm e} = \sigma^{k-1} Q_{ab}$ for some $Q \in \Gamma(\odot^2_{\circ} T^* M[2-k])$.
Because $P = q(\IIo^{\rm e}) = q(\sigma^{k-1} Q)$, of interest is the difference $q(\sigma^{k-1} Q) - \sigma^{k-1} q(Q)=[q,\sigma^{k-1}]Q$. Note, because $k \leq \frac{d}{2} + 1$, $q(Q)$ is defined for $d \geq 5$, and from Lemma~\ref{insertions}) it follows that, for any integer $\ell\geq2$,
\begin{align} \label{[q,s^k]}
q(\sigma^\ell Q)_{AB}-\sigma^\ell q(Q)_{AB}
= \sigma^{\ell -1} X_{(A} Z^b_{B)}  t_b+ X_A X_B \mathcal{O}(\sigma^{\ell -2})\,,
\end{align}
for some $t_b \in \Gamma(T^*M[1-k])$. 
Clearly $\sigma^{\ell-1} Z^b t_b = \sigma^{\ell-1} q(t) + X \mathcal{O}(\sigma^{\ell -1})$, so setting $\ell = k-1$,
Equation~\ref{[q,s^k]}
becomes
$$q(\sigma^{k-1} Q)_{AB} - \sigma^{k-1} q(Q)_{AB} = \sigma^{k-2} X_{(A} q(t)_{B)} + \sigma^{k-3} X_A X_B U\,,$$
for some $U \in \Gamma(\ce M[-k])$. Defining $T_A := q(t)_A \in \Gamma(\mathcal{T} M[-k])$ and $Q_{AB} := q(Q)_{AB} \in \Gamma(\odot^2_\circ \mathcal{T} M [-k])$, the lemma follows.
\end{proof}

\begin{remark}\label{Q-td}
For hyperumbilic embeddings, $\otop (\nabla_n^m \IIo^{\rm e})= 0$ for all $0 \leq m \leq k -2$ (where $k = \lceil \frac{d+1}{2} \rceil$).  Also, as shown in the proof of Lemma~\ref{cff-ff}, the quantity $\otop (\nabla_n^{k-1} \IIo^{\rm e})$ has transverse order~$k$, so it follows from the above proof that  the transverse order of $Q_{ab}|_{\Sigma}$ is $k$.
\end{remark}

\medskip

Armed with the above technical lemma, as well as Lemma~\ref{Dlog}, we can  construct a set of canonical conditional fundamental forms.

\begin{definition}
Let $d\geq5$ and $\Sigma \hookrightarrow (M^d, \cc)$ be a conformal embedding with corresponding asymptotic unit defining density $\sigma$. Further, let $\tau \in \ce M[1]$ be any true scale. Then, for $\frac{d+3}{2} \leq n \leq d-1$, define the \textit{$n$th canonical conditional  fundamental form} by
$$\mathring{\underline{\overline{\rm{n}}}} := \bar{q}^* \circ \bar{r} \circ \otop \circ \left( I \csdot D^{n-2} (P \log \tau) - \log \tau I \csdot D^{n-2} P \right)\in \Gamma(\odot^2_\circ T^*\Sigma[3-n])\,.$$
\end{definition}
\begin{remark}
It follows that the expression $I \csdot D^{n-2} P \log \tau - \log \tau\hh I \csdot D^{n-2} P$ in the above definition is a tractor by repeated application of Lemma~\ref{Dlog}.
\end{remark}

\begin{proposition} \label{cond-ffs}
Let $\Sigma \hookrightarrow (M^d, \cc)$ with $d \geq 5$ be a hyperumbilic conformal embedding and let $k := \lceil \frac{d+1}{2} \rceil$. Then, for all $k+1 \leq n \leq d-1$, the $n$th canonical conditional fundamental form is a fundamental form.
\end{proposition}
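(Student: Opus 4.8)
The plan is to split the statement into two parts: (i) that $\mathring{\underline{\overline{\rm{n}}}}$ is a natural, conformally invariant, trace-free symmetric $2$-tensor density of weight $3-n$ on $\Sigma$, and (ii) that its transverse order is exactly $n-1$. Part (i) is almost immediate from the construction: by repeated application of Lemma~\ref{Dlog}, the inner expression $I\csdot D^{n-2}(P\log\tau)-\log\tau\, I\csdot D^{n-2}P$ is a genuine tractor (the true scale $\tau$ enters the two terms in the same way and cancels), and it is assembled entirely from the canonical data $\sigma$, $I_\sigma=\hd\sigma$ and $P=\hd\hd\sigma$; since $\bar q^*\circ\bar r\circ\otop$ is a conformally natural projection onto trace-free symmetric hypersurface tensor densities, the result has the stated type and conformal weight. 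Because the operator applied has transverse order at most $n-1\le d-2<d$, it does not see the $\mathcal{O}(\sigma^d)$ ambiguity in the asymptotic unit defining density, so $\mathring{\underline{\overline{\rm{n}}}}$ is a bona fide conformal hypersurface invariant. The entire content is therefore in part (ii).

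For part (ii) I would first reduce to a leading-transverse-derivative computation, in the style of the proofs of Lemma~\ref{tr-deg} and Corollary~\ref{cff-ff}: pick $g\in\cc$ in which $s$ is a unit defining function, so that $\nabla_n n_a=0$, and retain only $\nabla_n$-derivatives. By Lemma~\ref{PsQ}, hyperumbilicity gives $P_{AB}=\sigma^{k-1}Q_{AB}+\sigma^{k-2}X_{(A}T_{B)}+\sigma^{k-3}X_AX_BU$ with $Q|_\Sigma$ of transverse order $k$ by Remark~\ref{Q-td}. I would then argue that the $X$-bearing terms contribute only below transverse order $n-1$ once the final projection is applied: under $I\csdot D$ a factor of $X$ can only disappear through $\hd^AX^B=h^{AB}$, which produces no derivative and leaves a factor of $I_\sigma$ that either restricts on $\Sigma$ to the normal tractor $N$ (killed by $\otop$ since $\bar Z^A_aN_A=0$) or, when differentiated, feeds back via $\hd^A\hd^B\sigma=P_{AB}$ into the same recursion but with strictly fewer powers of $\sigma$; whatever survives still carrying an overall $X$ is killed by $\bar r$. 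A $\sigma$-power count then shows none of these reaches transverse order $n-1$, so the leading term of $\mathring{\underline{\overline{\rm{n}}}}$ comes from $\sigma^{k-1}q(Q)$ alone.

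The core step is the leading contribution of $\sigma^{k-1}q(Q)$. Using $\tfrac{d}{d\epsilon}\big|_{\epsilon=0}\tau^\epsilon=\log\tau$, the inner combination equals
$$
\frac{d}{d\epsilon}\Big|_{\epsilon=0}\tau^{\epsilon}\Big(\tau^{-\epsilon}(I\csdot D)^{n-2}\tau^{\epsilon}-(I\csdot D)^{n-2}\Big)P\,,
$$
i.e. the derivative with respect to the weight of the weight-dependent coefficients of $(I\csdot D)^{n-2}$, applied to $P$. Now I would invoke the $\mathfrak{sl}(2)$ algebra of Proposition~\ref{sl2-algebra} with $x=\sigma$ and $y=-\tfrac1{I^2}I\csdot D$ (taking $\tfrac1{I^2}=1$ to the order needed): commuting the $k-1$ raising operators in $x^{k-1}$ past $y^{n-2}$ yields $y^{n-2}x^{k-1}=c(\underline w)\,y^{\,n-k-1}+(\text{terms divisible by }x)$, where $c$ is an explicit product of $k-1$ linear functions of the weight; since $n\ge k+1$ there are enough lowering operators. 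Restricting to $\Sigma$ (where $x=\sigma$ vanishes), and noting that the residual leading-transverse-derivative structure is governed, exactly as in the derivation of~\nn{finallyIhaveaname}, by a further product of linear factors in the weight, the leading transverse-order term of $\mathring{\underline{\overline{\rm{n}}}}$ becomes a fixed numerical multiple of
$$
\frac{d}{dw}\Big(c(w)\prod_i\big(d+2w-\cdots\big)\Big)
$$
evaluated at the weight of $q(Q)$, times $\otop\circ\nabla_n^{\,n-1}Q$; by the curved-case argument of Corollary~\ref{cff-ff} (the Weyl- and Schouten-type terms generated at successive $\nabla_n$'s do not cancel) the operator $\otop\circ\nabla_n^{\,n-1}$ applied to the transverse-order-$k$ tensor $Q$ has transverse order exactly $n-1$, provided the numerical coefficient above is nonzero.

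The main obstacle is precisely this non-vanishing, and it is where the logarithmic density earns its keep. On the weight range determined by $k+1\le n\le d-1$, Lemma~\ref{tr-deg} shows that the undifferentiated coefficient — the leading coefficient of the naive operator $\delta^{(n-2)}_{d,1}$ acting on $\IIo^{\rm e}$ — vanishes (this is exactly why $\delta^{(n-2)}_{d,1}\IIo^{\rm e}$ fails to be an $n$th fundamental form in this range); the remaining task is to check that on this range the zero is simple, so that its $w$-derivative survives and supplies a nonzero leading term for $\mathring{\underline{\overline{\rm{n}}}}$. This reduces to a finite inspection of the linear factors in $c(w)$ together with those appearing in~\nn{finallyIhaveaname}, cross-checked against the ranges in Lemma~\ref{tr-deg}; together with the $X$/$N$ bookkeeping of the second paragraph it is the crux of the argument, and is best carried out — as in the proofs of Lemma~\ref{tr-deg} and Corollary~\ref{cff-ff} — in a unit-defining-function scale keeping only $\nabla_n$-derivatives. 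Everything else is formal.
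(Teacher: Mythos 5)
The transverse-order half of your argument tracks the paper's: on the range $k+1\le n\le d-1$ the product of linear weight factors in the leading symbol of $I\csdot D^{\hh n-2}$ acquires a zero at the relevant weight, that zero is simple because only the single factor with $i=d-n$ vanishes, the log-density picks out its $w$-derivative via $\underline{w}\log\tau=1$ and $\underline{w}^2\log\tau=0$, and the surviving coefficient multiplies $\otop\circ\nabla_n^{\hh n-2}$ applied to $P=q(\IIo^{\rm e})$, whose transverse order is controlled by the non-cancellation argument in Corollary~\ref{cff-ff}. (The paper runs this directly on $P$; your detour through $Q$ and the $\mathfrak{sl}(2)$ commutation is unnecessary for this half, and the ``simplicity of the zero'' you defer as the crux is immediate since the linear factors are distinct.)

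The genuine gap is in your part (i). Lemma~\ref{Dlog} shows only that $I\csdot D^{\hh n-2}(P\log\tau)-\log\tau\, I\csdot D^{\hh n-2}P$ is a \emph{tractor}, i.e.\ that it lands in a tractor bundle rather than a Whitney sum involving log-density bundles; it does not show that this tractor is independent of the choice of $\tau$. Indeed the commutator computed in that lemma is $(\hd\log\tau)-\tfrac{2}{d+2\underline{w}}X\hh(\hd\log\tau)\csdot\hd$, which depends on $\tau$ through $\hd\log\tau$. Equivalently, your $\tfrac{d}{d\epsilon}\big|_{\epsilon=0}$ identity computes the weight-derivative of the coefficients of $(I\csdot D)^{n-2}$ \emph{in the trivialization determined by $\tau$}, and such a derivative is scale-independent only where the undifferentiated coefficients degenerate. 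Establishing $\tau$-independence is the bulk of the paper's proof and is precisely where hyperumbilicity enters beyond transverse order: one needs Lemma~\ref{PsQ} to write $P=\sigma^{k-1}Q+\sigma^{k-2}X\odot T+\sigma^{k-3}X^2U$, the solved $\mathfrak{sl}(2)$ recursion~\nn{solved-sl2} to reduce $y^{\ell+m}x^m$ to $y^\ell$ composed with polynomials $f_j({\sf h})$, and then a case analysis showing $f_3(d)=0$ (hence $f_1(d)=f_2(d)=0$) on the stated range --- which holds outright for $d$ even, holds for $d$ odd when $n\ge k+2$, and fails for $n=k+1$ with $d$ odd, where the residual $\tau$-dependent term involves $y^2\circ X_AX_B\eqSig{\mathcal E}(X)+{\mathcal E}(I)+{\mathcal E}(h)$ and is killed only after applying $\bar q^*\circ\bar r\circ\otop$. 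If $\tau$-independence were ``almost immediate'' as you claim, these objects would be unconditional invariants and the hyperumbilicity hypothesis would be vacuous; the whole point of the term ``conditional'' is that this cancellation requires the $\sigma^{k-1}$ structure of $P$.
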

\begin{proof}
We begin by showing that the transverse order of the $n$th canonical conditional fundamental form is $n-1$. Recycling  the computation in the  proof of Lemma~\ref{tr-deg}, 
but promoting the weight $w$ to an operator (as necessary to  act on log densities),  
we find that
$$ I \csdot D^{n-2} \stackrel\Sigma=  \nabla_n^{n-2} \circ \left[ \prod_{i=1}^{n-2} (d+2 \underline{w} -n-i+2) \right] + \text{ltots} \, ,$$
and in turn
$$ I \csdot D^{n-2} \circ P_{AB} \stackrel\Sigma= \nabla_n^{n-2} \circ P_{AB} \circ \left[ \prod_{i=1}^{n-2} (d+2 \underline{w} - n-i) \right] + \text{ltots} \circ P_{AB}\,.$$
Because $k+1 \leq n \leq d-1$, we have that $d-n-(n-2) = d-2n+2 \leq 0$ and $d-n-1 \geq 0$. Therefore the product above has $\underline{w}$ as one of its factors,  so we have that
$$\prod_{i=1}^{n-2} (d+2 \underline{w} - n-i) = \alpha \underline{w} + \mathcal{O}(\underline{w}^2)\,,$$
for some $\alpha \neq 0$. So, remembering that $\otop$ includes restriction to $\Sigma$ and $\underline{w}\log \tau=1$ while $\underline{w}^2 \log\tau=0$, we can write
$$\otop \circ I \csdot D^{n-2} (P_{AB} \log \tau) = \alpha \otop(\nabla_n^{n-2} P_{AB}) + \text{ltots} (P_{AB} \log \tau)\,.$$
Further, again using $\underline{w} \;1 = 0$, we have that $\log \tau\,  I \csdot D^{n-2} P_{AB} = \log\tau\, \text{ltots} (P_{AB})$. Note that~$P = q(\IIo^{\rm e})$ and, from the proof of Corollary~\ref{cff-ff},  we have $\otop (\nabla_n^m \IIo^{\rm e})$ has transverse order~$m+1$. Hence, because 
$Z^A_a Z^B_b (\otop  \circ I \hh\csdot \hh  D^{n-2})
(P_{AB})$ has transverse degree $n-1$ and the hypersurface-intrinsic operator~$\bar{q}^* \circ \bar{r}$ acting on $(\otop \circ I\hh\csdot \hh D^{n-2})(
P_{AB})$  cannot change its transverse order, we conclude that the $n$th canonical conditional fundamental form has transverse order $n-1$.

\smallskip
\color{black}
Finally
 we need to show that
 the $n$th canonical conditional fundamental form is independent of~$\tau$. Suppose that $\Sigma$ is embedded hyperumbilically and 
let $\ell := n-k-1$. Note that $0 \leq \ell \leq d-k-2$.
Then, from Lemma~\ref{PsQ}, we have that
\begin{equation*}
\mathring{\underline{\overline{\rm{n}}}} := (\bar{q}^* \circ \bar{r} \circ \otop)
(\Pi_n)
\,,
\end{equation*}
where
\begin{multline*}
\Pi_n := I \csdot D^{\ell + (k - 1)}\big([\sigma^{k-1} Q_{AB}  + \sigma^{k-2} X_{(A} T_{B)} + \sigma^{k-3} X_A X_B U ] \log \tau \big)\\- \log \tau \,  I \csdot D^{\ell + (k - 1)} \big(\sigma^{k-1} Q_{AB}  + \sigma^{k-2} X_{(A} T_{B)} + \sigma^{k-3} X_A X_B U \big) \,.
\end{multline*}
Employing a quadratic Casimir of the $\frak{sl}(2)$ algebra, $$4yx + 2{\sf h} + {\sf h}^2=4xy-2{\sf h}+{\sf h}^2 \eqSig {\sf h}({\sf h}-2)\, ,$$ we find the enveloping algebra recursion relation
$$y^{\ell + m + 1} x^{m+1} \eqSig - y^{\ell + m} x^{m} (\ell + m + 1) ({\sf h} + m - \ell)\,,$$
which can be solved to yield (for any non-negative integer $m$)
\begin{align} \label{solved-sl2}
y^{\ell+m} x^m \eqSig (-1)^m y^\ell \prod_{i=1}^m (\ell + i)({\sf h}-\ell + i - 1)\,.
\end{align}
Note that when $m = 0$ in the above display, our convention is to define the product to be $1$.
In the $\frak{sl}(2)$
notations of Lemma~\ref{sl2-algebra}
we now have
\begin{alignat*}{2}
\Pi_n =\;&&- (-1)^{n}\log \tau&  \Big[ y^{\ell + (k-1)} x^{k-1} Q_{AB} \\
&& &+ y^{\ell + 1 + (k-2)} x^{k-2} (X_{(A} T_{B)})
+ y^{\ell + 2 + (k-3)} x^{k-3} (X_A X_B U) \Big] \\
 &&+(-1)^{n} & \Big[ y^{\ell + (k-1)} x^{k-1} (Q_{AB} \log \tau) \\
 &&&+ y^{\ell + 1 + (k-2)} x^{k-2} (X_{(A} T_{B)} \log \tau) 
+ y^{\ell + 2 + (k-3)} x^{k-3} (X_A X_B U \log \tau) \Big]\,.
\end{alignat*}

We define the 
 polynomials 
 \begin{equation}
 \label{F}
 F_{\ell,w,i}(u) := (\ell + i)(u + 2w - \ell + i - 1)\, ,
 \end{equation} which obey
$F_{\ell+1,w+1,i}(u) = F_{\ell, w, i+1}(u)$.
Then, using Equation~\ref{solved-sl2} and the fact that $Q$, $X \odot T$ and $X^2 \,U$ have weights $-k$, $1-k$, and $2-k$, respectively, we have that:
\begin{equation*} 
\begin{aligned}
y^{\ell + (k-1)} x^{k-1} \circ Q_{AB} \eqSig\;& (-1)^{k-1} y^{\ell} \circ Q_{AB} \circ \prod_{i=1}^{k-1} F_{\ell, -k, i}({\sf h})\,, \\
y^{\ell +1 + (k-2)} x^{k-2} \circ X_{(A} T_{B)} \eqSig\;& (-1)^{k-2} y^{\ell+1} \circ  X_{(A} T_{B)} \circ \prod_{i=1}^{k-2} F_{\ell + 1,1-k,i}({\sf h}) \\ \eqSig
& (-1)^{k-2} y^{\ell+1} \circ  X_{(A} T_{B)} \circ \prod_{i=2}^{k-1} F_{\ell,-k,i}({\sf h})\,, \\
y^{\ell+2 + (k-3)} x^{k-3} \circ X_A X_B U \eqSig\;& (-1)^{k-3} y^{\ell+2} \circ X_A X_B U \circ \prod_{i=1}^{k-3} F_{\ell + 2, 2-k,i}({\sf h}) \\ \eqSig
& (-1)^{k-1} y^{\ell+2} \circ X_A X_B U \circ \prod_{i=3}^{k-1}F_{\ell,-k,i}({\sf h})\,.
\end{aligned}
\end{equation*}

\noindent
Defining the  polynomial $$f_j(u) := 
\prod_{i=j}^{k-1} (\ell + i)(u - 2k - \ell + i - 1)=
\prod_{i=j}^{k-1}F_{\ell,-k,i}(u)\, ,$$
and remembering that ${\sf h}\hh (1)=d$,  we may rewrite $\Pi_n$:

\begin{align}
\begin{alignedat}{2} 
 \Pi_n   = &&- (-1)^{n+k-1} \log \tau &\Big \{ f_1(d)\hh  y^\ell Q_{AB}\\
&&& - f_2(d)\hh  y^{\ell + 1} (X_{(A} T_{B)})  + f_3(d) y^{\ell + 2} (X_A X_B U) \Big\} \\
&&+(-1)^{n+k-1} &\Big\{ y^\ell \big[Q_{AB} f_1({\sf h}) (\log \tau) \big] \\ 
&&&-y^{\ell + 1} \big[X_{(A} T_{B)} f_2({\sf h}) (\log \tau) \big] + y^{\ell + 2}\big[ X_A X_B U f_3({\sf h}) (\log \tau) \big] \Big\} \,.
\end{alignedat} \label{Pi-n}
\end{align}

Note that
\begin{align}\label{top-Fs}
f_2(u) =  F_{\ell,-k,2}(u) f_3(u) \qquad \text{and} \qquad f_1(u) = F_{\ell, -k,1}(u) F_{\ell,-k,2}(u) f_3(u) \,.
\end{align}
Now, for any polynomial $f$ for which $d$ is a root, the operator $f({\sf h})=f(d+2\underline{w})$ obeys
$$
f({\sf h}) = 2 f'(d)\hh \underline{w} + \mathcal{O}(\underline{w}^2)\, ,
$$
and so
$$f({\sf h}) \log \tau = 2 f'(d)\, .$$ Thus, if $f_3(d) = 0$ then $f_1(d)=0=f_2(d)$ and moreover  $f_1({\sf h}) \log \tau$, $f_2({\sf h}) \log \tau$, and $f_3({\sf h}) \log \tau$ are independent of $\tau$ and hence $\Pi_n$ would be independent of $\tau$.

\medskip

To establish $\tau$ independence, we analyze three cases. In the first two cases, we determine for which choices of $n$ we have that $f_3(d) = 0$, the result of which depends on dimension parity. In the third case, we handle all of the choices of $n$ excluded from the analysis in the first two cases. Note that, because $d \geq 5$, $k:=\lceil\frac{d+1}2\rceil$, and the lemma is only concerned with $n$ satisfying $k+1 \leq n \leq d-1$, we have that $k \geq 3$ so $n \geq 4$.
Thus, when $k=3$ we must have $d=5 $ and~$n=4$. We begin with   the first, even dimension parity, case.

\smallskip

\noindent
\textbf{Case 1:} If $k \geq 4$ and $d \geq 6$ is even, then $\mathring{\underline{\overline{\rm{n}}}}$ is independent of $\tau$ when $k+1 \leq n \leq d-1$:

\medskip
 To see this first observe that
the polynomial $f_3$  satisfies $f_3(d) = 0$ when
$$d - 2k - \ell + 2 \leq 0 \leq d - k - \ell -2\,.$$
Using that $\ell:=n-k-1$, the above display implies that $ d-k+3 \leq n$. From the hypothesis, we only consider $n \leq d-1$, so we have that $k \geq 4$. Rewriting the inequality above as a condition on~$n$,  we have
\begin{align} \label{n-constraint}
d-k+3
 \leq n \leq d-1\,.
\end{align}
Using that $d$ is even we have $d = 2k-2$, and so Equation~\ref{n-constraint} becomes
$$k+1 \leq n \leq d-1\,,$$
as required.

\medskip

\noindent
\textbf{Case 2:} If $k \geq 4$ and $d \geq 7$ is odd, then $\mathring{\underline{\overline{\rm{n}}}}$ is independent of $\tau$ when $k+2 \leq n \leq d-1$:

\medskip

Because $d$ is odd, $d = 2k-1$. Then, Equation~\ref{n-constraint} becomes
$$k+2 \leq n \leq d-1\,,$$
as required.

\medskip

So far we have  dealt neither with the case $k=3$ nor the case $n=k+1\geq 5$ and $d$ odd. These are both encompassed by studying $n=k+1\geq 4$ and $d$ odd.

\smallskip

\noindent
\textbf{Case 3:} If $n = k+1$ and $d$ is odd, then $\mathring{\underline{\overline{\rm{n}}}}$ is independent of $\tau$:

\medskip

Because $d$ is odd, we have that $d = 2k-1$. Because $n = k+1$, we have that $\ell = 0$. Moreover (see  Equation~\nn{F})
$$
F_{0,-k,2}(d) = 0
\mbox{ and }
F_{0,-k,2}(d+2\underline{w}) = 4 \underline{w}
\, .$$
Thus, from Equations~\ref{Pi-n} and~\ref{top-Fs} and remembering that $\underline{w}\log\tau=1$,  we have that the only terms in $\Pi_n$ that depend on $\tau$ are the terms proportional to $ y^2 X_A X_B U$. Because $y^2 \propto I \csdot \hd^2$, we can
employ Equation~\nn{usemeoft} and Proposition~\ref{leib-failure}
 to see that
$$y^2 \circ X_A X_B  \eqSig {\mathcal E}(X) + {\mathcal E}(I) + {\mathcal E}(h)\,.$$
But, because $\mathring{\underline{\overline{\rm{n}}}} = \bar{q}^* \circ \bar{r} \circ \otop (\Pi_n)$ and
 $$(\bar{q}^* \circ \bar{r} \circ \otop)(t_{(ab)} Z^a_A Z^b_B+\alpha X_{(A} V_{B)} + \beta I_{(A} V_{B)}' + \gamma h_{AB} S) 
=(\bar{q}^* \circ \bar{r} \circ \otop)(t_{(ab)} Z^a_A Z^b_B)\, ,
$$ for any non-vanishing $t_{ab}\in 
\Gamma(\odot^2 T^*M[w+2])
$, $V \in \Gamma(\ct M[w-1])$, $V' \in \Gamma(\ct M[w])$, and $S \in \Gamma(\ce M[w])$ for generic $w$,
 as well as any coefficients $\alpha,\beta,\gamma$,
  we have that $\mathring{\underline{\overline{\rm{n}}}}$ is independent of~$\tau$ when $n = k+1$ and $d$ odd.

\bigskip

The above three cases prove that, for all $d \geq 5$ and $n$ satisfying $k+1 \leq n \leq d-1$, the $n$th canonical conditional fundamental form $\mathring{\underline{\overline{\rm{n}}}}$ is independent of $\tau$. 
\end{proof}

We have already seen that a conformal embedding $\Sigma \hookrightarrow (M^d,\cc)$ is hyperumbilic when all its canonical fundamental forms vanish. The above proposition in fact establishes that 
a hyperumbilic conformal embedding $\Sigma \hookrightarrow (M^d,\cc)$ is \"uberumbilic when all its canonical conditional fundamental forms vanish.

\begin{example} We can use Proposition~\ref{cond-ffs}
to 
compute $\IVo$ in $d = 5$ for hyperumbilic embeddings (so $\IIo = \IIIo = 0$) and find
$$\IVo_{ab} 
\stackrel{\Sigma_{\rm hyp}}= 
2 C_{\hat n (ab)}^\top\, .
$$
This computation relies on 
 the expression for $I \csdot D^2$ in terms of the tractor connection and weight operators, which can be found in~\cite{Will2}. 
\end{example}

\begin{remark} \label{conj-remark}
Notice that the above fourth canonical conditional fundamental form and the fourth conditional fundamental form provided in Equation~\nn{mynumberisveryugly} of the introduction 
differ because they have  differing 
 invariance criteria; 
 the latter requires only umbilicity.
\end{remark}

We require a corollary of Lemma~\ref{dnIIoe0} before we prove our main result:
\begin{corollary} \label{dnIIoe0-2}
Let $d \geq 3$ and $\Sigma \hookrightarrow (M^d, \cc)$ be an \"uberumbilic conformal embedding with corresponding asymptotic unit defining density $\sigma$.
Then, for all $0 \leq \ell \leq d-3$,
$$\nabla_n^{\ell} \IIo^{\rm e} \eqSig 0\,.$$
\end{corollary}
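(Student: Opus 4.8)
The plan is to deduce the corollary from Lemma~\ref{dnIIoe0} together with the vanishing of the canonical conditional fundamental forms, via an induction on the number $\ell$ of normal derivatives. First I would note that an \"uberumbilic embedding is in particular hyperumbilic, so Lemma~\ref{dnIIoe0} already delivers $\nabla_n^\ell\IIo^{\rm e}\eqSig 0$ for all $0\le\ell\le k-2$, where $k=\lceil\tfrac{d+1}{2}\rceil$; since $k-2\ge d-3$ when $d\le 4$, there is nothing further to prove in those dimensions, so assume $d\ge 5$. The induction then runs over $k-1\le\ell\le d-3$, with inductive hypothesis that $\nabla_n^m\IIo^{\rm e}\eqSig 0$ for every $0\le m\le\ell-1$; the base case $\ell=k-1$ is exactly the content of Lemma~\ref{dnIIoe0}. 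Throughout I would work, as in the proof of Lemma~\ref{dnIIoe0}, in a scale $g\in\cc$ in which $s$ is a unit defining function, so that $\nabla_n n\eqSig 0$.

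Fixing such an $\ell$, I would split $\nabla_n^\ell\IIo^{\rm e}_{ab}$ into its trace-free tangential part $\otop(\nabla_n^\ell\IIo^{\rm e})$, a piece proportional to $n_{(a}(n\csdot\nabla_n^\ell\IIo^{\rm e})^\top_{b)}$, and a piece proportional to $(n_an_b-\tfrac{\bar g_{ab}}{d-1})\,n\csdot(\nabla_n^\ell\IIo^{\rm e})\csdot n$, just as in Lemma~\ref{dnIIoe0}. The vanishing of the latter two reduces to showing $n^a\nabla_n^\ell\IIo^{\rm e}_{ab}\eqSig 0$, and this runs verbatim as there: from $I^AP_{AB}=\tfrac{KX_B}{d-2}$ and $P=q(\IIo^{\rm e})$ one has $n^a\IIo^{\rm e}_{ab}=\tfrac{s\,\nabla\csdot\IIo^{\rm e}_b}{d-1}$; peeling off the factor of $s$, commuting $\nabla_n$ past $\nabla^a$ (whose commutator produces only terms in $\nabla_n^{<\ell-1}\IIo^{\rm e}$, killed by the inductive hypothesis), and writing $\nabla^a=\nabla^{\top a}+n^a\nabla_n$ gives $n^a\nabla_n^\ell\IIo^{\rm e}_{ab}\eqSig\tfrac{\ell}{d-1}\,n^a\nabla_n^\ell\IIo^{\rm e}_{ab}$, so that $\ell\le d-3<d-1$ forces $n^a\nabla_n^\ell\IIo^{\rm e}_{ab}\eqSig 0$.

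The genuinely new ingredient is the vanishing of the tangential part. Here I would set $n=\ell+2$, so that $k+1\le n\le d-1$; then \"uberumbilicity (Definition~\ref{DUNK}), together with Proposition~\ref{cond-ffs} and the uniqueness statement of Lemma~\ref{Sparky}, yields $\mathring{\underline{\overline{\rm{n}}}}\eqSig 0$ for the $n$th canonical conditional fundamental form. As in the proofs of Corollary~\ref{cff-ff} and Proposition~\ref{cond-ffs}, in the present scale this fundamental form has leading transverse structure $\mathring{\underline{\overline{\rm{n}}}}_{ab}=\alpha\,\otop(\nabla_n^\ell\IIo^{\rm e}_{ab})+(\text{terms of transverse order}\le\ell)$ with $\alpha\neq 0$, the correction terms arising from $I\csdot D^{n-2}=\nabla_n^{n-2}\circ[\cdots]+\text{ltots}$ applied to $P=q(\IIo^{\rm e})$. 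I would then argue that each correction term, being of transverse order at most $\ell$ and constructed through $q$ from $\IIo^{\rm e}$, is a polynomial in the jets $\nabla_n^m\IIo^{\rm e}$ with $m\le\ell-1$, their hypersurface-tangential derivatives, and ambient curvature; by the inductive hypothesis every such $\nabla_n^m\IIo^{\rm e}$ vanishes identically along $\Sigma$, hence so do its tangential derivatives and all such products. Therefore $0\eqSig\mathring{\underline{\overline{\rm{n}}}}_{ab}\eqSig\alpha\,\otop(\nabla_n^\ell\IIo^{\rm e}_{ab})$, whence $\otop(\nabla_n^\ell\IIo^{\rm e})\eqSig 0$; combined with the previous paragraph this gives $\nabla_n^\ell\IIo^{\rm e}\eqSig 0$ and closes the induction.

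The main obstacle is precisely this last bookkeeping: one must check carefully that no term of transverse order $\le\ell$ in the holographic formula for the $n$th canonical conditional fundamental form escapes the inductive hypothesis---that is, that every subleading term really is built from strictly lower normal jets of $\IIo^{\rm e}$ (together with curvature and tangential operators), with the leading coefficient $\alpha$ nonvanishing throughout the range $k+1\le n\le d-1$. This is a transverse-order argument of the same flavour as the proof of Lemma~\ref{tr-deg}, and once it is in place the three displayed reductions assemble into the claimed identity.
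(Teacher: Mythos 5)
Your proposal is correct and follows essentially the same route as the paper, whose own proof simply disposes of $d=3,4$ (where Lemma~\ref{dnIIoe0} already covers the full range $\ell\le d-3$) and then states that for $d\ge 5$ the argument of Lemma~\ref{dnIIoe0} goes through \emph{mutatis mutandis}. What you have written is precisely the content of that ``mutatis mutandis'': the same decomposition and normal-contraction argument, with the vanishing of the $n$th canonical conditional fundamental form ($n=\ell+2$, via Definition~\ref{DUNK}, Proposition~\ref{cond-ffs} and Lemma~\ref{Sparky}) replacing hyperumbilicity to kill the leading tangential trace-free part.
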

\begin{proof}
In $d = 3$ dimensions, \"uberumbilicity is equivalent to umbilicity and thus the corollary holds trivially. When $d = 4$, \"uberumbilicity is equivalent to hyperumbilicity, and thus  the result follows from Lemma~\ref{dnIIoe0}. In dimensions $d \geq 5$, the proof follows that of Lemma~\ref{dnIIoe0} {\it mutatis mutandis}.
\end{proof}

We can now prove our main result.

\begin{proof}[Proof of Theorem~\ref{uber-PE}]
In the interior of $M$, the trace--free Schouten tensor of the singular metric $g^o=s^{-2}g$, determined by the conformal density $\sigma=[g;s]$, obeys~\cite{BEG}
$$
\sigma\mathring P^{g^o}= q^*(\nabla I_\sigma)\, ,
$$
and this condition extends to the boundary $\Sigma$.
Thus the 
asymptotic Poincar\'e--Einstein Condition~\nn{aPE} holds exactly when
$$
q^*(\nabla I_\sigma)={\mathcal O}(\sigma^{d-2})\, .
$$ 
 Because $\IIo^{\rm e} = q^*(\nabla I)$,
it only remains to establish that 
$\Sigma$ is \"uberumbilic iff $\IIo^{\rm e} = \sigma^{d-2} T$ 
for
some~$T \in \Gamma(\odot^2_{\circ} T^* M[3-d])$. 
Clearly  
if $\IIo^{\rm e} = \sigma^{d-2} T$, then $\Sigma$ is \"uberumbilic. The converse follows from Corollary~\ref{dnIIoe0-2}.
\end{proof}

Remark~\ref{conj-remark}
provides an example of a conditional higher fundamental forms that exists  when  
the hyperumbilicity condition
is weakened.
 Moreover, both the fourth and fifth fundamental forms given in the introduction exist in all even dimensions $d \geq 4$ 
 without
the supposition of any conditions whatsoever. 
 The dimension parity dichotomy between Cases 1 and 2 in the proof of Proposition~\ref{cond-ffs}
 also suggests that even dimensions may be more amenable for constructing invariant fundamental forms. Rather than searching for higher fundamental forms (with no invariance conditions) in even dimensions, one can consider 
weakened conditions in any dimension.
 In fact, when $d = 5$, 
 a conditional fourth fundamental form exists if the embedding is umbilic, while when $d = 7$, a conditional fifth fundamental form exists if both $\IIo$ and $\IIIo$ vanish. 
Existence results for higher conditional forms can be established for 
  weakened analogs of   
 the hyperumbilicity condition  so long as the following hypothesis holds:

\begin{hyp} \label{conj-PsQ}
Let 
$\Sigma \hookrightarrow (M^d, \cc)$ be a conformal embedding with corresponding asymptotic unit defining density $\sigma$ 
such that 
$$P_{AB} = \sigma^{k-1} Q_{AB}\,,$$
where $Q \in \Gamma(\odot^2 \mathcal{T} M [-k]) \cap \ker X$ and the integer $k$ obeys $1\leq k\leq d-1$.
\end{hyp}

\begin{proposition}
Let $d\geq 5$ and $\Sigma \hookrightarrow (M^d, \cc)$ be a conformal embedding 
subject to Hypothesis~\ref{conj-PsQ}
for some $k$.
Then for all $d-k+1 \leq n \leq d-1$, the $n$th canonical conditional fundamental form is a fundamental form.
\end{proposition}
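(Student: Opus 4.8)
The plan is to follow the proof of Proposition~\ref{cond-ffs} essentially line by line, the only structural change being that Hypothesis~\ref{conj-PsQ} replaces the three-term expansion $P_{AB}=\sigma^{k-1}Q_{AB}+\sigma^{k-2}X_{(A}T_{B)}+\sigma^{k-3}X_AX_BU$ valid for hyperumbilic embeddings by the single term $P_{AB}=\sigma^{k-1}Q_{AB}$, so that only one $\mathfrak{sl}(2)$ orbit need be tracked. As there, two things must be shown: that the $n$th canonical conditional fundamental form $\mathring{\underline{\overline{\rm{n}}}}$ has transverse order $n-1$, and that it is independent of the auxiliary true scale $\tau$.

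First I would settle the transverse order for $n\geq k+1$. Recycling the leading-symbol computation from the proof of Lemma~\ref{tr-deg}, with the weight promoted to the operator $\underline w$ so that the coefficient polynomials may act on the log density $\log\tau$, one obtains $I\csdot D^{n-2}\stackrel\Sigma=\nabla_n^{n-2}\circ\prod_{i=1}^{n-2}(d+2\underline w-n-i+2)+\text{ltots}$, whence $\otop\circ I\csdot D^{n-2}(P_{AB}\log\tau)$ has leading transverse term a nonzero multiple of $\otop(\nabla_n^{n-2}P_{AB})$ while $\log\tau\cdot\otop\circ I\csdot D^{n-2}P_{AB}$ is of strictly lower transverse order. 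Since $P=q(\IIo^{\rm e})$ with $\IIo^{\rm e}=\sigma^{k-1}q^*(Q)$ under Hypothesis~\ref{conj-PsQ}, since $\otop(\nabla_n^{n-2}\IIo^{\rm e})$ has transverse order $n-1$ (for $n\geq k+1$) by the argument of Corollary~\ref{cff-ff}, and since the hypersurface-intrinsic operator $\bar q^*\circ\bar r$ cannot raise or lower transverse order, $\mathring{\underline{\overline{\rm{n}}}}$ has transverse order $n-1$; by construction it is a trace-free symmetric density-valued section of $\odot^2_\circ T^*\Sigma[3-n]$.

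For $\tau$-independence I would rerun the $\mathfrak{sl}(2)$ calculation of Proposition~\ref{cond-ffs} with $\Pi_n$ now built from $Q_{AB}$ alone: with $\ell:=n-k-1$ and using the enveloping-algebra recursion~\nn{solved-sl2}, one finds that, up to an overall sign and modulo terms of order $\sigma^d$, $\Pi_n=-\log\tau\,f(d)\,y^\ell Q_{AB}+y^\ell\big(Q_{AB}\,f({\sf h})\log\tau\big)$, where $f(u):=\prod_{i=1}^{k-1}F_{\ell,-k,i}(u)$ is the polynomial of Proposition~\ref{cond-ffs}. The new input is the arithmetic: $F_{\ell,-k,i}(d)=(\ell+i)(d-2k-\ell+i-1)=(\ell+i)(d-k-n+i)$, so $d$ is a root of $f$ precisely when the index $i_0:=k+n-d$ lies in $\{1,\dots,k-1\}$, that is precisely when $d-k+1\leq n\leq d-1$. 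When $f(d)=0$ the first term of $\Pi_n$ drops out and $f({\sf h})\log\tau=2f'(d)$ is a $\tau$-independent constant, exactly as in the proof of Proposition~\ref{cond-ffs}; hence $\Pi_n$, and therefore $\mathring{\underline{\overline{\rm{n}}}}$, is independent of $\tau$.

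The main obstacle, as in Proposition~\ref{cond-ffs}, is the bookkeeping of distinguished weights and of the boundary of the $\mathfrak{sl}(2)$ range. One must separate the generic cases $n\geq k+1$ (where $\ell\geq0$ and~\nn{solved-sl2} applies verbatim) from the cases $d-k+1\leq n\leq k$, which occur only when $2k\geq d$: there the power of $\sigma$ exceeds the number of $y$'s, so $I\csdot D^{n-2}P$ and $I\csdot D^{n-2}(P\log\tau)$ both vanish to positive order in $\sigma$ along $\Sigma$ (the leftover $X$-contributions being killed by $\bar q^*\circ\bar r\circ\otop$, which annihilates any $X\odot(\,\cdot\,)$, $I\odot(\,\cdot\,)$ or $h(\,\cdot\,)$ term exactly as in Case~3 of the proof of Proposition~\ref{cond-ffs}), so $\mathring{\underline{\overline{\rm{n}}}}$ is there identically zero and the claim holds trivially. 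It then remains only to check that no value of $w=3-n$ in the relevant range is a forbidden weight for any of the maps $q$, $r$, $\bar q^*$, $\bar r$, $I\csdot D$ appearing in the definition of $\mathring{\underline{\overline{\rm{n}}}}$, which is routine given $d\geq5$.
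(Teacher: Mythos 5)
Your proposal is correct and follows exactly the route the paper intends: its entire proof is the sentence ``the proof follows that of Proposition~\ref{cond-ffs} \emph{mutatis mutandis}'', and your computation that $F_{\ell,-k,i}(d)=(\ell+i)(d-k-n+i)$ vanishes for some $i\in\{1,\dots,k-1\}$ precisely when $d-k+1\leq n\leq d-1$ is the arithmetic the paper leaves implicit, made cleaner here because Hypothesis~\ref{conj-PsQ} removes the $X_{(A}T_{B)}$ and $X_AX_BU$ orbits that forced the three-case analysis in Proposition~\ref{cond-ffs}. Your extra care with the degenerate range $n\leq k$ (where everything vanishes to positive order in $\sigma$) goes beyond what the paper records and is harmless.
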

\begin{proof}
The proof  follows that of Proposition~\ref{cond-ffs}
{\it mutatis mutandis}.
\end{proof}

\begin{remark} 
Hypothesis~\ref{conj-PsQ}
 is a significant strengthening of Lemma~\ref{PsQ} but, for $k \leq 3$, is provably true by direct computation so long as $\IIo = \dots = \mathring{\underline{\overline{\rm{k}}}} = 0$.
\end{remark}

From the statement of this proposition,
when $d = 5$, we see that to construct $\IVo$, we only need $k = 2$ and in turn need only require that $\IIo = 0$, as stated above. Similarly, for $d = 7$, we see that to construct~$\Vo$, we need only that $k = 3$ and so we only require that $\IIo = \IIIo = 0$.

\subsection{Tractor Fundamental Forms}

\color{black}
In this section, we give  various
identities relating tractor fundamental forms and bulk tractors. 
These are particularly useful 
for any holographic computation involving extrinsic conformal embedding data.

A canonical tractor $n$th fundamental form is given by $\bar{q}(\mathring{\underline{\overline{\rm{n}}}})$ in dimensions $d > n+1$. 
The tractor second fundamental form $L:=\bar q(\IIo)$ is defined for $d>3$, and its holographic formula was
given in Equation~\nn{L-P-formula}. A holographic formula for
the canonical tractor third fundamental form, valid when $7\neq d>5$, is
\begin{equation}\label{Iwasdemoted}
\bar{q}(\IIIo) =  \dot{P}_{AB}^t - \tfrac{2}{(d-3)(d-5)} X_{(A} \bar D \csdot \dot{P}^t_{B)} + \tfrac{1}{(d-3)(d-4)(d-5)} X_A X_B \bar D \csdot \hdb \csdot \dot{P}^t\, .
\end{equation}
Here $\dot{P}_{AB} := I \csdot \hd P_{AB}$ and $\dot{P}_{AB}^t:=\bar{r} \circ \otop (\dot{P}_{AB})$;
the above result is a direct application of Lemma~\ref{D-free-tractor} and the definition of $\IIIo$ in  
 Definition~\ref{ff-lower}.
Just as the Fialkow tensor is related to the canonical third fundamental form by a factor $-(d-3)$, see Equation~\nn{whysonegative},  the Fialkow \textit{tractor} and the canonical tractor third fundamental form obey 
$$\bar{q}(\IIIo)_{AB} = -(d-3) F_{AB}\, .$$ 
The above relationship between $F$ and $\bar{q}(\IIIo)$ and Equation~\nn{Iwasdemoted}
 yield a corollary to Corollary~\ref{Fialkow-tractor1}
that gives an analog of the Fialkow--Gau\ss\ equation~\nn{Aaron}:
\begin{corollary}[Fialkow--Gau\ss--Thomas Equation] \label{Fialkow-tractor2}
Let $7 \neq d > 5$ and $\sigma$ be an asymptotic unit conformal defining
density   for $\Sigma$. Then,
\begin{multline*}
\left( L^C_A L_{CB} - \tfrac{1}{d-1} K \overline{h}_{AB} - W_{NABN} \right) - \tfrac{1}{d-1} X_{(A} \hdb_{B)} K - \tfrac{1}{(d-4)(d-5)} X_A X_B U \\
= -  \dot{P}_{AB}^t + \tfrac{2}{(d-3)(d-5)} X_{(A} \bar D \csdot \dot{P}^t_{B)} - \tfrac{1}{(d-3)(d-4)(d-5)} X_A X_B \bar D \csdot \hdb \csdot \dot{P}^t 
=(d-3) F_{AB}
\,,
\end{multline*}
where $U\in \Gamma(\ce \Sigma[-4])$ is the density given in Equation~\nn{dasboot}.
\end{corollary}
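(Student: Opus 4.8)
The plan is to assemble the stated identity from three pieces that have already been established in the excerpt, so that the proof is essentially a matter of transitivity. First I would recall Corollary~\ref{Fialkow-tractor1}, which asserts that for $7\neq d>5$,
$$
(d-3) F_{AB} = \left( L^C_A L_{CB} - \tfrac{1}{d-1} K \overline{h}_{AB} - W_{NABN} \right) - \tfrac{1}{d-1} X_{(A} \hdb_{B)} K - \tfrac{1}{(d-4)(d-5)} X_A X_B U\,.
$$
This immediately gives the equality of the first displayed line with $(d-3)F_{AB}$, i.e. the leftmost and rightmost members of the claimed chain. So the only genuinely new content is that the \emph{middle} expression, built from $\dot P_{AB}^t := \bar r\circ\otop(I\csdot\hd P_{AB})$, also equals $(d-3)F_{AB}$ (equivalently, equals the first line).

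Second I would invoke Equation~\nn{Iwasdemoted}, the holographic formula for the canonical tractor third fundamental form:
$$
\bar q(\IIIo) = \dot P_{AB}^t - \tfrac{2}{(d-3)(d-5)} X_{(A} \bar D \csdot \dot P^t_{B)} + \tfrac{1}{(d-3)(d-4)(d-5)} X_A X_B \bar D \csdot \hdb \csdot \dot P^t\,,
$$
together with the relation $\bar q(\IIIo)_{AB} = -(d-3) F_{AB}$ recorded just above the corollary (itself a consequence of $\IIIo = -(d-3)\Fo$ from Equation~\nn{whysonegative} and functoriality of the insertion map $\bar q$). Multiplying the displayed formula for $\bar q(\IIIo)$ by $-1$ and substituting $-(d-3)F_{AB}$ on the left yields precisely
$$
(d-3)F_{AB} = -\dot P_{AB}^t + \tfrac{2}{(d-3)(d-5)} X_{(A} \bar D \csdot \dot P^t_{B)} - \tfrac{1}{(d-3)(d-4)(d-5)} X_A X_B \bar D \csdot \hdb \csdot \dot P^t\,,
$$
which is the asserted equality of the middle expression with $(d-3)F_{AB}$. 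Chaining this with Corollary~\ref{Fialkow-tractor1} completes the proof, and the hypothesis "$\sigma$ an asymptotic unit conformal defining density" is exactly what is needed for $P_{AB}=\hd_A\hd_B\sigma$ to be the canonical extension so that $\dot P_{AB}$ and hence $\dot P_{AB}^t$ are well defined and Equation~\nn{Iwasdemoted} applies.

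The main obstacle, such as it is, is bookkeeping rather than substance: one must check that the weight restrictions in Lemma~\ref{D-free-tractor} (namely $w\neq 1-\tfrac d2,-\tfrac d2,-d,1-d$, here applied at the weight of $\dot P^t$, which is $1-2=-1$) are met, and that the various distinguished weights excluded in Proposition~\ref{Dt-complex}, Lemma~\ref{insertions} and the definition of $\hdb$ do not collide for $7\neq d>5$; all of these are satisfied in the stated range, so no further argument is required. I would therefore write the proof as: "By Corollary~\ref{Fialkow-tractor1}, the first displayed expression equals $(d-3)F_{AB}$. By Equation~\nn{Iwasdemoted}, the relation $\bar q(\IIIo)_{AB}=-(d-3)F_{AB}$, and Definition~\ref{ff-lower}, the second displayed expression equals $(d-3)F_{AB}$ as well. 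Combining gives the claim." One might add a one-line remark that $U$ appearing here is literally the density of Equation~\nn{dasboot}, inherited verbatim from Corollary~\ref{Fialkow-tractor1}.
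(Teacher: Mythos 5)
Your proposal is correct and is essentially the paper's own argument: the paper presents this corollary as an immediate consequence of Corollary~\ref{Fialkow-tractor1} combined with Equation~\nn{Iwasdemoted} and the relation $\bar q(\IIIo)_{AB}=-(d-3)F_{AB}$, exactly as you chain them. (The only quibble is a slip in your bookkeeping aside: $P_{AB}=\hd_A\hd_B\sigma$ has weight $-1$, so $\dot P^t$ has weight $-2$, and it is this weight fed into the hypersurface version of Lemma~\ref{D-free-tractor} that forces the exclusion $d\neq 7$; this does not affect the validity of the argument.)
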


\medskip

\noindent

The rigidity density $K = \IIo^2$ is an important conformal hypersurface invariant that measures the difference between ambient and hypersurface scalar curvatures:
$$
K\eqSig
2(d-2)\big(
J -P(\hat n,\hat n) - \bar J + \tfrac{d-1}2 H^2\big)\, .
$$
The above follows by tracing the Fialkow--Gau\ss\ equation~\nn{Aaron}. Since we have canonical extension $\IIo^{\rm e}$ of $\IIo$, the same follows for $K$. There are many situations in which normal derivatives of~$K^{\rm e}$ are needed.
\begin{definition}
Let $K^{\rm e} := P_{AB} P^{AB}$ as in the proof of Lemma~\ref{thisismylabel}, and 
\begin{eqnarray*}
\Kd &:=& \delta_R K^{\rm e} \in \Gamma(\ce \Sigma[-3])\,,
\\
 \Kdd&:=& \delta_R \hh I \csdot \hd K^{\rm e} \in \Gamma(\ce \Sigma[-4])\,, \qquad 
 d\neq 6\,, \\
  \dddot K&:=& \delta_R \hh I \csdot \hd^2 K^{\rm e} \in \Gamma(\ce \Sigma[-5])\,, \qquad 
 d\neq 6,8\,, \\[-1mm]
 &\vdots&
 \end{eqnarray*}
\end{definition}

\noindent
 In particular, because $P_{AB} P^{AB} = (\IIo^{\rm e})^2$, there are (rather useful) formul\ae\ for  $\Kd,\Kdd$, and $\dddot K$ in terms of the fundamental forms $\IIo$, $\IIIo$, $\IVo$, $\Vo$, and hypersurface derivatives thereof.
 For this we 
introduce the following notational device. \begin{definition}\label{DefconnII}
Let
$$\Pi_{(2)} := \otop \circ q(\IIo^{\rm e})
\in \Gamma(\odot^2_\circ \ct \Sigma[-1])
\,,$$
and, for $3 \leq m < d$ such that $m \not \in \{\frac{d+1}{2}, \frac{d+3}{2}, \frac{d+5}{2}\}$, let
$$\Pi_{(m)} := (\bar{r} \circ \otop \circ \delta_{\rm R} \circ q) \circ \ID^{m-3}_{\sigma} (\IIo^{\rm e})
\in \Gamma(\odot^2_\circ \ct \Sigma[1-m])
\,.$$
When $3 \leq m < d$ 
we define
$$\tilde{\Pi}_{(m)} := \bar{q}(\mathring{\overline{\underline m}})\,.$$
\end{definition}

\begin{remark}
The values $\{\frac{d+1}{2}, \frac{d+3}{2}, \frac{d+5}{2}\}$ are treated on a separate footing in Definition~\ref{DefconnII} for reasons of definedness of the operator $\bar r$; see Lemma~\ref{Pantheon}.
Also, in dimensions $d$ such that $m \not \in \{\frac{d+1}{2}, \frac{d+3}{2}, \frac{d+5}{2}\}$, by construction we have that $\tilde{\Pi}_{(m)} = \Pi_{(m)} + {\mathcal E}(X)$, since $\tilde \Pi_{(m)}=(\bar q\circ  \bar q^*)( \Pi_{(m)})$ and 
 Lemma~\ref{D-free-tractor} says
$\bar q\circ  \bar q^*=\operatorname{Id} +  {\mathcal E}(X)$.
So, when $m \in \{\frac{d+1}{2}, \frac{d+3}{2}, \frac{d+5}{2}\}$, if $T \in  \Gamma(\odot^2 \ct M[w]) \cap \ker X_\lrcorner$, we may define $\Pi_{(m)}^{AB} T_{AB} := \tilde{\Pi}_{(m)}^{AB} T_{AB}$.
\end{remark}

By construction, the rank two, trace-free hypersurface tractors $\Pi_{(m)}$  produce the corresponding fundamental form $\mathring{\overline{\underline m}}$  upon acting by the  extraction map $\bar{q}^*$. In general, if $ \bar{q}^*(T^{AB})=t_{ab}$ and $ \bar{q}^*(U^{AB})=u_{ab} $, we have that $t_{ab} u^{ab} = T_{AB} U^{AB}$. For this reason, 
the tractors~$\Pi_{(m)}$ can be used to 
  compute holographic formul\ae\ for scalars built from contractions of fundamental forms.
  These formul\ae\ are simpler than those for their constituent fundamental forms and are therefore particularly useful for computations of scalar densities, such as integrands for Willmore-like energies (see for example~\cite{Will2,GR}). We now give two such results.

\begin{lemma} \label{IIIo2}
Let $d>4$, then the square of the third fundamental form has a holographic formula given by
$$\IIIo^2 = \dot{P}_{AB} \dot{P}^{AB}\big|_\Sigma - \tfrac{3d-2}{(d-1)(d-2)^2} K^2 \,.$$
\end{lemma}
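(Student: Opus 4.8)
The plan is to compute $\dot P_{AB}\dot P^{AB}\big|_\Sigma$ directly using the holographic formula for the tractor third fundamental form from Equation~\nn{Iwasdemoted}, together with the relation $\bar q(\IIIo)_{AB}=-(d-3)F_{AB}$ and the general principle that the extraction map is compatible with tractor contractions, i.e.\ if $\bar q^*(T^{AB})=t_{ab}$ and $\bar q^*(U^{AB})=u_{ab}$ then $t_{ab}u^{ab}=T_{AB}U^{AB}$. First I would observe that $\IIIo^2 = \bar q(\IIIo)_{AB}\bar q(\IIIo)^{AB}$ since $\bar q$ is a section-level insertion whose projecting part recovers $\IIIo$ and which is trace-free and annihilated by $X$; hence $\IIIo^2=(d-3)^2 F_{AB}F^{AB}$. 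On the other hand, Equation~\nn{Iwasdemoted} expresses $\bar q(\IIIo)_{AB}$ in terms of $\dot P^t_{AB}:=\bar r\circ\otop(\dot P_{AB})$ plus $X$-trace correction terms, so the plan is to contract that formula with itself and track how the correction terms contribute.

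The key computational step is to expand $\bar q(\IIIo)_{AB}\bar q(\IIIo)^{AB}$ using Equation~\nn{Iwasdemoted}. Writing $\bar q(\IIIo)_{AB}=\dot P^t_{AB}+X$-terms, and using that $\dot P^t$ is tractor trace-free and in the kernel of $X$ (by construction of $\bar r\circ\otop$), all cross terms between $\dot P^t$ and the $X$-terms vanish because $X^A\dot P^t_{AB}=0$ and $X^2=0$; and the $X$-terms squared also vanish for the same reason. Therefore $\bar q(\IIIo)_{AB}\bar q(\IIIo)^{AB}=\dot P^t_{AB}\dot P^{t\,AB}$. The remaining task is to relate $\dot P^t_{AB}\dot P^{t\,AB}$ to $\dot P_{AB}\dot P^{AB}\big|_\Sigma$: since $\dot P^t=\bar r\circ\otop(\dot P)$, one must account for the difference between $\dot P$ and its trace-free, $X$-kernel projection. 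Using the explicit form of $\bar r$ from Lemma~\ref{Pantheon} and $\otop$, together with $X^A\dot P_{AB}$ being controlled by $\delta_R K^{\rm e}$-type quantities (recall $X\csdot P=0$, so the failure of $\dot P=I\csdot\hd P$ to be $X$-orthogonal is measured by $I\csdot\hd X=I$ and hence by $I^A P_{AB}=\tfrac{K}{d-2}X_B$), one extracts the correction proportional to $K^2$.

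The main obstacle I expect is the bookkeeping in the last step: carefully computing the ``error'' $\dot P_{AB}\dot P^{AB}-\dot P^t_{AB}\dot P^{t\,AB}$ along $\Sigma$. This requires knowing the $X^A$-contraction and the tractor trace of $\dot P_{AB}=I\csdot\hd P_{AB}$. From $X\csdot P=0$ and $I^A P_{AB}=\tfrac{1}{d-2}K X_B$ (the tractor identity used in the proof of Lemma~\ref{thisismylabel}), together with Proposition~\ref{leib-failure} applied to $I\csdot\hd$, one finds $X^A\dot P_{AB}$ and $h^{AB}\dot P_{AB}$ as explicit multiples of $KX_B$ and $K$ respectively (with coefficients depending on $d$ and the weight $w=1$ of $P$). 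Feeding these into the projection formula for $\bar r\circ\otop$ and squaring produces precisely a term proportional to $K^2$; matching the rational coefficient against $-\tfrac{3d-2}{(d-1)(d-2)^2}$ and checking consistency with $\IIIo^2=(d-3)^2 F^2$ (i.e.\ with the traced Fialkow--Gau\ss\ data) completes the proof. I would organize the computation in a fixed scale $g\in\cc$, using the explicit slot formulae for $q$ from Lemma~\ref{insertions} and for the injectors, so that each contraction reduces to a finite Riemannian manipulation; alternatively one may argue entirely in tractor terms, which is cleaner but requires slightly more care with the distinguished-weight caveats of Lemma~\ref{Pantheon} and Lemma~\ref{D-free-tractor}.
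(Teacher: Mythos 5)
Your proposal is correct and follows essentially the same route as the paper: both reduce $\IIIo^2$ to the square of $\dot{P}^t=\bar{r}\circ\otop(\dot{P})$ (using that tractor contraction of $X$-annihilated, trace-free insertions computes the tensor contraction of their projecting parts) and then expand that square using $X_A\dot{P}^{AB}=-\tfrac{K}{d-2}X^B$ and the $I\csdot\dot{P}$ identity to isolate the $K^2$ correction. The only cosmetic difference is your initial detour through $\bar q(\IIIo)$ and Equation~\nn{Iwasdemoted}, which carries extra dimension restrictions (it excludes $d=5,7$ while the lemma is stated for $d>4$); the paper sidesteps this by observing that since only the square is needed, one may work directly with $\Pi_{(3)}=\bar{r}\circ\otop(\dot P)$, whose extraction is $\IIIo$ by construction—exactly the reduction your own argument arrives at once the $X$-terms are seen to drop out.
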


\begin{proof}
It follows from Definition~\ref{DefconnII} that  $\IIIo^2 = \Pi_{(3)\hh AB} \Pi_{(3)}^{AB}$. Because the only appearance of $\Pi_{(3)}$ in this proof is when it is squared, any instances where $\tilde{\Pi}_{(3)}$ would be required can be replaced with $\Pi_{(3)}$. So, the proof amounts to relating $\dot{P}$ to $ \Pi_{(3)}$. As previously noted, $q(\IIo^{\rm e}) = P$, so $ \Pi_{(3)}^{AB} = \bar{r} \circ \otop (\dot{P})$. In order to relate $ \Pi_{(3)}$ to $\dot{P}|_{\Sigma}$ explicitly, following Lemma~\ref{Pantheon} specialized to hypersurface tractors, we need  to rewrite $X_A\dot{P}^{AB}$:
\begin{align}
\label{XPdot}
\begin{split}
X_A \dot{P}^{AB} =&\phantom{-} \:I \csdot \hd X_A P^{AB} - I_A P^{AB} + \tfrac{2 \sigma}{d-2} \hd_A P^{AB} \\
=& -I_A \hd_B I^A \\
=& -\big(\tfrac{1}{2} \hd_B I^2 + \tfrac{X_B}{d-2} (\hd I)^2 \big)\\
=& -\frac{K X_B}{d-2}+{\mathcal O}(\sigma^{d-2})
\,,
\end{split}
\end{align}
where the first and third lines are results of Proposition~\ref{leib-failure}, the second line results from the properties of $P$, and the last line uses the definition of $K$. 
Further, observe that
via Proposition~\ref{leib-failure}, we have
\begin{align}\label{starsare}
I \csdot \dot{P}^B = \tfrac{1}{d-2}\big(\dot{K} X^B + K I^B\big) - \tfrac{2 \sigma }{d-4}\big(\tfrac1{d-2}\hd^B K -P^{AC} \hd_C P_A^B\big)+{\mathcal O}(\sigma^{d-3})
\eqSig
\tfrac1{d-2}(\Kd X+ KN)
\, .
\end{align}
Using the above identities,  the definition of $\bar{r}$ and~$\otop$, as well as the standard operator identity for~$\hdb \circ X$, 
a tedious calculation along $\Sigma$ yields \color{black}
\begin{align*}
 \Pi_{(3)\hh AB} \eqSig \dot{P}_{AB} - \tfrac{d}{(d-1)(d-2)} X_{(A} \hdb_{B)}K - \tfrac{2}{d-2} \Kd I_{(A} X_{B)} - \tfrac{1}{d-2} K I_A I_B - \tfrac{1}{(d-1)(d-2)} K  I_{AB}\,.
\end{align*}
Squaring this identity gives the 
quoted result.
\end{proof}

\begin{lemma} \label{IIoIVo} 
Let $d > 6$, then the product of the second and fourth fundamental forms has a holographic formula given by
$$\IIo \csdot \IVo \eqSig (d-4) \Big(P_{AB} \ddot{P}^{AB} + \tfrac{4}{(d-2)^2} K^2\Big)\, .$$
\end{lemma}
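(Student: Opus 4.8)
The plan is to follow the template of Lemma~\ref{IIIo2} and reduce the claim to an identity between hypersurface tractors. By Definition~\ref{DefconnII} the second and fourth fundamental forms are the extraction maps of $\Pi_{(2)}=\otop\circ q(\IIo^{\rm e})$ and $\Pi_{(4)}=(\bar r\circ\otop\circ\delta_R\circ q)(\ID_\sigma\IIo^{\rm e})$; since contraction commutes with extraction and $\Pi_{(4)}$ is trace-free and $X$--null (Lemma~\ref{Pantheon}), one has $\IIo\csdot\IVo\eqSig \Pi_{(2)}^{AB}\Pi_{(4)AB}=L^{AB}\Pi_{(4)AB}$, where $L=\bar q(\IIo)$ is the tractor second fundamental form. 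Next, using $q(\IIo^{\rm e})=P$ and that $\ID_\sigma=(d+2w-6)\,\IDh_\sigma$ becomes $(d-4)\,\IDh_\sigma$ at the weight $w=1$ of $\IIo^{\rm e}$, together with $\IDh_\sigma=q^*\circ r\circ I\csdot\hd\circ q$, I would identify $\Pi_{(4)}$ as $(d-4)$ times the holographic tractor obtained by applying $\bar r\circ\otop\circ\delta_R$ to the $q\circ q^*$--dressing of $\dot P$, hence ultimately built from $\ddot P_{AB}:=I\csdot\hd\,(I\csdot\hd\,P_{AB})$, the double canonical transverse derivative of $P=\hd I$. The hypothesis $d>6$ enters precisely here, as it is what makes $\dot P$ and $\ddot P$ genuine tractors at their integral weights $-2,-3$, the obstruction sitting at $d=6$.

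The bulk of the work is to produce the holographic formula for $\Pi_{(4)}$ along $\Sigma$, in exact analogy with the explicit expression for $\Pi_{(3)}$ established inside the proof of Lemma~\ref{IIIo2}. Iterating twice the non-Leibniz identity of Proposition~\ref{leib-failure} in the operator form~\nn{usemeoft}, the operator $\bar r$ of Lemma~\ref{Pantheon}, and the injector relations $\hd^AX^B=h^{AB}$, $I\csdot\hd\,X^A=I^A$, $I\csdot P_A\eqSig\tfrac{K}{d-2}X_A$, one arrives at
$$\Pi_{(4)AB}\eqSig (d-4)\,\ddot P_{AB}+\big(\text{terms}\propto X_{(A}(\cdots)_{B)},\ I_{(A}X_{B)}(\cdots),\ X_AX_B(\cdots),\ N_{(A}(\cdots)_{B)},\ \overline h_{AB}(\cdots)\big),$$
with $K,\Kd,\Kdd$ and $\hdb K,\hdb\hdb K$ appearing in the coefficients. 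The required inputs are the contraction identities $X_A\dot P^{AB}\eqSig-\tfrac{K}{d-2}X^B$ and $I\csdot\dot P^B\eqSig\tfrac1{d-2}(\Kd X^B+K N^B)$ of Equations~\nn{XPdot}--\nn{starsare}, together with their once-more-differentiated analogs, obtained by applying $I\csdot\hd$ to these and using $X\csdot I=\sigma$, $X\csdot N=0$:
$$X_A\ddot P^{AB}\eqSig-\tfrac{2}{d-2}\big(\Kd X^B+K N^B\big),\qquad I_{(A}X_{B)}\ddot P^{AB}\eqSig-\tfrac{2K}{d-2},\qquad X_AX_B\ddot P^{AB}\eqSig 0.$$
This computation is long but mechanical; it is naturally organised via the $\mathfrak{sl}(2)$ relations of Proposition~\ref{sl2-algebra} with $x=\sigma$, $y\propto I\csdot D$ (neglecting $1/I^2$ as permitted), and can be verified with FORM as for the other heavy tractor calculations in the paper.

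It then remains to contract. Since $L$ is trace-free and lies in the kernel of contraction with both $X$ and $N=I|_\Sigma$, \emph{every} correction term in the displayed formula for $\Pi_{(4)}$ dies, leaving $\IIo\csdot\IVo\eqSig(d-4)\,L^{AB}\ddot P_{AB}$. Finally, the holographic formula~\nn{L-P-formula} gives $P_{AB}\big|_\Sigma=L_{AB}+\tfrac{2}{d-2}K\,I_{(A}X_{B)}+\tfrac{3d-8}{(d-2)(d-3)}M\,X_AX_B$ with $M=L_{AB}F^{AB}=-\tfrac{\Kd}{2(d-3)}$; contracting this with $\ddot P^{AB}$ and using $X_AX_B\ddot P^{AB}\eqSig0$ and $I_{(A}X_{B)}\ddot P^{AB}\eqSig-\tfrac{2K}{d-2}$ yields $L^{AB}\ddot P_{AB}\eqSig P^{AB}\ddot P_{AB}+\tfrac{4}{(d-2)^2}K^2$, whence the claim. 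The hard part is the second step: pushing two copies of $I\csdot D$ through $P$ and through all the $X$--adjustments hidden in $\bar r\circ\otop\circ(q\circ q^*)$ without error, and in particular verifying that those adjustments contribute no further surviving $\otop$--slot term beyond $(d-4)\ddot P_{AB}$ itself---a cancellation ultimately forced by conformal invariance of both sides but which must be exhibited explicitly.
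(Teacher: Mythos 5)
Your proposal is correct and follows essentially the same route as the paper: unpack $\Pi_{(4)}$ through the operator chain to isolate $(d-4)\ddot{P}_{AB}$ modulo correction terms that die in the contraction, then evaluate the residual contractions using $X_A\ddot{P}^{AB}\eqSig-\tfrac{2}{d-2}(\Kd X^B+KN^B)$ exactly as the paper does. The only (cosmetic) difference is that you contract with $L$ and convert to $P\csdot\ddot{P}$ at the very end via the holographic formula~\nn{L-P-formula}, whereas the paper contracts with $\Pi_{(2)}=P|_\Sigma-\tfrac{2K}{d-2}N\odot X$ directly; the arithmetic is identical.
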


\begin{proof}
First, because $d > 5$, we see that $\IVo$ is a canonical fundamental form (so not conditional). As in the previous lemma, we note that $\IIo \cdot \IVo = \Pi_{(2)\, AB} \Pi_{(4)}^{AB} $. Moreover, Equation~\nn{thestatementwearelookingfor} implies that $N^A P_{AB} = \frac1{d-2} K X_B$ so 
$$\Pi_{(2)} = P |_{\Sigma} - \tfrac{2 K}{d-2} N \odot X\, .$$
Thus we are  tasked with computing 
$P_{AB} \Pi_{(4)}^{AB}$ along $\Sigma$.
Remembering that $X \csdot\hh P = 0$, it is sufficient to compute~$\Pi_{(4)}^{AB}$ modulo terms proportional to  $X$.
Using Definition~\ref{DefconnII}, we compute~$\Pi_{(4)}$ in steps. Recall, from Equation~\nn{home-brand}, that
$$\Pi_{(4)} = (d-4)\, (\bar r \circ \otop \circ \delta_R \circ q \circ q^* \circ r \circ I \csdot \hd \circ q)(\IIo^{\rm e})\,.$$ We outline this calculation  proceeding from right to left in this sequence of operators.
\medskip

First, as shown previously, $q(\IIo^{\rm e}) = P$, so $I \csdot \hd \hh q(\IIo^{\rm e}) = \dot{P}$. Therefore, using Lemma~\ref{Pantheon}, we next compute $r(\dot{P})$:
$$r(\dot{P})_{AB} = \dot{P}_{AB} - \tfrac{d+2}{4d(d-2)} X_{(A} \hd_{B)} K - \tfrac{2}{d(d-2)} h_{AB} K+
 {\mathcal O}( \sigma^{d-2})+
 {\mathcal O}( \sigma^{d-4})X_{(A}T_{B)}\,,$$
for some tractor $T_B$. Here we used Equation~\nn{XPdot}
and the oft-used operator identity for $\hd \circ X$ given in Equation~\nn{usemeoft}.

\medskip

\noindent
Next, we need to compute $(q \circ q^* \circ r)(\dot{P})$. Before we continue, we consider the operators that come next: We are only interested in the $\bar{Z}_A \bar{Z}_B$ component of the tractor $\Pi_{(4)}$, so we can ignore terms proportional to $X$ in $(\otop \circ \delta_R \circ q \circ q^* \circ r)(\dot{P})$ when doing this computation. Further, we can ignore terms proportional to $I_A$ when computing $(\delta_R \circ q \circ q^* \circ r)(\dot{P})$ because these terms are projected out by $\otop$. The various projections, therefore, amount to ignoring terms proportional to~$X$ or $I$ when computing $(q \circ q^* \circ r)(\dot{P})$, because $\delta_R \circ I \stackrel{\Sigma} = {\mathcal E}(X) + {\mathcal E}(I)$ and $\delta_R\circ X\stackrel{\Sigma} = {\mathcal E}(X) + {\mathcal E}(I)$.
From 
Lemma~\ref{D-free-tractor}, 
$(q \circ q^* \circ r)(\dot{P}) - r(\dot{P}) = {\mathcal E}(X)$, so 
$$(q \circ q^* \circ r)(\dot{P}) = \dot{P}- \tfrac{2}{d(d-2)} K h_{} + {\mathcal E}(X)+{\mathcal E} (I)+ {\mathcal O}(\sigma^{d-2})\,.$$

\noindent
Next, note that $(\delta_R \circ q \circ q^* \circ r)(\dot{P}) = \ddot{P} - \tfrac{2}{d(d-2)} \Kd h + {\mathcal E}(X)+{\mathcal E} (I)+{\mathcal O}(\sigma^{d-3})$, and we can apply the operator~$\otop$ to obtain
$$(\otop \circ \delta_R \circ q \circ q^* \circ r)(\dot{P}) = \otop(\ddot{P}) + {\mathcal E}(X)\,.$$
Next, it is useful to note that $\otop(\ddot{P}_{AB}) \eqSig I_A^{A'} I_B^{B'} \ddot{P}_{A'B'} + I_{AB} U$ for some $U \in \Gamma(\ce \Sigma[-3])$, and also that $I_{AB} P^{AB} \eqSig 0$. Thus, finishing the calculation amounts to computing
\begin{align} \label{display-before-last}
P^{AB} \bar{r}(I_A^{A'} I_B^{B'} \ddot{P}_{A'B'} + I_{AB} U)\,.
\end{align}
For this, we need the identity
$$X^A \ddot{P}_{AB} \eqSig -\tfrac{2}{d-2} \big(\Kd X_B + K N_B\big)\,,$$
which is derived from Proposition~\ref{leib-failure}, Equation~\nn{usemeoft}, and Equation~\nn{starsare}. Because we are contracting on $P$, any terms proportional to $X$ or the tractor first fundamental form produced by $\bar r$  in Equation~\ref{display-before-last} can be discarded. Hence, again consulting Lemma~\ref{Pantheon}, we find that
\begin{align*}
\Pi_{(2)\hh AB} \Pi_{(4)}^{AB} \eqSig& (d-4) P_{AB} I^A_{A'} I^B_{B'} \ddot{P}_{A'B'} \\
\eqSig& (d-4) \big(P_{AB} - \tfrac{2}{d-2} K N_{(A} X_{B)} \big) \ddot{P}^{AB} \\
\eqSig& (d-4) \big(P_{AB} \ddot{P}^{AB} + \tfrac{4}{(d-2)^2} K^2 \big)\,,
\end{align*}
where the first equality is a result of the identity $I_{AB} P^{AB} \eqSig 0$, the second equality is an application of Equation~\nn{XPdot} 
to yield an 
identity for $I \cdot {P}$,
and the last equality is a consequence of the display above expressing $X \cdot \ddot{P}$. 
\end{proof}

\medskip

One more technical lemma is necessary in order to produce
formul\ae\ for $\Kd$ and $\Kdd$ in terms of the canonical fundamental forms.
\begin{lemma} \label{DP2} 
Let $d > 5$. Then,
\begin{align*}
(\hd P)^2 \eqSig& (\hdb L)^2 + \IIIo^2 + \tfrac{2}{(d-4)(d-5)} \IIo \cdot \IVo - \tfrac{4(d-7)}{(d-3)(d-5)} \IIo \cdot \IIIo \cdot \IIo + \tfrac{2(d-7)}{d-5} \IIo^4 \\
&- \tfrac{2(3d^3 - 34d^2 + 100d-73)}{(d-1)(d-2)^2 (d-5)} K^2 \,.
\end{align*}
\end{lemma}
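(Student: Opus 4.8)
The plan is to compute both sides of the claimed identity by expanding $\hd P$ in the scale $\sigma$ using the holographic machinery developed above, and then recognizing the resulting Riemannian expression in terms of the canonical fundamental forms. First I would recall that $P_{AB} = \hd_A \hd_B \sigma$ and that $\IIo^{\rm e} = q^*(\nabla I_\sigma) = q^*(P)$, with $K^{\rm e} = P_{AB}P^{AB}$. The quantity $(\hd P)^2 := (\hd_A P_{BC})(\hd^A P^{BC})\big|_\Sigma$ should first be split using the tangential decomposition $\hd_A = \hd^T_A + N_A N^B \hd_B + (\text{$X$-terms})$, or more efficiently via Lemma~\ref{Dt-simple}, writing $\hd_A = \hd^T_A + I_A I\csdot\hd - \tfrac{X_A}{d+2w-3} I\csdot\hd^2$ along $\Sigma$. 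Since $X\csdot P = 0$ and contractions with $X$ on the other factor can be managed using Equation~\nn{usemeoft}, this isolates a "tangential" piece, a "normal" piece proportional to $(I\csdot\hd P)^2 = \dot P^2$, and cross terms. The normal piece is controlled by Lemma~\ref{IIIo2} (which gives $\dot P_{AB}\dot P^{AB}$ in terms of $\IIIo^2$ and $K^2$), and by Lemma~\ref{IIoIVo} together with the relation $\ddot P = I\csdot\hd \dot P$ appearing in the chain $\IIo\csdot\IVo \eqSig (d-4)(P\csdot\ddot P + \tfrac{4}{(d-2)^2}K^2)$.

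Next I would handle the genuinely tangential piece. By the Gau\ss--Thomas formula (Theorem~\ref{GTF}, or its vector form Lemma~\ref{Dt-Dbar}), $\hd^T_A$ acting on a hypersurface tractor equals $\hdb_A$ plus $\Gamma_A{}^\sharp$ plus $X_A$-corrections, where $\Gamma_{ABC}$ is built from $L$ and $F$ via Equation~\nn{Gamma}. Since $P\big|_\Sigma$ differs from $\bar q(\IIo)=L$ only by $X$- and $I$-terms (by the holographic formula Equation~\nn{L-P-formula} and the fact that $P = q(\IIo^{\rm e})$ restricts appropriately), the leading tangential contribution to $(\hd P)^2$ is $(\hdb L)^2$, and the $\Gamma^\sharp$-corrections generate the terms quadratic in $L$ and $F$ — these should reproduce, using $\IIIo = -(d-3)\Fo$ (Equation~\nn{whysonegative}) and the tractor identities $\hdb^A F_{AB}=0=X^A F_{AB}$, the combinations $\IIo\cdot\IIIo\cdot\IIo$ and $\IIo^4$ and further $K^2$ contributions. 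Throughout, one uses $X^A P_{AB} = 0$, $N^A P_{AB} = \tfrac{K}{d-2}X_B + {\mathcal O}(\sigma^{d-2})$ (from Equation~\nn{thestatementwearelookingfor}), and the expansions $I\csdot\dot P^B \eqSig \tfrac{1}{d-2}(\Kd X + KN)$ from Equation~\nn{starsare}, all of which were already established.

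The main obstacle I anticipate is bookkeeping the rational-coefficient $K^2$ terms: every one of the pieces above ($\dot P^2$ via Lemma~\ref{IIIo2}, $P\cdot\ddot P$ via Lemma~\ref{IIoIVo}, the $X^A\dot P^{AB}$ and $X^A\ddot P^{AB}$ substitutions, the $\Gamma^\sharp$-square, and the $\tfrac{2}{(d-4)(d-5)}$ normalization implicit in relating $\ddot P$ to $\IVo$) contributes a term proportional to $\tfrac{1}{(d-1)(d-2)^2}K^2$ with a dimension-dependent numerator, and these must sum to the stated $-\tfrac{2(3d^3-34d^2+100d-73)}{(d-1)(d-2)^2(d-5)}K^2$. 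This is exactly the kind of computation the authors elsewhere delegate to FORM~\cite{Jos, FormFiles}, so I would organize the proof as: (i) reduce $(\hd P)^2$ to $(\hdb L)^2 + \dot P^2 + (\text{$\Gamma$-terms}) + (\text{terms involving }\ddot P)$ symbolically via the identities above; (ii) substitute Lemmas~\ref{IIIo2} and~\ref{IIoIVo} and the tractor trace/contraction identities; (iii) collect. I expect steps (i) and (ii) to be conceptually routine given the preceding lemmata, with step (iii)'s coefficient arithmetic being the only real hazard, best verified by computer algebra. The weight restriction $d>5$ is needed so that $\IVo$ is an honest (non-conditional) canonical fundamental form and so that all the insertion maps $q$ and operators $r$, $\hdb\circ X$, $\ID_\sigma$ are defined at the relevant weights.
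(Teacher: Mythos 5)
Your proposal matches the paper's proof, which is given only as a one-line citation of exactly the ingredients you identify: Equation~\nn{L-P-formula}, the Gau\ss--Thomas formula of Theorem~\ref{GTF}, Lemmas~\ref{IIIo2} and~\ref{IIoIVo}, and reorderings via Proposition~\ref{leib-failure}. Your decomposition of $\hd_A$ into tangential and normal pieces, the reduction of the tangential square to $(\hdb L)^2$ plus $\Gamma^\sharp$-corrections, and the handling of the $\dot P^2$ and $P\csdot\ddot P$ contributions are precisely the intended route, with the coefficient collection being the only (computer-verifiable) labor.
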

\begin{proof}
The proof is a tedious but straightforward application of Equation~\ref{L-P-formula},  Lemmas
~\ref{GTF},~\ref{IIIo2}, \ref{IIoIVo} and standard reorderings of tractor operators based on Proposition~\ref{leib-failure}. 
\end{proof}

\medskip

Employing these lemmas, we have   formul\ae\ for $\Kd$ and $\Kdd$.
\begin{proposition}
Let $d > 4$. Then,
\begin{eqnarray}
\Kd = 2 \IIo \cdot \IIIo\, . \label{Kdot}
\end{eqnarray}
If $d> 6$,
\begin{eqnarray}
\Kdd &\eqSig&\, - \tfrac{2}{d-6} (\hdb L)^2 + \tfrac{2(d-7)}{d-6} \IIIo^2 + \tfrac{2(d-7)}{(d-5)(d-6)} \IIo \cdot \IVo + \tfrac{8(d-7)}{(d-3)(d-5)(d-6)} \IIo \csdot \IIIo \csdot \IIo
\nonumber
\\ &&\qquad- \tfrac{4(d-7)}{(d-5)(d-6)} \IIo^4   + \tfrac{10d^3 - 110d^2 + 296d -172}{(d-1)(d-2)^2 (d-5)(d-6)} K^2  \, . \label{Kddot}
\end{eqnarray}
Moreover, if $d=5$ then
\begin{eqnarray}\label{staggering}
\nonumber
\Kdd&\stackrel\Sigma=& -4\hh  \IIo \csdot \bar{\Delta} \IIo
 +\tfrac{20}3\IIo \csdot \bar{\nabla} \bar{\nabla} \csdot \IIo
+  \tfrac89 \big(\bar{\nabla} \csdot \IIo\big)^2
+ \bar \Delta K\\
&&\phantom{=}
  -4 \hh \IIo \csdot C_n^{\top} 
  +20\hh \IIo^2 \csdot \bar{P} 
 +2 \bar{J} K 
 -4 H \IIo^3
  -4 H \IIo \csdot \IIIo \\
  \nonumber
&&\phantom{=}\:\:\:  
+4\hh \IIIo \csdot \IIIo 
- 2 \hh\IIo^2 \csdot \IIIo
+ \tfrac{31}{18} K^2  
+8\hh \IIo^{ad} \IIo^{bc}\overline{W}_{abcd} \, .
\end{eqnarray}

\end{proposition}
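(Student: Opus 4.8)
The plan is to establish the three displayed formulas for $\Kd$, $\Kdd$ by systematically exploiting the holographic identity $K^{\rm e} = P_{AB}P^{AB}$ together with the $\mathfrak{sl}(2)$ algebra of Proposition~\ref{sl2-algebra} and the tractor insertion machinery. First I would prove Equation~\nn{Kdot}: since $\Kd := \delta_R K^{\rm e} = N^A\hd_A(P_{BC}P^{BC})|_\Sigma$, I apply the Leibniz failure of Proposition~\ref{leib-failure} to the product $P_{BC}P^{BC}$, obtaining $\delta_R K^{\rm e} = 2 N^A(\hd_A P_{BC})P^{BC}|_\Sigma$ up to the $X$-term which is killed by $N^A X_A$-type contractions (here one uses $X\csdot P=0$ and $N=I|_\Sigma$). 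The quantity $N^A\hd_A P_{BC}|_\Sigma = \delta_R P_{BC}$ is, by Lemma~\ref{ho-robin} applied at the appropriate weight, expressible in terms of hypersurface data; combined with $\Pi_{(2)}=P|_\Sigma - \tfrac{2K}{d-2}N\odot X$ (from Lemma~\ref{thisismylabel}) and the definition $\IIIo = \delta_{d,1}^{(1)}\IIo^{\rm e}$, the contraction $P^{BC}\delta_R P_{BC}$ collapses to $\IIo\cdot\IIIo$. This gives $\Kd = 2\IIo\cdot\IIIo$.

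For Equation~\nn{Kddot}, valid when $d>6$, the key observation is $\Kdd := \delta_R\, I\csdot\hd\, K^{\rm e}$ and that $I\csdot\hd(P_{AB}P^{AB})$ can be split using the algebra: writing $\dot P := I\csdot\hd P$, one has $I\csdot\hd(P\csdot P) = 2 P\csdot\dot P + (\text{correction from non-Leibniz property})$, and the correction involves $(\hd P)^2$ contracted with $I$'s. This is precisely where Lemma~\ref{DP2} enters: it provides the expansion of $(\hd P)^2$ in terms of $(\hdb L)^2$, $\IIIo^2$, $\IIo\cdot\IVo$, $\IIo\cdot\IIIo\cdot\IIo$, $\IIo^4$, and $K^2$. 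I would also use Lemma~\ref{IIIo2} to trade $\dot P_{AB}\dot P^{AB}|_\Sigma$ for $\IIIo^2 + \tfrac{3d-2}{(d-1)(d-2)^2}K^2$, and Lemma~\ref{IIoIVo} to trade $P_{AB}\ddot P^{AB}$ for $\tfrac{1}{d-4}\IIo\cdot\IVo - \tfrac{4}{(d-2)^2}K^2$. Then $\delta_R$ acting on the resulting scalar expression is handled term-by-term: $\delta_R$ of a product of scalar densities obeys the modified Leibniz rule, and $\delta_R$ of $K^{\rm e}$ reproduces $\Kd$ which we already know. Assembling all pieces and collecting coefficients of the six independent invariants yields the stated rational-function coefficients; the $K^2$ coefficient, being an accumulation of several contributions, is the one requiring the most bookkeeping.

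For the $d=5$ case, Equation~\nn{Kddot} is ill-defined (the $\tfrac{1}{d-6}$ and $\tfrac{1}{d-5}$ poles, and $\IVo$ is only a conditional fundamental form), so a separate direct computation is needed. Here I would compute $\Kdd = \delta_R I\csdot\hd K^{\rm e}$ directly in a scale using the explicit formulas: $K^{\rm e}_{ab}$ via $(\IIo^{\rm e})^2$ from Equation~\nn{showmykeys}, the operator $I\csdot\hd$ in terms of $\nabla_n$ and $\rho_s$ (as recorded after Lemma~\ref{Dt-simple}), and the second Robin operator. One substitutes $\nabla_n\IIo^{\rm e}$ using the identity $\nabla_n\nabla_a n_b = W_{nabn} - P_{ab} + 2n_{(a}P_{b)n} - g_{ab}P_{nn} + \text{ltots}$ from the proof of Corollary~\ref{cff-ff}, iterated once more for $\nabla_n^2$, specializing $d=5$. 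The Gau\ss\ equation~\nn{GE}, the trace-free Codazzi equation~\nn{trfreecod}, and the Fialkow--Gau\ss\ equation~\nn{Aaron} (to convert ambient $P^\top$, $W_{nabn}$ to hypersurface $\bar P$, $\bar W$, $\Fo$, $\IIo^2$) are then applied to put everything in intrinsic-plus-conormal form, and the result is organized into the displayed combination. The main obstacle will be the bookkeeping of the $d=5$ computation: keeping track of all the curvature-contraction terms (especially the $\IIo^{ad}\IIo^{bc}\overline W_{abcd}$ term, which arises from the Gau\ss\ equation substitution into a quadratic-in-$\IIo^{\rm e}$ expression) and verifying the numerical coefficients, since unlike the generic-$d$ case there are no poles to cancel that would signal an error. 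As elsewhere in the paper, a symbolic-algebra check (FORM, as in~\cite{Jos,FormFiles}) would confirm the final coefficients.
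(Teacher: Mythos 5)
Your proposal follows essentially the same route as the paper: both parts of the generic-dimension argument rest on applying the Leibniz-failure identity (Proposition~\ref{leib-failure}) to $K^{\rm e}=P_{AB}P^{AB}$ to get $\Kd\eqSig 2P\csdot\dot P$ and $\Kdd\eqSig 2\dot P^2+2P\csdot\ddot P-\tfrac{2}{d-6}(\hd P)^2$, then invoking Lemmas~\ref{IIIo2}, \ref{IIoIVo} and \ref{DP2} to convert to fundamental forms, and the $d=5$ case is done by a direct scale-dependent Riemannian/FORM computation exactly as in the paper. The only cosmetic difference is that you run the $\Kd$ identification forwards (from $\delta_R K^{\rm e}$ to $\IIo\csdot\IIIo$) where the paper runs it backwards via the tractor formula for $\bar q(\IIIo)$, which is immaterial.
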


\begin{proof}
We first prove that $\Kd = 2 \IIo \cdot \IIIo$
(note that a proof was already given in~\cite{Will2}).
 To do so, consider the product $\IIo \cdot \IIIo$. By inserting these fundamental forms into tractors, we find that $\IIo \cdot \IIIo = L \cdot \bar{q}(\IIIo)$. From 
Equation~\nn{Iwasdemoted}
and the fact that $X \csdot L = 0$, we have that $\IIo \cdot \IIIo = L \csdot \dot{P}^t$. Further, $L \eqSig P + {\mathcal E}(X) + {\mathcal E}(N)$ and $X \csdot \dot{P}^t = 0=N \csdot \dot{P}^t $, so $\IIo \csdot \IIIo \eqSig P \csdot \dot{P}^t$. By definition, $\dot{P}^t = \bar{r}(I_A^{A'} I_B^{B'} \dot{P}_{A'B'} + I_{AB} U)$ for some $U \in \Gamma(\ce \Sigma[-2])$. Thus, because $X \csdot \dot{P} = {\mathcal E}(X)$ (see Equation~\nn{XPdot}), using Equation~\nn{Pantheon} we have that $P \cdot 
\dot P^t
\eqSig P \cdot \dot{P}$ and in turn $\IIo \cdot \IIIo \eqSig P \cdot \dot{P}$. But from Proposition~\ref{leib-failure}, we have that $\Kd \eqSig 2 P \cdot \dot{P}$, so the first claim of the lemma follows.

\smallskip

To prove the second claim, for which $d > 6$, we first apply Proposition~\ref{leib-failure} twice to $P_{AB}P^{AB}$ and find  that
$$\Kdd \eqSig 
 2 \dot{P}^2
+2 P \cdot \ddot{P}  - \tfrac{2}{d-6} (\hd P)^2\,.$$
Applying Lemmas~\ref{IIIo2}, \ref{IIoIVo}, and ~\ref{DP2}, we obtain the  second claim.

Finally, we turn to the third claim with~$d = 5$. Because Lemmas~\ref{IIoIVo} and \ref{DP2} do not hold when~$d = 5$, we need to use a different method. Also if  $\Sigma \hookrightarrow (M^5, \cc)$ is a generic conformally embedded hypersurface,  the tensor  $\IVo$ is only a canonical   conditional fundamental form, so in particular it cannot appear in an otherwise conformally-invariant expression for $\Kdd$. Thus, to compute $\Kdd$ in this case, we resort to a Riemannian computation, and use that when~$d = 5$ (see~\cite{Will2}),
$$I \csdot \hd^2 K^{\rm e} \eqSig \Big[ \Delta^\top - 2\bar{J} +\tfrac{1}{3} K + 2\nabla_n^2 + 4 \big(2 H \nabla_n - P_{nn} - \tfrac{1}{3} K + \tfrac{5}{2} H^2 \big) \Big] (\nabla n + s P + g \rho)^2\,.$$
The expression for $\nabla_n^2 \rho$ along $\Sigma$ may also be found in~\cite{Will2}.
The remaining terms were handled by using the computer algebra system FORM~\cite{Jos}; this computation is documented in~\cite{FormFiles}.
\end{proof}

It only remains to prove Theorem~\ref{goodwill} from  the introduction.
\begin{proof}[Proof of Theorem~\ref{goodwill}]
The quantity $\ddot K$ is a density of weight $-4$ and hence can be invariantly integrated over the hypersurface $\Sigma$ when $d=5$.
Moreover the 
Codazzi--Mainardi Equation~\nn{trfreecod} can be used to
verify  the 
identity
$$
\bar\nabla^c W_{c(ab)\hat n}^\top
=
\bar \Delta \IIo_{ab}
-\tfrac{d-1}{d-2}\bar \nabla_{(a} \bar \nabla \csdot \IIo_{b)\circ}
-(d-1) \IIo_{(a}\csdot \bar P_{b)\circ}
-\bar J\hh  \IIo_{ab}
-\IIo^{cd}\bar W_{cabd}\, ,
$$
which is valid in any bulk dimension~$d\geq 4$.
It is straightforward to check that the left hand side of the above display 
is conformally invariant precisely when $d=5$.  It 
can then be used to verify that the integrand in Display~\nn{Samaritan}
differs from a non-zero multiple  of $\ddot K$ only by total divergences and manifestly invariant terms. 
(Note that the choice of $g\in \cc$ is used to trivialize density bundles in the  integrand of Display~\nn{Samaritan}.)
\end{proof}

\section*{Acknowledgements}

A.W.~was also supported by a Simons Foundation Collaboration Grants for Mathematicians ID 317562 and 686131, and  thanks the University of Auckland for warm hospitality.
A.W. and A.R.G.
 gratefully acknowledge support from the Royal Society of New Zealand via Marsden Grants 16-UOA-051 and 19-UOA-008.

\color{black}

\bibliographystyle{plain}
\bibliography{conformal}

\end{document}